\newtheorem{thm}{Theorem}[section]
\newtheorem{prop}[thm]{Proposition}
\newtheorem{lem}[thm]{Lemma}
\newtheorem{cor}[thm]{Corollary}
\newtheorem{dfn}[thm]{Definition}
\newtheorem{ques}[thm]{Question}
\newtheorem{conj}[thm]{Conjecture}
\newtheorem{ex}[thm]{Example}
\newtheorem{rmk}[thm]{Remark}
\newtheorem{rmks}[thm]{Remarks}
\renewcommand{\bf}[1]{\mathbf{#1}}
\renewcommand{\rm}[1]{\mathrm{#1}}
\renewcommand{\cal}[1]{\mathcal{#1}}
\newcommand{\bbN}{\mathbb{N}}
\newcommand{\bbR}{\mathbb{R}}
\newcommand{\bbT}{\mathbb{T}}
\newcommand{\bbZ}{\mathbb{Z}}
\newcommand{\bfX}{\mathbf{X}}
\newcommand{\bfY}{\mathbf{Y}}
\newcommand{\bfZ}{\mathbf{Z}}
\newcommand{\sfE}{\mathsf{E}}
\newcommand{\sfP}{\mathsf{P}}
\newcommand{\sfW}{\mathsf{W}}
\newcommand{\rmH}{\mathrm{H}}
\newcommand{\rmI}{\mathrm{I}}
\renewcommand{\d}{\mathrm{d}}
\newcommand{\rme}{\mathrm{e}}
\newcommand{\rmh}{\mathrm{h}}
\newcommand{\A}{\mathcal{A}}
\newcommand{\F}{\mathcal{F}}
\newcommand{\J}{\mathcal{J}}
\newcommand{\calL}{\mathcal{L}}
\renewcommand{\P}{\mathcal{P}}
\newcommand{\Q}{\mathcal{Q}}
\newcommand{\R}{\mathcal{R}}
\newcommand{\calS}{\mathcal{S}}
\newcommand{\T}{\mathcal{T}}
\newcommand{\Z}{\mathcal{Z}}
\newcommand{\frM}{\mathfrak{M}}
\renewcommand{\S}{\Sigma}
\renewcommand{\a}{\alpha}
\renewcommand{\b}{\beta}
\newcommand{\eps}{\varepsilon}
\newcommand{\g}{\gamma}
\renewcommand{\k}{\kappa}
\renewcommand{\l}{\lambda}
\newcommand{\w}{\omega}
\newcommand{\s}{\sigma}
\renewcommand{\phi}{\varphi}
\newcommand{\dom}{\mathrm{dom}}
\newcommand{\ol}[1]{\overline{#1}}
\renewcommand{\qed}{\nolinebreak\hspace{\stretch{1}}$\Box$\vspace{7pt}}
\newcommand{\fin}{\nolinebreak\hspace{\stretch{1}}$\lhd$}
\newcommand{\mcup}{\mbox{$\bigcup$}}
\newcommand{\actson}{\curvearrowright}
\renewcommand{\t}[1]{\widetilde{#1}}
\renewcommand{\to}{\longrightarrow}
\newcommand{\bipack}{\rm{bipack}}
\newcommand{\BIPACK}{\rm{BIPACK}}
\newcommand{\bicov}{\rm{bicov}}
\newcommand{\BICOV}{\rm{BICOV}}
\newcommand{\cov}{\rm{cov}}
\newcommand{\rmD}{\mathrm{D}_{\mathrm{KL}}}
\newcommand{\DCS}{\mathrm{DCS}}
\newcommand{\DCF}{\mathrm{DCF}}
\newcommand{\DCM}{\mathrm{DCM}}
\newcommand{\traj}{\mathrm{traj}}
\begin{document}

\title{\textbf{\Large{Scenery entropy as an invariant of RWRS processes}}}
\author{Tim Austin\thanks{Research supported by a fellowship from the Clay Mathematics Institute}\\ \\ \small{Courant Institute, New York University,}\\ \small{New York, NY 10012, U.S.A.}\\ \small{\texttt{tim@cims.nyu.edu}}}

%

\date{}

\maketitle

\begin{abstract}
Probabilistic models of random walks in random sceneries give rise to examples of probability-preserving dynamical systems. A point in the state spaces consists of a walk-trajectory and a scenery, and its `motion' corresponds to shifting the time-origin.

These models were proposed as natural examples of non-Bernoulli K-automorphisms by Adler, Ornstein and Weiss.  This was proved in a famous analysis by Kalikow using Ornstein's Very Weak Bernoulli characterization of Bernoulli processes.  Since then, various authors have generalized this construction to give other examples, including some smooth examples due to Katok and Rudolph.

However, the methods used to prove non-Bernoullicity do not obviously show that these examples are distinct from one another.  This paper introduces a new isomorphism-invariant of probability-preserving systems, and shows that in a large class of the above examples it essentially captures the Kolmogorov-Sinai entropy of the scenery process alone.  As a result, constructions that use different scenery-entropies give continuum-many non-isomorphic examples.  Conditionally on an invariance principle for certain local times, these include a continuum of distinct smooth non-Bernoulli K-automorphisms on a fixed compact manifold.

\end{abstract}

\setcounter{tocdepth}{1}
\tableofcontents

\section{Introduction}

\subsection{Historical overview}

A simple random walk in a random scenery may be described by the following data:
\begin{itemize}
\item The first ingredient is the space $\{\pm 1\}^\bbZ$ with the product measure $\nu_{1/2}^{\otimes \bbZ}$, where $\nu_{1/2} = \frac{1}{2}(\delta_1 + \delta_{-1})$.  This gives the space of possible step-sequences of the walk, from the infinite past and to the infinite future, with the usual i.i.d. law for those steps.
\item The second ingredient is a probability-preserving system $(C^\bbZ,\mu,S)$, where $C$ is a finite set of `colours', $S$ is the leftward coordinate-shift on $C^\bbZ$, and $\mu$ is an $S$-invariant probability on $C^\bbZ$.  This $\mu$ is the law of a random scenery, which decorates every point in $\bbZ$ with a colour from $C$.
\end{itemize}

Elements $(y_n)_n \in \{\pm 1\}^\bbZ$ may be identified bijectively with paths $\bbZ \to \bbZ$ which pass through the origin and whose increments are all $-1$ or $1$, by identifying $y_n$ with the increment from $n$ to $n+1$.  This converts $(y_n)_n$ into the trajectory taken by the walker, as seen from her current location.  The coordinate shift on $(y_n)_n$ acts on this picture by shifting the origin of time, but retaining the feature that the trajectory passes through the origin: that is, we always view the trajectory from the walker's current location.

This description may naturally be combined with the walker's view of the scenery to form a probability-preserving system $(Z,\rho,R)$ which captures the whole of the above picture.  First let
\[(Z,\rho) := (\{\pm 1\}^\bbZ\times C^\bbZ,\nu_{1/2}^{\otimes \bbZ}\otimes \mu).\]
For the dynamics, think of $((y_n)_n,(x_m)_m) \in Z$ as a pair
\[\big(\hbox{trajectory through origin},\ \ \hbox{scenery viewed by walker at origin}\big),\]
and let $R$ be the transformation which shifts time one step forward, but preserves the feature that the walker's location is the origin.  In notation, this is
\[R((y_n)_n,(x_m)_m) = ((y_{n+1})_n,(x_{m+y_0})_m).\]

One checks easily that $\rho$ is $R$-invariant.  This system is called the \textbf{random walk in random scenery $\mu$}, and will be denoted $\rm{RWRS}_\mu$.

The systems $\rm{RWRS}_\mu$ are important in ergodic theory because they are simple and natural examples of an abstract phenomenon: for many possible choices of $\mu$, they are K-automorphisms but not Bernoulli systems.  This was conjectured by Adler, Ornstein and Weiss, who observed that the K-property is fairly easy to prove (it also holds for more complicated random walks, as shown by Meilijson~\cite{Meil74}).  However, non-Bernoullicity was not proved at that time, and it was recorded as an open problem in~\cite{Weiss72}.  This problem was solved by Kalikow (who refers to this as the `$[T,T^{-1}]$ system', as have many more recent authors).

\vspace{7pt}

\noindent\textbf{Theorem~(\cite{Kal82})}\quad \emph{The process $\rm{RWRS}_{\nu_{1/2}^{\otimes \bbZ}}$ is not Bernoulli. \qed}

\vspace{7pt}

The heart of Kalikow's work is to show that $\rm{RWRS}_{\nu_{1/2}^{\otimes \bbZ}}$ does not have the Very Weak Bernoulli property, one of the equivalent characterizations of Bernoullicity involved in Ornstein's famous solution of the Bernoulli Isomorphism Problem.

Other non-Bernoulli K-automorphisms were constructed before Kalikow's work: the first in~\cite{Orn73}, and then a continuum family of them in~\cite{OrnShi73}.  However, those examples were all obtained by cutting and stacking for this deliberate purpose.

For any abstract ergodic-theoretic phenomenon, it is of additional interest to find examples that arise naturally from other parts of mathematics (see Section 14 of Thouvenot's essay~\cite{Tho02} for further discussion).  For non-Bernoulli K-automorphisms, more progress in this direction was made by Feldman in~\cite{Feld76}.  He exhibited some examples in the form of skew products, somewhat resembling $\rm{RWRS}$s, as an application of his new notion of loose Bernoullicity.  However, these examples still required the a priori cut-and-stack construction of a non-loosely-Bernoulli automorphism.  A smooth version of this construction was then carried out by Katok in~\cite{Kat80}, who points out that Ratner's formidable work~\cite{Rat79} provides natural, geometric non-loosely-Bernoulli transformations for ingredients.  Nevertheless, following~\cite{Kal82}, $\rm{RWRS}$s remain the principal `natural' examples of non-Bernoulli K-automorphisms (and, indeed, Kalikow actually shows that $\rm{RWRS}_{\nu_{1/2}^{\otimes \bbZ}}$ is not even loosely Bernoulli, although we will ignore this strengthening here).  Thouvenot gives an overview of these developments in~\cite[Section 12]{Tho02}, as well as more complete references.

Since~\cite{Kal82}, Kalikow's argument has been generalized in various directions.  Smooth examples of skew products analogous to $\rm{RWRS}$s are shown to satisfy the same conclusion in~\cite{Rud88}, following a suggestion in~\cite{Kat80}.  In these examples, the trajectories of simple random walk are replaced by the sequences of ergodic sums of a smooth function over an Anosov diffeomorphism.  More recently,~\cite{denHSte97} analysed quite general examples of random walks in $\bbZ^d$, $d \geq 1$, with sceneries given by shift-invariant measures on $C^{\bbZ^d}$, obtaining new non-Bernoulli-K examples when $d = 2$.

(Several more recent works have also explored necessary and sufficient conditions for a $\rm{RWRS}$ with its obvious generating partition to be Weakly Bernoulli.  This is a more restrictive question than Very Weak Bernoullicity, and can be approached using simpler methods than Kalikow's.  However, it is not an invariant of measure-theoretic isomorphism. We will not discuss it further in this paper.)

Having shown that some $\rm{RWRS}$s are not Bernoulli, it is natural to ask when they are isomorphic to \emph{each other}.  Kalikow's method does not seem to resolve this question directly, even for i.i.d. sceneries.  First, note that if $(C^\bbZ,\mu,S) \cong (D^\bbZ,\theta,S)$, then $\rm{RWRS}_\mu \cong \rm{RWRS}_\theta$, since the former isomorphism my simply be applied to the second coordinate of $\{\pm 1\}^\bbZ\times C^\bbZ$.  However, the reverse implication can fail.  For instance, if $(C^\bbZ,\mu,S)$ is a coding of an ergodic circle-rotation, then $\rm{RWRS}_\mu$ is an isometric extension of the Bernoulli shift $(\{\pm 1\}^\bbZ,\nu^{\otimes \bbZ}_{1/2},S)$, and in this case it is still Bernoulli of the same entropy~\cite{AdlShi72,AdlShi74}, so
\[\rm{RWRS}_{\rm{ergod.\ rotn.}} \cong \rm{RWRS}_{\rm{trivial\ system}}.\]

On the other hand, Kalikow's result itself shows that the isomorphism class of $\rm{RWRS}_\nu$ does remember something (necessarily isomorphism-invariant) about the scenery process.

The purpose of this paper is to show that this includes the entropy of the scenery process.  This result seems to have been expected for some time; I learnt of this expectation from J.-P. Thouvenot, but it is also hinted at in Vershik's paper~\cite{Ver00} in connection with his notion of `secondary entropy'.

\subsection{Statement of the main results}

The main result below applies to a generalization of RWRS processes constructed from certain`cocycle random walks'.  To formulate it, suppose that $\bfY = (Y,\nu,S)$ is a probability-preserving system, that $\s:Y\to \bbR$ is measurable, and that $\bfX = (X,\mu,T)$ is a jointly measurable and probability-preserving action of $\bbR$.   Then the \textbf{generalized RWRS system with base $(\bfY,\s)$ and fibre $\bfX$} is the resulting skew-product transformation on $(Y\times X,\nu\otimes \mu)$:
\[(S\ltimes_\s T)(y,x) := (Sy,T^{\s(y)}x).\]
This system will be denoted $\bfY \ltimes_\s \bfX$.

For example, suppose that
\[\bfY := (\{\pm 1\}^\bbZ,\nu_{1/2}^{\otimes \bbZ},S), \quad  \s((y_n)_{n\in \bbZ}) := y_0,\]
and that $(X,\mu,T)$ is a continuous-time flow such that $(X,\mu,T^1) \cong (C^\bbZ,\mu',S)$. Then
\[\bfY \ltimes_\s \bfX \cong \rm{RWRS}_{\mu'}.\]

We shall prove that for certain fixed choices of $(\bfY,\s)$, the entropy of $\bfX$ is an isomorphism-invariant of the whole generalized RWRS system $\bfY \ltimes_\s \bfX$.  The argument will assume some quite delicate conditions on the system $\bfY$ and cocycle $\s$.  In the first place:
\begin{quote}
$Y \subseteq A^\bbZ$ is a subshift of finite type; $S$ is the coordinate-shift; $\nu$ is a Gibbs measure for a H\"older continuous potential on $Y$; and $\s$ is a H\"older continuous non-coboundary with $\int \s\,\d\nu = 0$.
\end{quote}

We refer to these assumptions collectively as $(\bfY,\s)$ being a `well-distributed pair'.  The proofs below will make use of this assumption in many different ways.  It could probably be replaced with a longer list of more bespoke assumptions, but it seems simpler to restrict to the above class. Many of the consequences of this assumption that we need assert various kinds of resemblance to Brownian motion at all sufficiently large scales, with some explicit rate on the convergence.  This is in a similar spirit to the `asymptotically Brownian' condition required by Rudolph in~\cite{Rud88}, but technically different.


In addition to the above, we will need to assume that our well-distributed pair satisfies an `Enhanced Invariance Principle', which describes the asymptotic law of the cocycle $\s$ and also its occupation measures over long time-scales.  I believe that this principle holds for \emph{all} well-distributed pairs, and can therefore be dropped from explicit mention in Theorem A below.  However, it is not yet available in the literature in that generality.  It is available for some more specific examples, and at time of writing I understand that Michael Bromberg is working on the general case.  This principle will be formulated carefully in Subsection~\ref{subs:EIP}.

\vspace{7pt}


\noindent\textbf{Theorem A}\quad \emph{Suppose that $\bfY = (Y,\nu,S)$ and $\s:Y\to \bbR$ form a well-distributed pair which satisfies the Enhanced Invariance Principle.  If $\bfX_i$, $i=1,2$ are two flows such that there exists a factor map $\bfY \ltimes_\s \bfX_1 \to \bfY \ltimes_\s \bfX_2$, then $\rmh(\bfX_1) \geq \rmh(\bfX_2)$.}

\vspace{7pt}

Importantly, this allows factor maps that do not act as the identity on the base system $\bfY$.  We must therefore find a way to extract the entropy of the scenery from $\bfY \ltimes_\s \bfX$ as an abstract p.-p. system, without assuming knowledge of the distinguished factor map $\bfY \ltimes_\s \bfX\to \bfY$.

It is important that one fix the choice of $(\bfY,\s)$.  Indeed, the invariant that we shall actually produce takes the form $f(\s)\rmh(\bfX)$, where $f$ is some function of $\s$ which is homogeneous of order $1$.  It is easy to see that if one replaces $\s$ with $2\s$ and $\bfX$ with its slowdown by a factor of $2$, then the resulting generalized RWRS systems are isomorphic, so this fixing of $\s$ is essential.

Aaronson's recent work~\cite{Aar12} implies a special case of Theorem A in which the factor map is assumed to respect the coordinate factor map to $\bfY$.  Applied to our setting, Corollary 5 of that paper shows that if $\bfY$ and $\s$ are the process and cocycle of classical simple random walk, then a relative factor map
\begin{center}
$\phantom{i}$\xymatrix{ \bfY \ltimes_\s \bfX_1 \ar_{\rm{coord.\ proj.}}[dr] \ar[rr] && \bfY \ltimes_\s \bfX_2 \ar^{\rm{coord.\ proj.}}[dl]\\
& \bfY }
\end{center}
can exist only if $\rmh(\bfX_1) \geq \rmh(\bfX_2)$.  (Aaronson also handles the case of other stable random walks, which we leave aside here.)  For a canonical choice of generating partition $\R$ for these systems $\bfX\ltimes_\s \bfY$, this result follows from a calculation of the distributions of the relative complexities of $(\R,N)$-names over the base system $\bfY$, regarded as random variable on the probability space $(Y,\nu)$.  Our work below will turn out to need many of the same calculations as Aaronson's.  However, relative complexities give an invariant only of relative isomorphism: they do not serve to control arbitrary factor maps $\bfY \ltimes_\s \bfX_1 \to \bfY \ltimes_\s \bfX_2$.

Theorem A also has precedents in the study of non-invertible RWRS processes, for which a point in the state space records only the future trajectory of the walk.  The analog of Theorem A with one-sided simple random walk in the base was proved by Heicklen, Hoffman and Rudolph in~\cite{HeiHofRud00}, and a generalization to some other skew products, including some smooth examples, was given by Ball in~\cite{Bal03}.  In some ways the steps in our work below reflect those papers, except that they make essential use of some extra isomorphism-invariant structure of a non-invertible transformation: the decreasing filtration of pre-images of the $\s$-algebra.  This idea goes back to work of Vershik around 1970: see~\cite{Ver94}.



Conditionally on the Enhanced Invariance Principle, Theorem A also covers certain smooth analogs of RWRSs on compact manifolds, using appropriate codings from Gibbs measures on subshifts.  Arguably, these form the most `natural' among all kinds of example in ergodic theory (see again~\cite[Section 14]{Tho02}, which includes a discussion of some of these smooth RWRS-like examples, as studied in~\cite{Rud88}).  The first smooth non-Bernoulli K-automorphisms were constructed in~\cite{Kat80}, but again with a more complicated description.

For instance, let $A:\bbT^2\to \bbT^2$ be a hyperbolic toral automorphism and $m$ be the Haar probability measure, let $g = (g^t)_{t \in \bbR}$ be an Anosov flow on a compact manifold $M$ that preserves the Riemannian volume-form $\mu$, and on the space $(\bbT^2\times M,m\otimes \mu)$ consider the skew-product transformations
\[T_r(x_1,x_2,p) := (A(x_1,x_2),g^{r\sin x_1}p).\]
Kalikow's argument itself was extended to cover examples such as these in~\cite{Rud88}.


These can clearly be written as skew products of the form $\bfY \ltimes_\s \bfX$ with $\bfY = (\bbT^2,m,A)$ and $\bfX = (M,\mu,g)$.  This $\bfY$ has a coding given by an a.e. one-one H\"older function $F:(Y,\nu,S)\to (\bbT^2,m,A)$ for some SFT $Y$ and H\"older-potential Gibbs measure $\nu$~(see~\cite{Bow08}).  Therefore Theorem A applies to these examples provided one knows the Enhanced Invariance Principle.  Since $A\ltimes_{r\s} g = A\ltimes_\s g^{(r)}$, where $g^{(r)}$ is the speedup of $g$ by the constant factor $r$, it follows that the quantity
\[\rmh(\mu,g^{(r)}) = r\rmh(\mu,g)\]
is an isomorphism-invariant of $\bfY \ltimes_\s \bfX$.  Since $\rmh(\mu,g)$ is finite and positive, these values are distinct for distinct $r$, and so, conditionally on the Enhanced Invariance Principle, this family of examples proves the following.

\vspace{7pt}

\noindent\textbf{Conditional Corollary B}\quad \emph{For any $h \in (0,\infty)$, there is a compact manifold with a smooth volume form that admits continuum-many smooth, volume-preserving K-automorphisms of entropy $h$ which are pairwise non-isomorphic. \qed} 

\vspace{7pt}

The corresponding result for non-invertible maps was also proved by Ball in~\cite{Bal03}.  This possible consequence of the current work was brought to my attention by J.-P. Thouvenot.

\subsection{A new isomorphism invariant}\label{subs:intro-invt}

The key to Theorem A will be a new isomorphism-invariant of probability-preserving systems.

The definition of this new invariant is rather involved, and will not be given in full until Section~\ref{sec:RRDr}.  However, some motivation for it can be given in advance.  This will involve standard notions from information theory, which the unfamiliar reader can find recalled in Subsection~\ref{subs:info}.

Consider again the basic examples $\rm{RWRS}_\mu$. First, let us recall why the Kolmogorov-Sinai entropy of $\rm{RWRS}_\mu$ does not give any information about $\rmh(\mu,S)$.  Let $\bfY := (\{\pm 1\}^\bbZ,\nu_{1/2}^{\otimes \bbZ},S)$, let $\bfX := (C^\bbZ,\mu,S)$ be a scenery process (here in discrete time), let $\rho = \nu_{1/2}^{\otimes \bbZ}\otimes \mu$, and let $\s:\{\pm 1\}^\bbZ\to \{\pm 1\}$ be the time-zero coordinate.  These data together define $\rm{RWRS}_\mu = \bfY \ltimes_\s \bfX$.  Let $(Z,\rho) := (\{\pm 1\}^\bbZ\times C^\bbZ,\nu_{1/2}^{\otimes \bbZ}\otimes \mu)$, and let
\[\a:Z \to \{\pm 1\}\times C\]
be the time-zero map corresponding to the obvious generating partition $\R$ for $\rm{RWRS}_\mu$.  Let $\Q$ be the time-zero partition of $C^\bbZ$.

For $N \in \bbN$, let $\rho_N := \a^{[0;N)}_\ast\mu \in \Pr((\{\pm 1\}\times C)^N)$ be the distribution of the $(\a,N)$-name
\begin{multline*}
\big(\a(z),\a((S\ltimes_\s S)(z)),\ldots,\a((S\ltimes_\s S)^{N-1}(z))\big)\\
= \big((y_0,\ldots,y_{N-1}),(x_0,x_{y_0},x_{\s^y_1},\ldots,x_{\s^y_{N-1}})\big)
\end{multline*}
when $z = (y,x)$ is drawn from $\rho$.  The Kolmogorov-Sinai entropy of $\rm{RWRS}_\mu$ is given by the leading-order behaviour of the sequence of Shannon entropies $\rmH(\rho_N) = \rmH_\rho(\R^{[0;N)})$.

This can be computed in terms of the information function of $\R^{[0;N)}$:
\[\rmI_{\mu,\R^{[0;N)}}:Z\to [0,\infty):z\mapsto -\log \rho(\R^{[0;N)}(z)).\]
In the $N$-name written above, the string $(y_0,\ldots,y_{N-1})$ is equally likely to be any element of $\{\pm 1\}^N$, so this contributes $(\log 2) N$ to the value $\rmI_{\rho,\R^{[0;N)}}(y,x)$.  However, having fixed $(y_0,\ldots,y_{N-1})$, the possible output strings $(x_0,x_{y_0},\ldots,x_{\s^y_{N-1}})$ are in bijective correspondence with the scenery-portions $(x_m)_{m \in \s^y_{[0;N)}}$, where $\s^y_{[0;N)} = \{\s^y_n\,|\ n \in [0;N)\}$.  This gives the total value for the information function as
\[\rmI_{\rho,\R^{[0;N)}}(y,x) = (\log 2)N + \rmI_{\mu,\Q^{\s^y_{[0;N)}}}(x).\]
By the Shannon-McMillan Theorem (recalled as Theorem~\ref{thm:SM} below), for typical $(y,x)$ and large $N$ this is
\begin{eqnarray}\label{eq:typical-info}
(\log 2)N + \rmh(\mu,S) |\s^y_{[0;N)}| + \rm{o}(|\s^y_{[0;N)}|).
\end{eqnarray}
Simple random walk on $\bbZ$ behaves diffusively, meaning that for typical $y$ the cardinality $|\s^y_{[0;N)}|$ is of order $\sqrt{N}$.  Therefore for typical $y$ the above value is given by
\[(\log 2)N + \rmh(\mu,S) c_N(y)\sqrt{N} + \rm{o}(\sqrt{N})\]
for some value $c_N(y)$ which is typically of order $1$.

Thus, if we ignore certain rare events in $(y,x)$, then the entropy of the scenery contributes only a correction of order $\sqrt{N}$ to
\[\rmH_\rho(\R^{[0;N)}) = \int \rmI_{\rho,\R^{[0;N)}}(z)\,\rho(\d z).\]
This sublinear correction disappears in the limit that computes $\rmh(\rm{RWRS}_\mu)$.

In general, sublinear terms in the growth-rate of $\rmH_\rho(\R^{[0;N)})$ are not isomorphism-invariant, so we cannot use the above calculation to prove the invariance of $\rmh(\mu,S)$.  Towards fixing this problem, let us next consider a different way to look at these corrections, in terms of another information-theoretic quantity: the mutual information between the $N$-step past $\R^{[-N;0)}$ and the $N$-step future $\R^{[0;N)}$.  By definition, this is
\[\rmH_\rho(\R^{[-N;0)}) + \rmH_\rho(\R^{[0;N)}) - \rmH_\rho(\R^{[-N;N)}).\]
Now each term here may be written as an integral of information functions:
\[\int \big(\rmI_{\rho,\R^{[-N;0)}}(z) + \rmI_{\rho,\R^{[0;N)}}(z) - \rmI_{\rho,\R^{[-N;N)}}(z)\big)\,\rho(\d z).\]
Let us again ask about the typical behaviour of the integrand here for $z \sim \rho$, ignoring certain extreme events (specifically, that the simple random walk covers much more ground that expected between times $-N$ and $N$).

Substituting from~(\ref{eq:typical-info}), we find that for typical $z = (y,x)$ and sufficiently large $N$ we have
\begin{eqnarray*}
&&\rmI_{\rho,\R^{[-N;0)}}(z) + \rmI_{\rho,\R^{[0;N)}}(z) - \rmI_{\rho,\R^{[-N;N)}}(z)\\
&&= (\log 2)(N + N - 2N) + \rmh(\mu,S)\big(|\s^y_{[-N;0)}| + |\s^y_{[0;N)}| - |\s^y_{[-N;N)}|\big) + \rm{o}(\sqrt{N})\\
&&= 0 + \rmh(\mu,S)|\s^y_{[-N;0)}\cap \s^y_{[0;N)}| + \rm{o}(\sqrt{N}).
\end{eqnarray*}
Heuristically, this calculation runs as follows: the steps taken by the walk in the past and future are independent, so contribute nothing to the mutual information; and the remaining mutual information is all contributed by \emph{that portion of the scenery visited by both the $N$-step past and the $N$-step future}.

Now, an easy appeal to Donsker's Invariance Principle gives that as $N\to\infty$, the random variable $y \mapsto |\s^y_{[-N;0)}\cap \s^y_{0;N)}|/\sqrt{N}$ converges in law to the random variable $\calL^1(B_{[0,1]}\cap B'_{[0,1]})$, where $B$ and $B'$ are independent Brownian motions.  This suggests that, provided one allows for this limiting behaviour of the random variable $y \mapsto |\s^y_{[-N;0)}\cap \s^y_{0;N)}|$, the constant $\rmh(\mu,S)$ should be visible in the asymptotic behaviour of $\rmI_\rho(\R^{[-N;0)};\R^{[0;N)})$ (perhaps after allowing the excision of a small-measure subset of $\{\pm 1\}^\bbZ\times C^\bbZ$ to remove `pathological' random-walk trajectories).

As with the sublinear entropy-corrections themselves, one expects that the sequence of mutual informations $\rmI_\rho(\R^{[-N;0)};\R^{[0;N)})$ does not give an isomorphism-invariant of general processes $(\bfZ,\R)$ (although I have not proved this carefully).  The key remaining idea is to modify the definition of $\rmI_\rho$ to obtain a more robust quantity.

The way to do this is suggested by a general viewpoint that already has already been very fruitful in the study of Kolmogorov-Sinai entropy.  Given an ergodic system $\bfZ = (Z,\rho,R)$, a finite measurable partition $\R$ of $Z$ and a finite-valued map $\a:Z\to A$ which generates $\R$, the Shannon-McMillan Theorem expresses $\rmh(\bfZ,\R)$ as the exponential growth rate of the effective number of points in $A^N$ needed to support $\a^{[0;N)}_\ast\rho$.  However, as observed by Feldman~(\cite{Feld80}), this may also be approximated by choosing some sufficiently small $\delta > 0$, and then asking after the exponential growth rate of the number of $(\delta N)$-balls needed to cover most of the measure $\a^{[0;N)}_\ast\rho$ in the Hamming metric spaces
\[(A^N,d_{\rm{Ham}}).\]

Having proved this covering-number representation, the isomorphism-invariance of $\rmh(\bfZ,\R)$ follows fairly easily, since an isomorphism of processes may be approximated, for sufficiently large $N$, by a sequence of \emph{Lipschitz} maps between these metric spaces, for which the change in those covering numbers is easily controlled.

Inspired by this viewpoint, our replacement for the sequence $\rmI_\rho(\R^{[-N;0)},\R^{[0;N)})$ will be a sequence of values measuring how much `information' is held by both of the partitions $\R^{[-N;0)}$ and $\R^{[0;N)}$ if one insists that this `information' can be recovered robustly if one allows small errors according to the Hamming metrics on $A^{[-N;0)}$ and $A^{[0;N)}$.

An important step in this paper is the rigorous development of this new invariant, via notions defined on abstract spaces that carry pairs of metrics.  This will be the work of Section~\ref{sec:RRDr}.

\subsection{Outline of the remaining sections}


Sections~\ref{sec:prelim-anal} and~\ref{sec:prelim-ET} present a variety of standard or routine results that will be needed later, concerning analysis and dynamics respectively.  Subsection~\ref{subs:EIP} formulates the Enhanced Invariance Principle and described some cases in which it is known.

Section~\ref{sec:prelim-discuss} is a warm-up for the rest of the paper.  It describes some basic features of the marginal metric spaces that arise from the skew-products in Theorem A.

Section~\ref{sec:RRDr} introduces the specific new isomorphism-invariant at the heart of the proof of Theorem A, estimates it in a few simple cases, and states the more precise Theorem~\ref{thm:rate} about its behaviour for the skew-products that appear in Theorem A.

Sections~\ref{sec:before-upper-bd} and~\ref{sec:upper-bd} prove the upper bound asserted in Theorem~\ref{thm:rate}.

Section~\ref{sec:dCs} returns to the study of well-distributed cocycles, focusing on some more subtle properties that are needed for the lower bound.  Chief among these is the ability, for a `typical' trajectory of the cocycle $\s$ over the interval $\{0,1\ldots,N-1\}$, to find very many somewhat large subsets of this interval on which $\s$ is injective, and which have a discrete `Cantor-like' structure.

Section~\ref{sec:lower-bd} then uses these finer properties to prove the lower bound asserted in Theorem~\ref{thm:rate}, and hence complete the proof of Theorem A.  This is more difficult than the upper-bound proof, and draws important ideas from~\cite{Kal82}.

Finally, Section~\ref{sec:further-ques} formulates some open questions and directions for further investigation.

\subsection*{Acknowledgements}

Jean-Paul Thouvenot shared with me several important insights in connection with the problems addressed here, as well as his enthusiasm for them.  Peter Nandori introduced me to Local Limit Theorems for smooth cocycles and helped me greatly in understanding them, and then Michael Bromberg shared with me his current progress on local times for cocycles over Gibbs-Markov processes. \fin

\section{Preliminaries: analysis and probability}\label{sec:prelim-anal}

\subsection{Basic conventions}

An \textbf{interval} will be either an interval in $\bbR$ or a discrete interval in $\bbZ$; the ambient set will always be clear from the context.  If $a,b \in \bbZ$ with $a \leq b$ then $[a;b] := [a;b+1) := (a-1;b] := \{a,a+1,\ldots,b\}$.  Sometimes we use the abbreviation $[n] := [0;n)$.

Given an interval $K \subseteq \bbR$, we will let $\rm{Int}(K)$ denote the collection of all nonempty compact subintervals of $K$.  We give it the topology inherited from the obvious identification with $\{(u,v) \in K\,|\ u\leq v\} \subseteq \bbR^2$.

Lebesgue measure on $\bbR$ will be denoted by $\cal{L}^1$.  If $I$ is a bounded interval in either $\bbR$ or $\bbZ$, then $\rm{U}_I$ will denote the uniform probability distribution on $I$.

We will use $\star$ to denote convolution of functions or measures on $\bbR$, in any case in which it is well-defined.

In this paper, a \textbf{mollifier} will be a compactly-supported smooth function $\phi:\bbR\to [0,\infty)$ which is symmetric about the origin and satisfies $\int \phi\,\d\calL^1 = 1$.

The following popular notation from harmonic analysis will be useful later.  Given two collections $(A_i)_{i\in I}$, $(B_i)_{i\in I}$ of non-negative real numbers and another structure or quantity $X$, we write $A_i \lesssim_X B_i$ to assert that there is a constant $C \in (0,\infty)$ depending only on $X$ such that $A_i \leq CB_i$ for all $i$.  We write $A_i \sim_X B_i$ in case both $A_i \lesssim_X B_i$ and $B_i \lesssim_X A_i$.

\subsection{Probability}

Various later arguments will involve comparisons with Brownian motion.  We will always let $\sfW \in \Pr C[0,\infty)$ be the classical Wiener measure, and let $\sfW_{[0,1]} \in \Pr C[0,1]$ be the law of $B|_{[0,1]}$ for $B \sim \sfW$.  This latter is supported on the closed subset
\[C_0(0,1] := \{f \in C[0,1]\,|\ f(0) = 0\}.\]

If $(X,\S,\mu)$ is a probability space and $A \in \S$ has $\mu(A) > 0$, then $\mu_{|A}$ will denote the conditional measure $\mu(A\cap \,\cdot\,)/\mu(A)$.

In our dynamical applications, all probability spaces will be standard Borel, and we will generally omit their $\s$-algebras from the notation.

We will later make several uses of the following quantitative approximation to absolute continuity.

\begin{dfn}[Approximate absolute continuity]\label{dfn:approx-ab-ct}
Let $(X,\S)$ be a measurable space, $\mu$ and $\nu$ be finite measures on $X$, and $\eps \in [0,\infty)$ and $M \in (0,\infty)$.  Then we write that $\mu \ll_{M,\eps} \nu$ if
\[\mu(A) \leq M\nu(A) + \eps \quad \forall A \in \S,\]
and we write that $\mu \sim_{M,\eps} \nu$ if $\mu \ll_{M,\eps} \nu$ and $\nu \ll_{M,\eps} \mu$.
\end{dfn}

In case $\mu$ and $\nu$ are both probability measures, an easy exercise gives
\[\mu \ll_{1,\eps} \nu \quad \Longleftrightarrow \quad \nu \ll_{1,\eps} \mu \quad \Longleftrightarrow \quad \mu \sim_{1,\eps} \nu\]
(where the first equivalence holds because the Jordan decomposition gives $(\mu - \nu)^+(X) = (\mu - \nu)^-(X)$ for any two probability measures).  On the other hand, $\mu \ll_{M,0} \nu$ if and only if $\mu$ is absolutely continuous with respect to $\nu$ and $\|\d\mu/\d\nu\|_{L^\infty(\nu)} \leq M$.

The following basic properties are also routine to verify.

\begin{lem}\label{lem:abs-ct-basics}
Approximate absolute continuity enjoys the following properties:
\begin{itemize}
\item If $\mu_1 \ll_{M_1,\eps_1} \mu_2$ and $\mu_2 \ll_{M_2,\eps_2} \mu_3$, then
\[\mu_1 \ll_{M_1M_2,M_1\eps_2+ \eps_1} \mu_3.\]
\item If $\mu,\nu,\theta \in \Pr \bbR$ and $\mu\ll_{M,\eps}\nu$, then also $\theta\star \mu\ll_{M,\eps}\theta\star \nu$. \qed
\end{itemize}
\end{lem}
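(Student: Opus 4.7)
The plan is to verify both properties by direct unpacking of Definition~\ref{dfn:approx-ab-ct}; neither appears to hide any subtlety, so the main task is simply to make sure the bookkeeping of the constants is correct. I do not expect a genuine obstacle here --- the lemma is stated as ``routine to verify'' and that description is accurate.

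For the first item, I would fix an arbitrary $A \in \S$ and chain the two inequalities: using $\mu_1(A) \leq M_1 \mu_2(A) + \eps_1$ and then $\mu_2(A) \leq M_2 \mu_3(A) + \eps_2$, one obtains
\[
\mu_1(A) \leq M_1\bigl(M_2 \mu_3(A) + \eps_2\bigr) + \eps_1 = M_1 M_2\, \mu_3(A) + (M_1 \eps_2 + \eps_1),
\]
which is precisely the claim. The only point worth flagging is that the additive error from the first step gets scaled by the multiplicative constant $M_1$ of the outer step, which is what produces the asymmetric expression $M_1\eps_2 + \eps_1$.

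For the second item, I would use Fubini together with the translation-invariance of Borel sets. For any Borel $A \subseteq \bbR$,
\[
(\theta \star \mu)(A) = \int \mu(A - t)\,\theta(\d t) \leq \int \bigl(M \nu(A - t) + \eps\bigr)\,\theta(\d t) = M(\theta \star \nu)(A) + \eps,
\]
where in the last step I use that $\theta$ is a probability measure to absorb the integration of the constant $\eps$. (If one wanted to be pedantic, one could observe that the integrand $t \mapsto \mu(A-t)$ is Borel measurable, e.g.\ by a monotone-class argument starting from open $A$.) Strictly speaking, the statement is phrased for $\mu,\nu,\theta \in \Pr \bbR$, so $\theta(\bbR) = 1$ is exactly what keeps the additive error at $\eps$ rather than $\theta(\bbR)\eps$; this is the small subtlety it is worth being explicit about.

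Both arguments are two lines each, so I would present them in a compact displayed chain as above, noting the role of the probability-measure hypothesis on $\theta$ in the second item. No appeal to the earlier equivalences (such as the Jordan-decomposition remark) is needed for these two properties.
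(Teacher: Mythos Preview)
Your proposal is correct and is exactly the routine verification the paper has in mind; the paper itself omits the proof entirely (the lemma is stated with a \qed\ and prefaced by ``routine to verify''), and your two short chains are the canonical way to fill it in.
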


\subsection{Information Theory}\label{subs:info}

We shall make use of several notions from Information Theory.  The main definitions are recalled here, but we shall largely take standard facts for granted: Cover and Thomas~\cite{CovTho06} is a canonical reference.

Given a countable set $A$ and $\mu \in \Pr A$, the \textbf{Shannon entropy} of $\mu$ is
\[\rmH(\mu) := -\sum_{a \in A}\mu\{a\}\log \mu\{a\} \in [0,+\infty].\]
Relatedly, if $(X,\mu)$ is any probability space and $\phi:X\to A$ is measurable, then $\rmH_\mu(\phi) := \rmH(\phi_\ast\mu)$; and if $\P$ is a countable measurable partition of $X$, then $\rmH_\mu(\P) := \rmH_\mu(\phi)$ for any choice of countable-valued map $\phi$ whose level-sets are the cells of $\P$.

If $(X,\S,\mu)$ is any probability space and $\nu$ is another probability on $X$, then the \textbf{Kullback-Leibler divergence of $\nu$ with respect to $\mu$} is
\[\rmD(\nu\,|\,\mu) := \left\{\begin{array}{ll}\int_X \frac{\d\nu}{\d\mu}\log\frac{\d\nu}{\d\mu}\,\d\mu &\quad \hbox{if}\ \nu \ll \mu\\ +\infty & \quad \hbox{else}\end{array}\right. \in [0,+\infty].\]

Next, suppose that $\P$ and $\Q$ are two countable measurable partitions of $(X,\S,\mu)$.  Then the \textbf{conditional entropy} of $\P$ given $\Q$ is the quantity
\[\rmH_\mu(\P\,|\,\Q) := \sum_{C \in \Q}\mu(C) \rmH_{\mu_{|C}}(\P)\]
(where we interpret those $C \in \Q$ for which $\mu(C) = 0$ as contributing zero). The \textbf{mutual information} of $\P$ and $\Q$ under $\mu$ is defined by
\[\rmI_\mu(\P;\Q) := \rmH_\mu(\P) - \rmH_\mu(\P\,|\,\Q).\]
A standard calculation shows that this is symmetric in $\P$ and $\Q$, and also that
\begin{eqnarray}\label{eq:rel-ent-of-bary}
\rmI_\mu(\P;\Q) &=& \rmH_\mu(\P) + \rmH_\mu(\Q) - \rmH_\mu(\P\vee \Q)\nonumber\\
&=& \int \rmD\big(\phi_\ast(\mu_{|\Q(x)})\,\big|\,\phi_\ast\mu\big)\,\mu(\d x),
\end{eqnarray}
where $\phi:X\to A$ is any finite-valued function generating the partition $\P$ (see, for instance, Equations (2.45) and (2.36) in~\cite[Section 2.4]{CovTho06}).  More generally, given a third partition $\R$, the \textbf{conditional mutual information} of $\P$ and $\Q$ given $\R$ is
\begin{eqnarray*}
\rmI_\mu(\P;\Q\,|\,\R) &:=& \rmH_\mu(\P\,|\,\R) - \rmH_\mu(\P\,|\,\Q\vee \R)\\
&=& \int \rmI_{\mu_{|\R(x)}}(\P;\Q)\,\mu(\d x),
\end{eqnarray*}
where the second equality is another standard calculation.

These definitions easily give the following.

\begin{lem}\label{lem:mut-inf-add}
Let $(X_i,\S_i,\mu_i)$ for $i=1,2$ be probability spaces, and for each $i$ let $\P_i$, $\Q_i$ and $\R_i$ be countable measurable partitions of $X_i$.  Then
\[\rmI_{\mu_1\otimes \mu_2}(\P_1\otimes \P_2;\Q_1\otimes \Q_2\,|\,\R_1\otimes \R_2) = \rmI_{\mu_1}(\P_1;\Q_1\,|\,\R_1) + \rmI_{\mu_2}(\P_2;\Q_2\,|\,\R_2).\]
\qed
\end{lem}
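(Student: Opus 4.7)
The plan is to expand both sides into Shannon entropies and reduce to the fact that Shannon entropy is additive on product distributions. First I would use the definition of conditional mutual information together with the identity $\rmH_\mu(\P\,|\,\Q) = \rmH_\mu(\P\vee\Q) - \rmH_\mu(\Q)$ to rewrite
\[\rmI_\mu(\P;\Q\,|\,\R) = \rmH_\mu(\P\vee\R) + \rmH_\mu(\Q\vee\R) - \rmH_\mu(\R) - \rmH_\mu(\P\vee\Q\vee\R),\]
so that the whole problem becomes an equality between two sums of Shannon entropies of product partitions under a product measure.

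Next I would record two elementary compatibilities between the product operation on partitions and the join operation: for any countable measurable partitions $\P_i,\Q_i$ of $X_i$,
\[(\P_1\otimes\P_2)\vee(\Q_1\otimes\Q_2) = (\P_1\vee\Q_1)\otimes(\P_2\vee\Q_2),\]
since cells on either side are the products $(C_1\cap D_1)\times(C_2\cap D_2)$ with $C_i\in\P_i$, $D_i\in\Q_i$. Iterating this takes care of all four terms above, each of which is the entropy of a product partition under $\mu_1\otimes\mu_2$.

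The key identity is then $\rmH_{\mu_1\otimes\mu_2}(\A_1\otimes\A_2) = \rmH_{\mu_1}(\A_1) + \rmH_{\mu_2}(\A_2)$ for countable partitions $\A_i$ of $X_i$. This follows directly from the definition: if $p_i$ denotes the distribution of the label function generating $\A_i$, then the distribution of the product label is $p_1\otimes p_2$, and
\[-\sum_{a,b}p_1(a)p_2(b)\log\bigl(p_1(a)p_2(b)\bigr) = \rmH(p_1) + \rmH(p_2).\]
Applying this identity to each of the four entropies in the expansion, the right-hand side regroups precisely into $\rmI_{\mu_1}(\P_1;\Q_1\,|\,\R_1) + \rmI_{\mu_2}(\P_2;\Q_2\,|\,\R_2)$, which closes the proof.

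There is no real obstacle here; the only mild point to watch is that when some of the entropies involved are infinite, the rearrangement must be justified by first truncating to finite sub-partitions and passing to the monotone limit, or else by arguing directly from the integral form
\[\rmI_\mu(\P;\Q\,|\,\R) = \int \rmI_{\mu_{|\R(x)}}(\P;\Q)\,\mu(\d x)\]
and using that the conditional measures of $\mu_1\otimes\mu_2$ given $\R_1\otimes\R_2$ factorize as products of the conditional measures of the $\mu_i$ given $\R_i$. Either route reduces the statement to the unconditional product formula above.
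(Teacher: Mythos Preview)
Your proposal is correct and is exactly the standard verification; the paper itself omits the proof entirely (the statement is followed immediately by \qed, with the remark that it follows ``easily'' from the definitions of conditional mutual information). Your expansion into four Shannon entropies via $\rmI_\mu(\P;\Q\,|\,\R) = \rmH_\mu(\P\vee\R) + \rmH_\mu(\Q\vee\R) - \rmH_\mu(\R) - \rmH_\mu(\P\vee\Q\vee\R)$, together with additivity of entropy on products and the compatibility $(\P_1\otimes\P_2)\vee(\Q_1\otimes\Q_2) = (\P_1\vee\Q_1)\otimes(\P_2\vee\Q_2)$, is precisely what the author has in mind.
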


We will also need the following simple but less standard calculations.

\begin{lem}[Conditioning mutual information on a subset]\label{lem:cond-cond-mut-inf}
If $(X,\S,\mu)$ is a probability space, $\P$, $\Q$, and $\R$ are countable partitions in $\S$, and $A\in \S$ has positive measure, then
\[\mu(A)\rmI_{\mu_{|A}}(\P;\Q\,|\,\R) \leq \log 2 + \rmI_\mu(\P;\Q\,|\,\R).\]
\end{lem}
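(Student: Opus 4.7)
The plan is to pass the conditioning-on-$A$ through the mutual information by introducing the binary partition $\mathcal{A} := \{A, A^c\}$ and then comparing $\rmI_\mu(\P;\Q\,|\,\R)$ with $\rmI_\mu(\P;\Q\,|\,\R \vee \mathcal{A})$, which is exactly a weighted average of $\rmI_{\mu_{|A}}(\P;\Q\,|\,\R)$ and $\rmI_{\mu_{|A^c}}(\P;\Q\,|\,\R)$.

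First I would unfold the definition of conditional mutual information for the enlarged conditioning partition. Since the cells of $\R\vee\mathcal{A}$ are just the intersections $C\cap A$ and $C\cap A^c$ for $C \in \R$, summing $\rmI_{\mu_{|B}}(\P;\Q)$ (with appropriate weights) over these cells and grouping them by whether they sit inside $A$ or $A^c$ gives
\[
\rmI_\mu(\P;\Q\,|\,\R\vee\mathcal{A}) = \mu(A)\rmI_{\mu_{|A}}(\P;\Q\,|\,\R) + \mu(A^c)\rmI_{\mu_{|A^c}}(\P;\Q\,|\,\R).
\]
Dropping the non-negative second term yields
\[
\mu(A)\rmI_{\mu_{|A}}(\P;\Q\,|\,\R) \leq \rmI_\mu(\P;\Q\,|\,\R \vee \mathcal{A}).
\]

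Next I would bound the right-hand side above by $\rmI_\mu(\P;\Q\,|\,\R) + \log 2$ using the chain rule for mutual information. Expanding $\rmI_\mu(\P;\Q\vee\mathcal{A}\,|\,\R)$ in two different orders gives the identity
\[
\rmI_\mu(\P;\Q\,|\,\R) + \rmI_\mu(\P;\mathcal{A}\,|\,\Q\vee\R) = \rmI_\mu(\P;\mathcal{A}\,|\,\R) + \rmI_\mu(\P;\Q\,|\,\R\vee\mathcal{A}),
\]
hence
\[
\rmI_\mu(\P;\Q\,|\,\R\vee\mathcal{A}) = \rmI_\mu(\P;\Q\,|\,\R) + \rmI_\mu(\P;\mathcal{A}\,|\,\Q\vee\R) - \rmI_\mu(\P;\mathcal{A}\,|\,\R).
\]
The last term is non-negative, and the middle term is bounded above by $\rmH_\mu(\mathcal{A}\,|\,\Q\vee\R) \leq \rmH_\mu(\mathcal{A}) \leq \log 2$, since $\mathcal{A}$ has only two cells. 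Chaining this with the previous inequality gives exactly the desired bound.

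There is no real obstacle here: the whole proof rests on two elementary identities (the decomposition of $\rmI_\mu(\,\cdot\,|\,\R\vee\mathcal{A})$ as an integral of mutual informations under conditional measures, and the chain rule), plus the trivial estimate $\rmH_\mu(\mathcal{A}) \leq \log 2$. The only point that requires any care is keeping straight which conditioning sits on which side of the chain rule, so that the unwanted term ($\rmI_\mu(\P;\mathcal{A}\,|\,\R)$) appears with the correct sign to be discarded.
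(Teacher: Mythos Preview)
Your proof is correct and follows essentially the same approach as the paper: introduce the binary partition $\mathcal{A}=\{A,A^c\}$, reduce to bounding $\rmI_\mu(\P;\Q\,|\,\R\vee\mathcal{A})$, and control the latter via the chain rule together with $\rmH_\mu(\mathcal{A})\leq\log 2$. The only cosmetic difference is that the paper applies the chain rule once to pass through $\rmI_\mu(\P\vee\mathcal{A};\Q\,|\,\R)$ and then expands that via entropies, whereas you expand $\rmI_\mu(\P;\Q\vee\mathcal{A}\,|\,\R)$ in two orders to obtain the identity directly; both routes discard a non-negative mutual-information term and bound the surviving $\mathcal{A}$-term by $\log 2$.
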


\begin{proof}
Let $\A := \{A,X\setminus A\}$. From the definition of conditional mutual information and the fact that it is always non-negative~(\cite[Corollary 2.6.3]{CovTho06}), one obtains
\begin{eqnarray*}
\mu(A)\rmI_{\mu_{|A}}(\P;\Q\,|\,\R) &\leq& \mu(A)\rmI_{\mu_{|A}}(\P;\Q\,|\,\R) + \mu(X\setminus A)\rmI_{\mu_{|X\setminus A}}(\P;\Q\,|\,\R)\\
&=& \rmI_\mu(\P;\Q\,|\,\R\vee \A).
\end{eqnarray*}
The Chain Rule for mutual information~(\cite[Theorem 2.5.2]{CovTho06}) gives
\begin{multline*}
\rmI_\mu(\P \vee \A;\Q\,|\,\R) = \rmI_\mu(\A;\Q\,|\,\R) + \rmI_\mu(\P;\Q\,|\,\R\vee \A)\\
\Longrightarrow \quad \rmI_\mu(\P;\Q\,|\,\R\vee \A) \leq \rmI_\mu(\P\vee \A;\Q\,|\,\R),
\end{multline*}
and now another use of the definitions, subadditivity of entropy and the Data-Processing Inequality gives
\begin{eqnarray*}
\rmI_\mu(\P\vee \A;\Q\,|\,\R) &=& \rmH_\mu(\P\vee \A\,|\,\R) - \rmH_\mu(\P\vee \A\,|\,\Q\vee \R)\\
&\leq& \rmH_\mu(\A) + \rmH_\mu(\P\,|\,\R) - \rmH_\mu(\P\,|\,\Q\vee \R)\\
&=& \rmH_\mu(\A) + \rmI_\mu(\P;\Q\,|\,\R).
\end{eqnarray*}
Finally, $\rmH_\mu(\A) \leq \log |\A| = \log 2$.
\end{proof}

\begin{lem}[Uniform integrability from relative entropy bound]\label{lem:ent-to-unif-int}
If $(X,\S,\mu)$ is a probability space and $\nu = f\cdot \mu\in \Pr X$ with $D := \rmD(\nu\,|\,\mu) < \infty$, then for any $C > 0$ one has
\[\nu \ll_{\rme^C,(D + \rme^{-1})/C} \mu.\]
\end{lem}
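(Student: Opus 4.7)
The plan is to split any measurable set $A$ according to whether the Radon--Nikodym density $f = \d\nu/\d\mu$ lies above or below the threshold $\rme^C$, and bound the two pieces of $\nu(A)$ separately. On the sublevel set $\{f \leq \rme^C\}$ we simply estimate $\nu(A \cap \{f \leq \rme^C\}) = \int_{A \cap \{f\leq \rme^C\}} f\,\d\mu \leq \rme^C \mu(A)$, which contributes the $\rme^C\mu(A)$ term on the right-hand side.

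For the tail $\{f > \rme^C\}$, the goal is to show $\nu(\{f>\rme^C\}) \leq (D+\rme^{-1})/C$. Here I would exploit the pointwise inequality $x\log x \geq -\rme^{-1}$ for all $x > 0$ (attained at $x = \rme^{-1}$), which yields
\[
\int_{\{f \leq \rme^C\}} f \log f\,\d\mu \;\geq\; -\rme^{-1},
\]
and hence, subtracting from $D = \int f \log f\,\d\mu$,
\[
\int_{\{f > \rme^C\}} f \log f\,\d\mu \;\leq\; D + \rme^{-1}.
\]
On $\{f > \rme^C\}$ one has $\log f > C$, so $f \log f > C f$ pointwise, and therefore $C \nu(\{f > \rme^C\}) = \int_{\{f > \rme^C\}} C f\,\d\mu \leq D + \rme^{-1}$, giving the desired bound on the tail.

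Adding the two pieces yields $\nu(A) \leq \rme^C \mu(A) + (D + \rme^{-1})/C$, which is precisely the claim $\nu \ll_{\rme^C,(D+\rme^{-1})/C} \mu$. There is no serious obstacle: the only mildly delicate point is remembering that $f \log f$ can take negative values, so one must absorb its most negative possible contribution (the universal constant $-\rme^{-1}$) when isolating the tail portion of the relative entropy integral.
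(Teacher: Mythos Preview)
Your proof is correct and follows essentially the same approach as the paper: split $A$ at the threshold $\{f > \rme^C\}$, bound the sub-level part by $\rme^C\mu(A)$, and control the tail $\nu\{f>\rme^C\}$ via the relative-entropy integral together with the pointwise bound $t\log t \geq -\rme^{-1}$. Your handling of the tail (subtracting the lower bound $\int_{\{f\leq \rme^C\}} f\log f\,\d\mu \geq -\rme^{-1}$ from $D$) is in fact slightly cleaner than the paper's intermediate passage through $\int|f\log f|\,\d\mu$, but the substance is identical.
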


\begin{proof}
Since $D = \int f \log f\,\d\mu$, and the function $t\mapsto t\log t$ has a global minimum at $t = \rme^{-1}$ with value $-\rme^{-1}$, one has
\begin{multline*}
\nu\{f > \rme^C\} = \int_{\{f > \rme^C\}}f\,\d\mu = \int_{\{\log f > C\}} f\,\d\mu \leq \int_{\{\log f > C\}} f\frac{\log f}{C}\,\d\mu\\ \leq \frac{1}{C}\int|f\log f|\,\d\mu \leq \frac{D + \rme^{-1}}{C}.
\end{multline*}
Therefore for any measurable $A \subseteq X$ one has
\[\nu(A) = \int_A f\,\d\mu \leq \rme^C\mu(A) + \nu(A\cap \{f > \rme^C\}) \leq \rme^C\mu(A) + \frac{D + \rme^{-1}}{C}.\]
\end{proof}

\subsection{Metric and pseudometric spaces}

If $(X,d)$ is a metric or pseudometric space, $x \in X$ and $r \geq 0$, then
\[B^d_r(x) := \{y \in X\,|\ d(x,y) < r\}\]
is the radius-$r$ open ball around $x$.  It will sometimes be abbreviated to $B_r(x)$ if $d$ is understood.  If $F \subseteq X$, then $B_r(F) := \bigcup_{x \in F}B_r(x)$. A subset $F \subseteq X$ is \textbf{$r$-separated} if
\[d(x,y) \geq r \quad \forall x,y \in F\ \hbox{distinct}.\]
The \textbf{$r$-covering number} of $(X,d)$ is
\[\cov((X,d),r) = \min\{|F|\,|\ F \subseteq X,\ B_r(F) = X\}.\]

A \textbf{metric measure} (`\textbf{m.m.}') \textbf{space} is a triple $(X,d,\mu)$ consisting of a metric space $(X,d)$ and a Radon measure $\mu$ on $X$.  In this paper it will always be tacitly assumed that $\mu$ is finite.  If $\mu(X) = 1$ then $(X,d,\mu)$ is a \textbf{metric probability} (`\textbf{m.p.}') \textbf{space}.  All m.m. spaces appearing below will either be compact or arise as Borel subsets of compact spaces.

It will sometimes be necessary to generalize this class to include pseudometrics.  However, the open balls for a pseudometric may not generate the whole of the relevant $\s$-algebra.  Thus, in this paper, a \textbf{pseudometric measure} (resp. \textbf{pseudometric probability}) (`\textbf{psm.m.}', resp. `\textbf{psm.p}') \textbf{space} will be a triple $(X,d,\mu)$ in which $X$ is a standard Borel space, $\mu$ is a measure (resp. probability) on $X$, and $d:X\times X\to [0,\infty)$ is a pseudometric which is Borel measurable on $X\times X$ and is totally bounded.  Clearly all compact m.m. spaces fall into this class.  This definition is similar to, though slightly more restrictive than, Vershik's class of `admissible' pseudometrics in~\cite{Ver10}.

If $X$ is a standard Borel space, then one may obtain a totally bounded Borel pseudometric $d$ on $X$ by letting $(Z,d^Z)$ be a compact metric space and $\phi:X\to Z$ a Borel map, and then taking $d := d^Z \circ \phi^{\times 2}$.  An easy exercise shows that every totally bounded Borel pseudometric $d$ on $X$ arises this way, by letting $(Z,d^Z)$ be the completion of the quotient of $X$ by the zero-distance equivalence relation defined by $d$. 

If $(X,d,\mu)$ is a psm.m. space with $\s$-algebra $\S$ and $U \in \S$, then we usually abbreviate
\[(U,d|_{U\times U},\mu|_{\S\cap U}) =: (U,d,\mu),\]
so this latter has total mass $\mu(U)$. On the other hand, if $(X,d,\mu)$ is a psm.p. space and $U \in \S$ has $\mu(U) > 0$, then
\[(U,d|_{U\times U},\mu(U)^{-1}\cdot \mu|_{\S\cap U}) =: (U,d,\mu_{|U}),\]
another psm.p. space.

Given a psm.m. space $(X,d,\mu)$ and $a,r > 0$, the \textbf{$a$-partial $r$-covering number} is
\begin{eqnarray}\label{eq:cov}
\rm{cov}_a((X,d,\mu),r) := \min\{|F|\,|\ F\subseteq X,\ \mu(B_r(F)) > a\}.
\end{eqnarray}

Much of the work later will concern a natural `roughening' of the class of Lipschitz maps.  Given pseudometric spaces $(X,d^X)$ and $(Y,d^Y)$, and also $c,L \geq 0$, a map $f:X\to Y$ is \textbf{$c$-almost $L$-Lipschitz} if
\[d^Y(f(x),f(x')) \leq Ld^X(x,x') + c \quad \forall x,x' \in X.\]
This class of maps already has a natural place in the study of concentration of measure.  For instance, it appears repeatedly in Chapter 3$\frac{1}{2}$ of Gromov~\cite{Gro01} (starting in the proof of 3$\frac{1}{2}$.15(b)), under the terminology `$K$-Lipschitz up to $c$'.

\section{Preliminaries: ergodic theory}\label{sec:prelim-ET}

We shall need to call on a variety of classical results from ergodic theory, and especially from entropy and Ornstein theory for probability-preserving transformations.  Two standard references that emphasize the material we need are Shields~\cite{Shi96} and Kalikow and McCutcheon~\cite{KalMcC10}.

\subsection{Probability-preserving systems and their entropy}\label{subs:basic}

In the following, a \textbf{probability-preserving} (`\textbf{p.-p.}') \textbf{system} is a triple $(X,\mu,T)$ in which $(X,\mu)$ is a standard Borel probability space and $T:X \to X$ is measurable, has a measurable inverse, and preserves $\mu$.  Similarly, a \textbf{p.-p. flow} is a triple $(X,\mu,T)$ in which $(X,\mu)$ is standard Borel and $T:\bbR\actson X$ is jointly measurable and $\mu$-preserving.  Many properties of such a flow are closely related to properties of its \textbf{time-$1$ system} $(X,\mu,T^1)$.

The classical entropy theory of p.-p. systems is most easily introduced in terms of finite partitions of $X$ (or, equivalently, finite-valued measurable functions on $X$).  We will assume this theory as it is presented, for example, in~\cite{Shi96} or~\cite{KalMcC10}.

An essential tool will be the Shannon-McMillan Theorem.  Some further notation will be useful.  Suppose that $(X,\mu,T)$ is a p.-p. system and that $\P$ is a finite Borel partition of $X$.  A pair such as $(\bfX,\P)$ will be called a \textbf{process}.  For any subset $F \subseteq \bbZ$, let
\[\P^F := \bigvee_{n \in F} T^{-n}(\P),\]
where this is interpreted as a new partition in case $F$ is finite, or, more generally, as a $\s$-subalgebra of the $\s$-algebra of $X$ if $F$ is infinite.  Now let
\[X^{\rm{SM}}_{I,\eps} := \big\{x \in X\,\big|\ \rme^{-(\rmh(\bfX,\P) + \eps)|I|} < \mu(\P^I(x)) < \rme^{-(\rmh(\bfX,\P) - \eps)|I|}\big\}\]
(so this depends on $\P$, although the notation suppresses that dependence). Clearly $X^{\rm{SM}}_{I + n,\eps} = T^n(X^{\rm{SM}}_{I,\eps})$ for every $n \in \bbZ$.

The following can be found in~\cite[Section 4.2]{KalMcC10} or~\cite[Sections I.5 and I.6]{Shi96}.

\begin{thm}[Shannon-McMillan Theorem]\label{thm:SM}
If $(X,\mu,T)$ is ergodic then
\[\mu(X^{\rm{SM}}_{I,\eps}) \to 1\]
as $|I| \to \infty$ for any fixed $\eps > 0$. \qed
\end{thm}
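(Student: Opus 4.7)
The plan is to follow the classical Breiman/McMillan derivation: reduce everything to the interval $I=[0;N)$ by $T$-invariance, write the information function as an additive cocycle sum, and then combine a martingale convergence with a slight strengthening of the Birkhoff ergodic theorem.

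First, observe that for any interval $I = [a;a+N)$ we have $\P^I = T^{-a}\P^{[0;N)}$, so $\mu(\P^I(x)) = \mu(\P^{[0;N)}(T^ax))$, and therefore $X^{\rm SM}_{I,\eps} = T^{-a}X^{\rm SM}_{[0;N),\eps}$. Since $\mu$ is $T$-invariant, it suffices to prove the theorem for $I = [0;N)$. Using the chain rule for conditional probabilities,
\[
-\log\mu(\P^{[0;N)}(x)) \;=\; \sum_{n=0}^{N-1} g_n(x), \qquad g_n(x) := -\log\mu\bigl(T^{-n}\P(x)\,\big|\,\P^{[0;n)}\bigr)(x),
\]
where $g_0(x) = -\log\mu(\P(x))$. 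Using $T$-invariance of $\mu$, one verifies that $g_n(x) = h_n(T^nx)$, where
\[
h_n(y) := -\log\mathbb{E}_\mu\bigl[1_{\P(y)}\,\big|\,\P^{[-n;0)}\bigr](y)
\]
is the conditional information of the time-zero cell given the finite past $\P^{[-n;0)}$.

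Next I would appeal to martingale convergence: as $n\to\infty$, the conditional expectations $\mathbb{E}_\mu[1_{\P}\,|\,\P^{[-n;0)}]$ converge a.s.\ and in $L^1(\mu)$ to $\mathbb{E}_\mu[1_{\P}\,|\,\P^{(-\infty;0)}]$, and a standard domination argument (using the Chung inequality $\int \sup_n h_n\,d\mu < \infty$, which follows from an $L\log L$ estimate on the supremum of the martingale) upgrades this to $L^1$-convergence $h_n \to h_\infty$ with an integrable majorant. By definition of Kolmogorov--Sinai entropy, $\int h_\infty\,d\mu = \rmh(\bfX,\P)$.

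The core step is then to pass from $h_n \to h_\infty$ in $L^1$ to the Cesàro convergence of $\frac{1}{N}\sum_{n=0}^{N-1} h_n(T^n x)$. This is exactly the content of Maker's (or Breiman's) generalized ergodic theorem: if $h_n \to h_\infty$ in $L^1$ with integrable supremum, and $T$ is ergodic, then $\frac{1}{N}\sum_{n=0}^{N-1} h_n(T^n x) \to \int h_\infty\,d\mu$ for $\mu$-a.e.\ $x$. Applied here, this gives
\[
\frac{-\log\mu(\P^{[0;N)}(x))}{N} \;\longrightarrow\; \rmh(\bfX,\P) \qquad \text{for }\mu\text{-a.e. }x,
\]
and in particular $\mu(X^{\rm SM}_{[0;N),\eps}) \to 1$. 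Combining with the reduction from general intervals in the first paragraph completes the proof.

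The main obstacle is the dominated generalized ergodic theorem for the varying sequence $h_n$: one must control $\sup_n h_n$ in $L^1$, which requires the maximal inequality for the martingale $\mathbb{E}_\mu[1_{\P}\,|\,\P^{[-n;0)}]$ together with an $L\log L$ (Chung) estimate. Once that domination is in hand, the rest is book-keeping with $T$-invariance.
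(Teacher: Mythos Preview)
Your sketch is the standard Breiman argument and is correct. Note, however, that the paper does not actually give its own proof of this theorem: it is stated as a classical result with a reference to \cite[Section 4.2]{KalMcC10} and \cite[Sections I.5 and I.6]{Shi96}, and closed with a \qed. So there is nothing to compare against beyond observing that your outline is precisely the proof one finds in those references (and in fact yields the stronger a.e.\ convergence of $\frac{1}{N}\rmI_{\mu,\P^{[0;N)}}$, whereas the statement as recorded in the paper only needs convergence in measure).
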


The following is also essentially a standard result.

\begin{lem}\label{lem:alt-ent-formula}
For any $N\geq 1$, the Kolmogorov-Sinai entropy satisfies
\[\rmH_\mu(\P^{[0;N)}\,|\,\P^{[-M;0)}) \ \downarrow \ \rmh(\bfX,\P)N \quad \hbox{as}\ M\to\infty.\]
\end{lem}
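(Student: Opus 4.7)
The plan is to reduce the statement to the standard fact that conditional entropy of a single-step partition given the whole past equals the Kolmogorov–Sinai entropy, which is usually taken as (or easily derived from) the definition of $\rmh(\bfX,\P)$.

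First, I would observe monotonicity: since conditioning on a finer $\s$-algebra can only reduce entropy, and the partitions $\P^{[-M;0)}$ grow coarser as $M$ decreases, the sequence $M \mapsto \rmH_\mu(\P^{[0;N)}\,|\,\P^{[-M;0)})$ is non-increasing. Hence it has a limit, and it suffices to identify that limit with $\rmh(\bfX,\P)N$.

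Next, I would apply the chain rule for conditional entropy to split the $N$-block into its single-step pieces:
\begin{eqnarray*}
\rmH_\mu(\P^{[0;N)}\,|\,\P^{[-M;0)}) &=& \sum_{n=0}^{N-1}\rmH_\mu(T^{-n}\P\,|\,\P^{[-M;n)}).
\end{eqnarray*}
By the $T$-invariance of $\mu$ each summand equals $\rmH_\mu(\P\,|\,\P^{[-M-n;0)})$, so the left-hand side becomes $\sum_{n=0}^{N-1}\rmH_\mu(\P\,|\,\P^{[-M-n;0)})$. For fixed $n$, as $M\to\infty$ the $\s$-algebras $\P^{[-M-n;0)}$ increase up to $\P^{(-\infty;0)}$, and the Martingale Convergence Theorem (applied to the conditional probabilities $\mu(\P(x)\,|\,\P^{[-M-n;0)})$) combined with dominated convergence for the $-t\log t$ functional gives
\[\rmH_\mu(\P\,|\,\P^{[-M-n;0)}) \downarrow \rmH_\mu(\P\,|\,\P^{(-\infty;0)}) = \rmh(\bfX,\P).\]
Summing over the $N$ terms and passing to the limit yields the claim.

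There is no real obstacle here; the only point that needs a touch of care is ensuring the monotone decrease of each summand (which follows from the monotonicity of conditional entropy under refinement of the conditioning $\s$-algebra) so that the convergence of the sum is itself monotone, matching the $\downarrow$ in the statement.
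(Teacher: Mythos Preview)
Your proof is correct and follows essentially the same approach as the paper: both use the chain rule to decompose $\rmH_\mu(\P^{[0;N)}\,|\,\P^{[-M;0)})$ into the sum $\sum_{n=0}^{N-1}\rmH_\mu(T^{-n}\P\,|\,\P^{[-M;n)})$ and then apply the $N=1$ case to each summand. The only difference is cosmetic: you spell out the $N=1$ case via $T$-invariance and martingale convergence, whereas the paper simply cites standard references for it.
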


\begin{proof}
When $N=1$, this can be obtained from~\cite[Corollary 423 and Theorem 434]{KalMcC10} or from~\cite[Equation I.6(3)]{Shi96}.  For general $M$, the chain rule for relative entropy (see, for instance,~\cite[Equation I.6(1)]{Shi96} or~\cite[Section 2.5]{CovTho06}) gives
\[\rmH_\mu(\P^{[0;N)}\,|\,\P^{[-M;0)}) = \sum_{n=0}^{N-1} \rmH_\mu(T^{-n}(\P)\,|\,\P^{[-M;n)}).\]
Since $N$ is fixed, we may now apply the special case to each right-hand summand separately as $M\to\infty$.
\end{proof}

Now consider two discrete intervals $I,J \subseteq \bbZ$ such that $I\cup J$ is also a discrete interval: thus, either one of them is empty, or they are adjacent, or they intersect.

\begin{lem}\label{lem:cond-mut-inf-sublin}
Given $\bfX$ and $\P$, there is a function $g:\bbN\to [0,\infty)$ with $g(m) = \rm{o}(m)$ as $m\to\infty$ such that
\[\rmI_\mu(\P^J;\P^I\,|\,\P^{I\cap J}) \leq g(|J\setminus I|).\]
\end{lem}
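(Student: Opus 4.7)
The plan is as follows. Using WLOG the symmetry of mutual information in its first two arguments and the fact that $\mu$ is $T$-invariant, I can translate so that $J\setminus I = [0;m)$ where $m := |J\setminus I|$, and then $I\cap J$ is either empty, equal to $[-k;0)$, or equal to $[m;m+k)$ for some $k\ge 0$ (this is where the hypothesis ``$I\cup J$ is an interval'' is used: it forces $J\setminus I$ itself to be an interval and forces $I\cap J$ to be an interval adjacent to it). By a further application of the time-reversal automorphism (which preserves entropies) I can assume $I\cap J$ lies to the left of $J\setminus I$, so $I = [-(k+\ell);0)$ for some $\ell\ge 0$.

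Since $J = (I\cap J)\sqcup (J\setminus I)$ and $I\cap J \subseteq I$, the partition identity $\P^J = \P^{I\cap J}\vee \P^{J\setminus I}$ and the chain rule for entropy give
\[
\rmI_\mu(\P^J;\P^I\,|\,\P^{I\cap J}) \ =\ \rmH_\mu(\P^{J\setminus I}\,|\,\P^{I\cap J}) - \rmH_\mu(\P^{J\setminus I}\,|\,\P^I).
\]
The first term on the right is dominated by dropping the conditioning: $\rmH_\mu(\P^{J\setminus I}\,|\,\P^{I\cap J}) \le \rmH_\mu(\P^{[0;m)})$, by $T$-invariance. The second term is bounded below by $\rmh(\bfX,\P)\cdot m$: indeed, $\rmH_\mu(\P^{[0;m)}\,|\,\P^{[-(k+\ell);0)}) \ge \inf_{M}\rmH_\mu(\P^{[0;m)}\,|\,\P^{[-M;0)}) = \rmh(\bfX,\P)\,m$ by Lemma~\ref{lem:alt-ent-formula}.

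Putting these two bounds together yields
\[
\rmI_\mu(\P^J;\P^I\,|\,\P^{I\cap J}) \ \le\ \rmH_\mu(\P^{[0;m)}) - \rmh(\bfX,\P)\,m \ =:\ g(m).
\]
It remains to observe that $g(m) = \rm{o}(m)$: this is the standard consequence of the Kolmogorov--Sinai formula $\rmh(\bfX,\P) = \lim_{m\to\infty} \rmH_\mu(\P^{[0;m)})/m$, applied to the non-negative sequence $\rmH_\mu(\P^{[0;m)}) - \rmh(\bfX,\P)\,m$. There is no real obstacle here; the only care needed is the case analysis placing $J\setminus I$ canonically to the right of $I\cap J$ (handled by $T$-invariance and time-reversal), and then both key inequalities reduce to already-recorded facts about $\rmH_\mu(\P^{[0;m)}\,|\,\P^{[-M;0)})$.
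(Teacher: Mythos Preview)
Your argument is correct and follows essentially the same route as the paper: both proofs reduce to the identity
\[
\rmI_\mu(\P^J;\P^I\,|\,\P^{I\cap J}) = \rmH_\mu(\P^{J\setminus I}\,|\,\P^{I\cap J}) - \rmH_\mu(\P^{J\setminus I}\,|\,\P^I),
\]
then sandwich the right-hand side between $0$ and $\rmH_\mu(\P^{[0;m)}) - \rmh(\bfX,\P)\,m =: g(m)$ using Lemma~\ref{lem:alt-ent-formula} and the sublinearity of $m\mapsto \rmH_\mu(\P^{[0;m)}) - \rmh(\bfX,\P)\,m$.

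Two small points of presentation are worth tightening. First, your case analysis silently excludes the possibility $I\subsetneq J$ with $I$ in the interior of $J$, in which case $J\setminus I$ is \emph{two} intervals; but there the conditioning $\P^{I\cap J}=\P^I$ already forces the mutual information to vanish, so this is a trivial case you should record before normalizing (the paper does this explicitly). Second, the phrase ``time-reversal automorphism'' is misleading: a general p.-p.\ system has no such automorphism. What you are actually using is that for adjacent intervals the conditional entropy depends only on lengths, since by $T$-invariance
\[
\rmH_\mu(\P^{[0;m)}\,|\,\P^{[m;m+M)}) = \rmH_\mu(\P^{[0;m+M)}) - \rmH_\mu(\P^{[0;M)}) = \rmH_\mu(\P^{[0;m)}\,|\,\P^{[-M;0)}),
\]
which is enough to place $I$ on the left of $J\setminus I$ and invoke Lemma~\ref{lem:alt-ent-formula} directly. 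With those cosmetic fixes your proof matches the paper's.
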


\begin{proof}
The definition of $\rmI_\mu$ gives
\begin{eqnarray}\label{eq:cond-mut-inf-form-again}
\rmI_\mu(\P^J;\P^I\,|\,\P^{I\cap J}) &=& \rmH_\mu(\P^J\,|\,\P^{I\cap J}) - \rmH_\mu(\P^J\,|\,\P^I) \nonumber \\
&=& \rmH_\mu(\P^{J\setminus I}\,|\,\P^{I\cap J}) - \rmH_\mu(\P^{J\setminus I}\,|\,\P^I),
\end{eqnarray}
since $\P^I \vee \P^{I\cap J} = \P^I$. Various cases are now trivial: if either $I$ or $J$ is empty, or if either $I\subseteq J$ or $J\subseteq I$, then this right-hand side collapses to zero.

In the remaining case, we observe that $J\setminus I$ is also a nonempty interval.  In this case, standard monotonicity properties of conditional entropy together with Lemma~\ref{lem:alt-ent-formula} give
\[\rmh(\bfX,\P)|J\setminus I| \leq \rmH_\mu(\P^{J\setminus I}\,|\,\P^I) \leq \rmH_\mu(\P^{J\setminus I}\,|\,\P^{I\cap J}) \leq \rmH_\mu(\P^{J\setminus I}).\]
However, the right-hand quantity here is of the form
\[\rmh(\bfX,\P)|J\setminus I| + g(|J\setminus I|)\]
for some sublinear function $g$, so the right-hand side of~(\ref{eq:cond-mut-inf-form-again}) is bounded by this $g$, completing the proof.
\end{proof}

Given a p.-p. transformation $\bfX = (X,\mu,T)$ and a finite Borel partition $\P$, one may always choose a finite set $A$ and function $\phi_0:X\to A$ which generates $\P$.  Having done so, let $\phi_n:= \phi_0\circ T^n$ for each $n \in \bbZ$, and more generally $\phi_F := (\phi_n)_{n\in F}:X\to A^F$ for $F \subseteq \bbZ$.  Abbreviate $\phi_\bbZ =: \phi$, so this is now a factor map
\[(X,\mu,T)\to (A^\bbZ,\phi_\ast\mu,S).\]
The entropy of the process $(\bfX,\P)$ may be understood as the entropy rate of $\phi_\ast\mu$, regarded as the law of a stationary sequence of $A$-valued random variables.

Having fixed $\P$, $A$ and $\phi$, the map $\phi_{(-\infty;0)}:X\to A^{(-\infty;0)}$ is referred to as the \textbf{past} of the process $(\bfX,\P)$.  The measure $\mu$ may be disintegrated over $\phi_{(-\infty;0)}$, giving a probability kernel
\[A^{(-\infty;0)} \to \Pr X:z \mapsto \mu_z;\]
this is referred to as \textbf{conditioning on the past}.  Various entropy-theoretic properties may be expressed in terms of these conditional measures: in the first place,
\[\rmh(\bfX,\P) = \int \rmH_{\mu_{\phi^{(-\infty;0)}(x)}}(\P)\,\mu(\d x) = \int \rmH(\phi_\ast \mu_{\phi^{(-\infty;0)}(x)})\,\mu(\d x),\]
the expected Shannon entropy of $\P$ given the past (see~\cite[Subsection I.6.b]{Shi96}).

\subsection{Compact models}

Instead of finite partitions, much of our later work will rely on endowing $X$ with a compact metric for which $T$ is continuous.  This is always possible by the following classical result (see, for instance,~\cite[Theorem 5.7]{Varadara85}):

\begin{thm}\label{thm:vara}
If $(X,\mu,T)$ is any jointly measurable p.-p. action of an l.c.s.c. group on a standard Borel probability space, then it is isomorphic as such to a jointly continuous action on a compact metric space with an invariant probability measure. \qed
\end{thm}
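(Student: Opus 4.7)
The plan is to reduce the problem to the Gelfand--Naimark correspondence. If we can produce a separable, $G$-invariant, point-separating $C^\ast$-subalgebra $\mathcal{A}\subseteq L^\infty(X,\mu)$ on which $G$ acts continuously by isometries in the sup-norm, then $\mathcal{A}\cong C(K)$ for a compact metric space $K$, the $G$-action transfers to a jointly continuous action of $G$ on $K$ (continuity of the orbit maps plus compactness of $K$ yields joint continuity), and the inclusion $\mathcal{A}\hookrightarrow L^\infty(X,\mu)$ induces an a.e.-defined measurable map $\Phi\colon X\to K$ which is $G$-equivariant, injective modulo $\mu$-null sets (by point separation), and carries $\mu$ to a Borel probability $\nu:=\Phi_\ast\mu$ on $K$. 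The preservation of $\nu$ under the $G$-action is automatic from equivariance.

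First I would set up the smoothing mechanism. Write $R_g f(x):=f(g^{-1}x)$ for the induced action on bounded measurable functions. For $\rho\in L^1(G)$ compactly supported, set $\rho\ast f:=\int \rho(g) R_g f\,\d g$ (a Bochner integral in $L^2$, say, but the result is a bounded measurable function). A direct change of variables gives $R_h(\rho\ast f)=(L_h\rho)\ast f$, where $L_h\rho(g)=\rho(h^{-1}g)$; since left translation is $L^1(G)$-continuous, we obtain
\[\|R_h(\rho\ast f)-\rho\ast f\|_\infty \le \|L_h\rho-\rho\|_{L^1(G)}\|f\|_\infty \longrightarrow 0 \quad \hbox{as } h\to e.\]
This is exactly the sup-norm $G$-continuity we need. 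Joint measurability of the action is used here to make sense of $\rho\ast f$ as a genuine bounded function (rather than an $L^2$-class).

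Next I would build $\mathcal{A}$ by a countable exhaustion. Fix a countable family $\{f_n\}$ of bounded Borel functions on $X$ separating points modulo $\mu$-null sets (available since $X$ is standard Borel), a countable dense subgroup $G_0\subseteq G$, and a countable family $\{\rho_k\}\subseteq C_c(G)$ dense in $L^1(G)$ and containing an approximate identity. Let $\mathcal{A}_0$ be the countable $\ast$-subalgebra of $L^\infty(X,\mu)$ generated over $\bbQ[\rmi]$ by $\{R_g(\rho_k\ast f_n):\ g\in G_0,\ k,n\in\bbN\}$, then set $\mathcal{A}:=\overline{\mathcal{A}_0}^{\|\cdot\|_\infty}$. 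Each generator is sup-norm $G$-continuous by the smoothing estimate, and this property passes to polynomial combinations, complex conjugates, sup-norm limits, and translates by elements of $G_0$; density of $G_0$ in $G$ then extends the action continuously from $G_0$ to all of $G$ on $\mathcal{A}$. Point separation a.e. is preserved because, as $k\to\infty$ along an approximate identity, $\rho_k\ast f_n\to f_n$ in $L^2(X,\mu)$, and after passing to a.e.-convergent subsequences these limits still separate points off a null set.

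The one step that requires genuine care is verifying that $\Phi\colon X\to K$ is a measurable isomorphism of probability spaces, not merely a measure-preserving factor. This uses the standard dichotomy: a Borel injection between standard Borel spaces is a Borel isomorphism onto its image, so once point separation a.e. and measurability of $\Phi$ are established, $\Phi$ identifies $(X,\mu)$ with $(\Phi(X),\nu)$ modulo null sets, and the transported action is the jointly continuous action on $K$ restricted to a full-measure $G$-invariant Borel subset. Extending back to all of $K$ is harmless because we keep the ambient compact model $K$ as the new phase space and work with the invariant measure $\nu$. No further ingredients beyond the smoothing lemma and a careful bookkeeping of the countable closure are needed; the potentially delicate point is ensuring that the $C^\ast$-algebra generated does not lose point-separation when we pass to the sup-norm closure, which is where the approximate-identity argument is essential.
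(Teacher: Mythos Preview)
The paper does not prove this theorem at all; it simply states it and cites Varadarajan~\cite{Varadara85} (the \qed\ marks a quoted result, not a proof). Your Gelfand--Naimark approach via a separable $G$-continuous $C^\ast$-subalgebra of $L^\infty(X,\mu)$ is correct and is essentially the standard argument, close in spirit to Varadarajan's own.

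Two minor points worth tightening, neither a genuine gap. First, make sure $1\in\mathcal{A}$ so that the Gelfand spectrum is compact; this is immediate once you note $\rho_k\ast 1=(\int\rho_k)\cdot 1$ and choose the approximate identity with $\int\rho_k=1$. Second, the sentence ``the inclusion $\mathcal{A}\hookrightarrow L^\infty(X,\mu)$ induces an a.e.-defined measurable map $\Phi$'' hides a small step: one must check that for $\mu$-a.e.\ $x$, evaluation at $x$ on the countable algebra $\mathcal{A}_0$ of genuine Borel functions is dominated by the essential sup-norm (countably many a.e.\ inequalities, so a single null set suffices), and hence extends to a character of $\mathcal{A}$, i.e., a point of $K$. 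Your joint-continuity claim is also fine: the sup-norm continuity of $g\mapsto R_g f$ for every $f\in C(K)$, together with each $R_g$ being a $\ast$-automorphism, gives separate continuity of the action on $K$, and then Ellis's theorem for locally compact groups acting on compact spaces upgrades this to joint continuity.
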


In case $(X,d^X)$ is a compact metric space, $T$ is a jointly continuous action of $\bbZ$ or $\bbR$ on $X$, and $\mu \in \Pr^T X$, we shall refer to $(X,d^X,\mu,T)$ as a \textbf{compact model p.-p. system} or \textbf{flow}.  We shall work with compact models of our systems in much of the sequel.  Of course, after choosing compact models, we must still allow arbitrary Borel (not necessarily continuous) factor maps between them.  They key to using the metric space structure, in spite of this flexibility, will be Lusin's Theorem.

One can use such a choice of metric $d^X$ to express the Kolmogorov-Sinai entropy.  This relationship can be traced back to Feldman's work in~\cite{Feld80}, and it is worked out in detail (for actions of general unimodular amenable groups) by Ornstein and Weiss in~\cite[Part II]{OrnWei87}.  We quickly recall some of the results that we need here, largely referring to that latter work.

First, for any compact model p.-p. system $(X,d^X,\mu,T)$ and any finite $F \subseteq \bbZ$, let
\[d^\bfX_F(x,x') := \sum_{n \in F}d^X(T^nx,T^nx').\]
This is a sequence of metrics on $X$. In terms of this construction, for any $r > 0$, one defines the \textbf{spatial $r$-entropy} $\rmh(\mu,T,d^X,r)$ by
\begin{eqnarray}\label{eq:KS-dfn}
\rmh(\mu,T,d^X,r) := \sup_{\eps > 0}\liminf_{N\to\infty}\frac{1}{N}\rm{cov}_{1 - \eps}((X,d^\bfX_{[0;N)},\mu),r N).
\end{eqnarray}

Similarly, if $(X,d^X,\mu,T)$ is a compact p.-p. flow and $F \subseteq \bbR$ is measurable with finite measure, then
\[d^\bfX_F(x,x') := \int_F d^X(T^tx,T^tx')\,\d t,\]
and the \textbf{spatial $r$-entropy} $\rmh(\mu,T,d^X,r)$ is again given by~(\ref{eq:KS-dfn}), where now $N$ is allowed to run through real values.

The connection between these spatial entropies and the Kolmogorov-Sinai entropy is the following, established in~\cite{Feld80,OrnWei87}:

\begin{prop}\label{prop:spatial-ent-and-KS-ent}
In the setting of either a compact model system or compact model flow, one has
\[\sup_{r > 0}\rmh(\mu,T,d^X,r) = \lim_{r \to 0}\rmh(\mu,T,d^X,r) = \rmh(\mu,T).\]
\qed
\end{prop}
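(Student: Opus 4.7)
The plan is to prove two things: (1) the function $r\mapsto \rmh(\mu,T,d^X,r)$ is non-increasing in $r$, so that $\sup_{r>0}$ and $\lim_{r\to 0+}$ coincide; and (2) this common value equals $\rmh(\mu,T)$. Monotonicity is immediate from the covering-number definition, since larger balls cover more measure with fewer of them, so $\rm{cov}_{1-\eps}$, and hence $\rmh(\mu,T,d^X,r)$, is non-increasing in~$r$.

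For the upper bound $\rmh(\mu,T,d^X,r)\leq \rmh(\mu,T)$, I would fix $r,\eps>0$ and use compactness of $(X,d^X)$ to produce a finite Borel partition $\P$ whose atoms all have $d^X$-diameter strictly less than~$r$ (for instance, a Voronoi-type partition refining a finite $(r/2)$-net). Two points sharing an atom of $\P^{[0;N)}$ then have $d^\bfX_{[0;N)}<rN$, so the Shannon--McMillan theorem (Theorem~\ref{thm:SM}) furnishes at most $\rme^{(\rmh(\mu,T,\P)+\eps)N}$ cells of $\P^{[0;N)}$ whose union carries $\mu$-mass $>1-\eps$; picking one representative per cell produces an $rN$-cover of this mass in $d^\bfX_{[0;N)}$. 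Hence $\frac{1}{N}\log\rm{cov}_{1-\eps}\leq \rmh(\mu,T,\P)+\eps\leq\rmh(\mu,T)+\eps$ for large~$N$, and $\eps\to 0$ closes this direction.

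The real work is the reverse bound, and the main obstacle is that $d^\bfX_{[0;N)}$ is a \emph{sum} rather than a Bowen maximum, so two orbits can be close on most coordinates yet separated by a small number of very large per-step distances, which must be controlled in addition to the usual Lusin/Birkhoff bookkeeping. The plan is: given $\eps>0$, pick a finite partition $\Q$ with $\rmh(\mu,T,\Q)>\rmh(\mu,T)-\eps$ and naming function $\phi:X\to A$; apply Lusin's theorem to obtain, for a small~$\delta$, a compact $K\subseteq X$ with $\mu(K)>1-\delta$ on which $\phi$ is continuous; and note that continuity of a finite-valued function on a compact set yields $r_0>0$ such that $x,y\in K$ and $d^X(x,y)<r_0$ force $\phi(x)=\phi(y)$. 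For $r<r_0/4$ and a $d^\bfX_{[0;N)}$-ball $B$ of radius $rN$ centred at a ``good'' $y_0$ (one for which Birkhoff's theorem for $\mathbf{1}_{K^c}$ has already taken effect), any $x\in B$ satisfies two bounds simultaneously: Markov on the sum $d^\bfX_{[0;N)}(x,y_0)<rN$ gives at most $2rN/r_0$ coordinates $n$ with $d^X(T^nx,T^ny_0)\geq r_0/2$, and Birkhoff gives at most $2\delta N$ coordinates at which $T^nx$ or $T^ny_0$ lies outside~$K$. On the remaining $(1-\eta)N$ coordinates, with $\eta:=2r/r_0+2\delta$, the $\Q$-names of $x$ and $y_0$ must agree.

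Standard combinatorial bounds then show that $B$ meets at most $\rme^{(H(\eta)+\eta\log|A|)N}$ distinct $\Q^{[0;N)}$-cells, where $H$ denotes the binary entropy. Shannon--McMillan applied to $\Q$ says that the typical $\Q^{[0;N)}$-cells each have $\mu$-mass at most $\rme^{-(\rmh(\mu,T,\Q)-\eps)N}$ and collectively cover measure $>1-\eps$, so any $M$-ball $(rN)$-cover of $\mu$-mass $>1-2\eps$ must satisfy
\[
M\cdot \rme^{(H(\eta)+\eta\log|A|)N}\cdot \rme^{-(\rmh(\mu,T,\Q)-\eps)N}\;\geq\; 1-3\eps,
\]
giving $\frac{1}{N}\log M\geq \rmh(\mu,T,\Q)-\eps-H(\eta)-\eta\log|A|-\rm{o}(1)$. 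Fixing $\eps$, then $\Q$, then $\delta$, and finally shrinking $r$ so that $H(\eta)+\eta\log|A|<\eps$, one obtains $\rmh(\mu,T,d^X,r)\geq \rmh(\mu,T)-O(\eps)$, and $\eps\to 0$ completes the proof for systems. The flow case goes through by the same scheme applied to the integral $d^\bfX_{[0;N)}(x,y)=\int_0^N d^X(T^tx,T^ty)\,\d t$ and the flow version of Shannon--McMillan; alternatively, one reduces to the time-$1$ system and uses its own Birkhoff theorem to keep the fraction of integer times spent outside~$K$ small.
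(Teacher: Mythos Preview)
The paper does not supply its own proof here; it cites Feldman~\cite{Feld80} and Ornstein--Weiss~\cite{OrnWei87} and closes with \qed. Your sketch is essentially the standard argument from those sources.

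There is one slip in the lower bound: you assert that for \emph{every} $x$ in the ball $B$ centred at a Birkhoff-good $y_0$, Birkhoff controls the number of times $T^nx\notin K$. But Birkhoff's theorem for $1_{K^{\rm c}}$ gives this only $\mu$-a.e., not for all $x\in B$, and the Lusin conclusion $\phi(T^nx)=\phi(T^ny_0)$ needs \emph{both} points in $K$. The fix is routine: let $G_N$ be the set of points whose $[0;N)$-orbit spends at most $2\delta N$ steps outside $K$, so $\mu(G_N)\to 1$, and carry out your name-counting only on $B\cap G_N$. Since $\cov_{1-\eps}$ only asks you to cover mass $1-\eps$, and since any ball-centre can be relocated into $G_N$ at the cost of doubling the radius, the rest goes through with negligible loss. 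A smaller point: your appeal to Shannon--McMillan presumes ergodicity, which the proposition does not explicitly assume; the general case is handled in the cited references via ergodic decomposition.
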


Corresponding to this, one would expect a relative of the Shannon-McMillan Theorem~\ref{thm:SM} for the exponential order of the $\mu$-measure of a typical small-radius ball in the space $(X,d^\bfX_{[0;N)},\mu)$, once $N$ is large.  Such a result is proved in~\cite[Section II.4, Theorem 5]{OrnWei87}.  The related result that we will use below is actually a step on the way to their proof of that theorem.

\begin{prop}[{\cite[Section II.4, Proposition 3]{OrnWei87}}]\label{prop:from-OW}
For any $\b \in (0,1]$, $r > 0$ and $h^\ast < \rmh(\mu,T,d^X,r)$, one has
\[\cov_\b\big(\big(X,d^\bfX_{[0;N)},\mu),r N) > \exp (h^\ast N)\]
for all sufficiently large $N$. \qed
\end{prop}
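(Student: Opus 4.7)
The plan is to argue by contradiction, amplifying any hypothetically cheap $\beta$-cover at scale $N$ into a nearly-full cover at a slightly larger scale via orbit translation; this will contradict the definition of $\rmh(\mu,T,d^X,r)$ as $\sup_{\eps > 0}\liminf_N \tfrac{1}{N}\log\cov_{1-\eps}(\cdot,rN)$. Fix $\eps \in (0,1)$ witnessing $h^\ast < \liminf_N \tfrac{1}{N}\log\cov_{1-\eps}((X,d^\bfX_{[0;N)},\mu),rN)$, and $\eta \in (0,\eps)$.

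Suppose the conclusion fails. Then along some subsequence $N_k \to \infty$ there are sets $A_k \subseteq X$ with $\mu(A_k) \geq \beta$ and center sets $D_k \subseteq X$ of cardinality $\leq e^{h^\ast N_k}$ with $A_k \subseteq B^{d^\bfX_{[0;N_k)}}_{rN_k}(D_k)$. Applying Birkhoff's ergodic theorem to $\mathbf 1_{A_k}$ (assuming ergodicity, or else working on ergodic components) one chooses $m_k$ so that $\mu\big(\bigcup_{j=0}^{m_k-1}T^{-j}A_k\big) \geq 1-\eta$, and a diagonal sub-subsequence arrangement gives $m_k = o(N_k)$. Each $x$ in this enlarged set admits a hitting index $j(x) \in [0;m_k)$ with $T^{j(x)}x \in A_k$ and some $c(x) \in D_k$ with $d^\bfX_{[0;N_k)}(T^{j(x)}x, c(x)) < rN_k$, so the shifted point $T^{-j(x)}c(x) \in \bigcup_{j<m_k}T^{-j}D_k$ is within $rN_k$ of $x$ in the window-metric $d^\bfX_{[j(x);j(x)+N_k)}$.

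To upgrade this window-wise approximation to a cover in the full metric $d^\bfX_{[0;N'_k)}$ with $N'_k := N_k + m_k$, pre-choose a finite Borel partition $\Q$ of $X$ with $d^X$-mesh $\leq r$ and refine each candidate center by the $(\Q,m_k)$-itinerary of $x$ on the coordinates outside $[j(x); j(x)+N_k)$, selecting one representative point of $X$ per non-empty refined cell. The total $d^\bfX_{[0;N'_k)}$-distance from $x$ to its representative is then at most $rN_k + rm_k = rN'_k$, and the number of cells is at most $m_k e^{h^\ast N_k}|\Q|^{m_k}$, which has exponential rate $h^\ast + o(1)$ in $N'_k$ since $m_k = o(N_k)$. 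This gives an upper bound on $\cov_{1-\eta}((X, d^\bfX_{[0;N'_k)}, \mu), rN'_k)$ smaller than $e^{(h^\ast + \delta)N'_k}$ for any $\delta > 0$ and large $k$, contradicting the choice of $\eps > \eta$.

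The principal obstacle is the junk-window step. Labeling $x$ by a $(\Q, m_k)$-itinerary on the junk coordinates is combinatorially clean, but to yield a \emph{ball} cover in the sense required by the definition of $\cov$, one must pick actual representative points of $X$ per label and verify that every $x$ matching a label lies within $d^\bfX_{[0; N'_k)}$-distance $rN'_k$ of its representative. I would handle this via the standard dictionary between partition-cell diameters and ball-cover radii employed in Feldman~\cite{Feld80}, applied to the joint partition determined by a Voronoi assignment to $D_k$ on $A_k$ together with the pull-back of $\Q^{m_k}$ on junk coordinates. A secondary obstacle is ensuring $m_k = o(N_k)$; this follows from a diagonal choice once ergodicity provides finite $m_k$ for each $k$, and could alternatively be enforced uniformly in $k$ via a Kac-type hitting-time estimate depending only on $\beta$ and $\eta$.
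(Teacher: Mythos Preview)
The paper does not prove this proposition; it is quoted from \cite[Section~II.4, Proposition~3]{OrnWei87} and closed with a \qed, so there is no in-paper argument to compare against. Your sketch has the right architecture --- amplify a $\beta$-cover to a $(1-\eta)$-cover via orbit translation and contradict the defining $\liminf$ --- but both obstacles you flag are genuine gaps, and neither of your proposed repairs works. For $m_k = o(N_k)$: there is no Kac-type bound on the first-hitting time depending only on $\mu(A)$ and $\eta$, since by the Rokhlin lemma any ergodic system contains sets of measure exactly $\beta$ that are nearly $T$-invariant, forcing $m(A)$ to be as large as one likes. The diagonal argument also fails as stated: the sets $A_k$ are dictated by the hypothesised cheap covers, and nothing prevents $m(A_k) \geq N_k$ for every $k$, so no sub-subsequence with $m_k/N_k \to 0$ need exist. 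For the radius: two points in the same refined cell are each within $rN_k$ of the common centre $T^{-j}c$ on the main window, hence within $2rN_k$ of \emph{each other} there; together with $rm_k$ on the junk coordinates this gives cell-diameter $2rN_k + rm_k$, so you obtain only a cover at radius about $2rN'_k$. That bounds $\rmh(\mu,T,d^X,2r)$ rather than $\rmh(\mu,T,d^X,r)$, and since the spatial $r$-entropy is genuinely $r$-dependent the contradiction does not close.

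The Ornstein--Weiss argument sidesteps the first issue by fixing a single bad scale $N$ and letting a new scale $M$ tend to infinity with $M/N \to \infty$: the pointwise ergodic theorem for $\mathbf{1}_{A_N}$ then guarantees visit-frequency $\geq \beta - \eps$ on $[0;M)$ for most $x$, and one greedily tiles $[0;M)$ by $N$-blocks started at visit times. The untiled remainder now has length $\leq (1-\beta+\eps)M$ --- a fixed \emph{fraction} of $M$, not $o(M)$ --- and handling both this residual fraction and the radius bookkeeping requires more than a single $\Q$-itinerary refinement; for the details you should consult the cited section of \cite{OrnWei87} directly.
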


The approach to entropy theory using compact metrics, rather than partitions, will be highly convenient in the rest of this paper.  In Section~\ref{sec:RRDr}, a new invariant of systems will be defined explicitly in terms of the sequences of metrics $d^\bfX_{[0;N)}$, and we will see that this `geometric' definition leads naturally to a proof of isomorphism-invariance similar to a proof of the Kolmorogov-Sinai Theorem in terms of these metrics.

For the entropy theory of $\bbR$-actions, it has long been known that the metric-based approach is considerably cleaner and more efficient: this realization goes back to Feldman~\cite{Feld80}, and stimulated the use of compact metrics in ergodic theory more generally.  This program has recently been actively promoted by Vershik and his co-workers~(\cite{Ver10,VerZatPet13}).  As will become clear in Section~\ref{sec:RRDr}, the present paper owes a great deal to this point of view.

Given a topological flow $T:\bbR\actson X$ with metric $d^X$, another dynamically-defined sequence of metrics on $X$ may be obtained by supremizing over time-intervals, rather than integrating: for any nonempty compact $F \subseteq \bbR$, let
\[d^{\bfX,\infty}_F(x,x') := \sup_{t \in F}d^X(T^tx,T^tx').\]
When it is necessary to distinguish this from the earlier metric, we will refer to the metrics $d^\bfX_F$ as \textbf{Hamming-like metrics} and to the metrics $d^{\bfX,\infty}_F$ as \textbf{Bowen-Dinaburg metrics}. In topological dynamics, the asymptotic packing or covering numbers of the metrics $d^{\bfX,\infty}_F$ are the basis of the Bowen-Dinaburg approach to topological entropy, but are not so directly related to Kolmogorov-Sinai entropy.  However, it will be convenient to know later that given a topological flow $(X,T)$ and an ergodic invariant probability $\mu$, these alternative metrics may also be used to define $\rmh(\mu,T)$.  This has previously been proved in~\cite[Theorem 1.1]{Kat80b}.  However, we will need a slightly stronger, local version of that control, so we include a precise statement and proof here.  Clearly $d^{\bfX,\infty}_F \geq d^\bfX_F$, but we will need a result in the reverse direction.

\begin{lem}\label{lem:d1-cov-and-dinf-cov}
If $(X,d^X,\mu,T)$ is an ergodic compact model flow, then for every $\eps,\delta > 0$ there is a $\delta_1 > 0$ such that, for every $x \in X$ and $K \in \rm{Int}(\bbR)$ with $\calL^1(K) \geq 1$, one has
\[\cov\big((B^{d^\bfX_K}_{\delta_1\calL^1(K)}(x),d^{\bfX,\infty}_K),\delta \big) < \exp(\eps \calL^1(K)).\]
\end{lem}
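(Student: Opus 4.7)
The plan is to reduce the continuous-time Bowen--Dinaburg metric $d^{\bfX,\infty}_K$ to a discrete sampling of the orbit at unit-spaced times, code each point $y$ in the Hamming ball by how its orbit deviates from that of $x$ at those sample times, and bound the number of such codes by exploiting the small total deviation that is imposed by lying in a small $d^\bfX_K$-ball.

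Setup: by joint continuity of the flow and compactness of $[-1,1]\times X$, pick $\eta\in(0,\delta/4)$ and $\xi\in(0,1/2)$ such that (i) $d^X(p,q)<2\eta$ forces $\sup_{|s|\le 1}d^X(T^sp,T^sq)<\delta$, and (ii) $|s|<\xi$ forces $d^X(T^sp,p)<\eta/8$ uniformly in $p\in X$. Fix a finite $(\eta/4)$-net $Z\subseteq X$ with $N:=|Z|$, and, for each $K$ with $\calL^1(K)\ge 1$, a subset $K_\bbZ\subseteq K$ of consecutive unit-spaced times with $|K_\bbZ|\le\calL^1(K)+1$ for which every $s\in K$ lies within $1/2$ of $K_\bbZ$. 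Code each $y\in X$ by the function $c(y)\colon K_\bbZ\to Z\sqcup\{\star\}$ sending $k$ to $\star$ exactly when $d^X(T^ky,T^kx)<\eta$, and otherwise to a nearest point of $Z$ to $T^ky$. If $c(y)=c(y')$, then $d^X(T^ky,T^ky')<2\eta$ at every $k\in K_\bbZ$; by~(i) and the density of $K_\bbZ$, this implies $d^{\bfX,\infty}_K(y,y')<\delta$. Consequently the covering number to be estimated is at most the number of distinct codes attained by $y\in B^{d^\bfX_K}_{\delta_1\calL^1(K)}(x)$.

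Counting: call $k\in K_\bbZ$ \emph{far} if $c(y)(k)\ne\star$. For such $k$, (ii) and the triangle inequality yield $d^X(T^sy,T^sx)\ge\eta/2$ throughout $[k-\xi,k+\xi]$, and these intervals are disjoint since $2\xi<1$, so
\[\delta_1\calL^1(K)\;\ge\;\int_Kd^X(T^sy,T^sx)\,\d s\;\ge\;\xi\eta\cdot\#\{\text{far interior integers}\}.\]
Hence the number of far positions is at most $p\calL^1(K)+2$ with $p:=\delta_1/(\xi\eta)$. Selecting a far-set and labelling each far position bounds the code count by
\[\sum_{j=0}^{p\calL^1(K)+2}\binom{|K_\bbZ|}{j}N^j\;\le\;\exp\!\big(h(\delta_1)\calL^1(K)+O(\log\calL^1(K))+O(1)\big),\]
where $h(\delta_1):=H_2(\delta_1/(\xi\eta))+(\delta_1/(\xi\eta))\log N\to 0$ as $\delta_1\to 0$ and $H_2$ denotes the binary entropy.

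Two regimes will finish the argument. For $\calL^1(K)\ge L_0$, with $L_0=L_0(\eps,\delta)$ large enough that the $O(\log\calL^1(K))+O(1)$ correction fits inside $(\eps/2)\calL^1(K)$, and $\delta_1$ small enough that $h(\delta_1)<\eps/2$, the code bound gives the required $\exp(\eps\calL^1(K))$. For $1\le\calL^1(K)<L_0$ the code bound is wasteful (the additive $O(1)$ may dwarf $\eps\calL^1(K)$), so I argue directly: if some $y$ in the Hamming ball realised $d^{\bfX,\infty}_K(y,x)\ge\delta$ at an interior point $s_0$, then the same $\xi$-tube estimate applied once (with $\delta/2$ in place of $\eta/2$) gives $\delta_1\calL^1(K)\ge\xi\delta/2$, which is impossible as soon as $\delta_1<\xi\delta/(2L_0)$; hence the entire Hamming ball fits in a single $d^{\bfX,\infty}_K$-ball around $x$ and the covering number is $1$. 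Taking $\delta_1$ to be the minimum of the two prescriptions completes the proof. The main subtlety is precisely this short-$K$ regime, where the combinatorial entropy bound is too coarse and the additive constants would otherwise defeat the estimate $\exp(\eps\calL^1(K))$; the saving grace is that a single far excursion already costs $\xi\eta$ of $L^1$-deviation, giving a direct $L^1\to L^\infty$ comparison that is usable precisely when $\calL^1(K)$ stays bounded.
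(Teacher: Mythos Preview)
Your proof is correct and follows essentially the same route as the paper's: discretize time to a unit-spaced grid, code each orbit by its cell-by-cell deviation from that of $x$ at those sample times, and bound the number of codes via a Hamming-ball estimate, using that the $L^1$-constraint caps the number of ``far'' sample times. The paper avoids your two-regime split by observing that the Hamming-ball cardinality bound $\sum_{j\le\eta N}\binom{N}{j}m^j<e^{\eps N}$ can be arranged uniformly for \emph{all} $N\ge 1$ once $\eta$ is taken small enough (for $\eta N<1$ the sum is just $1$), so no separate treatment of short $K$ is needed.
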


\begin{proof}
Clearly it suffices to prove this with $I = [0,a]$ for some $a \geq 1$.  Let $N := \lfloor a \rfloor$, and observe that $N \geq a/2$.

By the joint continuity of $T$, there is some $\delta' > 0$ such that
\[\forall x,x' \in X, \quad d^X(x,x') < \delta' \quad \Longrightarrow \quad \max_{t \in [-2,2]}d^X(T^tx,T^tx') < \delta/2,\]
and now there is also some $\delta'' > 0$ such that
\[\forall x,x' \in X, \quad d^X(x,x') < \delta'' \quad \Longrightarrow \quad \max_{t \in [-2,2]}d^X(T^tx,T^tx') < \delta'.\]

This latter condition implies that if $d^X(T^nx,T^nx') \geq \delta'$ for some $n \in \bbZ$, then $d^X(T^tx,T^tx') \geq \delta''$ for all $t \in [n,n+1]$, and therefore
\begin{eqnarray}\label{eq:real-d1-and-int-d1}
\int_0^a d^X(T^tx,T^tx')\,\d t \geq \delta''|\{n \in [0;N)\,|\ d^X(T^nx,T^nx') \geq \delta'\}|.
\end{eqnarray}

Let $\P = (P_1,\ldots,P_m)$ be a Borel partition of $X$ into sets of diameter less than $\delta'$.  Having chosen this, let $\eta \in (0,\eps)$ be so small that in the space $[0;m]^N$ the cardinality of a Hamming ball of radius $\eta N$ is less than $\rme^{\eps N}$ for all $N \geq 1$.  Finally, choose $\delta_1 := \eta\delta''/2$.

After these preliminaries, suppose that $x,x' \in X$ satisfy $d^\bfX_{[0,a]}(x,x') < \delta_1a \leq \eta\delta''N$.  Then~(\ref{eq:real-d1-and-int-d1}) implies that
\[|\{n \in [0;N)\,|\ d^X(T^nx,T^nx') \geq \delta'\}| < \eta N.\]
Fix $x$, and for each $n \in \bbZ$ let
\[P_{n,0} := T^{-n}(B^{d^X}_{\delta'}(T^nx)) \quad \hbox{and} \quad P_{n,i} := T^{-n}(P_i) \quad \hbox{for}\ i=1,2,\ldots,m.\]
Then the above estimates imply that
\[B^{d^\bfX_{[0,a]}}_{\delta_1 a}(x) \subseteq \bigcup_{\hbox{\scriptsize{$\begin{array}{c}(w_0,\ldots,w_{N-1}) \in [0;m]^N\\ |\{n \in [0;N)\,|\ w_n \neq 0\}| < \eta N\end{array}$}}}P_{0,w_0}\cap P_{1,w_1}\cap \cdots \cap P_{N-1,w_{N-1}}.\]
By the choice of $\delta'$, each individual intersection on the right here has $d^{\bfX,\infty}_{[0,a]}$-diameter less than $\delta$, and by the choice of $\eta$ the number of such intersections appearing in this union is less than $\rme^{\eps N}$.
\end{proof}

\subsection{Gibbs measures on mixing SFTs}\label{subs:Gibbs}

The source of base systems for the examples in Theorem A is the class of Gibbs measures on mixing SFTs, and other invariant states on topological dynamical systems that can be suitably coded from these.  These form the basic setting of the `thermodynamic formalism'.  The standard monographs~\cite{Bow08,ParPol90} provide a good reference for most of our needs, and~\cite{Rue04,Sin72} largely cover the same material.

Given a finite alphabet $A$, we shall usually consider $A^\bbZ$ endowed with the metric
\[d(a,a') := \sum_{n \in \bbZ}2^{-|n|}1_{\{a_n \neq a_n'\}}.\]
We also endow $A^{(-\infty;0)}$ with the analogous metric.

A function $A^\bbZ \to \bbR$ is \textbf{H\"older} if it is so with respect to $d$ for some positive H\"older exponent, and similarly for a function $A^{(-\infty;0)}\to \bbR$.  A function on $A^\bbZ$ is \textbf{one-sided} if it factorizes through the coordinate projection $A^\bbZ\to A^{(-\infty;0]}$.  Motivated by the thermodynamic formalism, we will sometimes refer to a H\"older function restricted to any closed subset of $A^\bbZ$ as a \textbf{potential} (ignoring the many more general potentials that can be considered in the thermodynamic formalism).

As usual, a \textbf{subshift of finite type} (`\textbf{SFT}') in $A^\bbZ$ is a closed $S$-invariant subset $Y \subseteq A^\bbZ$ defined by a finite set of forbidden subwords.  We always endow such an SFT with the restriction $d^Y$ of the metric $d$ above.

Given a topologically mixing SFT $Y \subseteq A^\bbZ$ and a potential $\phi:Y \to \bbR$, there is always an associated \textbf{Gibbs measure} $\nu \in \Pr^S Y$, uniquely characterized by the property that there are $c_1,c_2 \in (0,\infty)$ and $P \in \bbR$ such that
\begin{eqnarray}\label{eq:dfn-Gibbs}
c_1\exp\Big(P|I| + \sum_{n\in I}\phi(S^ny)\Big) \leq \nu(\P^I(y)) \leq c_2\exp\Big(P|I| + \sum_{n\in I}\phi(S^ny)\Big)
\end{eqnarray}
for all $y\in Y$ and bounded discrete intervals $I \subseteq \bbZ$: see~\cite[Theorem 1.4]{Bow08} or~\cite[Chapter 3]{ParPol90}.  Henceforth we shall refer to a triple $(Y,\nu,S)$ in which $(Y,S)$ is a mixing SFT and $\nu$ is the Gibbs measure associated to some potential as a \textbf{mixing Gibbs system}.

Now let $\a:Y\to A$ be the time-zero coordinate map and let $\P$ be the partition it generates.  Let $Y^- := \a^{(-\infty;0]}(Y) \subseteq A^{(\infty;0]}$.  The Gibbs measure $\nu$ associated to a potential $\phi$ is constructed via its image $\nu^- := \a^{(-\infty;0]}_\ast\nu \in \Pr Y^-$.  This image determines $\nu$ uniquely, by $S$-invariance.  As in the proof of Ruelle's Perron-Frobenius Theorem (see~\cite[Theorem 2.2]{ParPol90}), one may always find another H\"older function $\psi:Y^- \to \bbR$ such that
\begin{itemize}
\item $\phi - \psi\circ \a^{(-\infty;0]}$ is cohomologous to a constant over $S$ among H\"older functions, and
\item the Perron-Frobenius operator $C(Y^-)\to C(Y^-)$ defined by
\[L_\psi f(y) := \sum_{a \in A\,|\,ya \in Y^-}\rme^{\psi(ya)}f(ya)\]
satisfies $L_\psi 1_{Y^-} = 1_{Y^-}$, and otherwise has spectrum contained in a disk of radius strictly less than $1$ (in this case the Perron-Frobenius operator is said to be `normalized'~\cite[Chapter 2]{ParPol90}).
\end{itemize}
Having found this $\psi$, the measure $\nu^-$ is the unique probability measure for which $L_\psi^\ast\nu^- = \nu^-$.

After reconstructing $\nu$ from $\nu^-$, this Perron-Frobenius operator has the interpretation that for any bounded measurable function $f:Y^- \to \bbR$ and $r \in \bbN$ one has
\[\sfE_\nu(f\circ \a^{(-\infty;0]}\circ S^r\,|\,\P^{(-\infty;0]}) = (L_\psi^r f)\circ \a^{(-\infty;0]}.\]
In particular, if $y\mapsto \nu_y$ is the disintegration of $\nu$ over the strict past $\a^{(-\infty;0)}:Y\to A^{(-\infty;0)}$, then the equation $L_\psi^\ast \nu^- = \nu^-$ implies
\[\a_\ast\nu_y = \sum_{a \in A\,|\,ya \in Y^-}\rme^{\psi(ya)}\delta_a.\]

By the H\"older condition and the fact that $Y$ is an SFT, there are $b < \infty$, $\b \in (0,1)$ and $N_0 \in \bbN$ such that
\begin{multline*}
N \geq N_0 \quad \hbox{and} \quad y,y' \in Y^-\ \hbox{with}\ \P^{[-N_0;0]}(y) = \P^{[-N_0;0]}(y') \quad\\ \Longrightarrow \quad \{a\,|\ ya \in Y^-\} = \{a\,|\ y'a \in Y^-\}\ \hbox{and}\ \max_{a\,|\,ya \in Y^-} |\phi(ya) - \phi(y'a)| < b\b^N.
\end{multline*}
This has proved the following.

\begin{lem}[H\"older continuity of conditional measures]\label{lem:cond-meas-Hold}
In the setting above there are $N_0 \in \bbN$, $b < \infty$ and $\b \in (0,1)$ such that for any $N\geq N_0$ one has
\begin{multline*}
y,y' \in Y^- \ \hbox{with}\ \P^{[-N;0)}(y) = \P^{[-N;0)}(y') \\ \Longrightarrow \quad \a_\ast\nu_y \sim \a_\ast\nu_{y'} \quad \hbox{and} \quad
\rme^{-b\b^N} < \frac{\d(\a_\ast\nu_y)}{\d(\a_\ast\nu_{y'})} < \rme^{b\b^N}.
\end{multline*}
\qed
\end{lem}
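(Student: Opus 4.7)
The plan is to deduce both assertions of the lemma directly from the explicit formula
\[\a_\ast\nu_y \;=\; \sum_{a\,:\,ya\in Y^-}\rme^{\psi(ya)}\delta_a\]
derived just above, using the SFT structure of $Y$ for equality of the supports and the H\"older continuity of the normalized potential $\psi$ for exponential control on the Radon--Nikodym derivative. Both ingredients are in fact already packaged in the multi-line display immediately preceding the lemma statement; the proof then amounts to unpacking them and substituting into the formula.

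First I would fix $N_0$ to exceed the length of the longest forbidden word defining $Y$, so that for every $y\in Y^-$ the set of one-step extensions $\{a\in A\,:\,ya\in Y^-\}$ is determined by the coordinates $y_{-N_0+1},\ldots,y_{-1}$ alone. Then for any $N\geq N_0$, agreement of $y,y'\in Y^-$ on $[-N;0)$ forces the two sums above to be indexed by the same finite subset of $A$, so $\a_\ast\nu_y$ and $\a_\ast\nu_{y'}$ have a common finite support and are in particular mutually absolutely continuous.

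Second, for each such $a$ the sequences $ya$ and $y'a$ agree at positions $-N,-N+1,\ldots,0$ in $A^{(-\infty;0]}$, so in the standard metric on $A^{(-\infty;0]}$ one has $d(ya,y'a)\leq 2^{-N}$. H\"older continuity of $\psi$ on $Y^-$ with exponent $\theta>0$ and constant $H<\infty$ then gives $|\psi(ya)-\psi(y'a)|\leq H\cdot 2^{-\theta N}=:b\b^N$ with $b:=H$ and $\b:=2^{-\theta}\in(0,1)$. Since the ratio of point masses at $a$ under the two measures is precisely $\rme^{\psi(ya)-\psi(y'a)}$, exponentiating yields the bound
\[\rme^{-b\b^N} \;<\; \frac{\d(\a_\ast\nu_y)}{\d(\a_\ast\nu_{y'})} \;<\; \rme^{b\b^N},\]
uniformly in $a$, which is the conclusion of the lemma.

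I do not anticipate any significant obstacle here. The only point worth flagging is that appending the same symbol at position $0$ to the strict pasts of $y$ and $y'$ produces two elements of $A^{(-\infty;0]}$ that agree on a window of length $N+1$ rather than $N$, which is precisely what makes the resulting decay rate $\b=2^{-\theta}$ depend only on the H\"older exponent of $\psi$, and not on the Gibbs constants appearing in~(\ref{eq:dfn-Gibbs}).
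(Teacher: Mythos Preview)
Your proposal is correct and follows essentially the same route as the paper: the paper's argument is the displayed implication immediately preceding the lemma (equal extension sets from the SFT property, and $|\psi(ya)-\psi(y'a)|<b\b^N$ from H\"older continuity of $\psi$), after which it simply declares ``This has proved the following'' and states the lemma with a \qed. Your write-up makes explicit the step of reading off the Radon--Nikodym derivative as $\rme^{\psi(ya)-\psi(y'a)}$ from the formula $\a_\ast\nu_y=\sum_a \rme^{\psi(ya)}\delta_a$, which the paper leaves implicit.
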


\begin{cor}\label{cor:Gibbs-mut-inf}
If $(Y,\nu,S)$ and $\P$ are as above and also $p \in \bbN\cup \{0\}$, then
\[\sup_{N\geq 1}\rmI_\nu(\P^{[-p;N+p)};\P^{[-N-p;p)}) < \infty.\]
\end{cor}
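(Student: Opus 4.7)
The plan is to reduce everything to the direct Gibbs inequality~(\ref{eq:dfn-Gibbs}), without invoking Lemma~\ref{lem:cond-meas-Hold} or the Perron-Frobenius picture. First I would write $A := [-p;N+p)$ and $B := [-N-p;p)$ so that the two partitions in question are $\P^A$ and $\P^B$, with $\P^A \vee \P^B = \P^{A \cup B}$ for $A \cup B = [-N-p;N+p)$, and the standard Shannon identity gives
\[\rmI_\nu(\P^A;\P^B) = \rmH_\nu(\P^A) + \rmH_\nu(\P^B) - \rmH_\nu(\P^{A \cup B}).\]

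The key observation is that the Gibbs inequality~(\ref{eq:dfn-Gibbs}) implies
\[\log \nu(\P^I(y)) = P|I| + S_I\phi(y) + r_I(y), \qquad r_I(y) \in [\log c_1,\log c_2],\]
for every finite discrete interval $I$ and every $y \in Y$, where $S_I\phi(y) := \sum_{n\in I}\phi(S^ny)$ and the error $r_I(y)$ is bounded uniformly in $I$ and $y$. Integrating against $\nu$ and using $S$-invariance to get $\int S_I\phi\,\d\nu = |I|\int \phi\,\d\nu$, one obtains
\[\rmH_\nu(\P^I) = -P|I| - |I|\textstyle\int\phi\,\d\nu + E_I,\]
where $E_I \in [-\log c_2,-\log c_1]$ is an error bounded independently of $I$.

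Substituting this expression for each of $\rmH_\nu(\P^A)$, $\rmH_\nu(\P^B)$ and $\rmH_\nu(\P^{A \cup B})$ into the three-term decomposition and applying the inclusion-exclusion identity $|A| + |B| - |A \cup B| = |A \cap B| = 2p$, the terms that grow linearly in $N$ cancel exactly, leaving
\[\rmI_\nu(\P^A;\P^B) = 2p\bigl(-P - \textstyle\int\phi\,\d\nu\bigr) + (E_A + E_B - E_{A\cup B}).\]
The first summand depends only on the fixed data $p,P,\phi,\nu$, and the second is uniformly bounded by $3\log(c_2/c_1)$; taking the supremum over $N$ then gives the claim.

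I expect no real obstacle here: the entire content is the observation that the Gibbs condition pins down each individual Shannon entropy up to a universal additive constant, so the bulk terms in the inclusion-exclusion cancel and only the contribution of the $2p$-element overlap $A \cap B$ survives. The same argument would in fact give a bound of the form $O(|A \cap B|)$ for any two finite intervals $A,B \subseteq \bbZ$, which is what one would expect heuristically from the ``almost product'' structure of Gibbs measures on disjoint regions.
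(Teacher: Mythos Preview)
Your argument is correct and takes a genuinely different route from the paper's proof. The paper first treats the case $p=0$ by expanding $\rmI_\nu(\P^{[0;N)};\P^{[-N;0)})$ via the Chain Rule for mutual information into a sum of terms $\rmI_\nu(S^{-n}(\P);\P^{[-N;0)}\,|\,\P^{[0;n)})$, then invokes Lemma~\ref{lem:cond-meas-Hold} (H\"older continuity of conditional measures, derived from the normalized Perron-Frobenius operator) to show these terms decay geometrically in $n$, hence are summable. The case $p\geq 1$ is then reduced to $p=0$ by peeling off the $4p$ ``boundary'' cells using subadditivity of entropy.

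Your approach bypasses both the chain-rule expansion and the Perron-Frobenius machinery entirely: you observe that the Gibbs bound~(\ref{eq:dfn-Gibbs}) pins down $\rmH_\nu(\P^I)$ to within an additive constant independent of $I$, and then the inclusion-exclusion $|A|+|B|-|A\cup B|=|A\cap B|=2p$ cancels all $N$-dependence in the three-term formula for mutual information. This is strictly more elementary and in fact proves the slightly more general statement you note at the end (an $O(|A\cap B|)$ bound for any two overlapping intervals). What the paper's argument buys in exchange for the extra input is finer information: it actually shows that in the $p=0$ case the mutual information is bounded by the tail of a convergent geometric series, which is a quantitatively sharper statement than yours, though not one needed for the corollary as stated.
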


\begin{proof}
Suppose first that $p=0$.  The Chain Rule for mutual information~(\cite[Theorem 2.5.2]{CovTho06}) gives
\begin{eqnarray}\label{eq:chain-rule}
\rmI_\nu(\P^{[0;N)};\P^{[-N;0)}) = \sum_{n=0}^{N-1}\rmI_\nu(S^{-n}(\P);\P^{[-N;0)}\,|\,\P^{[0;n)}).
\end{eqnarray}
Letting $N_0$ be as in Lemma~\ref{lem:cond-meas-Hold}, we obtain
\[\rme^{-b\b^n} < \frac{\d\big((\a\circ S^n)_\ast (\nu_{|\P^{[-N;n)}(y)})\big)}{\d\big((\a\circ S^n)_\ast (\nu_{|\P^{[0;n)}(y)})\big)} < \rme^{b\b^n} \quad \forall n \geq N_0,\]
and hence
\[\rmD\big((\a\circ S^n)_\ast (\nu_{|\P^{[-N;n)}(y)})\,\big|\,(\a\circ S^n)_\ast (\nu_{|\P^{[0;n)}(y)})\big) < b\b^n.\]

Integrating over $\nu_{|\P^{[0;n)}(y)}$ and recalling Equation~(\ref{eq:rel-ent-of-bary}), this gives
\[\rmI_\nu(S^{-n}(\P);\P^{[-N;0)}\,|\,\P^{[0;n)}) < b\b^n \quad \forall n\geq N_0.\]
Therefore the right-hand sum in~(\ref{eq:chain-rule}) is bounded by
\[\sum_{n= 0}^{N_0-1}\rmI_\nu(S^{-n}(\P);\P^{[-N;0)}\,|\,\P^{[0;n)}) + \sum_{n= N_0}^{N-1}b\b^n,\]
which remains bounded as $N\to\infty$ because $\sum_n b\b^n$ is a convergent series.

Finally, if $p \geq 1$, then the definition and standard properties of mutual information give
\begin{eqnarray*}
&&\rmI_\nu(\P^{[-p;N+p)};\P^{[-N-p;p)})\\ &&= \rmH_\nu(\P^{[-N-p;p)}) + \rmH_\nu(\P^{[-p;N+p)}) - \rmH_\nu(\P^{[-N-p,N+p)})\\
&&\leq \rmH_\nu(\P^{[-N-p;-N)}) + \rmH_\nu(\P^{[0;p)}) + \rmH_\nu(\P^{[-p;0)}) + \rmH_\nu(\P^{[N;N+p)})\\
&&\quad + \rmH_\nu(\P^{[-N;0)}) + \rmH_\nu(\P^{[-N;0)}) - \rmH_\nu(\P^{[-N,N)})\\
&&\leq 4p\rmH_\nu(\P) + \rmI_\nu(\P^{[0;N)};\P^{[-N;0)}).
\end{eqnarray*}
\end{proof}

\subsection{H\"{o}lder cocycles over mixing SFTs}\label{subs:Hold-coc-prop}

The structure of a generalized RWRS system seems to depend rather delicately on the cocycle $\s$ which defines it.  This subsection is given to various properties of such cocycles that will be needed later.  The general flavour is of comparing them over long time-scales with Brownian motion.  Such probabilistic limit theorems are a very classical subject in dynamics.  They are all widely-known for simple random walk itself: a suitable reference is~\cite{Rev90}. In our slightly more general setting, much of what we need will be taken from Guivarc'h and Hardy's classic work~\cite{GuiHar88}, which in turn built on older methods of Nagaev for certain Markov chains~\cite{Nagaev57}, among others.  A suitable invariance principle is proved by Bunimovich and Sinai in~\cite{BunSin81}, or can be deduced from the strong invariance principles given in~\cite{PhiSto75}.

Let $(Y,\nu,S)$ be as in the previous subsection. A \textbf{cocycle} over a $(Y,\nu,S)$ will be a real-valued measurable function $\s:Y\to \bbR$.  The term `cocycle' will refer either to this function itself, or to the resulting function $\bbZ\times Y\to \bbR$ defined by the partial sums:
\[(n,y) \mapsto \left\{\begin{array}{ll}\sum_{m=0}^{n-1} \s(S^my)& \quad \hbox{if}\ n \geq 1\\ 0 & \quad \hbox{if}\ n= 0\\ -\sum_{m=n}^{-1} \s(S^my) & \quad \hbox{if}\ n \leq -1.  \end{array} \right.\]
It will often be convenient to denote this value by $\s^y_n$.  We may also think of it as a random function
\[\bbZ\to \bbR:n\mapsto \s^y_n\]
defined on the probability space $(Y,\nu)$.  For a fixed choice of $y$, we will refer to the function $\s^y$ as a \textbf{cocycle-trajectory} to emphasize this point of view.

Mean-zero, H\"older cocycles enjoy a (weak) version of Donsker's Invariance Principle.  It is proved for any dynamical system admitting a suitable Markov partition (including our mixing Gibbs systems $(Y,\nu,S)$) in~\cite{BunSin81}: see their Theorems 2'' and 3.  To formulate it, it will be convenient to introduce the maps $\traj_N:\bbR^\bbZ \to C[0,1]$ defined by
\[\traj_N(\s)(t) := N^{-1/2}\big((Nt - \lfloor Nt\rfloor)\s_{\lceil Nt\rceil} + (\lfloor Nt + 1\rfloor - Nt)\s_{\lfloor Nt \rfloor}\big)\]
(that is, $\traj_N$ rescales $\s$ horizontally by $N^{-1}$ and vertically by $N^{-1/2}$, and then interpolates linearly to produce a function on $[0,1]$).  Similarly, define $\traj_{-N}:\bbR^\bbZ\to C[0,1]$ by
\[\traj_{-N}(\s)(s) := N^{-1/2}\big((-Ns - \lfloor -Ns\rfloor)\s_{\lceil -Ns\rceil} + (\lfloor -Ns + 1\rfloor + Ns)\s_{\lfloor -Ns \rfloor}\big).\]

\begin{thm}[Invariance principle]\label{thm:IP}
If $(\bfY,\s)$ is a mixing Gibbs system and $\s:Y\to \bbR$ is a H\"older cocycle with $\int \s\,\d\nu = 0$, then there is some $c \geq 0$ for which the \textbf{Invariance Principle} holds:
\[\traj_N(\s^y) \stackrel{\rm{law}}{\to} c B \quad \hbox{as} \quad N \to \infty,\]
where the left-hand side is regarded as a random variable on the probability space $(Y,\nu)$, and the right hand side has law $\sfW_{[0,1]}$. Moreover, $c = 0$ if and only if $\s$ is a coboundary over $S$ among H\"older functions. \qed
\end{thm}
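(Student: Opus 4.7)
The plan is to reduce to a one-sided Hölder cocycle, apply the Nagaev--Guivarc'h perturbation method to the transfer operator to obtain a CLT with an explicit variance $c^{2}$, and then upgrade to functional convergence by combining convergence of finite-dimensional distributions with tightness. First, using the standard Sinai cohomology lemma for Hölder functions on mixing SFTs, I would write
\[
\sigma \;=\; \tilde{\sigma}\circ \alpha^{(-\infty;0]} \;+\; u\circ S - u
\]
for some Hölder $\tilde\sigma$ on $Y^{-}$ and Hölder $u$ on $Y$. The telescoping coboundary contributes only a bounded fluctuation to the partial sums $\sigma^{y}_{n}$, so after dividing by $\sqrt{N}$ it disappears uniformly; hence it suffices to prove the invariance principle for the one-sided cocycle $\tilde\sigma$ on $(Y^{-},\nu^{-},S)$, with the same limiting constant.

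Next, on the Banach space of Hölder functions on $Y^{-}$, the normalized Perron--Frobenius operator $L_{\psi}$ (recalled in the previous subsection) has a spectral gap: $1$ is a simple eigenvalue with eigenfunction $1_{Y^{-}}$ and the rest of the spectrum is contained in a disk of radius strictly less than one. Consider the twisted family
\[
L_{\psi,t} f \;:=\; L_{\psi}\bigl(\rme^{\rmi t\tilde\sigma} f\bigr), \qquad t\in\bbR.
\]
Because $\tilde\sigma$ is Hölder, this is an analytic perturbation of $L_{\psi}$, so for $t$ near $0$ it has a simple dominant eigenvalue $\lambda(t)$ with $\lambda(0)=1$. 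Using $\int\tilde\sigma\,\d\nu^{-}=0$, a direct computation gives $\lambda'(0)=0$ and $\lambda''(0)=-c^{2}$ with
\[
c^{2} \;=\; \int \tilde\sigma^{2}\,\d\nu^{-} + 2\sum_{n\geq 1} \int \tilde\sigma\cdot(\tilde\sigma\circ S^{n})\,\d\nu^{-} \;\geq\; 0,
\]
the series converging by the exponential decay of correlations supplied by the spectral gap. The iteration $\int\rme^{\rmi t\sigma^{y}_{N}/\sqrt{N}}\,\d\nu(y) = \langle L_{\psi,t/\sqrt{N}}^{N} 1, h\rangle \to \rme^{-c^{2}t^{2}/2}$ then yields the one-dimensional CLT, and applied to increments on disjoint intervals (using stationarity and decay of correlations) gives convergence of all finite-dimensional marginals of $\traj_{N}(\sigma^{y})$ to those of $cB$.

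To complete the proof of convergence in $C[0,1]$, I would establish tightness by a Kolmogorov-type criterion: expanding $\int |\sigma^{y}_{m}-\sigma^{y}_{n}|^{4}\,\d\nu(y)$ into a fourfold sum of correlations and using the exponential mixing of $(\bfY,\nu)$ for Hölder observables yields a bound of order $|m-n|^{2}$, which suffices. Combined with the finite-dimensional convergence this gives $\traj_{N}(\sigma^{y})\to cB$ in law in $C[0,1]$.

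For the dichotomy: if $\sigma = v\circ S - v$ for Hölder $v$, then $\sigma^{y}_{N}=v(S^{N}y)-v(y)$ is uniformly bounded, so $N^{-1}\int(\sigma^{y}_{N})^{2}\,\d\nu \to 0$ and hence $c=0$. Conversely, if $c=0$, the Green--Kubo series above vanishes, which forces (after rewriting) $\tilde\sigma$ to lie in the range of $I-L_{\psi}$ on the mean-zero Hölder subspace; the spectral gap lets one invert and take $v := -\sum_{k\geq 0}L_{\psi}^{k}\tilde\sigma$, a Hölder solution of $\tilde\sigma = v\circ S - v$ on $Y^{-}$. Combining with the auxiliary $u$ from the first step expresses $\sigma$ itself as a Hölder coboundary over $S$. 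The main obstacle in this program is the spectral/analytic step: verifying that $L_{\psi,t}$ is a genuinely analytic family on the Hölder space and that its dominant eigenvalue admits the stated second-order expansion; once that is in place the rest is a standard assembly from the ingredients already introduced in Subsection~\ref{subs:Gibbs}.
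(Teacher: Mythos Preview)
The paper does not actually prove this theorem: it is quoted from the literature (the preamble to Subsection~\ref{subs:Hold-coc-prop} cites \cite{BunSin81}, with \cite{GuiHar88} and \cite{PhiSto75} in the background), and the statement carries a \qed\ with no argument. Your outline---reduce to a one-sided cocycle, apply Nagaev--Guivarc'h spectral perturbation of $L_\psi$ for the finite-dimensional distributions, and obtain tightness from a fourth-moment bound via Kolmogorov's criterion---is exactly the standard route taken in those references, and the paper itself reproduces the tightness ingredients later (Lemma~\ref{lem:four-fol}, Corollary~\ref{cor:fourth-moment}, and the remark following Lemma~\ref{lem:max-not-lg}). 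So for the convergence statement your plan is correct and matches the cited proofs.

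There is, however, a genuine error in your argument for the implication $c=0\Rightarrow\hbox{coboundary}$. You write that one may ``invert and take $v:=-\sum_{k\geq 0}L_\psi^k\tilde\sigma$, a H\"older solution of $\tilde\sigma=v\circ S-v$''. But $L_\psi$ is the transfer operator, the \emph{dual} of the Koopman operator $f\mapsto f\circ S$, not the Koopman operator itself; the equation $(I-L_\psi)v=\tilde\sigma$ is not the coboundary equation. Indeed, the spectral gap makes $I-L_\psi$ invertible on mean-zero H\"older functions, so your series converges for \emph{every} such $\tilde\sigma$, with no use of $c=0$: this should already signal that something is wrong. The correct route is to observe that $c^2=0$ forces $\|\sigma^y_N\|_{L^2(\nu)}$ to stay bounded (from the Green--Kubo expansion, $\|\sigma^y_N\|_2^2 = Nc^2 + O(1)$), whence $\sigma$ is an $L^2$-coboundary by the standard bounded-cocycle argument, and then H\"older regularity of the transfer function follows from the Liv\v{s}ic regularity theorem (see, e.g., \cite[Chapter~4]{ParPol90}).
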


In view of its r\^ole in the above theorem, we shall call the constant $c^2$ the \textbf{effective variance} of $(\bfY,\s)$.  Henceforth we will work with non-coboundaries, and will generally normalize so that the effective variance is $1$.

\begin{dfn}
A \textbf{well-distributed pair} is a pair $(\bfY,\s)$ in which $\bfY = (Y,\nu,S)$ is a mixing Gibbs system and $\s:Y\to \bbR$ is a H\"older non-coboundary with $\int \s\,\d\nu = 0$ and with effective variance $1$.
\end{dfn}

The next result gives the optimum rate of convergence to a Gaussian law for the distribution of $\s^y_N$ for a fixed $N$.

\begin{thm}[Berry-Esseen property:~{\cite[Th\'eor\`eme B.IV.2]{GuiHar88}}]\label{thm:BE}
If $(\bfY,\s)$ is well-distributed, then
\begin{eqnarray}\label{eq:fast-CLT}
\sup_{t \in \bbR}\big|\nu\{\s^y_N \leq t\sqrt{N}\} - \rm{N}(-\infty,t)\big| \lesssim_{\bfY,\s} \frac{1}{\sqrt{N}} \quad \forall N\geq 1,
\end{eqnarray}
where $\rm{N}(-\infty,t)$ is the cumulative distribution function of a standard Gaussian. \qed
\end{thm}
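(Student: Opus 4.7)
The plan is to establish~(\ref{eq:fast-CLT}) by the Nagaev--Guivarc'h spectral method applied to the normalized transfer operator $L_\psi$ of Subsection~\ref{subs:Gibbs}. Recall that $L_\psi$ acts on a Banach space $\mathcal{H}$ of H\"older functions on $Y^-$, satisfies $L_\psi 1_{Y^-} = 1_{Y^-}$, and has the rest of its spectrum strictly inside the unit disk. After the standard preliminary step (Sinai's construction, as already used to obtain $\psi$) that replaces $\s$ by a H\"older-cohomologous function factoring through $Y^-$, introduce the twisted transfer operators
\[ L_t f(y) := L_\psi\big(\rme^{\rmi t \s} f\big)(y), \qquad t \in \bbR, \]
on $\mathcal{H}$. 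The cocycle property together with the conditional-expectation interpretation of $L_\psi$ from Subsection~\ref{subs:Gibbs} yields the key identity
\[ \sfE_\nu\big[\rme^{\rmi t \s^y_N}\big] = \int L_t^N 1_{Y^-}\,\d\nu^-, \]
so the rescaled characteristic function $\chi_N(t) := \sfE_\nu[\rme^{\rmi t \s^y_N/\sqrt N}]$ is governed by iterates of $L_{t/\sqrt N}$.

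Next, apply Kato--Rellich analytic perturbation theory to the analytic family $t \mapsto L_t \in B(\mathcal{H})$. Since the unperturbed operator $L_0 = L_\psi$ has a spectral gap, for $t$ in some neighbourhood $|t| \leq \delta$ there is a simple analytic eigenvalue $\lambda(t)$ with $\lambda(0) = 1$ and an associated rank-one spectral projector $P_t$ such that
\[ L_t^N = \lambda(t)^N P_t + R_t^N, \qquad \|R_t^N\|_{\mathcal{H}} \leq C r^N, \]
uniformly in $t$, for some $r < 1$. Differentiating $L_t$ twice at $0$ and using $\int \s\,\d\nu = 0$ together with the effective-variance normalization $c = 1$ identified in Theorem~\ref{thm:IP} gives
\[ \lambda(t) = 1 - \tfrac{1}{2}t^2 + O(|t|^3) \qquad (t \to 0). \]
The principal obstacle is then to show that for any $0 < \delta \leq |t| \leq T$, the spectral radius of $L_t$ on $\mathcal{H}$ is at most some $r(T) < 1$. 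This is established by contradiction: a unit-modulus eigenvalue $\lambda$ of $L_t$ with H\"older eigenfunction $\rho$ would give $\rme^{\rmi t \s} = \lambda \cdot (\rho \circ S)/\rho$, so that $\rme^{\rmi t \s}$ is a multiplicative H\"older coboundary, which forces $\s$ itself to be a H\"older additive coboundary modulo $2\pi/t$; the non-coboundary hypothesis rules this out. A compactness argument then upgrades this pointwise fact to a uniform gap on every compact subset of $\bbR \setminus \{0\}$.

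Finally, I would close via Esseen's smoothing inequality: for any $T > 0$,
\[ \sup_{t \in \bbR}\big|\nu\{\s^y_N \leq t\sqrt N\} - \rm{N}(-\infty,t)\big| \lesssim \int_{-T}^T \bigg|\frac{\chi_N(t) - \rme^{-t^2/2}}{t}\bigg|\,\d t + \frac{1}{T}, \]
applied with $T = c_0 \sqrt N$ for a small constant $c_0 > 0$. On $|t| \leq \delta \sqrt N$, the local expansion of $\lambda$ gives $|\chi_N(t) - \rme^{-t^2/2}| \lesssim (|t|^3/\sqrt N + r^N)\rme^{-t^2/4}$, whose contribution to the integral is $O(1/\sqrt N)$; on $\delta\sqrt N < |t| \leq T$, the uniform spectral gap gives $|\chi_N(t)| \leq C r(c_0)^N$, whose contribution is exponentially small; and the boundary term $1/T$ is itself $O(1/\sqrt N)$. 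The hard step is the uniform spectral gap of $L_t$ away from zero, which is exactly where the non-coboundary hypothesis enters; everything else is routine analytic perturbation and Esseen smoothing.
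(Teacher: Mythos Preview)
The paper does not give its own proof of this theorem: the statement is quoted directly from Guivarc'h and Hardy~\cite{GuiHar88} and closed with a terminal \qed. Your outline is precisely the Nagaev--Guivarc'h spectral method that underlies that reference, so you are reconstructing the cited argument rather than offering an alternative, and the overall architecture (twisted transfer operators, Kato perturbation of the leading eigenvalue, Esseen smoothing) is correct.

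One point deserves correction. Your justification for the uniform spectral gap of $L_t$ on $\delta \leq |t| \leq c_0$ appeals to ``the non-coboundary hypothesis'', but a unit-modulus eigenvalue of $L_t$ only forces $\s$ to be H\"older-cohomologous to a $(2\pi/t)\bbZ$-valued function, which is \emph{not} the same as $\s$ being a coboundary; the relevant hypothesis would be aperiodicity, and the paper's definition of a well-distributed pair does not assume that (indeed, the discussion after Definition~\ref{dfn:EIP} explicitly allows the lattice case). This is not fatal to your argument: simply take the Esseen cutoff $T$ equal to $\delta\sqrt{N}$, where $\delta$ is the perturbation radius, so the entire integral lies in the perturbative window and the range $\delta\sqrt{N} < |t| \leq T$ is empty. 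The expansion $\l(s) = 1 - s^2/2 + O(|s|^3)$ alone then carries the whole estimate. In this formulation the non-coboundary assumption enters only through Theorem~\ref{thm:IP}, to ensure the effective variance is nonzero and hence that $\l''(0)\neq 0$.
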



\subsection{The Enhanced Invariance Principle}\label{subs:EIP}

In addition to the preceding results on cocycle-distribution, we will also need an enhancement of the conclusion of Theorem~\ref{thm:IP} which simultaneously describes the frequency of visits of $\s^y$ to different regions in $\bbR$.  To formulate this, given $y \in Y$ and a nonempty finite subset $F \subseteq \bbZ$, let
\[\g^y_F := \frac{1}{|F|}\sum_{n\in F}\delta_{\s^y_n}.\]
This is the \textbf{occupation measure of $\s$ over the set of times $F$}.

Let $L^B(u)$, $u \in \bbR$, be Brownian local time at time $1$, regarded as a $C_\rm{c}(\bbR)$-valued random variable on the space $(C_0(0,1],\sfW_{[0,1]})$ (see, for instance,~\cite[Chapter 22]{Kal02}).  Observe that if $\phi:\bbR \to [0,\infty)$ is a mollifier and $\theta$ is a Radon measure on $\bbR$, then the convolution $\phi \star \theta$ may always be understood as the smooth function
\[u \mapsto \int \phi(v-u)\,\theta(\d v).\]

The following is the additional property of a well-distributed pair that we will need.

\begin{dfn}[Ehanced Invariance Principle]\label{dfn:EIP}
The well-distributed pair $(\bfY,\s)$ satisfies the \textbf{Enhanced Invariance Principle} if
there is a mollifier $\phi$ such that
\[\big(\traj_N(\s^y),\,((\phi\star \g^y_{[0;N)})(\sqrt{N} u))_{u \in \bbR}\big) \stackrel{\rm{law}}{\to} (B,L^B)\]
for the product of the uniform and locally uniform topologies on $C_0(0,1]\times C_\rm{c}(\bbR)$. As before, the left- and right-hand sides here are understood as random variables on $(Y,\nu)$ and $(C_0(0,1],\sfW_{[0,1]})$, respectively.
\end{dfn}

I strongly suspect that every well-distributed pair satisfies the Enhanced Invariance Principle, so that the above could instead be introduced as a theorem.  If $\s$ is aperiodic (see~\cite{GuiHar88}), then the above convergence should actually hold for every mollifier $\phi$.  If $\s$ is cohomologous to an $\ell\bbZ$-valued cocycle for some $\ell > 0$, say $\s = \tau + f\circ S - f$, then the occupation measures of $\s^y$ are `adjustments' of those of $\tau^y$, which are supported on $\ell\bbZ$.  However, the above should still hold provided $\phi$ is strictly positive on an interval $[-a,a]$ with $a > \max\{\ell,\|f\|_\infty\}$.  The proofs of these results should be based on the same spectral analysis of the complex Ruelle operator as in~\cite{GuiHar88} or~\cite[Chapter 4]{ParPol90}.  However, as far as I know this result has appeared in the literature only in special cases:
\begin{itemize}
\item In case $(\bfY,\s)$ is the pair of a simple random walk, then it follows from a much stronger classical coupling result between occupation measures of simple random walk and Brownian motion~(\cite[Theorem 10.1]{Rev90}).

\item The generalization to partial sums of Markov chains was recently established by Bromberg and Kosloff~\cite{BromKos14}, building on older results of Borodin~\cite{Bor81}.
\end{itemize}

Thus, our Theorem A is unconditional in either of the above cases.  The first of these covers the classical RWRSs.

I understand that the full generalization (even to the still-broader setting of finite-variance H\"older cocycles on Gibbs-Markov shifts --- see~\cite[Chapter 4]{Aar97},~\cite{AarDen01}) will be the subject of future work by Bromberg.

Similar results for cocycles over general Young towers appear as~\cite[Theorem 9]{DolSzaVar08} and~\cite[Proposition 3]{NanSza12}, but focusing only on finite-dimensional marginals.

The Enhanced Invariance Principle will be used to prove Theorem~\ref{thm:rate}, which evaluates our forthcoming new invariant in the case of generalized RWRS systems.  In fact, it will be needed only for proving the lower-bound half of that Theorem, in Sections~\ref{sec:dCs} and~\ref{sec:lower-bd}.

\begin{rmk}
In recent years there has been considerable interest in generalizing probabilistic limit theorems for ergodic sums to dynamical systems that admit a more general Markov-Gibbs structure or a suitable Young tower~(\cite{You98}): see, for instance,~\cite{AarDen01,Gou05,SzaVar04,DolSzaVar08,Xia09} and the many further references there.  A fairly gentle introduction to the use of Young towers is in~\cite[Chapter 4]{Bala00}, and related material can also be found in the monograph~\cite{HenHer01}.

In suspect that Theorem A can be extended to the study of generalized RWRS systems with base and cocycle given by one of these more general settings.  However, in addition to the Enhanced Invariance Principle, one would need some restriction on the relevant generating partition to obtain an analog of Corollary~\ref{cor:Gibbs-mut-inf}. \fin
\end{rmk}

\begin{rmk}
Rudolph's work in~\cite{Rud88} studies systems satisfying a rather different kind of convergence to Brownian motion: his \textbf{asymptotically Brownian} cocycles $\s$ admit some $\eta > 0$ and a $(\nu,\sfW)$-coupling $\sfP$ such that for $\sfP$-a.e. $(y,B)$ one has
\[|\s^y_n - B_n| = \rm{o}(n^{1/2 - \eta}) \quad \hbox{as}\ n\to\infty.\]
This definition follows Philipp and Stout~\cite{PhiSto75}, who establish that a wide variety of examples are $\eta$-asymptotically Brownian for some $\eta$.  In principle, the existence of such a coupling is significantly stronger than the conclusion of Theorem~\ref{thm:IP}, but it also does not seem to imply the Enhanced Invariance Principle without some additional arguments as in~\cite{BromKos14}, so our assumptions on $\s$ are actually somewhat askew to Rudolph's.  It could be that our Theorem A gives new examples of non-Bernoulli K-automorphisms, not covered by~\cite{Rud88}, but I do not know of any specific systems that fall into this gap. \fin
\end{rmk}

\section{Informal discussion of the RWRS marginal metrics}\label{sec:prelim-discuss}

This section is discursive.  It is not needed for the logic in the rest of the paper, but offers some motivation for the constructions that follow.

The new invariant below is defined in terms of the `marginal' m.p. spaces that arise from a given compact model of a generalized RWRS system.  This section will begin with a sketch of the `marginal' m.p. spaces that arise from the canonical generating partition of a classical RWRS example.

\subsection{Conditioning on the scenery, or the past}\label{subs:prelim-discuss-1}

Let $\a:\{\pm 1\}^\bbZ\times C^\bbZ\to \{\pm 1\}\times C$ be the obvious generating partition for $\rm{RWRS}_\mu$, and let $\rho_N = \a^{[0;N)}_\ast\rho$ be the distribution of the $(\a,N)$-name, as in Subsection~\ref{subs:intro-invt}.  Given a scenery distribution $\mu \in \Pr^S C^\bbZ$, let $\mu_I$ be its marginal on $C^I$ for any $I \subseteq \bbZ$.

Let $d_\a$ be the pseudometric on $\{\pm 1\}^\bbZ\times C^\bbZ$ given by the pullback under $\a$ of the complete metric on $\{\pm 1\}\times C$.  Then the marginal psm.p. spaces given by $(d_\a)^{\rm{RWRS}_\mu}_{[0;N)}$ are likewise pulled back from the finite m.p. spaces
\[\big((\{\pm 1\}\times C)^N,d_{\rm{Ham}},\rho_N\big).\]

We now sketch a provisional description of these m.p. spaces. This is in terms of the $1$-Lipschitz quotient map
\begin{center}
$\phantom{i}$\xymatrix{
\big((\{\pm 1\}\times C)^N,d_{\rm{Ham}},\rho_N\big)\ar[d]\\
\big(\{\pm 1\}^N,d_{\rm{Ham}},\nu_{1/2}^{\otimes N}\big).}
\end{center}
The idea is to describe $\rho_N$ as a lift of $\nu_{1/2}^{\otimes N}$ through this map.

Consider a fixed scenery $c = (c_m)_m\in C^\bbZ$, and define the function $F_c:\{\pm 1\}^N \to C^N$ by
\[F_c((y_n)_{n=0}^{N-1}) = (c_{\s^y_0},c_{\s^y_1},\ldots,c_{\s^y_{N-1}}).\]
Clearly this output depends only on the finite portion $c|_{\s^y_{[0;N)}}$ of $c$.  Let
\[\rho_{N,c} := \int_{\{\pm 1\}^N} \delta_{(y,F_c(y))}\,\nu_{1/2}^{\otimes N}(\d y),\]
the result of lifting $\nu_{1/2}^{\otimes N}$ to the graph of $F_c$.

We can now write the lifted measure $\rho_N$ as the average of the conditional measures of $\rho_N$ given the scenery, and these latter are precisely the graph-supported measures $\rho_{N,c}$:
\begin{eqnarray}\label{eq:cond-meass}
\rho_N = \int_{C^\bbZ}\rho_{N,c}\,\mu(\d c).
\end{eqnarray}
This decomposition of $\rho_N$ is obtained canonically from the process $(\rm{RWRS}_\mu,\a)$: it is the pushforward under $\a^{[0;N)}$ of the disintegration of $\rho$ over the strict past $\a^{(-\infty;0)}$.  This is because
\begin{itemize}
\item on the one hand, the past of the simple random walk is independent of the future,
\item but on the other, simple random walk is recurrent, so the past of the whole process a.s. determines the scenery exactly.
\end{itemize}

\subsection{Separating the conditional measures}

We can now describe the overall strategy of the proof of non-Bernoullicity in~\cite{Kal82}.  The heart of Kalikow's work is to prove that there are arbitrarily large $N$ for which the following holds.

\begin{thm}[{\cite{Kal82}}]\label{thm:Kal2}
For a fixed sequence of walk-steps $y \in \{\pm 1\}^N$ and a fixed scenery $c' \in C^\bbZ$, it holds for most $c \in C^\bbZ$ that
\[d_{\rm{Ham}}\big((y,F_c(y)),\ \rm{spt}\,\rho_{N,c'}\big) > 10^{-20} N,\]
where `most' means `with high probability as $N\to\infty$'. \qed
\end{thm}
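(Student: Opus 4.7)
Since $\rm{spt}\,\rho_{N,c'} = \{(y'', F_{c'}(y'')) : y'' \in \{\pm 1\}^N\}$, the statement asks that for typical $y$ and any fixed $c'$, with $\mu$-high probability over $c$, every $y'' \in \{\pm 1\}^N$ satisfies
\[\#\{n : y_n \neq y''_n\} + \#\{n : c_{\s^y_n} \neq c'_{\s^{y''}_n}\} > 10^{-20} N.\]
I would attack this by a union bound over $y''$, combined with a strong typicality property of $y$.

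First, an easy reduction: any $y''$ with $\#\{n : y_n \neq y''_n\} > 10^{-20} N$ already contributes enough distance through the walk-step coordinate. Attention is therefore restricted to the Hamming ball $Y_y := \{y'' : d_{\rm{Ham}}(y,y'') \leq 10^{-20} N\}$, of cardinality at most $\exp(\rmH(10^{-20}) N)$ by the standard binomial estimate, where $\rmH$ denotes binary entropy. Second, I would condition on a good event on $y$, of $\nu_{1/2}^{\otimes N}$-probability tending to $1$: by Theorem~\ref{thm:IP} together with classical local-time asymptotics for simple random walk, the range $V^y := \s^y_{[0;N)}$ has cardinality of order $\sqrt{N}$, and the multiplicities $|A_m| := |\{n : \s^y_n = m\}|$ admit a Brownian-local-time profile with $\max_m |A_m|$ of order $\sqrt{N}\log N$ or smaller.

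For a fixed $y'' \in Y_y$ and good $y$, the matching count rewrites as $\sum_{m \in V^y} f_m(c_m)$, where $f_m(a) := \#\{n \in A_m : c'_{\s^{y''}_n} = a\}$ and the $(c_m)_{m \in V^y}$ are i.i.d.\ under $\mu$. Its expectation is $\sum_{m,a}\mu(a) f_m(a) \leq (\max_a \mu(a))\,N$, so for non-degenerate $\mu$ on at least two symbols the expected matching rate is at most $1-\eta_0$ with $\eta_0>0$ fixed; consequently the mean mismatch already exceeds $\eta_0 N \gg 10^{-20} N$. What remains is concentration, and then the union bound.

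The main obstacle is the strength of concentration required. A single $c_m$ can shift the matching count by up to $|A_m|$, and $\sum_m |A_m|^2$ is of order $N^{3/2}$, so a direct bounded-differences argument gives only $\exp(-\Omega(\sqrt N))$ on deviations of order $N$. This is far weaker than the $\exp(-\rmH(10^{-20}) N)$ needed to absorb the $|Y_y|$-sized union bound. Overcoming this gap is the technical core of~\cite{Kal82}: a finer analysis of the self-intersection pattern of $\s^y$ identifies, for each $y'' \in Y_y$, an explicit set of order $N$ indices at which the randomness of $F_c(y)$ decomposes into genuinely independent, boundedly-influential contributions, yielding true exponential concentration. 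Constructing this `independence set' uniformly over $y'' \in Y_y$ is what I would carry out following Kalikow's method.
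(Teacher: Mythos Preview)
The paper does not give its own proof of this theorem: it is quoted from~\cite{Kal82} and marked with \qed. However, the paper re-uses the essential ideas of Kalikow's argument in Sections~\ref{sec:dCs} and~\ref{sec:lower-bd}, and describes the original strategy in the preamble to Section~\ref{sec:lower-bd}. Against that description, your proposal has a genuine gap.

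You correctly identify the obstacle: bounded differences gives only $\exp(-\Omega(\sqrt N))$ concentration, which cannot absorb a union bound over the $|Y_y| \approx \exp(\rmH(10^{-20})N)$ competitors. But your proposed remedy --- ``an explicit set of order $N$ indices at which the randomness of $F_c(y)$ decomposes into genuinely independent, boundedly-influential contributions'' --- cannot exist. The map $c \mapsto F_c(y)$ factors through the restriction $c|_{V^y}$, and on your own good event $|V^y| = O(\sqrt N)$. There are therefore only $O(\sqrt N)$ independent scenery symbols in play, and any non-empty event measurable with respect to $c|_{V^y}$ has probability at least $|C|^{-|V^y|} = \exp(-O(\sqrt N))$. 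No refinement of the self-intersection structure can manufacture an $\exp(-\Omega(N))$ tail from $O(\sqrt N)$ bits of randomness; the union-bound route is closed by an entropy obstruction, not a technical one.

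Kalikow's actual argument is not a sharpened union bound over $Y_y$. As the paper explains at the start of Section~\ref{sec:lower-bd}, he defines a hierarchy of events indexed by an increasing sequence of time-scales and proves by recursion on the scale that each holds with probability close to $1$. The recursion step relies on the meandering property of the walk (Proposition~\ref{prop:spade} here): at every scale, with high probability one can find many sub-intervals whose $\s^y$-images are pairwise well separated, and on these the relevant scenery comparisons become genuinely independent. The paper recasts this hierarchy geometrically via discrete Cantor families (Subsections~\ref{subs:intro-dCs}--\ref{subs:sim-to-sim}), and that reformulation is the right place to look to see how the $\sqrt N$ bits of scenery randomness are leveraged across scales rather than spent all at once against a single union bound.
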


(Indeed, Kalikow actually proves this with Feldman's weaker $\ol{\rm{f}}$-metric in place of $d_{\rm{Ham}}$.)

By Fubini's Theorem, the above implies that for $(\mu\otimes \mu)$-most pairs $(c,c')$ there is a subset $W_{c,c'} \subseteq \{\pm 1\}^N$ such that
\[\nu_{1/2}^{\otimes N}(W_{c,c'}) = 1 - \rm{o}(1) \quad \hbox{and} \quad d_{\rm{Ham}}\big((\rm{id},F_c)(W_{c,c'}),\rm{spt}\,\rho_{N,c'}\big) > 10^{-20}N.\]
This implies that a typical pair of conditional measures $\rho_{N,c}$, $\rho_{N,c'}$ are $\Omega(N)$-separated in the Wasserstein metric associated to $d_{\rm{Ham}}$, and hence that $\rm{RWRS}_\mu$ does not satisfy the Very Weak Bernoulli condition. 

An alternative description of this reasoning, more intrinsic to the metric geometry of $((\{\pm 1\}\times C)^N,d_{\rm{Ham}},\rho_N)$, uses a different characterization of Bernoullicity in terms of measure concentration.

\begin{dfn}[(Almost) Exponential measure concentration]
Let $(X_n,d_n,\mu_n)$ be a sequence of compact psm.p. spaces.  The sequence exhibits \textbf{exponential measure concentration} if for every $\delta > 0$ there is a $c > 0$ such that for any Borel set $U \subseteq X_n$ one has
\[\mu_n(U) \geq \rme^{-c n} \quad \Longrightarrow \quad \mu_n(B^{d_n}_\delta(U)) \geq 1 - \rme^{-c n}\]
for all sufficiently large $n$.  The constant $c$ is the \textbf{exponential rate} of this concentration at distance $\delta$.

The sequence exhibits \textbf{almost exponential measure concentration} if there is a sequence of Borel subsets $X_n' \subseteq X_n$ such that $\mu_n(X'_n) \to 1$ and $(X_n',d_n,(\mu_n)_{|X_n'})$ exhibits exponential measure concentration.
\end{dfn}

\begin{thm}[Exponential measure concentration in Bernoulli shifts]\label{thm:Bern-shift-almost-exp-conc}
Let $\bfX = (X,\mu,T)$ be a p.-p. system of entropy $h < \infty$, and let $\P$ be a finite generating partition of $\bfX$.  Then $\bfX$ is Bernoulli if and only if the sequence of psm.p. spaces $(X,N^{-1}d^\P_{[0;N)},\mu)$ exhibits almost exponential measure concentration. \qed
\end{thm}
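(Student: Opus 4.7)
\emph{Forward direction (Bernoulli $\Rightarrow$ almost exponential concentration).} The backbone is the classical Gaussian concentration for product probability measures under Hamming distance: for any finite probability space $(A,\pi)$, Marton's transportation inequality (or equivalently the bounded-differences inequality) gives exponential concentration of $(A^N,N^{-1}d_{\rm{Ham}},\pi^{\otimes N})$ at a rate independent of $N$. By the Ornstein Isomorphism Theorem, a Bernoulli $\bfX$ admits an isomorphism to a product shift $(B^\bbZ,\pi^{\otimes \bbZ},S)$, and the pulled-back time-zero partition $\Q$ is a Bernoulli generator of $\bfX$ for which $N^{-1}d^\Q_{[0;N)}$ is precisely that product Hamming metric. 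To transfer concentration to the given generator $\P$, approximate its time-zero map $\a:X\to A_\P$ by a $\Q^{[-W;W]}$-measurable map $\a'$ with $\mu\{\a\neq\a'\}<\eps$ (possible since $\Q$ generates). The induced name-map from $(X,N^{-1}d^\Q_{[-W;N+W)},\mu)$ to $(X,N^{-1}d^\P_{[0;N)},\mu)$ is $(2W{+}1)$-Lipschitz off the set where too many $T^n$-coordinates land in $\{\a\neq\a'\}$; this bad set has exponentially small $\mu$-measure by a standard large-deviation bound for ergodic averages, so after excising it the Lipschitz-image principle converts exponential concentration for $\Q$ into almost exponential concentration for $\P$.

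\emph{Backward direction (almost exponential concentration $\Rightarrow$ Bernoulli).} I aim to verify Ornstein's Very Weak Bernoulli property for $(\bfX,\P)$: for every $\eps>0$ and all $N$ sufficiently large, for a $\mu$-measure $\geq 1-\eps$ set of pasts $y$, the $\bar d_N$-distance (Wasserstein w.r.t.\ normalized Hamming on $A_\P^N$) between $\a^{[0;N)}_\ast\mu_y$ and $\a^{[0;N)}_\ast\mu$ is at most $\eps$; Ornstein's theorem then yields Bernoullicity. Three ingredients combine. First, by the Bobkov--G\"otze/Marton equivalence between exponential concentration in Hamming and the $T_1$-transportation inequality, almost exponential concentration of $(X,N^{-1}d^\P_{[0;N)},\mu)$ yields, on the good subset, a bound
\[
\bar d_N\big(\a^{[0;N)}_\ast\mu_y,\ \a^{[0;N)}_\ast\mu\big)\ \leq\ C\sqrt{\rmD\big(\a^{[0;N)}_\ast\mu_y\,\big\|\,\a^{[0;N)}_\ast\mu\big)\big/N}\ +\ \rm{o}(1).
\]
Second, Lemma~\ref{lem:alt-ent-formula} combined with Shannon--McMillan gives
\[
\int \rmD\big(\a^{[0;N)}_\ast\mu_y\,\big\|\,\a^{[0;N)}_\ast\mu\big)\,\mu(\d y)\ =\ \rmH_\mu(\P^{[0;N)}) - Nh\ =\ \rm{o}(N),
\]
using that $\rmH_\mu(\P^{[0;N)}\,|\,\P^{(-\infty;0)}) = Nh$ in the $M\to\infty$ limit of that Lemma. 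Third, Markov's inequality forces the integrand to be $\rm{o}(N)$ for all but an $\eps$-fraction of pasts $y$, and substituting into the first display gives $\bar d_N = \rm{o}(1)$ for those $y$, i.e.\ VWB.

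\emph{Main obstacle.} The delicate step is wringing a clean transportation inequality out of the `almost' clause, since Marton's implication is cleanest on spaces carrying true exponential concentration, whereas the excised bad subset $X\setminus X'_N$ could, a priori, carry significant $\mu_y$-mass for some atypical $y$. The fix is a Fubini--Markov argument: $\int \mu_y(X'_N)\,\mu(\d y) = \mu(X'_N)\to 1$, so most conditional measures $\mu_y$ assign mass $1-\rm{o}(1)$ to $X'_N$. Restrict and renormalize both $\mu_y$ and $\mu$ to $X'_N$, apply Marton's implication on this honest probability space, then undo the restriction, absorbing the excised mass as an additional $\rm{o}(1)$ total-variation contribution to $\bar d_N$. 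Analogous bookkeeping handles the Lipschitz-image step in the forward direction, where the domain of $(2W{+}1)$-Lipschitzness is only a large-measure subset rather than all of $X$.
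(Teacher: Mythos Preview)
The paper does not prove this theorem: it is stated with a \qed\ and immediately attributed to \cite[Theorem III.4.3]{Shi96}, \cite[Chapter 5]{KalMcC10}, and the original Marton--Shields paper~\cite{MarShi94}. So there is no in-paper argument to compare against; your plan is an attempt to reconstruct what those references do.

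Your forward direction is essentially the standard route and is sound. One small correction: the phrase ``standard large-deviation bound for ergodic averages'' is too casual for a general ergodic system, but here you are inside a Bernoulli shift with i.i.d.\ generator $\Q$, and after approximating $1_{\{\a\neq\a'\}}$ by a $\Q^{[-W';W']}$-measurable function you get a finitely-dependent process, for which the needed exponential tail is genuinely standard. That suffices for the excision step.

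The backward direction has a real gap as written. The Bobkov--G\"otze/Marton equivalence is between the $T_1$ inequality $W_1(\nu,\mu)\le\sqrt{2C\,\rmD(\nu\|\mu)}$ and \emph{Gaussian} concentration, i.e.\ a rate of the specific form $c(\delta)\sim \delta^2$. The blowing-up property in the paper's Definition only promises that for each $\delta$ there exists \emph{some} $c(\delta)>0$, with no control on the $\delta$-dependence. So you cannot extract the displayed inequality $\bar d_N\le C\sqrt{\rmD/N}+\rm{o}(1)$ with a fixed constant $C$. What \emph{does} follow from blowing-up (via Kantorovich duality and the paper's Lemma~\ref{lem:ent-to-unif-int}, after a short tail-integral computation) is the qualitatively weaker statement: for every $\delta>0$ there is $\eta>0$ such that $\rmD(\a^{[0;N)}_\ast\mu_y\|\a^{[0;N)}_\ast\mu)\le\eta N$ implies $\bar d_N(\mu_y,\mu)\le\delta$ for large $N$. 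Combined with your second ingredient ($\int\rmD_y\,\d\mu=\rm{o}(N)$) and Markov, this still yields VWB, so your strategy survives --- but you should replace the square-root transportation bound by this $\delta$-by-$\delta$ implication, which is closer to how Marton--Shields and Shields actually argue (they pass through the finitely-determined property rather than an explicit $T_1$ constant).
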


This is essentially the same as~\cite[Theorem III.4.3]{Shi96}, or can be quickly deduced from the implications proved in~\cite[Chapter 5]{KalMcC10}.  It was introduced explicitly into ergodic theory by Marton and Shields in~\cite{MarShi94}, where it was called the `blowing-up property'.  It is, however, also very close to Thouvenot's notion of `extremality', presented in~\cite[Definition 6.3]{Tho02} but devised much earlier.  These properties are now properly viewed as instances of the general phenomenon of concentration of measure: see, for instance,~\cite{Led--book} or~\cite[Chapter 3$\frac{1}{2}$]{Gro01} for an introduction.

Returning to RWRS$_\mu$, now fix some very small $\eps > 0$. Since simple random walk is diffusive, we may pick some large distance-cutoff $R \in \bbN$ so that the set
\[Y_N := \{y \in \{\pm 1\}^N\,|\ \s^y_{[0;N)}\subseteq [-R\sqrt{N};R\sqrt{N}]\}\]
has $\nu_{1/2}^{\otimes N}(Y_N) > 1 - \eps$ for all sufficiently large $N$.  Let $Z_N := Y_N\times C^N$, so
\[\rho_N(Z_N) = \rho_{N,c}(Z_N) = \nu_{1/2}^{\otimes N}(Y_N) > 1 - \eps,\]
because $\rho_N$ and each $\rho_{N,c}$ is a lift of $\nu_{1/2}^{\otimes N}$.

In addition, Theorem~\ref{thm:SM} gives subsets $X^{\rm{SM}}_{I,\eps} \subseteq C^I$ for each bounded discrete interval $I \subseteq \bbZ$ such that
\[|X^{\rm{SM}}_{I,\eps}| \leq \exp((\rmh(\mu,S) + \eps)|I|) \quad \hbox{and} \quad \mu_I(X^{\rm{SM}}_{I,\eps}) > 1 - \rm{o}(1) \ \hbox{as}\ |I| \to\infty.\]
Let $X_N := X^{\rm{SM}}_{[-R\sqrt{N};R\sqrt{N}],\eps}$.

If $y \in Y_N$, then $F_c(y)$ depends only on the portion $c|_{[-R\sqrt{N};R\sqrt{N}]}$, and therefore $(\rho_{N,c})_{|Z_N}$ depends only on $c|_{[-R\sqrt{N};R\sqrt{N}]}$.  With some slight abuse of notation, it follows that
\begin{multline}\label{eq:cond-meass2}
\rho_N \approx_\eps (\rho_N)_{|Z_N} = \int_{C^{[-R\sqrt{N};R\sqrt{N}]}} (\rho_{N,c})_{|Z_N}\,\mu_{[-R\sqrt{N};R\sqrt{N}]}(\d c)\\
\approx_\eps \int_{X_N} (\rho_{N,c})_{|Z_N}\,\mu_{[-R\sqrt{N};R\sqrt{N}]}(\d c)
\end{multline}
for sufficiently large $N$.  Thus, most of the mass in the decomposition~(\ref{eq:cond-meass}) is a convex combination of $|X_N| \leq \exp(2R(\rmh(\mu,S) + \eps)\sqrt{N})$ different measures supported on the graphs of the functions $F_c|_{Y_N}$.

Now, Kalikow's conclusion in Theorem~\ref{thm:Kal2} may easily be adapted to see that most pairs of the measures in the coarsened decomposition~(\ref{eq:cond-meass2}) are also well-separated in the Wasserstein metric.  Since there are only $\exp(\rm{O}(\sqrt{N}))$ of these measures, an easy argument now shows that this precludes $\rho_N$ from exhibiting almost exponential measure concentration.

\subsection{Significance for approximate recovery of the scenery}\label{subs:approx-recov}

Our work below will re-use the main ideas from Kalikow's proof of Theorem~\ref{thm:Kal2}, but to a different end.  As discussed in the Introduction, the scenery entropy $\rmh(\mu,S)$ should appear in estimates on the mutual information $\rmI_\rho(\a^{[-N;0)};\a^{[0;N)})$.  However, we need to make this quantity more robust, by asking after the information about a pair $(y,c)$ that can be recovered if one knows the output strings $\a^{[-N;0)}(y,c)$ and $\a^{[0;N)}(y,c)$ only approximately.

We still expect this information to reside in that part of the scenery visited by both of the trajectories $\s^y_{[-N;0)}$ and $\s^y_{[0;N)}$, so the heart of the matter is now the ability to recover $c|_{\s^y_{[0;N)}}$ approximately if one only knows
\[\a^{[0;N)}(y,c) = (y,F_c(y)).\]
approximately.

This is difficult, because the map $(c,y) \mapsto F_c(y)$ can contract the relevant Hamming distances very greatly.

\begin{ex}\label{ex:hard-to-recover}
If $y = (y_n)_{n \in [0;N)}$ and $y' = (y'_n)_{n \in [0;N)}$ are chosen so that $y_0 = 1$, $y'_0 = -1$, but $y_n = y'_n$ for all $n\in [1;N)$, then 
\[d_{\rm{Ham}}(y,y') = 1,\]
but
\[\s^y_n = \s^{y'}_n + 2 \quad \forall n \in [1;N).\]
Therefore, if $c \in C^\bbZ$ and $c' := S^2c$, then $F_c(y)$ and $F_{c'}(y')$ agree in every coordinate in $[1;N)$.  Thus
\[d_{\rm{Ham}}\big((y,F_c(y)),(y',F_{c'}(y')\big) = 1,\]
even though $c$ and $c'$ could be very far apart according to the relevant Hamming metric.  More subtle examples of this phenomenon are described in~\cite{Lin99}. \fin
\end{ex}

Therefore, if one knows $y|_{[0;N)}$ only up to a small Hamming-metric error, it could happen that $c|_{\s^y_{[0;N)}}$ cannot be recovered up to a small Hamming error from the output-string $F_c(y)$.  In order to work around this problem, we will need to set up a different, weaker sense in which approximate knowledge of $(y,F_c(y))$ constrains the possible choices of $c$, which is still strong enough that we obtain the same leading-order asymptotics as for true mutual information.

In view of the above example, a natural conjecture in this direction would be that, after excluding a small-probability set of `bad' trajectories $y$, it holds that
\begin{multline*}
d_{\rm{Ham}}\big((y,F_c(y)),(y',F_{c'}(y'))\big) \approx 0\\
\Longrightarrow \quad \frac{|\s^y_{[0;N)} \triangle \s^{y'}_{[0;N)}|}{\sqrt{N}} \approx 0 \quad \hbox{and} \quad \overline{\rm{f}}_{\s^y_{[0;N)}}(c,c') \approx 0,
\end{multline*}
where $\overline{\rm{f}}_I$ is Feldman's metric over a bounded discrete interval $I$ from~\cite{Feld76}.   Unfortunately, I do not know how to prove this.  Instead, we will work with an even weaker (and significantly more complicated) notion of similarity between sceneries.  Setting up this notion and then proving the analog of the above implication will be the most substantial part of our work, and will occupy most of Sections~\ref{sec:dCs} and~\ref{sec:lower-bd}.

\begin{rmk}
The above discussion is suggestive of a link with the `scenery reconstruction problem', which asks whether the entire scenery $c$ can eventually be reconstructed from only the output string $(c_0,c_{y_0},c_{\s^y_1},\ldots)$, with probability $1$ in the choice of $(y_0,y_1,\ldots)$.  Much is known about that problem, but the methods do not seem well-adapted to the problem of `approximate reconstruction' described above.  Essentially, this is because in those works the scenery is reconstructed only very `slowly': that is, the patch $c|_{[-m;m]}$ can be recovered with high probability only once one has seen $(c_0,c_{y_0},c_{\s^y_1},\ldots,c_{\s^y_M})$ for some $M \gg m^2$.  The best control on the necessary $M$ is some high-degree polynomial in $m$, obtained by Matzinger and Rolles in~\cite{MatRol03}. They conjecture that it suffices to use $M \ll m^{2 + \eps}$ for any $\eps > 0$, but this would still be too large for our purposes.  Nevertheless, it would be interesting to know of any conceptual intersection between their methods and ours.

More background on scenery reconstruction can be found in Section 3 of the survey~\cite{denHSte06}, and in the dedicated surveys~\cite{MatLember--overview} and~\cite{Kes98}. \fin
\end{rmk}

\begin{rmk}
Another proposal for an invariant of systems that should capture something like the above sequence of mutual informations is Vershik's `secondary entropy', formulated in~\cite[Section 7]{Ver00}.  Essentially, it amounts to quantifying the failure of the Very Weak Beroulli property of an abstract process $(\bfZ,\R)$ in terms of packings numbers within the space of future-name distributions.  However, I am not aware that this quantity has been shown to be invariant under isomorphisms of processes, and I also do not see how to estimate it accurately enough for RWRS processes.  Nevertheless, Vershik's idea was a key motivation for the invariant that we define below. \fin
\end{rmk}

\section{The new invariant}\label{sec:RRDr}

This section is largely concerned with general metric or pseudometric spaces, or general compact model p.-p. systems $\bfX = (X,d^X,\mu,T)$.  For these systems, the key to our new invariant will be to consider not just the asymptotic behaviour of the sequence of metrics $d_{[0;N)}^\bfX$ on $(X,\mu)$, but that of the sequence of \emph{pairs} of metrics
\[d_{[-N;0)}^\bfX \quad \hbox{and} \quad d_{[0;N)}^\bfX.\]

\subsection{Pair-metric spaces and bi-neighbourhoods}

\begin{dfn}
A \textbf{pair-metric space} is a triple $(X,d_1,d_2)$ in which $d_1$ and $d_2$ are two compact metrics generating the same topology on $X$.  A \textbf{pair-m.m.} (resp. \textbf{pair-m.p.}) \textbf{space} is a quadruple $(X,d_1,d_2,\mu)$ consisting of a pair-metric space and a finite Radon (resp. Radon probability) measure on $X$.
\end{dfn}

Note that we always assume compactness without mentioning it in the nomenclature.  It will be important that $d_1$ and $d_2$ do not generate different topologies.

It will be convenient to allow also pairs of pseudometrics.

\begin{dfn}
A \textbf{pair-pseudometric space} is a triple $(X,d_1,d_2)$ in which $X$ is a standard Borel space and $d_1$ and $d_2$ are two totally bounded Borel pseudometrics $X\times X\to [0,\infty)$.  A \textbf{pair-psm.m.} (resp. \textbf{pair-psm.p.}) \textbf{space} is a quadruple $(X,d_1,d_2,\mu)$ consisting of a pair-psuedometric space and a finite Radon (resp. Radon probability)  measure on $X$.
\end{dfn}

Note again that we always assume total boundedness without mentioning it in the nomenclature.

\begin{dfn}
If $(X,d^X_1,d^X_2)$ and $(Y,d^Y_1,d^Y_2)$ are pair-pseudometric spaces and $c,L > 0$, then a map $f:X\to Y$ is \textbf{$L$-pair-Lipschitz} (resp. \textbf{$c$-almost $L$-pair-Lipschitz}) if it is $L$-Lipschitz (resp. $c$-almost $L$-Lipschitz) as a map $(X,d^X_i) \to (Y,d^Y_i)$ for $i = 1,2$.
\end{dfn}

\begin{ex}\label{ex:simple-wedge-diag}
Let $X = [0,1]^3$, and let
\[d_1((x_1,x_2,x_3),(x'_1,x'_2,x'_3)) := |x_1 - x_1'| + |x_2 - x_2'|\]
and
\[d_2((x_1,x_2,x_3),(x'_1,x'_2,x'_3)) := |x_2 - x_2'| + |x_3 - x_3'|.\]
Then $(X,d_1,d_2)$ is a pair-pseudometric space in which neither $d_1$ nor $d_2$ is a metric. \fin
\end{ex}

Given a p.-p. system $(X,\mu,T)$ and a totally bounded Borel pseudometric $d^X$ on $X$, we will consider the sequence of pair-psm.p. spaces
\[(X,d_{[-N;0)}^\bfX,d^\bfX_{[0;N)},\mu), \quad N\geq 1.\]
These are referred to as the \textbf{marginal pair-psm.p. spaces} of $(X,d^X,\mu,T)$.

Our new invariant will involve some quantification of how much information is `robust' under both of the pseudometrics $d^\bfX_{[-N;0)}$ and $d^\bfX_{[0;N)}$ on $X$.  This will be made precise via the following notion.

\begin{dfn}
Let $(X,d_1,d_2)$ be a pair-pseudometric space and $\delta \geq 0$.  The \textbf{$\delta$-bi-neighbourhood} in $(X,d_1,d_2)$ around a point $x \in X$ is the set
\[B^{d_2}_\delta(B^{d_1}_\delta(x)).\]
A pair of points $(x,y) \in X^2$ is \textbf{$\delta$-bi-separated} in $(X,d_1,d_2)$ if
\[B^{d_2}_\delta(B^{d_1}_\delta(x))\cap B^{d_2}_\delta(B^{d_2}_\delta(y)) = \emptyset.\]
\end{dfn}

These definitions are not symmetrical in $d_1$ and $d_2$; though possibly disappointing, this will not matter in the sequel.

Give a subset $F \subseteq X$, its $\delta$-bi-neighbourhood is
\[B_\delta^{d_2}(B_\delta^{d_1}(F)) = \bigcup_{x \in F}B_\delta^{d_2}(B_\delta^{d_1}(x)).\]

The property of bi-separation will not be used much below, but it gives some useful first intuition for bi-neighbourhoods.  Explicitly, $x,y \in X$ are $\delta$-bi-separated if for any $x',y',z \in X$, the following four inequalities cannot all hold:
\[d_1(x,x') \leq \delta, \quad d_2(x',z) \leq \delta, \quad d_2(z,y') \leq \delta \quad \hbox{and} \quad d_1(y',y) \leq \delta.\]
Thus, this asserts that one cannot move from $x$ to $y$ by taking a jump which is very small for the metric $d_1$, then two jumps which are very small for $d_2$, then another jump which is very small for $d_1$.

Clearly if $(x,y)$ is $\delta$-bi-separated, then one must have $d_1(x,y) \geq 2\delta$ and also $d_2(x,y) \geq 2\delta$.  However, the reverse of this implication need not hold, even approximately.

\begin{ex}
Recall the pair-pseudometric space in Example~\ref{ex:simple-wedge-diag}, and let $x = (x_1,x_2,x_3)$ and $x' = (x'_1,x'_2,x'_3)$ be points of $[0,1]^3$.  Then
\[B^{d_2}_\delta(B^{d_1}_\delta(x)) := \{(x'_1,x'_2,x'_3)\,|\ |x_2 - x_2'| \leq 2\delta\},\]
and so $x,y$ are $\delta$-bi-separated if and only if
\[|x_2 - y_2| \geq 4\delta.\]
In particular, the points $(0,1,0)$ and $(1,1,1)$ are far apart according to both $d_1$ and $d_2$, but are not $\delta$-bi-separated for any $\delta > 0$. \fin
\end{ex}

The following is now the obvious ananlog of~(\ref{eq:cov}) for bi-neighbourhoods.

\begin{dfn}
Let $(X,d_1,d_2,\mu)$ be a pair-psm.m. space.  For $a,\delta > 0$, the \textbf{$a$-partial $\delta$-bi-covering number} is
\[\bicov_a((X,d_1,d_2,\mu),\delta) := \min\{|F|\,|\ F \subseteq X,\ \mu(B_\delta^{d_2}(B_\delta^{d_1}(F))) > a\}.\]

We also define simply
\[\bicov((X,d_1,d_2),\delta):= \min\{|F|\,|\ F\subseteq X,\ B_\delta^{d_2}(B_\delta^{d_1}(F)) = X\},\]
by analogy with classical covering numbers.
\end{dfn}

\begin{rmk}
Similarly, there is an obvious definition of $\bipack((X,d_1,d_2),\delta)$ in terms of bi-separation.  However, unlike for classical covering and packing numbers, I believe there are no simple relations between $\bicov$ and $\bipack$.  In essence, this is because the estimates relating covering and packing numbers rely on the inclusion
\[B_\delta(B_\delta(x)) \subseteq B_{2\delta}(x) \quad \forall x,\delta.\]
However, no corresponding inclusion need hold in the pair-pseudometric setting: given any $\delta \ll \delta'$, one can easily concoct examples in which $B_\delta^{d_2}(B_\delta^{d_1}(B_\delta^{d_2}(B_\delta^{d_1}(x))))$ is much larger than $B_{\delta'}^{d_2}(B_{\delta'}^{d_1}(x))$.

In fact, one could develop most of the rest of the present paper using bi-packing instead of bi-covering numbers, and I believe they would still serve to distinguish RWRS systems.  Bi-covering numbers seem to require slightly simpler estimates, so we focus on them.  However, it would be interesting to know of examples of systems for which these two different quantities give genuinely different invariants, perhaps with one behaving trivially and the other non-trivially. \fin
\end{rmk}

Now suppose that $(X,d^X_1,d^X_2)$ and $(Y,d^Y_1,d^Y_2)$ are pair-pseudometric spaces, that $c,L > 0$, and that $\Phi:X\to Y$ is a $c$-almost $L$-pair-Lipschitz map.  In this case, one has the obvious inclusion
\[\Phi\big(B^{d^X_2}_\delta(B^{d^X_1}_\delta(x))\big) \subseteq B^{d^Y_2}_{L\delta+c}(B^{d^Y_1}_{L\delta+c}(\Phi(x))) \quad \forall x \in X,\]
and hence also
\begin{eqnarray}\label{eq:bihood-under-pair-Lip}
\Phi\big(B^{d^X_2}_\delta(B^{d^X_1}_\delta(F))\big) \subseteq B^{d^Y_2}_{L\delta+c}(B^{d^Y_1}_{L\delta+c}(\Phi(F))) \quad \forall F \subseteq X.
\end{eqnarray}
This leads immediately to the following.

\begin{lem}\label{lem:bicov-under-pair-Lip}
Let $c,L,a,\delta > 0$. Suppose that $(X,d^X_1,d^X_2)$ and $(Y,d^Y_1,d^Y_2)$ are pair-pseudometric spaces, that $\mu$ is a finite Borel measure on $X$, and that $\Phi:X\to Y$ is a $c$-almost $L$-pair-Lipschitz map.  Then
\[\bicov_a((X,d^X_1,d^X_2,\mu),\delta) \geq \bicov_a((Y,d_1^Y,d_2^Y,\Phi_\ast\mu),L\delta + c).\]
\end{lem}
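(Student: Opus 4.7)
The plan is straightforward: pick a finite set $F \subseteq X$ that witnesses $\bicov_a((X,d^X_1,d^X_2,\mu),\delta)$ --- i.e.\ $|F|$ equals this bi-covering number and $\mu(B^{d^X_2}_\delta(B^{d^X_1}_\delta(F))) > a$ --- and show that its image $\Phi(F)$ witnesses a valid bi-covering on the $Y$-side at the enlarged scale $L\delta+c$.

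Substituting the chosen $F$ directly into the inclusion~(\ref{eq:bihood-under-pair-Lip}) gives
\[
\Phi\big(B^{d^X_2}_\delta(B^{d^X_1}_\delta(F))\big)\ \subseteq\ B^{d^Y_2}_{L\delta+c}\big(B^{d^Y_1}_{L\delta+c}(\Phi(F))\big),
\]
and the standard set-theoretic fact $A \subseteq \Phi^{-1}(\Phi(A))$ then upgrades this to
\[
B^{d^X_2}_\delta(B^{d^X_1}_\delta(F))\ \subseteq\ \Phi^{-1}\Big(B^{d^Y_2}_{L\delta+c}\big(B^{d^Y_1}_{L\delta+c}(\Phi(F))\big)\Big).
\]
Applying $\mu$ to both sides and using the definition of pushforward yields
\[
(\Phi_\ast\mu)\Big(B^{d^Y_2}_{L\delta+c}\big(B^{d^Y_1}_{L\delta+c}(\Phi(F))\big)\Big)\ \geq\ \mu\big(B^{d^X_2}_\delta(B^{d^X_1}_\delta(F))\big)\ >\ a,
\]
so $\Phi(F)$ qualifies as a competitor for the right-hand bi-covering number at radius $L\delta+c$. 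Hence
\[
\bicov_a((Y,d^Y_1,d^Y_2,\Phi_\ast\mu),L\delta+c)\ \leq\ |\Phi(F)|\ \leq\ |F|\ =\ \bicov_a((X,d^X_1,d^X_2,\mu),\delta),
\]
which is the desired inequality.

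There is essentially no main obstacle --- the work is already done in deriving~(\ref{eq:bihood-under-pair-Lip}) and in the definition of $\bicov_a$. The only bookkeeping point worth a sentence is measurability of the bi-neighbourhoods, which is needed for $(\Phi_\ast\mu)$ to be evaluated on them. Since $F$ is finite, $d^X_i$ and $d^Y_i$ are Borel on the respective product spaces, and finite unions of open Borel balls are Borel, every set on which a measure is applied above is Borel. No continuity or injectivity of $\Phi$ is used; the lemma is purely a consequence of the metric distortion bound together with the functorial behaviour of pushforwards.
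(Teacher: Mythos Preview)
Your proof is correct and takes essentially the same approach as the paper: both use the inclusion~(\ref{eq:bihood-under-pair-Lip}) to show that $\Phi(F)$ is a valid competitor for the $Y$-side bi-covering number, then compare cardinalities. The paper compresses your pushforward step into the single inequality $\Phi_\ast\mu(\Phi(A)) \geq \mu(A)$, which is exactly your $A \subseteq \Phi^{-1}(\Phi(A))$ rewritten.
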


\begin{proof}
If $F \subseteq X$, then~(\ref{eq:bihood-under-pair-Lip}) implies
\[\Phi_\ast\mu\big(B_{L\delta+c}^{d_2^Y}(B_{L\delta+c}^{d_1^Y}(\Phi(F)))\big) \geq \Phi_\ast\mu\big(\Phi\big(B_\delta^{d_2^X}(B_\delta^{d_1^X}(F))\big)\big) \geq \mu(B_\delta^{d_2^X}(B_\delta^{d_1^X}(F))).\]
\end{proof}

%
%

\subsection{Passing to subsets}\label{subs:competition}

By analogy with~(\ref{eq:KS-dfn}), a natural place to look for a new invariant of a p.-p. system $(X,\mu,T)$ would be in the asymptotic behaviour of
\begin{eqnarray}\label{eq:simple-from-bicov}
\bicov_{1 - \eps}((X,d^\bfX_{[-N;0)},d^\bfX_{[0;N)},\mu),\delta N)
\end{eqnarray}
as $N\to\infty$ for a suitable choice of (pseudo)metric $d$ on $X$, possibly then also sending $\eps \downarrow 0$ and $\delta \downarrow 0$ in the right order.

The arguments below can easily be adapted to show that one does obtain isomorphism-invariants this way.  However, as far as I know, they do not achieve the purpose of distinguishing RWRS systems.  Instead, our new invariant will be obtained from the bi-covering numbers of various \emph{subspaces} of $(X,d^\bfX_{[-N;0)},d^\bfX_{[0;N)},\mu)$.

The need to pass to subsets in a controlled way will be discussed in more detail shortly.  There are surely many ways to do this which will lead to a more refined invariant.  The procedure of this subsection is the simplest I have found to work, but is by no means canonical.

The key next point to emphasize is that bi-neighbourhoods can behave much more subtly than ordinary neighbourhoods under passing to subspaces.  If $x \in Y \subseteq X$, then the $\delta$-bi-neighbourhood of $x$ in the pair-metric subspace $(Y,d_1,d_2)$ is
\begin{eqnarray}\label{eq:subset-bad}
Y\cap B_\delta^{d_2}(Y\cap B_\delta^{d_1}(x)),
\end{eqnarray}
and this may be much smaller than just $Y\cap B_\delta^{d_2}(B_\delta^{d_1}(x))$.  Crucially, this means that bi-covering numbers can \emph{in}crease under passing to subsets.

\begin{ex}\label{ex:simple-again}
Let $(X,d_1,d_2)$ be as in Example~\ref{ex:simple-wedge-diag}, and let $U := \{(x,0,x)\,|\ x \in [0,1]\} \subset [0,1]^3$.  One has
\[d_i((x,0,x),(y,0,y)) = |x-y| \quad \hbox{for both}\ i=1,2,\]
and so within the pair-pseudometric space $(U,d_1,d_2)$, the $\delta$-bi-neighbourhood of $(x,0,x)$ is precisely
\[\{(y,0,y)\,|\ |x - y| \leq 2\delta\}.\]

By contrast, letting $V := [0,1]\times \{0\}\times [0,1]$, for any $(x,0,x'),(y,0,y') \in V$ one has
\[d_1((x,0,x'),(x,0,y')) = d_2((x,0,y'),(y,0,y')) = 0,\]
and so for every point of $V$, its $\delta$-bi-neighbourhood in $(V,d_1,d_2)$ is the whole of $V$, for any $\delta > 0$.

Therefore, even though $U\subseteq V$, we obtain
\[\bicov((U,d_1,d_2),\delta) \sim (2\delta)^{-1} \quad \hbox{whereas} \quad \bicov((V,d_1,d_2),\delta) = 1 \quad \forall \delta > 0.\]
\fin
\end{ex}

Now consider some further parameters $\a \in [1,\infty)$ and $\k > \k' > 0$.

\begin{dfn}\label{dfn:big-bicov}
For a pair-psm.p. space $(X,d_1,d_2,\mu)$, $\a \in [1,\infty)$, $\delta > 0$, and $\k > \k' > 0$ we define the \textbf{bi-covering number profile} by
\[\BICOV_{\a,\k,\k',\delta}(X,d_1,d_2,\mu) := \min_{\|\d\mu'/\d\mu\|_\infty \leq \a}\ \max_{\hbox{\scriptsize{$\begin{array}{c}U\subseteq X\\ \mu'(U) \geq \k\end{array}$}}}\bicov_{\k'}((U,d_1,d_2,\mu'),\delta).\]
\end{dfn}

This definition is quite involved, and clearly warrants some discussion.

An intuitive way to think about Definition~\ref{dfn:big-bicov} is in terms of a competition between two players, Max-er and Min-er.  Given a compact pair-psm.p. space $(X,d_1,d_2,\mu)$, Max-er and Min-er compete to produce a subset $U \subseteq X$.  Max-er's goal to to maximize the resulting value of $\bicov_{\k'}((U,d_1,d_2,\mu'),\delta)$ for some new auxiliary measure $\mu'$, and Min-er's goal is to minimize it.  They play as follows\footnote{Note that because the number of turns is limited to two, this is not a `game' in the fully-fledged mathematical sense.}:
\begin{enumerate}
\item First, Min-er may choose any new measure $\mu' \in \Pr X$, provided
\begin{eqnarray}\label{eq:bdd-RN-deriv}
\Big\|\frac{\d\mu'}{\d\mu}\Big\|_\infty \leq \a.
\end{eqnarray}
The natural choice to imagine here is $\mu' := \mu_{|A}$ for some $A \subseteq X$ with $\mu(A) \geq \a^{-1}$.  We allow the relaxation to arbitrary measures satisfying~(\ref{eq:bdd-RN-deriv}) because it makes some later arguments smoother (and we work with $\|\cdot\|_\infty$, rather than any other norm, also as a matter of convenience).
\item Second, Max-er chooses a subset $U\subseteq X$ for which $\mu'(U) \geq \k$.  For instance, if $\mu' = \mu_{|A}$, then this is equivalent to $\mu(U\cap A) \geq \k\mu(A)$.  So this choice by Max-er is constrained by Min-er's earlier choice of $\mu'$: for instance, for any subset $A$ of measure at least $\a^{-1}$, Min-er is able to force Max-er to include a not-too-small piece of that subset in her choice of $U$.
\end{enumerate}

(Implicitly, there is a third minimization turn implied by the definition of $\bicov$, in which Min-er chooses a subset of $U$ of measure at least $\k'$ that can be covered most efficiently by bi-neighbourhoods.  The flexibility of this last choice is also important in case Max-er's choice of $U$ contains some unwieldy subset of measure less than $\k - \k'$, since Min-er is then not required to cover that portion of $U$.)

Let us motivate this idea by sketching how it repairs certain defects of its simpler relative in~(\ref{eq:simple-from-bicov}).

As suggested above,~(\ref{eq:simple-from-bicov}) can be used to give an isomorphism invariant of p.-p. systems. The problem seems to be that it is very difficult to compute, for two distinct reasons.
\begin{itemize}
\item Firstly, $X$ could contain small subsets that have a heavy `pathological' effect on the bi-covering numbers, in that they either decrease or increase them drastically.  A drastic decrease is easy to visualize: imagine removing a tendril of fairly small measure which is long and thin for both $d_1$ and $d_2$.  This possibility would already be dealt with by our requiring only a partial covering of $X$, up to a certain measure.  However, as seen in Example~\ref{ex:simple-again}, removing a subset can also \emph{increase} bi-covering numbers, and I do not know how to rule out the possibility that removing a very small subset is responsible for a very large increase.  We need a definition that is stable under this possibility as well.

Definition~\ref{dfn:big-bicov} overcomes this latter problem in the second turn of the competition above: it is in Max-er's interest to choose a subset that removes any `decreasing pathology'.

\item Secondly, even if one is allowed to trim away pathologies of both the kinds above, the pair-psm.p. spaces
\[(X,d^\bfX_{[-N;0)},d^\bfX_{[0;N)},\mu)\]
can still be quite `inhomogeneous': they can contain various large-measure subsets that exhibit a broad spectrum of different asymptotics for their bi-covering numbers.  It could be difficult to work out how these different subsets contribute to an overall bi-covering number.  This will be discussed further for the particular skew-products of Theorem A in Section~\ref{sec:prelim-discuss}.

To overcome this problem, Definition~\ref{dfn:big-bicov} allows Min-er a first turn in which he is allowed to restrict attention to any not-too-small subset -- this should result in him cutting away the `bigger part' of $X$ from the point of view of $\bicov$.
\end{itemize}

Crucially, the formulation of Definition~\ref{dfn:big-bicov} in terms of repeated optimization --- that is, as a competition --- gives a way to excise these problems that is intrinsic to the pair-psm.p.-space structure.  This intrinsicality of $\rm{BICOV}$ will be key to its giving an isomorphism invariant of systems.

Understanding Definition~\ref{dfn:big-bicov} in terms of a competition will also help to guide us through the proofs of estimates on $\rm{BICOV}$ values later in the paper.  To prove an upper bound, one imagines playing as Min-er with Max-er playing optimally, and to prove a lower bound, one imagines the reverse.

\subsection{The new invariant}

To define our new invariant in terms of $\BICOV$, it is natural to focus on the metrics appearing in compact models.  Theorem~\ref{thm:vara} gives such a model for any system, and we will soon show that two isomorphic compact models give the same invariant up to some natural equivalence.  (However, it is sometimes convenient to use other pseudometrics on $X$ for some comparison with the metric in a compact model, hence the decision to include general pseudometrics above.)

The marginal pair-m.p. spaces of different compact systems are related using the following extension of Lusin's Theorem.

\begin{lem}\label{lem:marg-maps-approx}
Let $\Phi:(X,d^X,\mu,T)\to (Y,d^Y,\nu,S)$ be a Borel factor map of compact model p.-p. systems, and let $\eps > 0$.  Then there is an $L < \infty$ such that for all sufficiently large $N \in \bbN$ there is a compact subset $X_0 \subseteq X$ with $\mu(X_0) > 1 - \eps$ and such that $\Phi|X_0$ is continuous and $(\eps N)$-almost $L$-pair-Lipschitz from $(X_0,d^\bfX_{[-N;0)},d^\bfX_{[0;N)})$ to $(Y,d^\bfY_{[-N;0)},d^\bfY_{[0;N)})$.
\end{lem}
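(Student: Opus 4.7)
The approach is to combine Lusin's theorem with a maximal/Markov visit-frequency bound: Lusin gives a compact set on which $\Phi$ is uniformly continuous, and then for most points the $T$-orbit spends all but a tiny fraction of its time in that set. First I would fix a small parameter $\eta > 0$ (to be chosen at the end in terms of $\eps$ and $D := \mathrm{diam}(Y,d^Y)$) and apply Lusin's theorem to obtain a compact $X_1 \subseteq X$ with $\mu(X_1) > 1 - \eta$ on which $\Phi$ is continuous.  Compactness of $X_1$ promotes this to uniform continuity, yielding a modulus $\delta > 0$ such that $x,x' \in X_1$ with $d^X(x,x') < \delta$ forces $d^Y(\Phi x, \Phi x') < \eps/4$.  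This $\delta$ fixes the final constant $L := D/\delta$, independent of $N$.

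Next I would control the fraction of the orbit $(T^n x)_{n \in [-N;N)}$ that leaves $X_1$. Set $\lambda := \eps/(4D)$ and define
\[
B_N := \{x : |\{n \in [-N;0) : T^nx \notin X_1\}| > \lambda N\} \cup \{x : |\{n \in [0;N) : T^nx \notin X_1\}| > \lambda N\}.
\]
Fubini gives $\int |\{n \in [-N;0) : T^nx \notin X_1\}|\,\d\mu(x) = N\,\mu(X\setminus X_1) \le N\eta$, so Markov's inequality yields $\mu(B_N) \le 2\eta/\lambda$. Choosing $\eta$ so that $\eta + 2\eta/\lambda < \eps$, inner regularity of $\mu$ on the Polish space $X$ supplies a compact set $X_0 \subseteq X_1 \setminus B_N$ with $\mu(X_0) > 1 - \eps$, on which $\Phi$ is continuous automatically because $X_0 \subseteq X_1$.

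Finally, for $x,x' \in X_0$ I would estimate $d^\bfY_{[-N;0)}(\Phi x, \Phi x')$ by splitting $[-N;0)$ into three buckets: (i) indices $n$ for which $T^n x, T^n x' \in X_1$ and $d^X(T^n x, T^n x') < \delta$, each contributing at most $\eps/4$ and hence at most $\eps N/4$ in total; (ii) indices where both iterates are in $X_1$ but $d^X(T^n x, T^n x') \ge \delta$, each satisfying $d^Y(\Phi T^n x, \Phi T^n x') \le D \le L\,d^X(T^nx, T^nx')$, and hence at most $L\,d^\bfX_{[-N;0)}(x,x')$ in total; and (iii) the at most $2\lambda N$ indices where one of $T^n x, T^n x'$ lies outside $X_1$, each contributing at most $D$, for a total of $2\lambda N \cdot D = \eps N/2$.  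Summing yields $d^\bfY_{[-N;0)}(\Phi x,\Phi x') \le L\, d^\bfX_{[-N;0)}(x,x') + \eps N$, and the bound on $[0;N)$ is symmetric.

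The main obstacle is essentially bookkeeping: $\eta$, $\lambda$, $\delta$, and $L$ must be fixed in the correct order (modulus $\delta$ depends on $X_1$, which depends on $\eta$, while $\eta$ must be small compared to $\lambda = \eps/(4D)$) so that the three buckets combined contribute at most $\eps N$.  The phrase ``for all sufficiently large $N$'' in the statement is a convenience: the argument works uniformly in $N$ up to trivial adjustments when $\lambda N < 1$.
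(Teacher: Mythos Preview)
Your proof is correct and follows essentially the same route as the paper's: Lusin to get continuity on a compact $X_1$, a Markov/averaging bound to control the number of iterates that escape $X_1$, and then a termwise split of $d^{\bfY}_{[-N;0)}$ into good and bad indices. The only cosmetic differences are that the paper packages your buckets (i) and (ii) by first declaring $\Phi|X_1$ to be $\eps_1$-almost $L$-Lipschitz, and that it applies Lusin a second time at the end to extract the final compact $X_0$, whereas you invoke inner regularity directly; neither difference is material.
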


\begin{proof}
Let $\eps_1 := \eps/(2\rm{diam}(Y,d^Y) + 1)$.  By Lusin's Theorem, there is a compact subset $X_1 \subseteq X$ such that $\mu(X_1) > 1 - \eps_1^2$ and $\Phi|X_1$ is continuous.  That continuity implies that $\Phi|X_1$ is also $\eps_1$-almost $L$-Lipschitz for some $L < \infty$.

Now let $N \in \bbN$, and for each $x \in X$ let 
\[I_{1,x} := \{n \in [-N;0)\,|\ T^n x\not\in X_1\} \quad \hbox{and} \quad I_{2,x} := \{n \in [0;N)\,|\ T^nx \not\in X_1\}.\]
Let
\[X_2 := \big\{x \in X\,\big|\ |I_{1,x}\cup I_{2,x}| \leq 2\eps_1 N \big\}.\]
Since
\[\int_X |I_{1,x}\cup I_{2,x}|\,\mu(\d x) = 2N\mu(X\setminus X_1) < 2\eps_1^2 N,\]
Markov's Inequality implies $\mu(X\setminus X_2) < \eps_1 \leq \eps$.

Now suppose that $x,x' \in X_2$.  Then
\begin{eqnarray*}
&&d^\bfY_{[-N;0)}(\Phi(x),\Phi(x'))\\ &&= \sum_{n=-N}^{-1} d^Y(\Phi(T^nx),\Phi(T^nx'))\\
&&\leq |I_{1,x} \cup I_{1,x'}|\rm{diam}(Y,d^Y) + \sum_{n \in [-N;0)\setminus I_{1,x}\cup I_{1,x'}} d^Y(\Phi(T^nx),\Phi(T^nx'))\\
&&\leq 2\eps_1 N \rm{diam}(Y,d^Y) + \eps_1 N + L\sum_{n=-N}^{-1}d^X(T^nx ,T^nx')\\
&&= \eps N + Ld^\bfX_{[-N;0)}(x,x'),
\end{eqnarray*}
showing that $\Phi|X_2$ is $(\eps N)$-almost $L$-Lipschitz.  The analogous estimate holds also for $d^\bfX_{[0;N)}$ and $d^\bfY_{[0;N)}$.

Finally, another appeal to Lusin's Theorem gives a further compact subset $X_0 \subseteq X_2$ such that $\Phi|X_0$ is continuous, $\mu(X_0) > 1 - \eps$, and the above almost Lipschitz bounds must still hold.
\end{proof}

\begin{prop}\label{prop:big-bicov-factor}
Suppose that $\Phi:(X,d^X,\mu,T)\to (Y,d^Y,\nu,S)$ is a Borel factor map of compact model p.-p. systems, and that $\a > 1$, $\delta > 0$ and $\k > \k' > 0$.  Then for every $\a_1 \in [1,\a)$ there are $\k_1 \in (\k',\k)$ and $\delta_1 \in (0,\delta)$ such that
\[\BICOV_{\a_1,\k_1,\k',\delta_1 N}(X,d^\bfX_{[-N;0)},d^\bfX_{[0;N)},\mu)\\ \geq \BICOV_{\a,\k,\k',\delta N}(Y,d^\bfY_{[-N;0)},d^\bfY_{[0;N)},\nu)\]
for all sufficiently large $N$.
\end{prop}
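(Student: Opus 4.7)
The plan is to interpret both sides of the inequality through the min--max description of $\BICOV$: $\BICOV = \min_{\mu'}\max_{U}\bicov_{\k'}((U,\,\cdot\,,\mu'|_U),\,\cdot\,)$. Thus showing $\BICOV(X;\a_1,\k_1,\k',\delta_1 N)\geq\BICOV(Y;\a,\k,\k',\delta N)$ reduces to the following task: given any Min-er play $\mu'_1$ on the $X$-side (with $\|\d\mu'_1/\d\mu\|_\infty\leq \a_1$), produce a Max-er response $U\subseteq X$ with $\mu'_1(U)\geq \k_1$ whose $\bicov_{\k'}$ at scale $\delta_1 N$ dominates the value of $\BICOV(Y;\a,\k,\k',\delta N)$. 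The two workhorses are Lemma~\ref{lem:marg-maps-approx}, which gives a large compact subset on which $\Phi$ is $(\eps N)$-almost $L$-pair-Lipschitz with respect to the marginal metrics, and Lemma~\ref{lem:bicov-under-pair-Lip}, which transports partial bi-covering numbers through such maps.

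First I would fix the parameters. Set $\k_1:=(\k+\k')/2\in(\k',\k)$, then choose $\eps_0>0$ small enough that both $\a_1\eps_0 \leq \k-\k_1$ and $\eps_0<\delta$ hold; apply Lemma~\ref{lem:marg-maps-approx} with this $\eps_0$ to obtain a constant $L$ (which we may assume $\geq 1$) and, for all sufficiently large $N$, a compact set $X_0=X_0(N)\subseteq X$ with $\mu(X\setminus X_0)<\eps_0$ on which $\Phi$ is $(\eps_0 N)$-almost $L$-pair-Lipschitz from $(X_0,d^\bfX_{[-N;0)},d^\bfX_{[0;N)})$ to $(Y,d^\bfY_{[-N;0)},d^\bfY_{[0;N)})$. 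Finally set $\delta_1:=(\delta-\eps_0)/L\in(0,\delta)$, so that $L\delta_1+\eps_0=\delta$.

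Now, given any admissible $\mu'_1$, the pushforward $\nu'_1:=\Phi_\ast\mu'_1$ satisfies $\|\d\nu'_1/\d\nu\|_\infty\leq \a_1\leq \a$ (since conditional expectations are contractive in $L^\infty$), so it is a valid Min-er play on the $Y$-side. By the min--max definition of $\BICOV(Y)$, we may then select a Max-er response $V\subseteq Y$ with $\nu'_1(V)\geq \k$ and $\bicov_{\k'}\bigl((V,d^\bfY_{[-N;0)},d^\bfY_{[0;N)},\nu'_1|_V),\delta N\bigr)\geq \BICOV(Y;\a,\k,\k',\delta N)$. I would then set $U:=\Phi^{-1}(V)\cap X_0$, so that
$$\mu'_1(U) \geq \mu'_1(\Phi^{-1}(V))-\mu'_1(X\setminus X_0) \geq \nu'_1(V)-\a_1\mu(X\setminus X_0) \geq \k-\a_1\eps_0\geq \k_1,$$
making $U$ a legal Max-er play on the $X$-side.

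It remains to chain the inequalities. Applying Lemma~\ref{lem:bicov-under-pair-Lip} to the restricted map $\Phi|_U\colon U\to V$, with $c=\eps_0 N$ and source measure $\mu'_1|_U$, gives
$$\bicov_{\k'}\bigl((U,d^\bfX_{[-N;0)},d^\bfX_{[0;N)},\mu'_1|_U),\delta_1 N\bigr)\geq \bicov_{\k'}\bigl((V,d^\bfY_{[-N;0)},d^\bfY_{[0;N)},(\Phi|_U)_\ast(\mu'_1|_U)),L\delta_1 N+\eps_0 N\bigr).$$
The pushforward obeys $(\Phi|_U)_\ast(\mu'_1|_U)\leq \nu'_1|_V$ as measures on $V$, which makes bi-covering \emph{harder} (fewer points can be covered to exceed the same fixed threshold $\k'$), and the equality $L\delta_1+\eps_0=\delta$ matches the bi-radius. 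Together these yield a lower bound by $\bicov_{\k'}\bigl((V,\ldots,\nu'_1|_V),\delta N\bigr)\geq \BICOV(Y;\a,\k,\k',\delta N)$, completing the argument. The main conceptual obstacle is the non-monotonicity of bi-neighbourhoods under passage to subsets (Example~\ref{ex:simple-again}): one must be careful that the subset $U$ genuinely inherits a useful almost-Lipschitz structure rather than accidentally shrinking its bi-neighbourhoods, which is why the argument is arranged so that the Lipschitz transfer is applied only after $U$ has been fixed.
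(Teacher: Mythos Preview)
Your proof is correct and follows essentially the same strategy as the paper's: obtain from Lemma~\ref{lem:marg-maps-approx} a large set $X_0$ on which $\Phi$ is $(\eps N)$-almost $L$-pair-Lipschitz, pull back a Max-er set $V\subseteq Y$ to $U=\Phi^{-1}(V)\cap X_0$, and transport the bi-covering bound via Lemma~\ref{lem:bicov-under-pair-Lip}. The one difference is bookkeeping: the paper first replaces $\mu'$ by the conditional measure $\mu'':=\mu'_{|X_0}$ (adjusting the density bound to stay below $\a$) so that $\Phi|_U$ becomes exactly measure-preserving onto $\nu''|_V$, whereas you keep $\mu'_1$ unchanged and instead use the monotonicity $\theta_1\leq\theta_2\Rightarrow\bicov_{\k'}(\cdot,\theta_1)\geq\bicov_{\k'}(\cdot,\theta_2)$ to pass from $(\Phi|_U)_\ast(\mu'_1|_U)$ to $\nu'_1|_V$. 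Both routes are valid; yours avoids the renormalization step at the cost of this extra (easy) observation.
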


\begin{proof}
The parameters $\a_1 < \a$, $\k' < \k$ and $\delta$ are fixed.  Choose $\eps$ so small that one has
\[\a\eps < 1, \quad \a_1 < (1 - \a\eps)\a, \quad \eps < \delta/2, \quad \hbox{and} \quad \k_1 := (1 - \a\eps)\k \in (\k',\k).\]
Let $L < \infty$ be given by Lemma~\ref{lem:marg-maps-approx} for this $\eps$, and now choose $\delta_1$ so small that $L\delta_1 + \eps < \delta$.

Having chosen these parameters, and given any $N$ which is sufficiently large for the conclusion of Lemma~\ref{lem:marg-maps-approx}, set
\[m := \BICOV_{\a_1,\k_1,\k',\delta_1 N}(X,d^\bfX_{[-N;0)},d^\bfX_{[0;N)},\mu).\]
We will imagine playing as Min-er in the competition described in Subsection~\ref{subs:competition} with the input space $(Y,d^\bfY_{[-N;0)},d^\bfY_{[0;N)},\nu)$.

By the definition of $m$, we may choose some $\mu' \in \Pr X$ with $\|\d\mu'/\d\mu\|_\infty \leq \a_1$, and with the property that for every $U \subseteq X$ one has
\begin{eqnarray}\label{eq:bicov-bound}
\mu'(U) \geq \k_1 \quad \Longrightarrow \quad \bicov_{\k'}((U,d^\bfX_{[-N;0)},d^\bfX_{[0;N)},\mu'),\delta_1 N) \leq m.
\end{eqnarray}

Now, recalling our choice of $\eps$, let $X_0 \subseteq X$ be given by Lemma~\ref{lem:marg-maps-approx}, and then let $\mu'' := \mu'_{|X_0}$.  Since $\mu'(X\setminus X_0) \leq \a\mu(X\setminus X_0) < \a\eps$, we have
\[\Big\|\frac{\d\mu''}{\d\mu}\Big\|_\infty \leq \frac{\a_1}{1 - \a\eps} < \a.\]
Also, if $U \subseteq X_0$ with $\mu''(U) \geq \k$, then
\[\mu'(U) \geq \mu'(X_0)\k > (1 - \a\eps)\k = \k_1,\]
whereas for any $W \subseteq X_0$ one has $\mu''(W) = \mu'(W)/\mu'(X_0) \geq \mu'(W)$.  Therefore~(\ref{eq:bicov-bound}) implies that also
\begin{eqnarray*}
\mu''(U) \geq \k \quad \Longrightarrow \quad \bicov_{\k'}((U,d^\bfX_{[-N;0)},d^\bfX_{[0;N)},\mu''),\delta_1 N) \leq m.
\end{eqnarray*}

Finally, let $\nu'' := \Phi_\ast\mu''$, so this also satisfies $\|\d\nu''/\d\nu\|_\infty < \a$.  This will be our choice of measure on $Y$.  Given any $V \subseteq Y$ with $\nu''(V) \geq \k$, let $U := \Phi^{-1}(V)\cap X_0$.  Then also $\mu''(U) \geq \k$, and $\Phi$ defines a $c$-almost $L$-pair-Lipschitz and measure-preserving map
\[(U,d^\bfX_{[-N;0)},d^\bfX_{[0;N)},\mu'') \to (V,d^\bfY_{[-N,0)},d^\bfY_{[0;N)},\nu''),\]
so Lemma~\ref{lem:bicov-under-pair-Lip} gives
\begin{multline*}
\bicov_{\k'}\big((U,d^\bfX_{[-N;0)},d^\bfX_{[0;N)},\mu''),\delta_1 N\big)\\ \geq \bicov_{\k'}\big((V,d^\bfY_{[-N,0)},d^\bfY_{[0;N)},\nu''),(L\delta_1+c)N\big).
\end{multline*}
Since $L\delta_1 + c \leq \delta$, this completes the proof. \end{proof}

\begin{rmk}
It is not clear how well bi-covering numbers behave under Cartesian products.  However, one cannot hope for any nontrivial estimates for joinings.  This can be seen from the result of ~\cite{SmoTho79} that \emph{any} positive-entropy system is a joining of three Bernoulli factors.  We will see later that Bernoulli systems gives trivial $\rm{BICOV}$ values, whereas some positive-entropy systems, such as nontrivial RWRSs, do not --- so the triviality of the former cannot give a bound on the latter. \fin
\end{rmk}

\begin{dfn}
The family of sequences
\[\BICOV_{\a,\k,\k',\delta N}(X,d^\bfX_{[-N;0)},d^\bfX_{[0;N)},\mu), \quad N\geq 1,\]
parameterized by $\a \geq 1$, $\k > k' > 0$ and $\delta > 0$, is the \textbf{bi-covering rate} of $(X,d^X,\mu,T)$.  Proposition~\ref{prop:big-bicov-factor} implies that it depends only on the isomorphism class of $(X,\mu,T)$, up to the notion of equivalence implied by that proposition.
\end{dfn}

In the sequel, it will also be useful to compare the bi-covering rates of different pseudometrics defined on the same system.  The following is immediate from the definition of $\bicov_\bullet$.

\begin{lem}\label{lem:bd-above-with-ptn}
Let $(X,d^X,\mu,T)$ be a compact model p.-p. system, let $M,\eps > 0$, and let $\rho$ be a totally bounded Borel pseudometric on $X$ such that $d \leq M\rho + \eps$.  Then also
\begin{multline*}
\BICOV_{\a,\k,\k',(M\delta+\eps) N}(X,d^\bfX_{[-N;0)},d^\bfX_{[0;N)},\mu)\\ \leq \BICOV_{\a,\k,\k',\delta N}(X,\rho^\bfX_{[-N;0)},\rho^\bfX_{[0;N)},\mu)
\end{multline*}
for all $\a > 1$, $\k > \k' > 0$, $\delta > 0$ and $N \in \bbN$. \qed
\end{lem}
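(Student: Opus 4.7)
The plan is to reduce this to an application of Lemma~\ref{lem:bicov-under-pair-Lip}, via the identity map viewed as a pair-Lipschitz map between two different pair-pseudometric structures on $X$.

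First I would observe that the hypothesis $d^X \leq M\rho + \eps$ transfers to the dynamical metrics: summing the pointwise bound $d^X(T^n x, T^n x') \leq M\rho(T^n x, T^n x') + \eps$ over any finite $F \subseteq \bbZ$ gives
\[d^\bfX_F(x,x') \leq M\rho^\bfX_F(x,x') + \eps|F|.\]
Specializing to $F = [-N;0)$ and $F = [0;N)$, this says that the identity map
\[\id:(X,\rho^\bfX_{[-N;0)},\rho^\bfX_{[0;N)})\to (X,d^\bfX_{[-N;0)},d^\bfX_{[0;N)})\]
is $(\eps N)$-almost $M$-pair-Lipschitz.

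Next I would unwind Definition~\ref{dfn:big-bicov}. Fix any $\mu' \in \Pr X$ with $\|\d\mu'/\d\mu\|_\infty \leq \a$ and any $U\subseteq X$ with $\mu'(U)\geq \k$. The identity restricts to a measure-preserving, $(\eps N)$-almost $M$-pair-Lipschitz map on $U$, so Lemma~\ref{lem:bicov-under-pair-Lip} (with $L=M$, $c = \eps N$, and the input covering radius equal to $\delta N$) yields
\[\bicov_{\k'}\big((U,\rho^\bfX_{[-N;0)},\rho^\bfX_{[0;N)},\mu'),\delta N\big) \geq \bicov_{\k'}\big((U,d^\bfX_{[-N;0)},d^\bfX_{[0;N)},\mu'),(M\delta + \eps)N\big).\]

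Finally, I would take the $\max$ over admissible $U$ (the constraint $\mu'(U)\geq \k$ is identical on both sides, so the inequality survives) and then the $\min$ over admissible $\mu'$ (again the constraint $\|\d\mu'/\d\mu\|_\infty \leq \a$ is identical on both sides). This produces exactly
\[\BICOV_{\a,\k,\k',\delta N}(X,\rho^\bfX_{[-N;0)},\rho^\bfX_{[0;N)},\mu) \geq \BICOV_{\a,\k,\k',(M\delta+\eps) N}(X,d^\bfX_{[-N;0)},d^\bfX_{[0;N)},\mu),\]
which is the asserted bound. There is no real obstacle here: the entire content is that a pointwise comparison of pseudometrics integrates to a pair-Lipschitz comparison of the Hamming-like metrics, after which the monotonicity built into $\BICOV$ does the rest.
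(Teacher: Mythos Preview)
Your proof is correct and is essentially the same approach the paper has in mind: the paper states this lemma as ``immediate from the definition of $\bicov_\bullet$'' and gives no further argument, and your write-up simply makes that immediacy explicit by summing the pointwise bound over $[-N;0)$ and $[0;N)$ and invoking Lemma~\ref{lem:bicov-under-pair-Lip} for the identity map before passing to the min--max in Definition~\ref{dfn:big-bicov}.
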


Most often this will be used with $\rho(x,x') := 1_{\P(x)\neq \P(x')}$ for some finite measurable partition $\P$.

\subsection{Two elementary examples}

Before broaching the bi-covering rates of generalization RWRS systems, it will be instructive to analyze them in two rather simpler cases.  This subsection is essentially a digression, and can be skipped without missing any of the proof of Theorem A.

\subsubsection{Isometric systems}

\begin{prop}\label{prop:bicov-for-cpt}
If $T$ is an isometry of the compact metric space $(X,d)$ and $\mu \in \Pr^T X$ is ergodic, then
\begin{multline*}
\min_{\big\|\frac{\d\mu'}{\d\mu}\big\|_\infty \leq \a}\max_{\mu'(U) \geq \k}\rm{cov}_{\k'}((U,d,\mu'),2\delta)\\
\leq \BICOV_{\a,\k,\k',\delta N}(X,d^\bfX_{[-N;0)},d^\bfX_{[0;N)},\mu)\\ \leq \min_{\big\|\frac{\d\mu'}{\d\mu}\big\|_\infty \leq \a}\max_{\mu'(U) \geq \k}\rm{cov}_{\k'}((U,d,\mu'),\delta)
\end{multline*}
for all $\a \in (1,\infty)$, $\delta > 0$, $\k > k' > 0$, and $N \in \bbN$.
\end{prop}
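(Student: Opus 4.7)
The plan is to exploit the fact that an isometry $T$ makes the dynamical Hamming-like metrics trivial. Specifically, for every $x, x' \in X$ and every finite $F \subseteq \bbZ$,
\[
d^\bfX_F(x,x') = \sum_{n \in F} d(T^n x, T^n x') = |F|\,d(x,x'),
\]
so $d^\bfX_{[-N;0)} = d^\bfX_{[0;N)} = N d$. In particular, open balls of radius $\delta N$ in these dynamical metrics are just the open $d$-balls of radius $\delta$, and for any $U \subseteq X$ and measure $\mu'$,
\[
\bicov_{\k'}\big((U, d^\bfX_{[-N;0)}, d^\bfX_{[0;N)}, \mu'), \delta N\big) = \bicov_{\k'}\big((U, d, d, \mu'), \delta\big),
\]
independent of $N$.

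The next step is to compare the bi-covering number for a pair of identical metrics with ordinary covering numbers. Since both metrics coincide with $d$, the triangle inequality gives the sandwich
\[
B^d_\delta(F) \ \subseteq\ B^d_\delta\big(B^d_\delta(F)\big) \ \subseteq\ B^d_{2\delta}(F) \qquad \forall F \subseteq X.
\]
The right inclusion shows that every $\delta$-bi-cover is in particular a $2\delta$-cover, hence
\[
\cov_{\k'}\big((U,d,\mu'),2\delta\big) \ \leq\ \bicov_{\k'}\big((U,d,d,\mu'),\delta\big).
\]
The left inclusion shows that every $\delta$-cover of $U$ by $d$-balls is automatically a $\delta$-bi-cover, hence
\[
\bicov_{\k'}\big((U,d,d,\mu'),\delta\big) \ \leq\ \cov_{\k'}\big((U,d,\mu'),\delta\big).
\]

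Finally, applying the two-step optimization $\min_{\mu'}\max_U$ from Definition~\ref{dfn:big-bicov} preserves both inequalities (the inner $\min$ over efficient coverings is already built into $\bicov_{\k'}$ and $\cov_{\k'}$, and the outer $\min$--$\max$ is monotone in its argument). This delivers the claimed sandwich for $\BICOV_{\a,\k,\k',\delta N}$. No step here is difficult; the only thing worth verifying carefully is that the quantifier structure of Definition~\ref{dfn:big-bicov} commutes with the pointwise inequalities between $\cov$ and $\bicov$, which it does because the inequalities hold uniformly over choices of $U$ and $\mu'$.
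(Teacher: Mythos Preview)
Your proof is correct and follows essentially the same approach as the paper: use the isometry to reduce $d^\bfX_{[-N;0)}$ and $d^\bfX_{[0;N)}$ to $N\cdot d$, apply the triangle-inequality sandwich $B_\delta(x)\subseteq B_\delta(B_\delta(x))\subseteq B_{2\delta}(x)$ to compare $\bicov_{\k'}$ with $\cov_{\k'}$, and then carry the pointwise inequalities through the $\min_{\mu'}\max_U$ optimization.
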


Of course, the optimizations involved in these upper and lower bounds may still be non-trivial, and depend rather delicately on $(X,d,\mu)$, but they do not involve $N$.

\begin{proof}
Since $T$ is an isometry, one has $d\circ (T^{\times 2})^n = d$ for all $n$, and hence
\[d^\bfX_{[-N;0)} = d^\bfX_{[0;N)} = N\cdot d.\]

For any metric space $(X,d)$, any $\delta > 0$ and any $x \in X$, one has
\[B_\delta(x) \subseteq B_\delta(B_\delta(x)) \subseteq B_{2\delta}(x).\]
Therefore, for any $U \subseteq X$, one has
\begin{multline*}
\cov_{\k'}((U,d),2\delta) \leq \bicov_{\k'}((U,d,d),\delta)\\ = \bicov_{\k'}((U,d^\bfX_{[-N;0)},d^\bfX_{[0;N)}),\delta N) \leq \rm{cov}_{\k'}((U,d),\delta).
\end{multline*}

Now performing the optimization over $\mu'$ and $U$ completes the proof.
\end{proof}

\subsubsection{Bernoulli systems}

\begin{prop}\label{prop:Bern-bicov-trivial}
If $(X,d^X,\mu,T)$ is a compact model of a Bernoulli system, then
\[\BICOV_{\a,\k,\k',\delta N}(X,d^\bfX_{[-N;0)},d^\bfX_{[0;N)},\mu) = 1\]
for all sufficiently large $N$, for all $\a > 1$, $\k > \k' > 0$ and $\delta > 0$.
\end{prop}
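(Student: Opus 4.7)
The plan is to reduce, via Ornstein's isomorphism theorem together with Proposition~\ref{prop:big-bicov-factor}, to the special case in which the system is presented as the standard Bernoulli shift in its natural product-metric compact model, and there to exploit past/future coordinate independence through a ``midpoint construction''.

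By Ornstein's theorem, $(X,\mu,T)$ is measurably isomorphic to a Bernoulli shift $(\O^\bbZ,\lambda^{\otimes \bbZ},S)$ for some standard Borel probability $(\O,\lambda)$. Fix a compact metric $d^\O$ on $\O$ and put the product metric $d^\star(u,v):=\sum_m 2^{-|m|}d^\O(u_m,v_m)$ on $\O^\bbZ$; this yields a compact-model p.-p.\ system $\bfY := (\O^\bbZ,d^\star,\lambda^{\otimes\bbZ},S)$, and the Ornstein isomorphism becomes a Borel factor map $\bfY\to(X,d^X,\mu,T)$. By Proposition~\ref{prop:big-bicov-factor} it therefore suffices to verify that $\BICOV_{\a,\k,\k',\delta N}(\O^\bbZ,d^\bfY_{[-N;0)},d^\bfY_{[0;N)},\lambda^{\otimes\bbZ})=1$ for all sufficiently large $N$ and arbitrary positive parameters $\a,\k,\k',\delta$, so from now on I work on $\bfY$ and fix such parameters.

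Min-er plays $\mu':=\lambda^{\otimes \bbZ}$ (Radon--Nikod\'ym density $1\le \a$); we must produce, for every Borel $U$ with $\mu(U)\ge \k$, a single $x_0\in U$ whose $\delta N$-bi-neighborhood within $U$ has measure greater than $\k'$. Given $x,y\in \O^\bbZ$, define the \emph{midpoint} $z(x,y)$ to agree with $x$ on $\bbZ\setminus[0;N)$ and with $y$ on $[0;N)$. A direct geometric-series computation based on the weights $2^{-|m|}$ in $d^\star$ shows that both $d^\bfY_{[-N;0)}(x,z)$ and $d^\bfY_{[0;N)}(z,y)$ are bounded by $C\cdot\rm{diam}(\O)$ uniformly in $N$, and the same bound persists, with additive loss at most $2\,\rm{diam}(\O)$ per altered coordinate, if $(z_-,z_+)$ is perturbed within Hamming distance $\delta'N$ of $(x_-,y_+)$ and $z_{\bbZ\setminus[-N;N)}$ is altered arbitrarily; here $\delta':=\delta/(C'\rm{diam}(\O))$ for a suitable numerical constant $C'$. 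Thus for $N$ large any such perturbed $z$ witnesses the bi-neighborhood condition in $\O^\bbZ$, and the remaining hurdle is to arrange $z\in U$.

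Let $f(a,b):=\lambda^{\otimes \bbZ\setminus[-N;N)}\{c:(a,b,c)\in U\}$ and $F:=\{(a,b):f(a,b)>0\}\subseteq \O^{[-N;0)}\times \O^{[0;N)}$, so Fubini gives $\lambda^{\otimes 2N}(F)\ge \mu(U)\ge \k$; the midpoint $z$ can be chosen in $U$ (by picking $(z_-,z_+)$ in $F$ near $(x_{0,-},y_+)$ and $z_{\bbZ\setminus[-N;N)}$ from the corresponding nonempty cross-section of $U$) as soon as $(x_{0,-},y_+)$ lies in the Hamming-thickening $F_{\delta' N}$. Theorem~\ref{thm:Bern-shift-almost-exp-conc}, applied to the Bernoulli shift with any finite generating partition of $\O$ of sufficiently small $d^\O$-diameter, yields the product-measure concentration estimate $\lambda^{\otimes 2N}(F^{\rm{c}}_{\delta' N})\le \rme^{-cN}$ for some $c>0$. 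The joint density of $(x_-,y_+)$ under independent samples $x,y\sim \mu|_U$ equals $f_-(a)f_+(b)/\mu(U)^2$, where $f_\pm$ are the marginals of $f$; the product structure of $\mu$ forces $\int f_- f_+\,\d\lambda^{\otimes 2N}=\mu(U)^2$, so a short Fubini calculation gives
\[\Pr_{x,y\sim \mu|_U}\big[(x_-,y_+)\in F_{\delta' N}\big]\ \ge\ 1-\frac{\rme^{-cN}}{\k^2}\ \longrightarrow\ 1.\]
Hence the expectation over $x_0\sim \mu|_U$ of the $\mu|_U$-measure of the bi-neighborhood of $x_0$ within $U$ tends to $1$, and an averaging argument selects an $x_0\in U$ whose bi-neighborhood in $U$ has measure $>\k'$ for all $N$ large enough. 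The main technical obstacle is the bookkeeping between the integrated product metric $d^\bfY_\bullet$ (which contributes small tail terms from coordinates outside $[-N;N)$) and the coordinate-wise Hamming picture on $\O^{2N}$ used in the concentration step; the product structure of $\mu$, precisely what the Ornstein reduction makes available, is what underpins the crucial Fubini identity $\int f_- f_+\,\d\lambda^{\otimes 2N}=\mu(U)^2$.
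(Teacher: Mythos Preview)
Your argument is correct and takes a genuinely different route from the paper's.

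The paper works directly in the given compact model. It fixes a partition $\P$ into small-diameter cells and proves two auxiliary lemmas: first (the ``fat'' Lemma~\ref{lem:fat}), that for most $x$ the conditional law $\xi^{[0;N)}_\ast(\mu_{|\P^{[-N;0)}(x)})$ is $(\rme^{\eta N},\eta)$-absolutely continuous with respect to the unconditional law, using only the sublinear mutual-information bound of Lemma~\ref{lem:cond-mut-inf-sublin}; and second, that on those good past-cells any set $U$ with non-negligible conditional measure blows up to nearly all of $X^{\rm{conc}}_{[0;N)}$ under a small Hamming thickening, via Theorem~\ref{thm:Bern-shift-almost-exp-conc}. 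These combine to show that a single past-cell $\P^{[-N;0)}(x)\cap U$ has $(\delta N)$-bi-neighbourhood covering almost all of $U$. The argument carefully tracks the exceptional sets $X^{\rm{conc}}_N$ inherent in the ``almost'' of almost exponential concentration.

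Your approach instead externalizes the Bernoulli hypothesis by invoking Ornstein's theorem and Proposition~\ref{prop:big-bicov-factor} to pass to a literal product space $(\O^\bbZ,\lambda^{\otimes\bbZ})$, and there exploits exact coordinate independence via an explicit midpoint construction. This is pleasantly concrete and avoids the fat-lemma machinery entirely; the trade-off is that Ornstein's theorem is a heavy black box, whereas the paper's route only uses the easier direction of Theorem~\ref{thm:Bern-shift-almost-exp-conc}. One minor point: your appeal to Theorem~\ref{thm:Bern-shift-almost-exp-conc} for the estimate $\lambda^{\otimes 2N}(F^{\rm c}_{\delta'N})\le\rme^{-cN}$ is slightly imprecise, since that theorem only yields \emph{almost} exponential concentration. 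What you actually need (and what holds) is the classical Marton blowing-up lemma for i.i.d.\ product measures, which gives full concentration without exceptional sets; this is the result underlying the Marton--Shields reference cited near Theorem~\ref{thm:Bern-shift-almost-exp-conc}.
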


Thus, Propositions~\ref{prop:bicov-for-cpt} and~\ref{prop:Bern-bicov-trivial} show that both compact systems and Bernoulli systems have bi-covering rates that do not grow with $N$, even though they are in many ways `extreme opposites' with regard to mixing behaviour.

The first step is an auxiliary result comparing marginal distributions over different time-intervals.  Let $\bfX$ be as above, let $\P$ be any finite Borel partition of $X$, let $m := |\P|$ and let $\xi:X\to [m]$ be a finite-valued function generating $\P$.  Given $\eta > 0$ and $N \in \bbN$, define
\[X^{\rm{fat}}_{N,\eta} := \big\{x \in X\,\big|\ \xi^{[0;N)}_\ast(\mu_{|\P^{[-N;0)}(x)}) \ll_{\exp(\eta N),\eta} \xi^{[0;N)}_\ast\mu\big\}.\]

\begin{lem}\label{lem:fat}
For every $\eta > 0$ one has
\[\mu(X^{\rm{fat}}_{N,\eta}) \to 1 \quad \hbox{as}\ N\to\infty.\]
\end{lem}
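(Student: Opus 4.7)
The plan is to convert the approximate absolute continuity statement defining $X^{\rm{fat}}_{N,\eta}$ back to a relative-entropy bound via Lemma~\ref{lem:ent-to-unif-int}, and then control that relative entropy on average by the mutual information $\rmI_\mu(\P^{[0;N)};\P^{[-N;0)})$, which is sublinear in $N$ for any process.

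First, by formula~(\ref{eq:rel-ent-of-bary}) applied with $\phi = \xi^{[0;N)}$, one has
\[\rmI_\mu(\P^{[0;N)};\P^{[-N;0)}) = \int \rmD\big(\xi^{[0;N)}_\ast(\mu_{|\P^{[-N;0)}(x)})\,\big|\,\xi^{[0;N)}_\ast\mu\big)\,\mu(\d x).\]
Since $\rmI_\mu(\P^{[0;N)};\P^{[-N;0)}) = 2\rmH_\mu(\P^{[0;N)}) - \rmH_\mu(\P^{[-N;N)})$ by stationarity, and both $\rmH_\mu(\P^{[0;N)})/N$ and $\rmH_\mu(\P^{[-N;N)})/(2N)$ tend to $\rmh(\bfX,\P)$ by the definition of Kolmogorov-Sinai entropy, I conclude $\delta_N := \rmI_\mu(\P^{[0;N)};\P^{[-N;0)})/N \to 0$.

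Next I apply Markov's inequality: the set
\[G_N := \big\{x \,|\, \rmD\big(\xi^{[0;N)}_\ast(\mu_{|\P^{[-N;0)}(x)})\,\big|\,\xi^{[0;N)}_\ast\mu\big) > \sqrt{\delta_N}\,N\big\}\]
satisfies $\mu(G_N) \leq \sqrt{\delta_N} \to 0$. For $x \notin G_N$, Lemma~\ref{lem:ent-to-unif-int} applied with $C = \eta N$ gives
\[\xi^{[0;N)}_\ast(\mu_{|\P^{[-N;0)}(x)}) \ll_{\exp(\eta N),\,(\sqrt{\delta_N} N + \rme^{-1})/(\eta N)} \xi^{[0;N)}_\ast\mu.\]
Since $(\sqrt{\delta_N} N + \rme^{-1})/(\eta N) = \sqrt{\delta_N}/\eta + \rme^{-1}/(\eta N) \to 0$ as $N \to \infty$, this second parameter is at most $\eta$ for all sufficiently large $N$, so $X \setminus G_N \subseteq X^{\rm{fat}}_{N,\eta}$ eventually, and therefore $\mu(X^{\rm{fat}}_{N,\eta}) \geq 1 - \sqrt{\delta_N} \to 1$.

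There is no real obstacle here; the argument is a routine combination of the variational identity~(\ref{eq:rel-ent-of-bary}), the sublinear growth of the mutual information between past and future, and the quantitative conversion from relative entropy to approximate absolute continuity provided by Lemma~\ref{lem:ent-to-unif-int}. The only small point to verify is that the tolerance parameter supplied by that lemma, with the natural choice $C = \eta N$, does indeed fall below $\eta$ once $\sqrt{\delta_N} < \eta^2$, which is automatic from $\delta_N \to 0$.
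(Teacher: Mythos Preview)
Your proof is correct and follows essentially the same route as the paper's: both express the average relative entropy as the mutual information $\rmI_\mu(\P^{[0;N)};\P^{[-N;0)})$ via~(\ref{eq:rel-ent-of-bary}), observe that this is $\rm{o}(N)$, apply Markov's inequality, and then invoke Lemma~\ref{lem:ent-to-unif-int} with $C=\eta N$. The only cosmetic differences are that the paper cites Lemma~\ref{lem:cond-mut-inf-sublin} for the sublinearity whereas you compute it directly from the entropy-rate limit, and the paper uses the fixed Markov threshold $\eta^2 N - \rme^{-1}$ rather than your adaptive threshold $\sqrt{\delta_N}\,N$.
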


\begin{proof}
Equation~(\ref{eq:rel-ent-of-bary}) and a special case of Lemma~\ref{lem:cond-mut-inf-sublin} give
\begin{multline*}
\int_X \rmD(\xi^{[0;N)}_\ast(\mu_{|\P^{[-N;0)}(x)})\,|\,\xi^{[0;N)}_\ast\mu)\,\mu(\d x) = \rmI_\mu(\P^{[0;N)};\P^{[-N;0)})\\ = \rm{o}(N) \quad \hbox{as}\ N\to\infty.
\end{multline*}
Therefore, by Markov's inequality, the sets
\[\{x \in X\,|\ \rmD(\xi^{[0;N)}_\ast(\mu_{|\P^{[-N;0)}(x)})\,|\,\xi^{[0;N)}_\ast\mu) \leq \eta^2N - \rme^{-1}\}\]
have measure tending to $1$ as $N \to \infty$.  By Lemma~\ref{lem:ent-to-unif-int} with $C := \eta N$, these are contained in the sets $X^{\rm{fat}}_{N,\eta}$.
\end{proof}

The approximate absolute continuity in the definition of $X^{\rm{fat}}_{N,\eta}$ will be used in conjunction with Theorem~\ref{thm:Bern-shift-almost-exp-conc}.  Let $X^{\rm{conc}}_N$ be a sequence of high-probability Borel subsets of $X$ such that $(X^{\rm{conc}}_N,N^{-1}d^\P_{[0;N)},\mu_{|X^{\rm{conc}}_N})$ exhibits exponential measure concentration, as given by that theorem.  Let $c(\delta) > 0$ be the exponential rate of concentration for this sequence for each radius $\delta > 0$. Also, if $I = [a,a+N) \subseteq \bbZ$, then let $X^{\rm{conc}}_I := T^a(X^{\rm{conc}}_N)$.  This is clearly $\P^I$-measurable.

Now given $\g,\delta > 0$, define
\begin{multline*}
X^{\rm{loc.exp}}_{N,\g,\delta} := \big\{x \in X^{\rm{conc}}_{[0;N)}\,\big|\ \hbox{if}\ U \subseteq X\ \hbox{and}\ \mu_{|\P^{[-N;0)}(x)}(U) \geq \g\ \hbox{then}\\ \mu(B^{d^\P_{[0;N)}}_{\delta N}(U)\,|\,X^{\rm{conc}}_{[0;N)}) > 1 - \rme^{-c(\delta)N}\ \big\}.
\end{multline*}
Intuitively, $X^{\rm{loc.exp}}_{N,\g,\delta}$ consists of those $x$ such that if an event $U$ is reasonably likely given the `past' $\P^{[-N;0)}(x)$, then a small Hamming-neighbourhood around $U$ for the `future' $\P^{[0;N)}$ is very nearly the whole of $X^{\rm{conc}}_{[0;N)}$.

\begin{lem}
For every $\g,\delta > 0$, one has
\[\mu(X^{\rm{loc.exp}}_{N,\g,\delta}) \to 1 \quad \hbox{as}\ N\to\infty.\]
\end{lem}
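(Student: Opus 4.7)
The plan is to prove the inclusion
\[
X^{\rm{fat}}_{N,\eta} \cap X^{\rm{conc}}_{[0;N)} \subseteq X^{\rm{loc.exp}}_{N,\g,\delta}
\]
for a suitably chosen $\eta > 0$ and all sufficiently large $N$. Since Lemma~\ref{lem:fat} supplies $\mu(X^{\rm{fat}}_{N,\eta}) \to 1$ and the Bernoulli-shift concentration hypothesis supplies $\mu(X^{\rm{conc}}_{[0;N)}) \to 1$, this inclusion will imply the lemma.

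I would fix $\eta \in (0,\min\{\g,c(\delta)\})$, say $\eta := \tfrac{1}{2}\min\{\g,c(\delta)\}$, take any $x$ in the intersection above and any $U \subseteq X$ with $\mu_{|\P^{[-N;0)}(x)}(U) \geq \g$, and then replace $U$ by its $\P^{[0;N)}$-saturation $W_0$, i.e.\ the smallest $\P^{[0;N)}$-measurable set containing $U$. Because $d^\P_{[0;N)}$ assigns distance $0$ between points lying in a common cell of $\P^{[0;N)}$, every $d^\P_{[0;N)}$-neighbourhood of $W_0$ is contained in the same-radius neighbourhood of $U$. Since $W_0$ is $\P^{[0;N)}$-measurable, the abs.-ct.\ bound from the definition of $X^{\rm{fat}}_{N,\eta}$ applies and gives
\[
\g \leq \mu_{|\P^{[-N;0)}(x)}(W_0) \leq \rme^{\eta N}\,\mu(W_0) + \eta,
\]
so $\mu(W_0) \geq (\g - \eta)\rme^{-\eta N}$. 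Because $\eta < c(\delta)$ and $\mu(X^{\rm{conc}}_{[0;N)}) \to 1$, for $N$ large the conditional mass $\mu(W_0 \cap X^{\rm{conc}}_{[0;N)} \,|\, X^{\rm{conc}}_{[0;N)})$ is at least $\rme^{-c(\delta)N}$. Exponential measure concentration at radius $\delta$ on $(X^{\rm{conc}}_N, N^{-1}d^\P_{[0;N)}, \mu_{|X^{\rm{conc}}_N})$ then yields
\[
\mu\big(B^{d^\P_{[0;N)}}_{\delta N}(W_0 \cap X^{\rm{conc}}_{[0;N)}) \,\big|\, X^{\rm{conc}}_{[0;N)}\big) > 1 - \rme^{-c(\delta)N},
\]
and the saturation step tells us this bi-neighbourhood is contained in $B^{d^\P_{[0;N)}}_{\delta N}(U)$, which is exactly the condition for $x \in X^{\rm{loc.exp}}_{N,\g,\delta}$.

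The only step requiring genuine thought is the saturation passage: the definition of $X^{\rm{fat}}_{N,\eta}$ controls the conditional measure only on $\P^{[0;N)}$-measurable sets, so one must first pass from the arbitrary set $U$ to its saturation $W_0$ (which only \emph{increases} measure under $\mu_{|\P^{[-N;0)}(x)}$, so the lower bound $\g$ is preserved), and then verify that when one forms the $d^\P_{[0;N)}$-neighbourhood of $W_0$ one remains inside the $d^\P_{[0;N)}$-neighbourhood of $U$. Everything else is routine: the choice $\eta < c(\delta)$ is made precisely to ensure that the exponential cost $\rme^{-\eta N}$ incurred when transferring mass from the conditional measure to $\mu$ is dominated by the concentration rate $\rme^{-c(\delta)N}$, and the vanishing quantity $1 - \mu(X^{\rm{conc}}_{[0;N)})$ is absorbed harmlessly into constants.
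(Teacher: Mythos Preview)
Your argument has a genuine gap at the step where you pass from the lower bound $\mu(W_0) \geq (\g-\eta)\rme^{-\eta N}$ to the claimed lower bound $\mu(W_0 \cap X^{\rm{conc}}_{[0;N)}\,|\,X^{\rm{conc}}_{[0;N)}) \geq \rme^{-c(\delta)N}$. The only control you have on $X^{\rm{conc}}_{[0;N)}$ is that $\mu(X\setminus X^{\rm{conc}}_{[0;N)}) =: \eps_N \to 0$ at some unspecified rate, so the best you can say is
\[
\mu(W_0 \cap X^{\rm{conc}}_{[0;N)}) \geq (\g-\eta)\rme^{-\eta N} - \eps_N.
\]
For this to exceed $\rme^{-c(\delta)N}$ you would need $\eps_N$ itself to be exponentially small, which the definition of almost exponential concentration does not provide. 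Your remark that $1-\mu(X^{\rm{conc}}_{[0;N)})$ is ``absorbed harmlessly into constants'' is therefore incorrect: by the time you subtract it, your main term is already exponentially small.

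The paper repairs exactly this point by intersecting with $X^{\rm{conc}}_{[0;N)}$ \emph{before} passing from the conditional measure to $\mu$. It enlarges the good set by also requiring $\mu_{|\P^{[-N;0)}(x)}(X^{\rm{conc}}_{[0;N)}) > 1 - \g/2$, which holds with $\mu$-probability tending to $1$ by Markov's inequality. Under this extra hypothesis, $U' := U \cap X^{\rm{conc}}_{[0;N)}$ still has conditional mass at least $\g/2$, and since $X^{\rm{conc}}_{[0;N)}$ is $\P^{[0;N)}$-measurable, its saturation $W = \P^{[0;N)}(U')$ lies entirely inside $X^{\rm{conc}}_{[0;N)}$. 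Now the ``fat'' estimate gives $\mu(W) \gtrsim \rme^{-\eta N} \geq \rme^{-c(\delta)N}$ directly, with no subtraction of $\eps_N$ needed. Your saturation idea and the neighbourhood-containment observation are both correct and are used in the paper; the missing ingredient is this extra condition on $x$ that lets you intersect with $X^{\rm{conc}}_{[0;N)}$ while the lower bound is still of fixed positive order.
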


\begin{proof}
Choose some $\eta < \min\{\g/2,c(\delta)\}$, which implies that
\[\frac{\g\rme^{-\eta N}(1 - 2\eta/\g)}{2} \geq \rme^{-c(\delta)N}\]
for all sufficiently large $N$.

Let
\begin{multline*}
Y_N := \big\{x \in X^{\rm{conc}}_{[0;N)}\,\big|\ \mu_{|\P^{[-N;0)}(x)}(X^{\rm{conc}}_{[0;N)}) > 1 - \g/2\\ \hbox{and}\ \xi^{[0;N)}_\ast(\mu_{|\P^{[-N;0)}(x)}) \ll_{\exp(\eta N),\eta} \xi^{[0;N)}_\ast\mu\big\}.
\end{multline*}
Since
\[\mu(X^{\rm{conc}}_{[0;N)}) = \int \mu_{|\P^{[-N;0)}(x)}(X^{\rm{conc}}_{[0;N)})\,\mu(\d x) \to 1 \quad \hbox{as}\ N\to\infty,\]
Markov's Inequality, Lemma~\ref{lem:fat} and Theorem~\ref{thm:Bern-shift-almost-exp-conc} imply that $\mu(Y_N) \to 1$ as $N\to\infty$.

We will show that $Y_N \subseteq X^{\rm{loc.exp}}_{N,\g,\delta}$, so suppose that $x \in Y_N$ and $U \subseteq \P^{[-N;0)}(x)$ with $\mu_{|\P^{[-N;0)}(x)}(U) \geq \g$.  Let $U' := U\cap X^{\rm{conc}}_{[0;N)}$, so the definition of $Y_N$ implies that $\mu_{|\P^{[-N;0)}(x)}(U') \geq \g/2$.  Let $W := \P^{[0;N)}(U')$, so
\begin{eqnarray}\label{eq:U-and-U-prime}
B^{d^\P_{[0;N)}}_{\delta N}(W) = B^{d^\P_{[0;N)}}_{\delta N}(U') \subseteq B^{d^\P_{[0;N)}}_{\delta N}(U).
\end{eqnarray}

Now one has
\[\xi^{[0;N)}_\ast(\mu_{|U'}) \ll_{2/\g,0} \xi^{[0;N)}_\ast(\mu_{|\P^{[-N;0)}(x)}) \ll_{\exp(\eta N),\eta} \xi^{[0;N)}_\ast\mu,\]
and hence, by the rules in Lemma~\ref{lem:abs-ct-basics},
\[\xi^{[0;N)}_\ast(\mu_{|U'}) \ll_{2\exp(\eta N)/\g,2\eta/\g} \xi^{[0;N)}_\ast\mu.\]
This implies that
\[\mu(U') \geq \frac{\g\rme^{-\eta N}}{2} (\mu_{|U'}(U') - 2\eta/\g) = \frac{\g\rme^{-\eta N}(1 - 2\eta/\g)}{2},\]
and this is at least $\rme^{-c(\delta)N}$ for $N$ large enough, by our choice of $\eta$.  Now~(\ref{eq:U-and-U-prime}) and the definition of $c(\delta)$ complete the proof.
\end{proof}

\begin{proof}[Proof of Proposition~\ref{prop:Bern-bicov-trivial}]
First fix $\delta > 0$, let $\P$ be a finite Borel partition of $X$ into sets of diameter less than $\delta/2$, and let $d^\P$ be the associated pseudometric: $d^\P(x,x') := 1_{\P(x) \neq \P(x')}$.  It follows that
\begin{eqnarray}\label{eq:B-contains-PI}
B^{d_I^\bfX}_{\delta |I|}(x) \supseteq B^{d_I^\P}_{\delta |I|/2}(x)
\end{eqnarray}
for any bounded discrete interval $I \subseteq \bbZ$ and any $x \in X$.

Now fix $\a \in (1,\infty)$ and $\k > \k' > 0$, and let $\g:= \k/\a$ and $\eps := (\k - \k')/\a$.  We will show that for any sufficiently large $N$ and any $U \subseteq X$ with $\mu(U) \geq \g$, there are many points $x \in U$ has the property that
\begin{eqnarray}\label{eq:big-bineigh-for-Bern}
\mu\big(U\cap B^{d^\P_{[0;N)}}_{\delta N/2}\big(U\cap \P^{[-N;0)}(x)\big)\big) > \mu(U) - \eps.
\end{eqnarray}
In view of~(\ref{eq:B-contains-PI}), this implies the same lower bound for the $(\delta N)$-bi-neighbourhood of $x$ in $(U,d^\bfX_{[-N;0)},d^\bfX_{[0;N)})$, completing the proof.

To prove~(\ref{eq:big-bineigh-for-Bern}), observe that
\[\g \leq \mu(U) = \int_X \mu_{|\P^{[-N;0)}(x)}(U)\,\mu(\d x) = \int_{X^{\rm{loc.exp}}_{N,\g/2,\delta/2}} \mu_{|\P^{[-N;0)}(x)}(U)\,\mu(\d x) + \rm{o}(1)\]
as $N\to \infty$, so provided $N$ is sufficiently large, there must be some $x \in X^{\rm{loc.exp}}_{N,\g/2,\delta/2}$ such that
\[\mu_{|\P^{[-N;0)}(x)}(U) \geq \g/2.\]
Since this implies that $U \cap \P^{[-N;0)}(x) \neq \emptyset$, and the left-hand side of this last inequality depends only on the cell $\P^{[-N;0)}(x)$, we may move $x$ within that cell if necessary to assume in addition that $x \in U$.  However, by the definition of $X^{\rm{loc.exp}}_{N,\g/2,\delta/2}$, these assumptions now imply
\begin{multline*}
\mu\big(B^{d^\P_{[0;N)}}_{\delta N/2}\big(U\cap \P^{[-N;0)}(x)\big)\big)\\ \geq \mu\big(B^{d^\P_{[0;N)}}_{\delta N/2}\big(U\cap \P^{[-N;0)}(x)\big)\,\big|\,X^{\rm{conc}}_{[0;N),\delta/2}\big) - \mu(X\setminus X^{\rm{conc}}_{[0;N),\delta/2})\\ > 1- \rme^{-c(\delta/2)N} - \mu(X\setminus X^{\rm{conc}}_{[0;N),\delta/2}),
\end{multline*}
and this is greater than $1 - \eps$ for all sufficiently large $N$, implying~(\ref{eq:big-bineigh-for-Bern}).
\end{proof}

\subsection{Behaviour of the invariant for generalized RWRS systems}

We will now formulate our main result for the bi-covering rate of generalized RWRS systems.  It involves a certain universal function $\psi_{\rm{BM}}:[1,\infty)\to (0,\infty]$, defined in terms of geometric features of Brownian sample paths.

\begin{dfn}\label{dfn:psi}
The function $\psi_{\rm{BM}}:[1,\infty)\to (0,\infty]$ is defined as follows:
\[ \psi_{\rm{BM}}(\a) := \inf\Big\{\psi \in (0,\infty]\,\Big|\ \sfW^{\otimes 2}_{[0,1]}\big\{\calL^1(B_{[0,1]}\cap B'_{[0,1]}) \leq \psi\} \geq 1/\a\Big\}.\]
\end{dfn}

It is easy to check that the random variable
\begin{multline*}
C_0(0,1] \times C_0(0,1] \to [0,\infty):(B,B')\mapsto \cal{L}^1(B_{[0,1]}\cap B'_{[0,1]})\\ = \min\{\sup_{0 \leq t \leq 1}B_t,\sup_{0 \leq t \leq 1}B'_t\} - \max\{\inf_{0\leq t \leq 1}B_t,\inf_{0 \leq t \leq 1}B'_t\}
\end{multline*}
has an atomless distribution under $\sfW^{\otimes 2}$.  Using this and standard properties of Brownian motion, one easily verifies the following.

\begin{lem}\label{lem:props-of-psiBM}
The function $\psi_{\rm{BM}}$ has the following properties:
\begin{itemize}
\item $\psi_{\rm{BM}}(1) = \infty$;
\item $\psi_{\rm{BM}}(\a) \in (0,\infty)$ for all $\a > 1$;
\item $\psi_{\rm{BM}}$ is strictly decreasing;
\item $\psi_{\rm{BM}}$ is continuous. \qed
\end{itemize}
\end{lem}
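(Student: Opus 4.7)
The plan is to write $\psi_{\rm{BM}}(\a)$ as the lower $(1/\a)$-quantile of the real random variable
\[X := \calL^1(B_{[0,1]}\cap B'_{[0,1]})\]
under $\sfW^{\otimes 2}_{[0,1]}$, and to derive all four claims from a handful of standard properties of its distribution function $F(\psi) := \sfW^{\otimes 2}_{[0,1]}\{X \leq \psi\}$.  Specifically, once one knows that (i) $F$ is continuous on $\bbR$, (ii) $F(\psi) = 0$ for $\psi \leq 0$, (iii) $F(\psi) < 1$ for every finite $\psi$, and (iv) $F$ is strictly increasing on $[0,\infty)$, it follows that $F$ restricts to a continuous, strictly increasing bijection $[0,\infty) \to [0,1)$, and so $\psi_{\rm{BM}}(\a) = F^{-1}(1/\a)$ for $\a > 1$, while $\psi_{\rm{BM}}(1) = \infty$ because the defining infimum in Definition~\ref{dfn:psi} is taken over the empty set.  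The stated properties of $\psi_{\rm{BM}}$ then drop out: values in $(0,\infty)$ for $\a > 1$ are immediate; strict decrease and continuity on $(1,\infty)$ come from composing the strictly increasing continuous $F^{-1}$ with the strictly decreasing continuous $\a \mapsto 1/\a$; and continuity at $\a = 1$ in the extended sense reduces to $F^{-1}(t) \to \infty$ as $t \uparrow 1$, which holds because $F^{-1}: [0,1) \to [0,\infty)$ is surjective.

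Three of the four properties of $F$ are essentially free.  Continuity is exactly the atomlessness observed in the paragraph preceding the lemma statement.  That $F \equiv 0$ on $(-\infty,0]$ holds because $\sup_{t \in [0,1]} B_t > 0 > \inf_{t \in [0,1]} B_t$ almost surely, and similarly for $B'$, so the intersection $B_{[0,1]}\cap B'_{[0,1]}$ contains an open neighbourhood of $0$ almost surely, forcing $X > 0$ almost surely.  That $F(\psi) < 1$ for every finite $\psi$ is inherited from the unboundedness of $\sup_{t \in [0,1]} B_t$ under $\sfW_{[0,1]}$, which forces $X$ to have unbounded support.

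The remaining property, strict monotonicity of $F$ on $[0,\infty)$, is the only one that needs more than a one-line justification, and is the mild technical obstacle.  I would handle it via the classical explicit joint density of $(\sup_{t \in [0,1]} B_t,\inf_{t \in [0,1]} B_t)$ on the quadrant $\{(s,i):\ s > 0 > i\}$, which is known to be strictly positive throughout that open set.  Independence of $B$ and $B'$ then gives a joint density for the quadruple $\big(\sup_{t \in [0,1]} B_t,\inf_{t \in [0,1]} B_t,\sup_{t \in [0,1]} B'_t,\inf_{t \in [0,1]} B'_t\big)$ that is strictly positive on $\{s,s' > 0 > i,i'\}$.  Since the map
\[(s,i,s',i') \mapsto \min\{s,s'\} - \max\{i,i'\}\]
is continuous and surjective from that open set onto $(0,\infty)$, the push-forward law of $X$ assigns positive mass to every nonempty open subinterval of $(0,\infty)$, giving the desired strict increase of $F$.
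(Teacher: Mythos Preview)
Your proof is correct and is precisely the kind of argument the paper has in mind: the lemma is stated with a \qed\ and no proof, preceded only by the remark that $X$ has atomless law and that the claims follow from ``standard properties of Brownian motion''; you have simply spelled this out. One small slip: for $\a = 1$ the set $\{\psi \in (0,\infty]\,|\,F(\psi) \geq 1\}$ is not empty but equals $\{\infty\}$ (since $\psi = \infty$ is admitted and gives $F(\infty) = 1$), so the infimum is over a singleton rather than the empty set --- the conclusion $\psi_{\rm{BM}}(1) = \infty$ is of course unaffected.
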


In terms of $\psi_{\rm{BM}}$, our main result is as follows.

\begin{thm}\label{thm:rate}
If $(\bfY,\s)$ is a well-distributed pair satisfying the Enhanced Invariance Principle, $\bfX$ is an ergodic compact model flow, and $\a \in (1,\infty)$, then
\begin{multline*}
\sup_{\k > \k' > 0}\sup_{\delta > 0}\limsup_{N\to\infty}\frac{\log \BICOV_{\a,\k,\k',\delta}(Y \times X,d_{[-N;0)}^{\bfY \ltimes_\s \bfX},d^{\bfY \ltimes_\s \bfX}_{[0;N)},\nu \otimes \mu)}{\sqrt{N}}\\ = \psi_{\rm{BM}}(\a)\rmh(\bfX).
\end{multline*}
\end{thm}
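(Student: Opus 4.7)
The plan is to prove matching upper and lower bounds on the bi-covering rate, both pivoting on the overlap $O_N(y) := |\s^y_{[-N;0)} \cap \s^y_{[0;N)}|$. Under $d^{\bfY \ltimes_\s \bfX}_{[-N;0)}$ the past $N$-orbit of a point ``sees'' the $\bfX$-scenery over the past range of $\s^y$, and similarly for the future, so a $\delta$-bi-neighbourhood of $(y,x)$ couples only the scenery lying on the overlap of the past and future ranges. The Enhanced Invariance Principle (Definition~\ref{dfn:EIP}), applied to both past and future portions of the trajectory, gives $O_N/\sqrt{N} \stackrel{\rm{law}}{\longrightarrow} \calL^1(B_{[0,1]} \cap B'_{[0,1]})$, so by Lemma~\ref{lem:props-of-psiBM} the quantity $\psi_{\rm{BM}}(\a)\sqrt{N}$ is the natural $(1/\a)$-quantile threshold for $O_N$. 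On the fibre side, the metric form of Shannon--McMillan (Propositions~\ref{prop:spatial-ent-and-KS-ent} and~\ref{prop:from-OW}) counts typical $\bfX$-sceneries on any prescribed set $F \subseteq \bbZ$ as $\exp((\rmh(\bfX) \pm \eps)|F|)$, and coupling these two inputs produces the exponential rate $\psi_{\rm{BM}}(\a)\rmh(\bfX)\sqrt{N}$.

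For the upper bound, playing as Min-er against arbitrary $\k > \k' > 0$ and $\delta > 0$, I would take $\mu'$ to be $\nu \otimes \mu$ restricted to $\{O_N \leq (\psi_{\rm{BM}}(\a) + \eta)\sqrt{N}\}$ and rescaled, whose mass exceeds $1/\a$ by the Invariance Principle so the density constraint $\|\d\mu'/\d(\nu \otimes \mu)\|_\infty \leq \a$ is honoured. For any $U$ with $\mu'(U) \geq \k$, every $(y,x) \in U$ has overlap at most $(\psi_{\rm{BM}}(\a) + \eta)\sqrt{N}$. A bi-covering of $U$ is then constructed by listing the Hamming-typical $\bfX$-sceneries on the overlap and bundling into each class the remaining freedom --- the walk trajectory together with the scenery off the overlap --- using Lemma~\ref{lem:d1-cov-and-dinf-cov} to translate between Hamming-like and Bowen--Dinaburg-like control on $\bfX$. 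Shannon--McMillan bounds the number of classes by $\exp((\rmh(\bfX) + \eps)(\psi_{\rm{BM}}(\a) + \eta)\sqrt{N})$; sending $\eps, \eta \downarrow 0$ delivers the upper bound.

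For the lower bound, given any admissible $\mu'$, the density bound together with continuity and strict monotonicity of $\psi_{\rm{BM}}$ yields an $\eta_\eps > 0$ with
\[
\mu'\{O_N \geq (\psi_{\rm{BM}}(\a) - \eps)\sqrt{N}\} \;\geq\; \a\eta_\eps
\]
for $N$ large. With $\k$ chosen below $\a\eta_\eps$, Max-er selects this high-overlap set as $U$. It then remains to show that bi-covering $U$ requires at least $\exp((\rmh(\bfX) - \eps)(\psi_{\rm{BM}}(\a) - \eps)\sqrt{N})$ centres when $\delta$ is small. This is the main obstacle: it demands a quantitative rigidity result, intrinsic to the bi-metric structure, forcing distinct typical $\bfX$-scenery patterns on the overlap to yield bi-separated points in $(Y \times X, d^{\bfY \ltimes_\s \bfX}_{[-N;0)}, d^{\bfY \ltimes_\s \bfX}_{[0;N)})$.

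The hard part is precisely this rigidity. As Example~\ref{ex:hard-to-recover} shows, $(y,c) \mapsto (y, F_c(y))$ has no bi-Lipschitz inverse on the scenery coordinate --- a single sign-flip in $y$ shifts the readout globally --- so one cannot argue directly that Hamming-close output strings correspond to Hamming-close sceneries. Following the strategy sketched in Subsection~\ref{subs:approx-recov}, I would instead exhibit, for $\nu$-typical $y$, a large family of ``discrete Cantor-like'' subsets of $\s^y_{[-N;0)} \cap \s^y_{[0;N)}$ on which $\s^y$ is injective as a function of both past-indices and future-indices, and which are robust enough to survive the two successive small Hamming perturbations encoded in a bi-neighbourhood. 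On any such subset, points in a common $\delta$-bi-neighbourhood are forced to agree on the $\bfX$-scenery up to a small $d^\bfX$-error, and Proposition~\ref{prop:from-OW} applied on the union of these subsets produces the required exponential multiplicity of bi-separated classes. Establishing these combinatorial properties of the cocycle trajectories is the content of Section~\ref{sec:dCs}, and their Kalikow-style assembly into the lower bound is carried out in Section~\ref{sec:lower-bd}.
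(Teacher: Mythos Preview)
Your high-level picture is right, and for the lower bound you correctly identify the rigidity obstacle and defer to Sections~\ref{sec:dCs}--\ref{sec:lower-bd}, which is exactly what the paper does. Two points deserve attention, one cosmetic and one substantive.

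First, the cocycle $\s$ is $\bbR$-valued, so $O_N(y) = |\s^y_{[-N;0)} \cap \s^y_{[0;N)}|$ is not a cardinality; the paper works throughout with Lebesgue measures of neighbourhoods $B_\ell(\s^y_I)$ and their intersections, and your argument should be phrased that way.

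The real gap is in your upper bound. You propose to ``list the typical $\bfX$-sceneries on the overlap and bundle into each class the remaining freedom''. But Max-er gets to choose $U$, and you must bi-cover \emph{that specific $U$}: even after fixing an overlap-scenery class $C$, you have not explained why all of $U\cap C$ lies in a single $\delta$-bi-neighbourhood. Concretely, a $d_{[-N;0)}$-step within $U$ changes the past trajectory and hence the past range itself, so ``scenery off the overlap'' is not a stable notion under the moves you are allowed; nor is it clear that $U$ contains the intermediate points needed to execute the two-step path (recall~(\ref{eq:subset-bad})). Lemma~\ref{lem:d1-cov-and-dinf-cov} does not address this --- it compares two metrics on a single ball, not the bi-neighbourhood structure. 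The paper's solution is the mutual-information covering machinery of Section~\ref{sec:before-upper-bd}: one replaces the metrics by partitions $\calS_N,\T_N$ via Lemma~\ref{lem:ball-prods-in-balls}, proves $\rmI_{\nu\otimes\mu}(\calS_N;\T_N\,|\,\R_N) = \rm{o}(\sqrt{N})$ (Lemma~\ref{lem:rel-mut-inf-bound}, which crucially uses the Gibbs mixing bound of Corollary~\ref{cor:Gibbs-mut-inf}), and then invokes Proposition~\ref{prop:rel-efficient-cov} to produce a bi-covering of \emph{any} $U$ with $\mu'(U)\geq\k$. This greedy information-theoretic lemma is what makes the upper bound robust against Max-er's choice, and it is the missing ingredient in your sketch.
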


(Proposition~\ref{prop:big-bicov-factor} already shows that this $\sup\limsup$ does not depend on the choice of compact metric models for $\bfY$ and $\bfX$ as abstract p.-p. systems.)  The reason for this somewhat delicate dependence on the properties of Brownian motion will become clear during the proof.

\begin{rmk}
I expect that the limit-supremum here is actually a limit, and that this requires only a slight enhancement of the proof of the lower bound given below.  However, that enhancement seems to require rather heavier bookkeeping, so we do not pursue it in this paper. \fin
\end{rmk}

\begin{proof}[Proof of Theorem A from Theorem~\ref{thm:rate}]
Suppose that $\bfY \ltimes_\s \bfX_1$ and $\bfY \ltimes_\s \bfX_2$ are two examples as in Theorem A, and that the former admits a factor map to the latter.  Then we may take logarithms in the inequality of Proposition~\ref{prop:big-bicov-factor}, divide by $\sqrt{N}$, and then deduce from Theorem~\ref{thm:rate} that $\rmh(\bfX_1) \geq \rmh(\bfX_2)$.
\end{proof}

The rest of this paper is given to the proof of Theorem~\ref{thm:rate}.  This will involve separate proofs of upper and lower bounds, the second being the more difficult direction.

\section{The combinatorial basis of the upper bound}\label{sec:before-upper-bd}

This section introduces a general tool which will underly the proof of the upper bound in Theorem~\ref{thm:rate}.  Although very elementary, it may be of interest in its own right.  Subsection~\ref{subs:gen-upper-bound} also gives an easier outing for this tool, proving that the bi-covering rates of arbitrary p.-p. systems are always sublinear.

\subsection{A bound using mutual information}

For this subsection, fix a probability space $(X,\mu)$.  We will next develop ways to find an efficient covering of a `large' (in terms of $\mu$) portion of $X$ using certain distinguished subsets, based on some other information about those subsets.

Our most basic result in this direction assumes that these special subsets are involved in a reasonably `smooth' barycentric decomposition of $\mu$.

\begin{lem}\label{lem:efficient-cov}
Suppose that $(X,\mu)$ and $(Z,\nu)$ are standard Borel probability spaces, and that $z\mapsto \mu_z$ is a measurable family of finite Radon measures on $X$, uniformly bounded, such that
\[\mu = \int_Z \mu_z\,\nu(\d z) \quad \hbox{and} \quad \mu_z \ll_{M,\eps} \mu \quad \forall z \in Z.\]
Suppose in addition that for each $z \in Z$, $Y_z$ is a Borel subset of $X$ for which $\mu_z(Y_z) = 1$. Then for every $\a < 1 - \eps$ there is a subset $S \subseteq Z$ with
\[|S| \leq \frac{M}{1 - \a - \eps} \quad \hbox{and} \quad \mu\Big(\bigcup_{z \in S}Y_z\Big) > \a.\]
\end{lem}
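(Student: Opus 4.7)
The plan is a greedy covering. Starting from $S_0 := \emptyset$, at each stage I set $U_k := \bigcup_{z \in S_k} Y_z$ and stop as soon as $\mu(U_k) > \a$; otherwise I select some $z^* \in Z$ for which the new mass picked up satisfies
\[\mu(Y_{z^*} \cap (X \setminus U_k)) \geq \frac{1 - \a - \eps}{M},\]
and set $S_{k+1} := S_k \cup \{z^*\}$. The increment is strictly positive by the hypothesis $\a < 1 - \eps$, so at most $M/(1-\a-\eps)$ rounds can pass before $\mu(U_k) > \a$, and the resulting set $S = S_K$ satisfies the stated bound on $|S|$.

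The only real content is producing $z^*$. Writing $A_k := X \setminus U_k$, so that $\mu(A_k) \geq 1 - \a$, the barycentric identity gives
\[\int_Z \mu_z(A_k)\,\nu(\d z) = \mu(A_k) \geq 1 - \a,\]
so there must exist $z^* \in Z$ with $\mu_{z^*}(A_k) \geq 1 - \a$. Reading $\mu_{z^*}(Y_{z^*}) = 1$ as saying $\mu_{z^*}$ is a probability concentrated on $Y_{z^*}$ (which is the natural interpretation for the barycentric decomposition), one has $\mu_{z^*}(Y_{z^*} \cap A_k) = \mu_{z^*}(A_k) \geq 1 - \a$. The approximate absolute continuity $\mu_{z^*} \ll_{M,\eps} \mu$, read backwards at the set $Y_{z^*} \cap A_k$, then yields
\[\mu(Y_{z^*} \cap A_k) \geq \frac{\mu_{z^*}(Y_{z^*} \cap A_k) - \eps}{M} \geq \frac{1 - \a - \eps}{M},\]
as required, since $Y_{z^*} \cap A_k$ is exactly the new portion of $X$ that $z^*$ contributes.

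There is no substantial obstacle here: the argument is a standard greedy covering. The only subtlety worth flagging is that the approximate absolute continuity hypothesis is used in its reverse direction, transferring a lower bound on $\mu_{z^*}$ into a lower bound on $\mu$ at the cost of a multiplicative factor $M$ and an additive slack $\eps$. This is presumably the same mechanism that will drive the upper-bound half of Theorem~\ref{thm:rate} in the following section, applied to the barycentric decomposition of the $N$-step joint distribution over a suitable coarse past factor that makes the relative entropy small enough for Lemma~\ref{lem:ent-to-unif-int} to supply the needed $\ll_{M,\eps}$ comparison.
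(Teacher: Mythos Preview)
Your proof is correct and follows essentially the same greedy-covering argument as the paper's own proof: pick $z^*$ achieving at least the average value $\mu_{z^*}(A_k) \geq 1-\a$, use that $\mu_{z^*}$ is concentrated on $Y_{z^*}$ to get $\mu_{z^*}(Y_{z^*}\cap A_k) \geq 1-\a$, and then invert the $\ll_{M,\eps}$ bound to obtain the increment $\mu(Y_{z^*}\cap A_k) \geq (1-\a-\eps)/M$. One small remark: the paper explicitly allows the $\mu_z$ not to be probability measures, so your parenthetical ``$\mu_{z^*}$ is a probability concentrated on $Y_{z^*}$'' is slightly off --- the correct reading (which the paper also uses implicitly) is simply that $\mu_z(X\setminus Y_z)=0$, not that $\mu_z(X)=1$; your argument only needs the former.
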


Note that the measures $\mu_z$ are not required to be probability measures; this flexibility will be helpful shortly.

\begin{proof}
The set $S$ is constructed by the following greedy recursion.

Suppose that $z_1,\ldots,z_m \in Z$ have already been picked, where this is vacuous if $m=0$.  If $\mu\big(\bigcup_{i=1}^m Y_{z_i}\big) > \a$, then Stop and set $S := \{z_1,\ldots,z_m\}$.  Otherwise, let $U := X \setminus \bigcup_{i=1}^m Y_{z_i}$, and observe that
\[1 - \a \leq \mu(U) = \int_Z\mu_z(U)\,\nu(\d z),\]
so there is some $z_{m+1} \in Z$ for which
\[\mu_{z_{m+1}}(U) = \mu_{z_{m+1}}(U\cap Y_{z_{m+1}}) \geq 1 - \a,\]
and hence
\[M\mu(U\cap Y_{z_{m+1}}) + \eps \geq 1 - \a \quad \Longrightarrow \quad \mu(U\cap Y_{z_{m+1}}) \geq \frac{1 - \a - \eps}{M}.\]
This gives the choice of the next point $z_{m+1}$.

Having obtained $z_1,\ldots,z_m$ by the above algorithm, we have
\[\mu\Big(\bigcup_{i=1}^m Y_{z_i}\Big) = \sum_{i=1}^m\mu\Big(Y_{z_i}\Big\backslash \bigcup_{j=1}^{i-1}Y_{z_j}\Big) \geq m\cdot \frac{1 - \a - \eps}{M}.\]
This requires that $m \leq M/(1 - \a - \eps)$, so the recursion must terminate in a set $S$ containing at most this many points.  The union of the corresponding supports must have measure greater than $\a$, since this was the condition for termination.
\end{proof}

Lemma~\ref{lem:efficient-cov} gives the covering conclusion that we will need later, but its assumption that $\mu_z \ll_{M,\eps} \mu$ uniformly in $z$ is stronger than we will meet directly.  We will next turn it into an estimate closer to our applications. This begins with a useful way of `trimming' a positive-measure subset $U$ of a probability space $(X,\S,\mu)$ relative to a finite measurable partition $\P$ of $X$.

\begin{dfn}
Let $\g \geq 0$, let $(X,\S,\mu)$ be a probability space, and let $\P \subseteq \S$ be a finite partition into positive-measure sets.  A subset $V \in \S$ is \textbf{locally $\g$-thick in $\P$} if for every $C\in \P$ one has
\[\hbox{either} \quad C \cap V = \emptyset \quad \hbox{or} \quad \mu(V\,|\,C) \geq \g.\]
\end{dfn}

For clarity, note that $\emptyset$ is locally $\g$-thick in every partition, for every $\g$.

The following lemma is an immediate consequence of Markov's Inequality, but it will be worth having it ready to hand.

\begin{lem}[Trimming a set to a partition]\label{lem:trim}
Let $(X,\S,\mu)$ be a probability space, let $\P \subseteq \S$ be a finite partition into positive-measure sets, let $U \in \S$ with $\mu(U) > 0$, and let $\a \in (1/2,1)$.  Then the subset
\[V := \bigcup_{\hbox{\scriptsize{$\begin{array}{c}C \in \P\\ \mu(U\,|\,C) \geq (1 - \a)\mu(U)\end{array}$}}} (U\cap C) \subseteq U\]
satisfies $\mu(V) \geq \a\mu(U)$ and is locally $((1 - \a)\mu(U))$-thick in $\P$. \qed
\end{lem}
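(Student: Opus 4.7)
The plan is to check each of the two conclusions separately by direct inspection, since both follow almost mechanically from how $V$ is defined. Write $\P_{\mathrm{good}} := \{C \in \P : \mu(U\mid C) \geq (1-\a)\mu(U)\}$ and $\P_{\mathrm{bad}} := \P\setminus \P_{\mathrm{good}}$, so that $V = U \cap \bigcup \P_{\mathrm{good}}$. I would handle local thickness first, as it is immediate from this description, and then obtain the measure bound by estimating the total $\mu$-mass of $U$ lying in the bad cells.

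For local thickness, suppose $C \in \P$ satisfies $C \cap V \neq \emptyset$. By the definition of $V$, the only cells that contribute a nonempty intersection with $V$ are the ones in $\P_{\mathrm{good}}$, so $C \in \P_{\mathrm{good}}$ and moreover $V\cap C = U\cap C$. Consequently $\mu(V\mid C) = \mu(U\mid C) \geq (1-\a)\mu(U)$, which is exactly the asserted $((1-\a)\mu(U))$-thickness.

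For the measure bound, I would apply Markov's inequality to the conditional mass function $x\mapsto \mu(U\mid \P(x))$ (where $\P(x)$ denotes the cell of $\P$ containing $x$), whose expectation under $\mu$ equals $\mu(U)$. Equivalently, estimate directly:
\[
\mu(U\setminus V) \;=\; \sum_{C\in \P_{\mathrm{bad}}}\mu(U\mid C)\,\mu(C) \;\leq\; (1-\a)\mu(U)\sum_{C\in \P_{\mathrm{bad}}}\mu(C) \;\leq\; (1-\a)\mu(U),
\]
so $\mu(V) = \mu(U) - \mu(U\setminus V) \geq \a\mu(U)$. There is no real obstacle here; the only thing to watch is that the sum is taken over cells of positive measure (which is guaranteed by hypothesis) so that the conditional measures $\mu(U\mid C)$ are well defined on every summand.
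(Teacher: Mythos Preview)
Your proof is correct and matches the paper's approach: the paper states the lemma with a \qed and remarks only that it is ``an immediate consequence of Markov's Inequality,'' which is exactly the computation you carry out for the measure bound, while the local-thickness claim is indeed immediate from the definition of $V$.
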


The above definition and lemma have an obvious generalization to local thickness relative to a Borel map $\pi:X\to Y$ and a given disintegration of $\mu$ over $\pi$, but this will not be needed.

Now assume that $\calS$ and $\T$ are two fixed finite Borel partitions of $(X,\mu)$.

\begin{prop}\label{prop:efficient-cov}
Let $I := \rmI_\mu(\calS;\T)$ and suppose that $\a \in (0,1]$ and $\eta \in (0,\a)$.  Then for every Borel $U\subseteq X$ with $\mu(U) \geq \a$, there is a subset $S \subseteq U$ with
\[\log |S| \lesssim_{\a,\eta} I+1 \quad \hbox{and} \quad \mu\big(U\cap \T(U\cap \calS(S))\big) > \mu(U) - \eta\]
(where the notation in the first inequality indicates that the bound depends on $\a$ and $\eta$ but not otherwise on the choice of $U$).
\end{prop}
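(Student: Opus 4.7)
The plan is to combine the three tools developed in this subsection: Equation~(\ref{eq:rel-ent-of-bary}) recasts $I$ as an expected Kullback--Leibler divergence; Lemma~\ref{lem:ent-to-unif-int} converts a pointwise bound on that divergence into approximate absolute continuity of the conditional measures $\mu_{|\calS(x)}$; and a greedy-selection argument in the spirit of Lemma~\ref{lem:efficient-cov} then extracts the covering. Fix a finite-valued $\xi$ generating $\T$. By~(\ref{eq:rel-ent-of-bary}), $I = \int \rmD(\xi_\ast\mu_{|\calS(x)}\,|\,\xi_\ast\mu)\,\mu(\d x)$, so setting $D_0 := \max(8I/\eta,\,1)$, Markov's inequality produces
\[Z_0 \;:=\; \{x \in X : \rmD(\xi_\ast\mu_{|\calS(x)}\,|\,\xi_\ast\mu) \leq D_0\}\]
with $\mu(Z_0) \geq 1 - \eta/8$. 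Then with $C := 32(D_0 + \rme^{-1})/\eta^2$, Lemma~\ref{lem:ent-to-unif-int} gives, for every $x\in Z_0$ and every $\T$-measurable $F \subseteq X$,
\[\mu_{|\calS(x)}(F) \;\leq\; \rme^C\mu(F) + \eta^2/32.\]

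Next, I would apply Lemma~\ref{lem:trim} to $U\cap Z_0$ with trim parameter $1 - \eta/(4\mu(U))$ to obtain $V \subseteq U \cap Z_0$ that is locally $\g$-thick in $\calS$ with $\g \gtrsim \eta$ (using $\mu(U) \geq \a > \eta$ to control the resulting constant) and satisfies $\mu(V) \geq \mu(U) - 3\eta/8$. Setting $Y_x := \T(U \cap \calS(x))$ for $x \in V$ --- each $Y_x$ is $\T$-measurable --- one has $U\cap \T(U\cap\calS(S)) = U \cap \bigcup_{z\in S}Y_z$ for any $S\subseteq V$, and
\[\mu\Big(U\setminus \bigcup_{z\in S}Y_z\Big) \;\leq\; \mu(U\setminus V) + \mu\Big(V\setminus \bigcup_{z\in S}Y_z\Big) \;\leq\; \tfrac{3\eta}{8} + \mu\Big(V\setminus \bigcup_{z\in S}Y_z\Big),\]
so it suffices to greedily grow $W_m := \bigcup_{i\leq m}Y_{z_i}$ (with $z_i \in V$) until $\mu(V\setminus W_m) < \eta/2$.

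The greedy step is where local thickness and the mutual-information bound finally meet. While $\mu(V\setminus W_m) \geq \eta/2$, a cell-by-cell expansion gives
\[\int_V \mu_{|\calS(x)}(V\setminus W_m)\,\mu(\d x) \;=\; \sum_C \frac{\mu(V\cap C)}{\mu(C)}\,\mu((V\setminus W_m)\cap C) \;\geq\; \g\,\mu(V\setminus W_m),\]
so some $z_{m+1}\in V$ satisfies $\mu_{|\calS(z_{m+1})}(V\setminus W_m) \geq \g\eta/(2\mu(V)) \geq \g\eta/2$. Since $V\cap \calS(z_{m+1})\subseteq Y_{z_{m+1}}$ and $\mu_{|\calS(z_{m+1})}$ is supported on $\calS(z_{m+1})$, this lower bound persists with $V\setminus W_m$ replaced by the $\T$-measurable set $Y_{z_{m+1}}\setminus W_m$; the approximate-absolute-continuity estimate then yields $\mu(Y_{z_{m+1}}\setminus W_m) \geq \rme^{-C}(\g\eta/2 - \eta^2/32) \gtrsim_\eta \rme^{-C}$. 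Hence the recursion terminates in at most $O_\eta(\rme^C)$ steps, producing $S \subseteq V\subseteq U$ with $\mu(U\setminus \bigcup_{z\in S}Y_z) < \eta$ and $\log|S| \leq C + O(\log(1/\eta)) \lesssim_{\a,\eta} I + 1$. The main technical subtlety is the three-way balance among the additive error $\eta^2/32$ in Lemma~\ref{lem:ent-to-unif-int}, the per-step gain $\g\eta/2$, and the residual mass $\eta/2$ tolerated at termination: all three must sit at the same order of magnitude, which forces $C \asymp D_0/\eta^2$ and $\rme^C \asymp \exp(O_\eta(I))$ --- precisely matching the claimed linear dependence on $I$.
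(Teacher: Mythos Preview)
Your argument is correct and follows the same overall architecture as the paper's proof: express $I$ as an integrated KL divergence via~(\ref{eq:rel-ent-of-bary}), apply Markov to isolate a set $Z_0$ of cells with controlled divergence, convert that via Lemma~\ref{lem:ent-to-unif-int} into approximate absolute continuity, trim via Lemma~\ref{lem:trim}, and run a greedy covering. The one structural difference is the direction of the trimming: the paper trims $U_1$ to be locally thick in $\T$, which lets it compare $1_B\cdot\psi_\ast\mu$ with $\psi_\ast(\mu_{|V})$ and then invoke Lemma~\ref{lem:efficient-cov} as a black box; you instead trim to be locally thick in $\calS$, which makes the cell-by-cell lower bound $\int_V \mu_{|\calS(x)}(V\setminus W_m)\,\d\mu \geq \g\,\mu(V\setminus W_m)$ work and then you run the greedy selection by hand. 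Both routes yield the same $\log|S|\lesssim_{\a,\eta} I+1$; the paper's is slightly more modular, while yours avoids the auxiliary comparison $1_B\cdot\psi_\ast\mu \leq \g^{-1}\psi_\ast(1_V\cdot\mu)$ at the cost of unpacking the greedy step explicitly.
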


The connection between the mutual-information bound assumed here and the hypothesis of Lemma~\ref{lem:efficient-cov} will result from Lemma~\ref{lem:ent-to-unif-int}.

\begin{proof}
Let $\psi:X\to A$ be a map to a finite set that generates the partition $\T$. Then equation~(\ref{eq:rel-ent-of-bary}) gives
\[I = \int_X \rmD(\psi_\ast(\mu_{|\calS(x)})\,|\,\psi_\ast\mu)\,\mu(\d x).\]

\vspace{7pt}

\emph{Step 1.}\quad Choose $\zeta := \eta/3$, and observe that this implies
\[(1 - \zeta)^2(\mu(U) - \zeta) > (1 - 2\zeta)(\mu(U) - \zeta) > \mu(U) - 3\zeta = \mu(U) - \eta.\]

Now let $D := I/\zeta$.  Applying Markov's Inequality to the integral above gives that the set
\[X_1 := \big\{x\,\big|\ \rmD(\psi_\ast(\mu_{|\calS(x)})\,|\,\psi_\ast\mu) \leq D\big\}\]
has
\[\mu(X_1) \geq 1 - I/D = 1 - \zeta.\]
Letting $U_1 := U\cap X_1$, it follows that $\mu(U_1) \geq \mu(U) -\zeta$.

On the other hand, for any $C \in (0,\infty)$, Lemma~\ref{lem:ent-to-unif-int} gives
\begin{eqnarray}\label{eq:XD-elts-controlled}
X_1 \subseteq \big\{x\,\big|\ \psi_\ast(\mu_{|\calS(x)}) \ll_{\rme^C,(D + \rme^{-1})/C} \psi_\ast\mu\big\}.
\end{eqnarray}

\vspace{7pt}

\emph{Step 2.}\quad Now let $\g:= \zeta\mu(U_1) \geq \zeta(\mu(U)-\zeta)$, and apply Lemma~\ref{lem:trim} to find some $V \subseteq U_1$ with
\[\mu(V) \geq (1-\zeta)\mu(U_1) \geq (1-\zeta)(\mu(U) - \zeta) > (\mu(U) - \eta)/(1-\zeta)\]
and which is locally $\g$-thick in $\T$.

\vspace{7pt}

\emph{Step 3.}\quad The decomposition of $\mu$ into the measures $\mu_{|\calS(x)}$ may be conditioned on $V$ and pushed forward under $\psi$ to obtain
\begin{eqnarray}\label{eq:useful-disint}
\psi_\ast(\mu_{|V}) = \frac{1}{\mu(V)}\psi_\ast(1_V\cdot \mu) = \frac{1}{\mu(V)}\int_X \psi_\ast(1_V\cdot \mu_{|\calS(x)})\,\mu(\d x)
\end{eqnarray}
(being aware that the measures inside the right-hand integral may now not be probability measures).  Let $B := \psi(V) \subseteq A$, so the above pushforward measures are all supported on $B$.  Applying~(\ref{eq:XD-elts-controlled}), it follows that any $x \in V\subseteq X_1$ satisfies
\[\psi_\ast(1_V\cdot \mu_{|\calS(x)}) \leq 1_B\cdot \psi_\ast(\mu_{|\calS(x)}) \ll_{\rme^C,(D + \rme^{-1})/C} 1_B\cdot \psi_\ast\mu.\]
On the other hand, since $V$ is locally $\g$-thick in $\T$, for any $b \in B$ one has $\mu(V\cap \psi^{-1}\{b\}) \geq \g\mu(\psi^{-1}\{b\})$.  Therefore
\[1_B\cdot \psi_\ast\mu \leq \frac{1}{\g} \psi_\ast(1_V\cdot \mu)= \frac{\mu(V)}{\g}\psi_\ast(\mu_{|V}).\]

Combining this with the preceding inequalities, and recalling that $\mu(V) \geq \mu(U)-\eta \geq \a - \eta$, we obtain
\[\frac{1}{\mu(V)}\psi_\ast(1_V\cdot \mu_{|\calS(x)}) \ll_{\rme^C/\g,\ (D+\rme^{-1})/C(\a-\eta)} \psi_\ast(\mu_{|V}).\]

\vspace{7pt}

\emph{Step 4.}\quad This relates the integral and integrands in~(\ref{eq:useful-disint}), and so puts us in position to apply Lemma~\ref{lem:efficient-cov}.  The family of measures is $(1/\mu(V))\psi_\ast(1_V\cdot \mu_{|\calS(x)}) \in \Pr A$ for $x \in X$, and for each $x$ the relevant supporting subset is $\psi(V\cap \calS(x)) \subseteq A$.  To carry out this application, it remains to choose the constant $C$.  Let $\eps:= \zeta/2$, and now let
\[C := \frac{D + \rme^{-1}}{\eps (\a-\eta)} = \frac{2(I/\zeta + \rme^{-1})}{\zeta (\a - \eta)} \lesssim_{\a,\eta} I+1.\]
For the assumptions of Lemma~\ref{lem:efficient-cov} we must set $M := \rme^C$, and can then obtain some $S \subseteq V$ such that $|S| \leq M/(\zeta -\eps) = 2M/\zeta$ and
\begin{multline*}
\mu_{|V}(\T(V\cap \calS(S))) = \psi_\ast(\mu_{|V})\big(\psi(V\cap \calS(S))\big) > 1 - \zeta\\
\Longrightarrow \quad \mu\big(U\cap \T(U\cap \calS(S))\big) \geq \mu\big(V\cap \T(V\cap \calS(S))\big) > (1-\zeta)\mu(V) \geq \mu(U) - \eta.
\end{multline*}
This gives a subset $S$ contained in $X$, but not necessarily in $U$.  However, one can discard any $x \in S$ such that $U\cap \calS(x) = \emptyset$ without disrupting these estimates, and any remaining $x$ can be replaced by an element of $U\cap \calS(x)$ to give an element of $U$ with the same $\calS$-cell.  We may therefore take $S \subseteq U$, as required.
\end{proof}

One further generalization of the above result will be important later.  To formulate it, we now posit a third partition $\R$ such that $\R \preceq \calS\wedge \T$.

\begin{prop}\label{prop:rel-efficient-cov}
Let $I := \rmI_\mu(\calS;\T\,|\,\R)$ and suppose that $\a \in (0,1]$ and $\eta \in (0,\a)$.  Then for every Borel $U\subseteq X$ with $\mu(U) \geq \a$ there is a subset $S \subseteq U$ such that
\[\log \frac{|S|}{|\R|} \lesssim_{\a,\eta} I+1 \quad \hbox{and} \quad \mu\big(U\cap \T(U\cap \calS(S))\big) > \mu(U) - \eta.\]
\end{prop}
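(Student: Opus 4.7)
The plan is to reduce the relative statement to the unconditional Proposition~\ref{prop:efficient-cov} applied cell by cell over the refining partition $\R$. Because $\R \preceq \calS \wedge \T$, each cell $D \in \R$ is a union of $\calS$-cells and a union of $\T$-cells, so $\calS$ and $\T$ induce genuine partitions of $(D,\mu_{|D})$, and the standard decomposition
\[
I = \rmI_\mu(\calS;\T\,|\,\R) = \sum_{D \in \R}\mu(D)\,\rmI_{\mu_{|D}}(\calS;\T)
\]
holds. Writing $I_D := \rmI_{\mu_{|D}}(\calS;\T)$, the strategy is to throw away two kinds of ``bad'' $\R$-cells that cost only a small amount of $\mu(U)$, and then invoke Proposition~\ref{prop:efficient-cov} on the good cells with absolute parameters that depend only on $\eta$.

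The first bad set is $Z := \{D\in \R\,:\, I_D > K\}$, where $K := 1 + 3I/\eta$; by Markov's Inequality on the above decomposition, $\mu(\bigcup Z) \leq I/K \leq \eta/3$. The second bad set is $W := \{D\in\R\setminus Z\,:\,\mu_{|D}(U) < \eta/3\}$, which by definition contributes $\mu(U\cap\bigcup W) \leq (\eta/3)\mu(\bigcup W) \leq \eta/3$. Setting $\R_1 := \R\setminus(Z\cup W)$, for each $D\in\R_1$ we have $\mu_{|D}(U) \geq \eta/3$ and $I_D \leq K \lesssim_\eta I + 1$. We then apply Proposition~\ref{prop:efficient-cov} to $(D,\mu_{|D})$ with the subset $U\cap D$ and parameters $\a':= \eta/3$ and $\eta' := \eta/4 < \a'$, obtaining $S_D \subseteq U\cap D$ with
\[
\log|S_D| \lesssim_\eta I_D + 1 \lesssim_\eta I + 1
\qquad\text{and}\qquad
\mu_{|D}\big(U\cap \T(U\cap \calS(S_D))\big) > \mu_{|D}(U) - \eta/4.
\]
Because $\R \preceq \calS \wedge \T$, both $\calS(S_D)$ and its $\T$-neighbourhood stay inside the single $\R$-cell $D$, so these operations commute harmlessly with the cellwise decomposition.

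Setting $S := \bigcup_{D\in\R_1}S_D \subseteq U$ yields $|S| \leq |\R_1|\cdot \max_{D\in\R_1}|S_D| \leq |\R|\cdot\exp(O_\eta(I+1))$, hence $\log(|S|/|\R|) \lesssim_\eta I+1$, which is the required bound (with dependence only on $\eta$, in fact, which is stronger than asked). Multiplying the cellwise covering conclusions by $\mu(D)$ and summing over $D\in\R_1$, the total measure lost to $Z$, $W$, and the per-cell slack $\eta/4$ is at most $\eta/3 + \eta/3 + \eta/4 < \eta$, giving $\mu(U\cap \T(U\cap\calS(S))) > \mu(U) - \eta$. The only real friction is controlling the aggregate size of $S$: if the $I_D$ were allowed to be arbitrarily large the naive bound $|S| \leq \sum_D|S_D|$ would be vacuous, which is exactly why the Markov cut defining $Z$ is essential --- it trades a measure loss of size $\eta/3$ for the uniform bound $I_D \lesssim_\eta I+1$ across the surviving cells.
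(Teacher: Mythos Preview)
Your proof is correct and follows essentially the same strategy as the paper's: discard, via Markov's inequality, the $\R$-cells with large conditional mutual information, then discard the cells where $U$ has small conditional mass, and finally apply Proposition~\ref{prop:efficient-cov} cell by cell with fixed parameters. The paper handles the second discard via the trimming Lemma~\ref{lem:trim} rather than your direct definition of $W$, and applies Proposition~\ref{prop:efficient-cov} with the slightly different threshold $\zeta(\a-\zeta)$, but these are cosmetic differences; your version is arguably cleaner and, as you note, yields the bound with dependence on $\eta$ alone.
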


\begin{proof}
By discarding the union of all $\mu$-negligible cells of $\R$, we may assume that all cells have positive measure.

\quad

\emph{Step 1.}\quad Again let $\zeta := \eta/3$, and let $J := I/\zeta$.  Recall that
\[I = \int_X \rmI_{\mu_{|\R(x)}}(\calS;\T)\,\mu(\d x),\]
and let
\[\R_0 := \{C \in \R\,|\ \rmI_{\mu_{|C}}(\calS;\T) \leq J\},\]
so Markov's Inequality gives
\[\mu\big(\bigcup \R_0\big) \geq 1 - I/J = 1 - \zeta.\]
Letting $U_0 := U\cap \bigcup \R_0$, it follows that $\mu(U_0) \geq \mu(U) - \zeta$.

\vspace{7pt}

\emph{Step 2.}\quad Now let $\g := \zeta \mu(U_0)$, and apply Lemma~\ref{lem:trim} to obtain $V \subseteq U_0$ with $\mu(V) \geq (1-\zeta)\mu(U_0)$ and which is locally $\g$-thick in $\R$.  Since $V\subseteq U_0$, we know that $V\cap \R = V\cap \R_0$.  Let $\R_1 := \{C \in \R_0\,|\ C\cap V\neq \emptyset\}$.

\vspace{7pt}

\emph{Step 3.}\quad Finally, for each $C \in \R_1$, consider the probability space $(C,\mu_{|C})$ and the two partitions $\calS \cap C$ and $\T \cap C$.  Since $\R_1 \subseteq \R_0$, we know that
\[\rmI_{\mu_{|C}}(\calS \cap C; \T \cap C) \leq J,\]
while Step 2 guarantees that $\mu_{|C}(V) \geq \g \geq \zeta(\mu(U)-\zeta) \geq \zeta(\a - \zeta)$.  This last lower bound depends only on $\a$ and $\eta$, as does $\zeta$, so we may apply Proposition~\ref{prop:efficient-cov} within each of these conditioned probability spaces to obtain subsets $S_C \subseteq V\cap C$ such that
\[\log |S_C| \lesssim_{\a,\eta} J + 1 \lesssim_\eta I+1 \quad \forall C \in \R_1,\]
while
\[\mu_{|C}\big(V\cap C \cap \T\big(V\cap C\cap \calS(S_C)\big)\big) > \mu_{|C}(V) - \zeta.\]
Let $S := \bigcup_{C \in \cal{R}_1}S_C$.  Then
\[\log\frac{|S|}{|\R|} \leq \max_{C \in \cal{R}_1}\log |S_C| \lesssim_{\a,\eta} I + 1,\]
and
\begin{eqnarray*}
\mu\big(U\cap \T(U \cap \calS(S))\big) &\geq& \mu\big(V \cap \T(V \cap \calS(S))\big)\\
&=& \sum_{C \in \R_1}\mu(C)\mu_{|C}\big(V\cap \T(V \cap \calS(S))\big)\\
&\geq& \sum_{C \in \R_1}\mu(C)\mu_{|C}\big(V\cap C\cap \T\big(V \cap C\cap \calS(S_C)\big)\big)\\
&>& \sum_{C \in \R_1}\mu(C)(\mu_{|C}(V) - \zeta)\\
&\geq& \mu(V) - \zeta \geq (1-\zeta)(\mu(U) - \zeta) - \zeta \geq \mu(U) - \eta.
\end{eqnarray*}
\end{proof}

\subsection{An upper bound for general systems}\label{subs:gen-upper-bound}

This is another digressive subsection, but it also offers a warm-up to the upper bound in Theorem~\ref{thm:rate}.  It will use Proposition~\ref{prop:efficient-cov} to prove a bi-covering-rate upper bound for arbitrary p.-p. systems.  It proves our first concrete relation between bi-covering rates and mutual information.

\begin{prop}
Let $(X,d^X,\mu,T)$ be a compact model p.-p. system.  For any $\a \geq 1$, $\k > \k' > 0$, and $\delta > 0$, there is a finite Borel partition $\P$ of $X$ such that
\[\log\BICOV_{\a,\k,\k',\delta N}(X,d^\bfX_{[-N;0)},d^\bfX_{[0;N)},\mu) \lesssim_{\k,\k'} \rmI_\mu(\P^{[-N;0)},\P^{[0;N)}) + 1\]
as $N\to\infty$.
\end{prop}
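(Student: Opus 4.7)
The plan is to play as Min-er in the bi-covering competition with the trivial choice $\mu' := \mu$ (permitted since $\a \geq 1$), and then, for each admissible response $U$ by Max-er, to construct a small bi-covering of $U$ directly from Proposition~\ref{prop:efficient-cov} applied to the partitions $\calS := \P^{[-N;0)}$ and $\T := \P^{[0;N)}$.

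First I would fix a finite Borel partition $\P$ of $X$ with $\rm{diam}(P) < \delta$ for every $P \in \P$, using compactness of $(X,d^X)$. The point of this choice is that if $y \in \P^{[-N;0)}(x)$, then $d^X(T^ny,T^nx) < \delta$ for every $n \in [-N;0)$, hence $d^\bfX_{[-N;0)}(x,y) < \delta N$; in particular $\calS(x) \subseteq B^{d^\bfX_{[-N;0)}}_{\delta N}(x)$, and identically $\T(x) \subseteq B^{d^\bfX_{[0;N)}}_{\delta N}(x)$. Thus for any $S \subseteq U \subseteq X$,
\[
U \cap \T\big(U\cap \calS(S)\big) \;\subseteq\; U\cap B^{d^\bfX_{[0;N)}}_{\delta N}\Big(U \cap B^{d^\bfX_{[-N;0)}}_{\delta N}(S)\Big),
\]
so a subset $S$ which $\T$-$\calS$-covers most of $U$ is automatically a bi-covering of $U$ at radius $\delta N$.

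Next, given any $U \subseteq X$ with $\mu(U) \geq \k$, I would apply Proposition~\ref{prop:efficient-cov} to $\calS$, $\T$, $U$, with parameters $\a := \k$ and $\eta := (\k-\k')/2 \in (0,\k)$. Writing $I := \rmI_\mu(\P^{[-N;0)};\P^{[0;N)})$, the proposition produces $S \subseteq U$ with
\[
\log|S| \;\lesssim_{\k,\k'}\; I + 1
\qquad \text{and} \qquad
\mu\big(U\cap \T(U\cap \calS(S))\big) \;>\; \mu(U) - \eta \;\geq\; \k' + \tfrac{\k-\k'}{2}.
\]
Combined with the inclusion from the previous step, this gives
\[
\mu\Big(U\cap B^{d^\bfX_{[0;N)}}_{\delta N}\big(U\cap B^{d^\bfX_{[-N;0)}}_{\delta N}(S)\big)\Big) \;>\; \k',
\]
so $\bicov_{\k'}\big((U,d^\bfX_{[-N;0)},d^\bfX_{[0;N)},\mu),\delta N\big) \leq |S|$, with the requisite bound on $\log|S|$. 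Taking the maximum over admissible $U$ and noting that Min-er's choice $\mu' = \mu$ realises an upper bound for the outer minimisation in Definition~\ref{dfn:big-bicov} completes the argument.

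No step here is truly hard: the bi-neighbourhood geometry is all absorbed into the partition-cell containment, and the mutual-information bookkeeping is handled entirely by Proposition~\ref{prop:efficient-cov}. The only thing to get right is the choice of $\eta$, which must be strictly less than $\k-\k'$ so that the `loss' allowed by the efficient-cover lemma still leaves measure above the threshold $\k'$; this is the reason the implicit constant in $\lesssim$ depends on both $\k$ and $\k'$ rather than on $\k$ alone.
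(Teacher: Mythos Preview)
Your proposal is correct and follows essentially the same approach as the paper: both reduce to the choice $\mu'=\mu$, pick $\P$ with cells of diameter less than $\delta$ so that $\P^I$-cells sit inside $d^\bfX_I$-balls, and then invoke Proposition~\ref{prop:efficient-cov} with $\calS=\P^{[-N;0)}$, $\T=\P^{[0;N)}$ to produce the small bi-covering set $S$. Your version is in fact slightly more explicit about the choice of $\eta$ and the reason the implicit constant depends on both $\k$ and $\k'$; the only cosmetic point is that reusing the letter $\a$ for the lower-bound parameter in Proposition~\ref{prop:efficient-cov} risks confusion with the $\a$ in the $\BICOV$ definition.
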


\begin{proof}
It suffices to treat the case $\a = 1$, in which $\BICOV$ admits no choice of new measures on $X$.  For this case, let $\P$ be any finite Borel partition of $X$ into cells of diameter less than $\delta$.  Then
\[B^{d^\bfX_I}_{\delta|I|}(x) \supseteq \P^I(x) \quad \forall x\in X\ \hbox{and finite}\ I \subseteq \bbZ,\]
and so also
\[B^{d^\bfX_{[0;N)}}_{\delta N}\big(U\cap B^{d^\bfX_{[-N;0)}}_{\delta N}(S)\big) \supseteq \P^{[0;N)}(U\cap \P^{[-N;0)}(S))\quad \forall S \subseteq U.\]

Now Proposition~\ref{prop:efficient-cov} promises some $S \subseteq U$ such that
\[\mu\big(U \cap \P^{[0;N)}(U\cap \P^{[-N;0)}(S))\big) \geq \k'\]
and also
\[\log |S| \lesssim_{\k,\k'} \rmI_\mu(\P^{[-N;0)};\P^{[0;N)}) + 1.\]
\end{proof}

Combined with Lemma~\ref{lem:cond-mut-inf-sublin}, this immediately gives the following.

\begin{prop}\label{prop:always-sublin}
For any compact model p.-p. system $(X,d^X,\mu,T)$ one has
\[\log\BICOV_{\a,\k,\k',\delta N}(X,d^\bfX_{[-N;0)},d^\bfX_{[0;N)},\mu) = \rm{o}_{\k,\k',\delta}(N)\]
as $N\to\infty$. \qed
\end{prop}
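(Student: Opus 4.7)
The plan is to combine the proposition immediately preceding with Lemma~\ref{lem:cond-mut-inf-sublin}; no further machinery is needed. That preceding proposition furnishes, given $\delta > 0$, a finite Borel partition $\P$ of $X$ (whose cells have diameter less than $\delta$) for which
\[
\log\BICOV_{\a,\k,\k',\delta N}(X,d^\bfX_{[-N;0)},d^\bfX_{[0;N)},\mu) \lesssim_{\k,\k'} \rmI_\mu(\P^{[-N;0)};\P^{[0;N)}) + 1,
\]
uniformly in $N$. Since the implicit constant does not depend on $N$, it suffices to show that the mutual information on the right-hand side is $\rm{o}(N)$.

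The second step is to invoke Lemma~\ref{lem:cond-mut-inf-sublin} with $I := [-N;0)$ and $J := [0;N)$. These two discrete intervals are adjacent (their union is $[-N;N)$, which is itself an interval) and disjoint, so $\P^{I\cap J}$ is the trivial partition and conditioning on it has no effect. The lemma therefore delivers
\[
\rmI_\mu(\P^{[0;N)};\P^{[-N;0)}) = \rmI_\mu(\P^J;\P^I\,|\,\P^{I\cap J}) \leq g(|J\setminus I|) = g(N),
\]
where $g$ is the sublinear function produced by that lemma (it depends on $\bfX$ and $\P$, but not on $N$). Substituting this into the bound above yields the conclusion
\[
\log\BICOV_{\a,\k,\k',\delta N}(X,d^\bfX_{[-N;0)},d^\bfX_{[0;N)},\mu) = \rm{o}_{\k,\k',\delta}(N).
\]

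There is no genuine obstacle to overcome here; the result is essentially a one-line corollary. The only minor point worth noting is the provenance of the dependences: $\delta$ enters only through the choice of the partition $\P$ (whose cells must refine the $\delta$-scale), after which the sublinear rate supplied by $g$ depends on $\P$ but not on $N$, accounting for the subscripts in $\rm{o}_{\k,\k',\delta}(N)$.
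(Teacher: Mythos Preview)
Your proof is correct and follows exactly the same route as the paper: the preceding proposition bounds $\log\BICOV$ by $\rmI_\mu(\P^{[-N;0)};\P^{[0;N)})+1$, and Lemma~\ref{lem:cond-mut-inf-sublin} with $I=[-N;0)$, $J=[0;N)$ shows that mutual information is $\rm{o}(N)$. The paper's proof is literally just ``Combined with Lemma~\ref{lem:cond-mut-inf-sublin}, this immediately gives the following,'' so your write-up is a faithful expansion of that sentence.
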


Proposition~\ref{prop:always-sublin} begs the following question.

\begin{ques}
Can the upper bound in Proposition~\ref{prop:always-sublin} be improved to any fixed sub-linear function?
\end{ques}

This seems highly unlikely, but it could be interesting to see examples of $(X,d^X,\mu,T)$ whose log-bi-covering rates come arbitrarily close to linear.  Conjecture~\ref{conj:p-stable} will propose some systems that could achieve rate $N^{1-\eps}$ for any $\eps > 0$.

\section{The upper bound}\label{sec:upper-bd}

This section proves the upper bound in Theorem~\ref{thm:rate}.  The proof is based on the covering estimates of the previous section, similarly to the proof of Proposition~\ref{prop:always-sublin}.  The key is to replace certain balls for the metrics $d^{\bfY \ltimes_\s \bfX}_I$, $I \subseteq \bbZ$, with the cells of associated partitions, and then prove a mutual information bound for an application of Proposition~\ref{prop:rel-efficient-cov}.  Most of the delicacy here will be in choosing the partitions that approximate the metrics.

In principle, one feels that proofs of these results should be possible directly in terms of the metrics $d^{\bfY \ltimes_\s \bfX}_I$, without this switch to partitions.  However, I suspect that would require much thornier estimates in several places.

\subsection{Estimating balls in the skew-product metric}\label{subs:ball-est}

Let $\P$ be the time-zero partition of the SFT $Y$, as previously.  Also, let us normalize the metric on $X$ to assume that $\rm{diam}(X,d^X) \leq 1$.

\begin{lem}\label{lem:P-controls-sigma}
For every $\eps > 0$ there is a $p \in \bbN$ such that for all $N \in \bbN$ and $y,y' \in Y$ one has the following implication:
\[\P^{[-p,N+p)}(y) = \P^{[-p;N+p)}(y') \quad \Longrightarrow \quad \max_{n \in [0;N)}|\s^y_n - \s^{y'}_n| < \eps.\]
\end{lem}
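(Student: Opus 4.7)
The plan is to combine the Hölder continuity of $\s$ with the exponential decay in the metric $d$ on $Y \subseteq A^\bbZ$. Recall that $d(y, y') = \sum_{k \in \bbZ} 2^{-|k|} 1_{\{y_k \neq y'_k\}}$, and by hypothesis there exist $\theta > 0$ and $K < \infty$ with $|\s(z) - \s(z')| \leq K\, d(z, z')^\theta$ for all $z, z' \in Y$. The key observation is that if $y, y'$ agree on the long window $[-p; N+p)$, then for each $m \in [0; N)$ the shifted pair $S^m y, S^m y'$ agrees on a window around the origin whose left and right radii both grow as $m$ moves inside $[0; N)$. This will leave only two small geometric tails when the increments $\s(S^m y) - \s(S^m y')$ are summed over $m$.

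First, the hypothesis $\P^{[-p; N+p)}(y) = \P^{[-p; N+p)}(y')$ is equivalent to $y_k = y'_k$ for every $k \in [-p; N+p)$, which for $m \in [0; N)$ amounts to $(S^m y)_k = (S^m y')_k$ on the discrete interval $[-p - m;\, N + p - m)$. Splitting $d$ into the left and right tails outside this interval,
\[
d(S^m y, S^m y') \leq \sum_{k \leq -p - m - 1} 2^{-|k|} + \sum_{k \geq N+p-m} 2^{-k} = 2^{-(p+m)} + 2^{1 - (N + p - m)}.
\]
Applying Hölder continuity together with the elementary inequality $(a+b)^\theta \leq C_\theta(a^\theta + b^\theta)$, and absorbing constants into $C = C(K,\theta)$, we obtain
\[
|\s(S^m y) - \s(S^m y')| \leq C \bigl( 2^{-\theta(p+m)} + 2^{-\theta(N + p - m)} \bigr).
\]

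Summing over $m \in [0; n)$ for any $n \leq N$ gives
\[
|\s^y_n - \s^{y'}_n| \leq C \sum_{m=0}^{n-1} 2^{-\theta(p+m)} + C \sum_{m=0}^{n-1} 2^{-\theta(N + p - m)}.
\]
The first sum is bounded by the geometric tail $2^{-\theta p}/(1 - 2^{-\theta})$, and after the substitution $j = N - m$ the second sum is bounded by the same quantity. Hence $|\s^y_n - \s^{y'}_n| \lesssim_{K,\theta} 2^{-\theta p}$, uniformly in $N \in \bbN$ and $n \in [0; N)$, and choosing $p$ large enough makes the right-hand side less than $\eps$. There is no real obstacle here: the only subtlety is insisting on a symmetric two-sided window $[-p; N+p)$ rather than, say, an interval like $[0;N+p)$, since a naive estimate using only a one-sided buffer would blow up linearly in $n$ when summed over $m$.
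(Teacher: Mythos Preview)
Your proof is correct and takes essentially the same approach as the paper: bound each increment $|\s(S^m y) - \s(S^m y')|$ by an exponentially decaying quantity in the distance from $m$ to the nearer endpoint of $[0;N)$, then sum the two resulting geometric tails. The paper simply packages the metric computation and H\"older continuity into a single bound $|\s(S^i y) - \s(S^i y')| \leq b\beta^{\min\{i+p,\,N+p-i\}}$, whereas you compute $d(S^m y, S^m y')$ explicitly first, but the argument is otherwise identical.
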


\begin{proof}
Since $\s:Y \to \bbR$ is H\"older continuous, there are some $b < \infty$ and $\b \in (0,1)$ such that
\begin{multline*}
\P^{[-p,N+p)}(y) = \P^{[-p;N+p)}(y') \\ \Longrightarrow \quad |\s(S^iy) - \s(S^iy')| < b\b^{\min\{i+p,N+p-i\}} \quad \forall i \in [0;N).
\end{multline*}
Summing over $i$, this gives
\[|\s^y_n - \s^{y'}_n| \leq \sum_{i=0}^{n-1}|\s(S^iy) - \s(S^iy')| \leq 2\sum_{i\geq p}b\b^i \leq \frac{2b\b^p}{1 - \b} \quad \forall n \in [0;N),\]
which is less than $\eps$ provided $p$ is large enough.
\end{proof}

The following re-write of the above lemma will be useful in Subsection~\ref{subs:sim-to-sim}.

\begin{cor}\label{cor:P-controls-sigma}
For every $\eps > 0$ there is a $\delta > 0$ such that for all $N \in \bbN$ and $y,y' \in Y$ one has the following implication:
\[\max_{n \in [0;N)}d^Y(S^ny,S^ny') < \delta \quad \Longrightarrow \quad \max_{n \in [0;N)}|\s^y_n - \s^{y'}_n| < \eps.\]
\qed
\end{cor}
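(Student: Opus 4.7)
The plan is to deduce Corollary~\ref{cor:P-controls-sigma} directly from Lemma~\ref{lem:P-controls-sigma} by translating the $d^Y$-closeness hypothesis into a partition-agreement hypothesis. Given $\eps > 0$, first apply Lemma~\ref{lem:P-controls-sigma} to produce the integer $p \in \bbN$ for which agreement of $\P^{[-p;N+p)}$-cells forces the desired $\eps$-control of the cocycle values.

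Next, I would exploit the explicit form of the metric, namely
\[
d^Y(a,a') = \sum_{k \in \bbZ} 2^{-|k|} \mathbf{1}_{\{a_k \neq a'_k\}}.
\]
The key elementary observation is that $d^Y(a,a') < 2^{-p}$ forces $a_k = a'_k$ for every $|k| \leq p$, since any single disagreement at such a coordinate would already contribute at least $2^{-p}$ to the sum. So choosing $\delta := 2^{-p}$, the hypothesis $\max_{n\in[0;N)} d^Y(S^n y, S^n y') < \delta$ means that for each $n \in [0;N)$ the sequences $S^n y$ and $S^n y'$ agree on the coordinate window $[-p;p]$; re-indexing, this says $y$ and $y'$ agree in coordinates $[n-p; n+p]$. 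Taking the union over $n \in [0;N)$ yields agreement on $[-p; N-1+p]$, i.e.\ $\P^{[-p;N+p)}(y) = \P^{[-p;N+p)}(y')$.

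Now Lemma~\ref{lem:P-controls-sigma} applies and delivers $\max_{n \in [0;N)} |\s^y_n - \s^{y'}_n| < \eps$, which is the desired conclusion. There is no real obstacle here — the only mild subtlety is making sure that $\delta$ is chosen small enough that the $d^Y$-inequality forces agreement on a full symmetric window of radius $p$ around the origin (rather than just one-sided), but this is handled automatically by the symmetric factor $2^{-|k|}$ in the metric.
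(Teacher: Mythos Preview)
Your proof is correct and is exactly the intended argument: the paper presents this corollary with a bare \qed, treating it as an immediate rewrite of Lemma~\ref{lem:P-controls-sigma}, and your choice $\delta = 2^{-p}$ together with the observation that $d^Y(a,a') < 2^{-p}$ forces coordinate agreement on $[-p;p]$ is precisely the translation between the metric and partition formulations.
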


\begin{lem}\label{lem:ball-prods-in-balls}
For any $\delta > 0$ there are $p \in \bbN$ and a finite Borel partition $\Q$ of $X$ such that the following holds. If $I \in \rm{Int}(\bbR)$, if $y \in Y$ satisfies
\[\s^y_{[0;N)} + [-1,1]\subseteq I,\]
and if $x \in X$, then
\[B^{d^{\bfY \ltimes_\s \bfX}_{[0;N)}}_{\delta N}(y,x) \supseteq \P^{[-p;N+p)}(y) \times \Q^{I\cap \bbZ}(x),\]
and analogously with $[-N;0)$ in place of $[0;N)$ throughout.
\end{lem}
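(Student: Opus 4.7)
The plan is to show that sharing a $\P^{[-p;N+p)}$-cell controls the entire cocycle-trajectory $\s^y$ over $[0;N)$, while sharing a $\Q^{I\cap\bbZ}$-cell controls the $X$-coordinate at every integer position the cocycle visits. The main obstacle is that $\s^y_n$ is real-valued while $\Q^{I\cap\bbZ}$ only records positions at integers, so one must bridge between the integer and continuous shifts using joint continuity of the flow $T$. Concretely, for each $n\in[0;N)$ I would split the $X$-side contribution $d^X(T^{\s^y_n}x,T^{\s^{y'}_n}x')$ by the triangle inequality through the intermediate point $T^{\s^y_n}x'$. The second piece $d^X(T^{\s^y_n}x',T^{\s^{y'}_n}x')$ is controlled by Lemma~\ref{lem:P-controls-sigma}, which forces $|\s^y_n-\s^{y'}_n|$ to be small. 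The first piece is handled by writing $\s^y_n=k_n+r_n$ with $k_n:=\lfloor\s^y_n\rfloor$ and $r_n\in[0,1)$; the hypothesis $\s^y_{[0;N)}+[-1,1]\subseteq I$ is exactly what guarantees $k_n\in I\cap\bbZ$, so $T^{k_n}x$ and $T^{k_n}x'$ lie in the same $\Q$-cell, and an appeal to the uniform continuity of $(r,z)\mapsto T^rz$ on $[0,1]\times X$ finishes that piece.

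In detail, fix $\eta:=\delta/4$. Joint continuity of $T$ at $\{0\}\times X$ gives $\eps_1>0$ such that $d^X(T^sz,z)<\eta$ for all $z\in X$ and $|s|<\eps_1$. Uniform continuity of $(r,z)\mapsto T^rz$ on $[0,1]\times X$ gives $\delta'>0$ such that $d^X(z,z')<\delta'$ and $r\in[0,1]$ imply $d^X(T^rz,T^rz')<\eta$. Let $\Q$ be any finite Borel partition of $X$ into cells of diameter less than $\delta'$. Apply Lemma~\ref{lem:P-controls-sigma} with $\eps=\eps_1$ to obtain $p_1\in\bbN$, and let $p\geq p_1$ be large enough that also $\sum_{|k|>p}2^{-|k|}<\eta$, which (via the explicit exponential form of the metric on $Y\subseteq A^\bbZ$) implies $d^Y(S^ny,S^ny')<\eta$ for every $n\in[0;N)$ whenever $y,y'$ agree on coordinates $[-p;N+p)$.

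With these choices, suppose $(y',x')\in\P^{[-p;N+p)}(y)\times\Q^{I\cap\bbZ}(x)$. For each $n\in[0;N)$ the $Y$-component of the summand is at most $\eta$, and the two halves of the triangle-inequality split on the $X$-side each contribute at most $\eta$, so the $X$-component contributes at most $2\eta$. Summing over $n$ gives $d^{\bfY\ltimes_\s\bfX}_{[0;N)}((y,x),(y',x'))\leq 3\eta N<\delta N$, which is the desired inclusion. The statement for $[-N;0)$ follows by running the exact same argument with the interval $[-N;0)$ in place of $[0;N)$ throughout and using the corresponding hypothesis on $\s^y_{[-N;0)}$; Lemma~\ref{lem:P-controls-sigma} and the symmetric $\P^{[-p;N+p)}$-agreement condition apply in either time direction, so no new ideas are needed.
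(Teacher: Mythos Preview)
Your proof is correct and follows essentially the same route as the paper's: both split the $X$-side distance through the intermediate point $T^{\sigma^y_n}x'$, control one half via Lemma~\ref{lem:P-controls-sigma} and the other by writing $\sigma^y_n$ as an integer in $I\cap\bbZ$ plus a bounded fractional part, then invoke uniform continuity of the flow over a bounded time interval. The only differences are cosmetic (you use $\eta=\delta/4$ where the paper uses $\delta/3$, and you restrict the fractional part to $[0,1)$ where the paper allows $[-1,1]$), and your explicit verification that $k_n=\lfloor\sigma^y_n\rfloor\in I\cap\bbZ$ is exactly where the hypothesis $\sigma^y_{[0;N)}+[-1,1]\subseteq I$ is used, just as in the paper.
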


\begin{proof}
Since $T:\bbR\actson (X,d^X)$ is continuous and $\delta > 0$, any partition $\Q$ of $X$ into cells of sufficiently small diameter has the property that
\begin{eqnarray}\label{eq:Q-forces-small}
\forall x,x' \in X, \quad \Q(x) = \Q(x') \quad \Longrightarrow \quad d^X(T^tx,T^tx') < \delta/3 \quad \forall t \in [-1,1].
\end{eqnarray}
Fix a finite Borel partition $\Q$ with this property.

Next, using again the continuity of $T$, choose $\eps > 0$ so small that
\begin{eqnarray}\label{eq:eps-forces-small}
\sup_{|t| \leq \eps}\sup_{x \in X}d^X(x,T^t x) < \delta/3.
\end{eqnarray}

Lastly, choose $p$ as given by Lemma~\ref{lem:P-controls-sigma} for this value of $\eps$.  Increase $p$ further if necessary so that also $2^{-p+2} < \delta/3$.

Now let $y \in Y$, let $I \in \rm{Int}(\bbR)$ with $\s^y_{[0;N)} + [-1,1] \subseteq I$, and let $x \in X$.  Suppose that
\[(y',x') \in \P^{[-p;N+p)}(y)\times \Q^{I\cap \bbZ}(x).\]
The definition of $d^Y$ gives
\[d^Y(S^ny,S^ny') \leq 2^{-p+2} < \delta/3 \quad \forall n \in [0;N),\]
and the choice of $p$ gives
\[|\s^y_n - \s^{y'}_n| \leq \eps \quad \forall n \in [0;N).\]
Therefore, by~(\ref{eq:eps-forces-small}),
\begin{eqnarray*}
d^{\bfY \ltimes_\s \bfX}_{[0;N)}\big((y,x),(y',x')\big) &=& d^\bfY_{[0;N)}(y,y') + \sum_{n=0}^{N-1} d^X(T^{\s^y_n}x,T^{\s^{y'}_n}x')\\
&\leq& \delta N/3 + \sum_{n=0}^{N-1} \big(d^X(T^{\s^y_n}x,T^{\s^y_n}x') + d^X(T^{\s^y_n}x',T^{\s^{y'}_n}x')\big)\\
&\leq& \Big(2\delta/3 + \max_{n \in [0;N)}d^X(T^{\s^y_n}x,T^{\s^y_n}x')\Big)N.
\end{eqnarray*}

Finally, since $\Q^{I\cap \bbZ}(x) = \Q^{I\cap \bbZ}(x')$, the property~(\ref{eq:Q-forces-small}) gives that
\[d^X(T^tx,T^tx') < \delta/3 \quad \forall t \in (I\cap \bbZ) + [-1,1].\]
In particular, this holds for $t = \s^y_n$ for $n \in [0;N)$.
\end{proof}

\subsection{Completion of the upper bound}

Now fix arbitrary $\delta > 0$, $\a \in (1,\infty)$ and $\k > \k' > 0$.  Let $\psi := \psi_{\rm{BM}}(\a)$. For the upper bound in Theorem~\ref{thm:rate}, it will suffice to show that
\begin{multline*}
\BICOV_{\a,\k,\k',\delta N}(Y\times X,d^{\bfY \ltimes_\s \bfX}_{[-N;0)},d^{\bfY \ltimes_\s \bfX}_{[0;N)},\nu\otimes \mu)\\ \leq \exp\big((\rmh(\bfX) + \eps)(\psi + \eps)\sqrt{N} + \eps\sqrt{N}\big)
\end{multline*}
as $N \to \infty$, for every $\eps > 0$.

Having chosen $\delta$, let $p \in \bbN$ and let $\Q$ be a finite Borel partition of $X$ as given by Lemma~\ref{lem:ball-prods-in-balls}.  These will now also be fixed for the rest of the section.  Concerning the system $(X,\mu,T^1)$ and its partition $\Q$, if $\eps > 0$ and $I \in \rm{Int}(\bbR)$ is a bounded interval, then let $X^{\rm{SM}}_{I,\eps}$ denote the set of `typical' points for the partition $\Q^{I\cap \bbZ}$ according to the Shannon-McMillan Theorem~\ref{thm:SM}.

The next step will be to introduce certain auxiliary subsets and partitions of $X$ and $Y\times X$.

\begin{lem}\label{lem:useful-partn}
For each $\eps > 0$ there are finitely many pairs of intervals
\[(I_1,J_1),\ldots,(I_k,J_k) \in \rm{Int}(\bbR) \times \rm{Int}(\bbR)\]
such that
\[0 < \calL^1(I_i\cap J_i) < \psi + \eps \quad \forall i=1,2,\ldots,k\]
and for which the following holds.  For each $N \in \bbN$ there are pairwise-disjoint Borel subsets $W^1_N,\ldots,W_N^k \subseteq Y$ such that
\begin{itemize}
\item[i)] $\nu(W^1_N\cup \cdots \cup W^k_N) > 1/\a$ for all  sufficiently large $N$, and
\item[ii)] for every $y \in W^i_N$ one has
\[\s^y_{[-N;0)} + [-1,1] \subseteq \sqrt{N}I_i \quad \hbox{and} \quad \s^y_{[0;N)}  + [-1,1]\subseteq \sqrt{N}J_i.\]
\end{itemize}
\end{lem}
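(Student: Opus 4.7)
The strategy is to translate a statement about Brownian sample-path geometry, via a two-sided form of Theorem~\ref{thm:IP}, into the required $\nu$-measure estimate. A routine extension of the complex transfer operator argument behind Theorem~\ref{thm:IP} yields joint weak convergence
$$\bigl(\traj_{-N}(\s^y),\,\traj_N(\s^y)\bigr) \stackrel{\rm{law}}{\to} (B',B) \quad \hbox{as}\ N\to\infty,$$
where $B,B'$ are \emph{independent} standard Brownian motions on $[0,1]$: asymptotic independence comes from the exponential decay of correlations for the mixing Gibbs system $(Y,\nu,S)$, which ensures that past and future cocycle increments decorrelate in the diffusive scaling limit. As a continuous functional of the paths, the pair of rescaled ranges
$$\bigl(N^{-1/2}\,\mathrm{rng}\,\s^y_{[-N;0)},\ N^{-1/2}\,\mathrm{rng}\,\s^y_{[0;N)}\bigr) \stackrel{\rm{law}}{\to} \bigl(B'_{[0,1]},B_{[0,1]}\bigr)$$
also converges jointly in $\rm{Int}(\bbR)\times\rm{Int}(\bbR)$, where $\mathrm{rng}$ denotes the smallest closed interval containing the orbit.

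Set $\psi := \psi_{\rm{BM}}(\a)$. By Lemma~\ref{lem:props-of-psiBM} the cdf of $\calL^1(B_{[0,1]}\cap B'_{[0,1]})$ is continuous at $\psi$; since $0\in B_{[0,1]}\cap B'_{[0,1]}$ almost surely, the event $\{0<\calL^1(B_{[0,1]}\cap B'_{[0,1]})<\psi+\eps/2\}$ has $\sfW^{\otimes 2}$-probability strictly greater than $1/\a$. Pick $R$ large enough that $\sfW^{\otimes 2}\{B_{[0,1]}\cup B'_{[0,1]}\subseteq(-R,R)\}$ is as close to $1$ as desired, and fix a grid-spacing $\eta \ll \eps$. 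Enumerate the finite list
$$\F := \bigl\{(I,J)\,:\,I,J\subseteq[-R,R]\ \hbox{compact with endpoints in}\ \eta\bbZ,\ 0<\calL^1(I\cap J)<\psi+\eps\bigr\}$$
as $\{(I_1,J_1),\dots,(I_k,J_k)\}$. For each $i$ let $I_i^\circ, J_i^\circ$ be obtained from $I_i, J_i$ by retracting each endpoint inward by $\eta/2$, and let
$$U_i := \bigl\{(g',g)\in C_0(0,1]^2\,:\,\mathrm{rng}\,g'\subseteq I_i^\circ,\ \mathrm{rng}\,g\subseteq J_i^\circ\bigr\},$$
an open subset of $C_0(0,1]^2$. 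By continuity of $\mathrm{rng}$ and the choice of $\eta$, the events $U_i$ approximate a partition of the event in the previous display from within, and so $\bigcup_i U_i$ still has $\sfW^{\otimes 2}$-measure exceeding $1/\a$.

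Define, for each $i$ and each $N$,
$$\widetilde W^i_N := \bigl\{y\in Y\,:\,\s^y_{[-N;0)}+[-1,1]\subseteq\sqrt{N}\,I_i\ \text{and}\ \s^y_{[0;N)}+[-1,1]\subseteq\sqrt{N}\,J_i\bigr\},$$
and disjointify by $W^i_N := \widetilde W^i_N\setminus\bigcup_{j<i}\widetilde W^j_N$; the defining containments are inherited. On the scale $\sqrt N$ the additive $[-1,1]$ buffer is microscopic, so once $N$ is large enough that $N^{-1/2}<\eta/2$, membership of $\bigl(\traj_{-N}(\s^y),\traj_N(\s^y)\bigr)$ in $U_i$ forces $y\in\widetilde W^i_N$. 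The Portmanteau theorem for open sets, applied to the joint convergence from the first paragraph, therefore yields
$$\liminf_{N\to\infty}\nu\Bigl(\bigcup_{i=1}^k W^i_N\Bigr) \geq \sfW^{\otimes 2}\Bigl(\bigcup_{i=1}^k U_i\Bigr) > 1/\a,$$
which is the required inequality.

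The principal obstacle is the two-sided joint invariance principle with genuinely independent limits, since Theorem~\ref{thm:IP} is stated only in one-sided form; this is strictly weaker than the Enhanced Invariance Principle of Subsection~\ref{subs:EIP} and is well within reach of the Nagaev--Guivarc'h spectral method, but it is not quoted explicitly in the preceding sections. A secondary bookkeeping task is arranging the grid spacing $\eta$, the endpoint retraction, and the disjointification so that none of them erodes the strict inequality $>1/\a$ coming from atomlessness; this is straightforward once $\eta$ is chosen small compared to $\eps$.
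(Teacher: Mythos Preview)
Your proposal is correct and follows essentially the same route as the paper: identify the Brownian event $\{0<\calL^1(B_{[0,1]}\cap B'_{[0,1]})<\psi+\eps\}$ of $\sfW^{\otimes 2}$-probability $>1/\a$, approximate it from inside by finitely many open ``interval-pair boxes'', and pull back via the Portmanteau theorem and the two-sided invariance principle. The only cosmetic difference is that the paper invokes inner regularity of $\sfW^{\otimes 2}$ to produce the pairs $(I_i,J_i)$ abstractly, whereas you use an explicit $\eta$-grid; and the paper defines the $W^i_N$ as preimages of disjoint open sets in path space rather than disjointifying afterward. Your remark that the joint convergence $(\traj_{-N}(\s^y),\traj_N(\s^y))\Rightarrow (B',B)$ with independent limits is not literally stated in Theorem~\ref{thm:IP} is well taken; the paper silently assumes this here and later makes it explicit in the proof of Lemma~\ref{lem:transfer-psi}.
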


\begin{proof}
Consider the set
\[W^0 := \big\{(B,B') \in C_0(0,1]\times C_0(0,1]\,\big|\ 0 < \calL^1(B_{[0,1]}\cap B'_{[0,1]}) < \psi + \eps/3\big\}.\]
Definition~\ref{dfn:psi} and Lemma~\ref{lem:props-of-psiBM} give that
\[\sfW_{[0,1]}^{\otimes 2}(W_0) = \sfW_{[0,1]}^{\otimes 2}\big\{\calL^1(B_{[0,1]}\cap B'_{[0,1]}) < \psi + \eps/3\big\} > 1/\a.\]
Since $\sfW_{[0,1]}^{\otimes 2}$ is inner-regular with respect to compact sets, it follows that for some $k \in \bbN$ one can find
\begin{itemize}
\item pairs of intervals $(I_i,J_i) \in \rm{Int}(\bbR)^2$ for $i=1,2,\ldots,k$ such that
\[0 < \calL^1(I_i\cap J_i) < \psi + \eps\]
\item and pairwise-disjoint Borel subsets $W^1,\ldots,W^k \subseteq W^0$
\end{itemize}
such that
\begin{itemize}
\item[i)] $W^1 \cup \cdots \cup W^k$ is open,
\item[ii)] $\sfW_{[0,1]}^{\otimes 2}(W^1\cup \cdots \cup W^k) > 1/\a$, and
\item[iii)] for every $(B,B') \in W^i$ one has
\[B_{[0,1]} + [-\eps/3,\eps/3]\subseteq I_i \quad \hbox{and} \quad B'_{[0,1]} + [-\eps/3,\eps/3] \subseteq J_i.\]
\end{itemize}

Let
\[W^i_N := \{y \in Y\,|\ (\traj_{-N}(\s^y),\traj_N(\s^y)) \in W^i\} \quad \hbox{for each}\ i=1,\ldots,k.\]
Since $W^1\cup \cdots \cup W^k$ is open in $C_0(0,1]\times C_0(0,1]$, the Portmanteau Theorem and Theorem~\ref{thm:IP} imply that $\nu(W^1_N \cup \cdots \cup W^k_N) > 1/\a$ for all sufficiently large $N$.  The desired conclusion (ii) then holds provided also $1/\sqrt{N} < \eps/3$.
\end{proof}

Keeping the notation of the preceding lemma, now let $Z^i_N := W^i_N\times X^{\rm{SM}}_{\sqrt{N}(I_i \cap J_i),\eps}$ for each $i=1,2,\ldots,k$, and let
\[Z_N := Z^1_N \cup \cdots \cup Z^k_N, \quad Z_N^{\rm{c}} := (Y\times X)\setminus Z_N, \quad \hbox{and} \quad \Z_N:= \{Z^1_N,\ldots,Z^k_N,Z_N^{\rm{c}}\}.\]
For these sets $Z_N$ we have
\[(\nu\otimes \mu)(Z_N) \geq \nu(W_N) - \max_{i\leq k} \mu(X\setminus X^{\rm{SM}}_{\sqrt{N}(I_i\cap J_i)\cap \bbZ}),\]
so this is still greater than $1/\a$ for all sufficiently large $N$, by Theorem~\ref{thm:SM} and the fact that $|\sqrt{N}(I_i \cap J_i)\cap \bbZ|\to \infty$ for each $i$.

We next introduce the further partitions of $Y\times X$ that will enable an approximation of our bi-neighbourhoods.

First, for each $i = 1,2,\ldots,k$, define
\begin{eqnarray*}
\R^i_N &:=& \{\emptyset,Y\}\otimes \Q^{\sqrt{N}(I_i\cap J_i)\cap \bbZ},\\
\calS^i_N &:=& \P^{[-N-p;p)} \otimes \Q^{\sqrt{N}I_i\cap \bbZ}, \quad \hbox{and}\\
\T^i_N &=& \P^{[-p;N+p)} \otimes \Q^{\sqrt{N}J_i \cap \bbZ}.
\end{eqnarray*}

These different partitions can be adapted to the cells $Z^i_N$ as follows: let $\R_N$, $\calS_N$ and $\T_N$ be the partitions that refine $\cal{Z}_N$, all contain $Z^\rm{c}_N$ as a single cell, and satisfy
\[\R_N\cap Z^i_N = \cal{R}^i_N\cap Z^i_N,\ \calS_N \cap Z^i_N = \calS^i_N\cap Z^i_N \quad \hbox{and} \quad \T_N \cap Z^i_N = \T^i_N\cap Z^i_N\]
for all $i=1,2,\ldots, k$ and $N \in \bbN$.  Clearly
\[\Z_N \preceq \R_N \preceq \calS_N\wedge \T_N.\]

Now Lemma~\ref{lem:ball-prods-in-balls} and the properties of the sets $W^i_N$ given by Lemma~\ref{lem:useful-partn} imply that
\begin{eqnarray}\label{eq:UST}
\calS_N(y,x) \subseteq B^{d^{\bfY \ltimes_\s \bfX}_{[-N;0)}}_{\delta N}(y,x) \quad \hbox{and} \quad \T_N(y,x) \subseteq B^{d^{\bfY \ltimes_\s \bfX}_{[0;N)}}_{\delta N}(y,x)  \quad \forall (y,x) \in Z_N.
\end{eqnarray}

\begin{lem}\label{lem:base-ent-bound}
With $\R_N$ as above, one has
\[|\R_N| \leq \exp\big((\rmh(\bfX) + \eps)(\psi + \eps)\sqrt{N} + \rm{o}(\sqrt{N})\big) \quad \hbox{as}\ N\to\infty.\]
\end{lem}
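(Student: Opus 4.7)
The plan is to decompose $\R_N$ into the exceptional cell $Z^{\rm c}_N$ plus, for each $i=1,\ldots,k$, the collection of cells lying in $Z^i_N$, and then to bound each of these $k$ collections separately using the Shannon-McMillan Theorem~\ref{thm:SM} applied to $(X,\mu,T^1,\Q)$. The total bound will come out because $k$ is a fixed constant (independent of $N$) and each collection contributes the desired exponential.

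More concretely, by construction every $\R_N$-cell contained in $Z^i_N$ has the form
\[W^i_N \times \bigl(X^{\rm SM}_{\sqrt{N}(I_i\cap J_i),\eps} \cap Q\bigr)\]
for some cell $Q$ of $\Q^{\sqrt{N}(I_i\cap J_i)\cap \bbZ}$ that meets $X^{\rm SM}_{\sqrt{N}(I_i\cap J_i),\eps}$. Setting $L_{i,N} := |\sqrt{N}(I_i\cap J_i)\cap \bbZ|$, the defining inequality of $X^{\rm SM}_{\cdot,\eps}$ (for the time-$1$ system $(X,\mu,T^1)$ and the partition $\Q$) gives that any such $Q$ satisfies $\mu(Q) > \exp\bigl(-(\rmh(\mu,T^1,\Q)+\eps)L_{i,N}\bigr)$. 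Since these cells are pairwise disjoint subsets of $X$, their number is at most $\exp\bigl((\rmh(\mu,T^1,\Q)+\eps)L_{i,N}\bigr)$.

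Now I would use the two a priori bounds $\rmh(\mu,T^1,\Q) \leq \rmh(\mu,T^1) = \rmh(\bfX)$ and
\[L_{i,N} \leq \calL^1\bigl(\sqrt{N}(I_i\cap J_i)\bigr) + 1 = \sqrt{N}\,\calL^1(I_i\cap J_i) + 1 < \sqrt{N}(\psi+\eps) + 1,\]
the latter coming from Lemma~\ref{lem:useful-partn}. Combining these yields that the number of $\R_N$-cells inside a single $Z^i_N$ is at most $\exp\bigl((\rmh(\bfX)+\eps)(\psi+\eps)\sqrt{N} + O(1)\bigr)$. Summing this over the fixed number $k$ of indices and adding $1$ for the cell $Z^{\rm c}_N$ itself, one obtains
\[|\R_N| \leq 1 + k\exp\bigl((\rmh(\bfX)+\eps)(\psi+\eps)\sqrt{N} + O(1)\bigr) = \exp\bigl((\rmh(\bfX)+\eps)(\psi+\eps)\sqrt{N} + \rm{o}(\sqrt{N})\bigr),\]
as required.

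There is no serious obstacle here; the estimate is essentially bookkeeping around Shannon--McMillan. The only points that require any care are (i) remembering that $k$, and hence the $\log k$ contribution, is absorbed into the $\rm{o}(\sqrt{N})$ term because $k$ was fixed in Lemma~\ref{lem:useful-partn} before $N$ was varied, and (ii) handling the integer-vs-real length discrepancy $L_{i,N} \leq \sqrt{N}\calL^1(I_i\cap J_i)+1$, which costs only an additive $O(1)$ in the exponent. The entire argument uses nothing about the base system $\bfY$ beyond the inclusion $\s^y_{[-N;0)}\cup \s^y_{[0;N)} \subseteq \sqrt{N}(I_i\cup J_i)$ guaranteed on $W^i_N$, so the bound only involves the overlap length $\calL^1(I_i\cap J_i) < \psi+\eps$ — which is exactly where the universal function $\psi_{\rm BM}(\a)$ enters the final rate.
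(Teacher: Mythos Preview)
Your proof is correct and follows essentially the same approach as the paper: decompose $\R_N$ as the single cell $Z^{\rm c}_N$ plus the cells inside each $Z^i_N$, count the latter via the Shannon--McMillan lower bound on $\mu(Q)$ for $Q$ meeting $X^{\rm SM}_{\sqrt{N}(I_i\cap J_i),\eps}$, and absorb the fixed $k$ and the $+1$ length discrepancy into the $\rm{o}(\sqrt{N})$ term. You are slightly more explicit than the paper in noting $\rmh(\mu,T^1,\Q)\leq \rmh(\bfX)$ and in tracking the integer-versus-real length, but these are refinements of the same argument rather than a different route.
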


\begin{proof}
The definition above gives
\[|\R_N| = 1 + \sum_{i=1}^k|\R_N \cap Z^i_N| = 1 + \sum_{i=1}^K|\Q^{\sqrt{N}(I_i\cap J_i)\cap \bbZ}\cap X^{\rm{SM}}_{\sqrt{N}(I_i\cap J_i),\eps}|.\]
As $N\to\infty$, the number of summands on the right-hand side here is fixed, and their cardinalities are bounded by
\[(\rmh(\bfX) + \eps)|\sqrt{N}(I_i\cap J_i)\cap \bbZ| \leq (\rmh(\bfX) + \eps)(\psi + \eps)\sqrt{N}\]
by the definition of $X^{\rm{SM}}_{\sqrt{N}(I_i\cap J_i),\eps}$.
\end{proof}

\begin{lem}\label{lem:rel-mut-inf-bound}
With $\R_N$, $\calS_N$ and $\T_N$ as above, one has
\[\rmI_{\nu\otimes \mu}(\calS_N;\T_N\,|\,\R_N) = \rm{o}(\sqrt{N}) \quad \hbox{as}\ N\to\infty.\]
\end{lem}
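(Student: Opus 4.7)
The plan is to expand $\rmI_{\nu\otimes \mu}(\calS_N;\T_N\,|\,\R_N)$ as an average of unconditional mutual informations over the cells of $\R_N$, then exploit the product structure of $Y\times X$ to split each summand into a ``base'' piece controlled by Corollary~\ref{cor:Gibbs-mut-inf} and a ``fibre'' piece controlled by Lemma~\ref{lem:cond-mut-inf-sublin}. The cell $Z_N^{\rm{c}}$ contributes nothing since $\calS_N$ and $\T_N$ both restrict to the trivial partition there. Every other cell has the form $R = W^i_N\times(A\cap X^{\rm{SM}}_{\sqrt{N}(I_i\cap J_i),\eps})$ for some $i\leq k$ and some $A\in \Q^{\sqrt{N}(I_i\cap J_i)\cap\bbZ}$, and on $R$ the conditional measure factors as $\nu_{|W^i_N}\otimes \mu_{|A\cap X^{\rm{SM}}}$. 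Moreover, restricted to $R$, both $\calS_N$ and $\T_N$ are products of a partition on the $Y$-factor with one on the $X$-factor, so Lemma~\ref{lem:mut-inf-add} gives
\[
\rmI_{(\nu\otimes\mu)_{|R}}(\calS_N;\T_N) = \rmI_{\nu_{|W^i_N}}(\P^{[-N-p;p)};\P^{[-p;N+p)}) + \rmI_{\mu_{|A\cap X^{\rm{SM}}}}(\Q^{\sqrt{N}I_i\cap\bbZ};\Q^{\sqrt{N}J_i\cap\bbZ}).
\]

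For the base piece, summing over all cells of $\R_N$ inside $Z^i_N$ collapses the $A$-dependence and yields $\nu(W^i_N)\mu(X^{\rm{SM}}_{\sqrt{N}(I_i\cap J_i),\eps})\rmI_{\nu_{|W^i_N}}(\P^{[-N-p;p)};\P^{[-p;N+p)})$, and Lemma~\ref{lem:cond-cond-mut-inf} bounds $\nu(W^i_N)\rmI_{\nu_{|W^i_N}}(\cdots)$ by $\log 2 + \rmI_\nu(\P^{[-N-p;p)};\P^{[-p;N+p)})$. Corollary~\ref{cor:Gibbs-mut-inf} then shows this is uniformly bounded in $N$, so summing over the finitely many indices $i$ contributes only $O(1)$.

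For the fibre piece, applying Lemma~\ref{lem:cond-cond-mut-inf} inside each $(X,\mu_{|A})$ with sub-event $X^{\rm{SM}}_{\sqrt{N}(I_i\cap J_i),\eps}$ gives
\[
\mu(A\cap X^{\rm{SM}})\rmI_{\mu_{|A\cap X^{\rm{SM}}}}(\Q^{\sqrt{N}I_i\cap\bbZ};\Q^{\sqrt{N}J_i\cap\bbZ}) \leq \mu(A)\Bigl(\log 2 + \rmI_{\mu_{|A}}(\Q^{\sqrt{N}I_i\cap\bbZ};\Q^{\sqrt{N}J_i\cap\bbZ})\Bigr).
\]
Summing in $A$ collapses the right-hand side to $\log 2 + \rmI_\mu(\Q^{\sqrt{N}I_i\cap\bbZ};\Q^{\sqrt{N}J_i\cap\bbZ}\,|\,\Q^{\sqrt{N}(I_i\cap J_i)\cap\bbZ})$, and since $I_i\cup J_i$ is an interval, Lemma~\ref{lem:cond-mut-inf-sublin} applied to the system $(X,\mu,T^1)$ with partition $\Q$ bounds this by $g(|\sqrt{N}J_i\cap\bbZ\setminus \sqrt{N}I_i\cap\bbZ|)$ for some sublinear $g$. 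Since $J_i\setminus I_i$ has fixed finite Lebesgue measure, the argument of $g$ is $O(\sqrt{N})$, so this term is $\rm{o}(\sqrt{N})$. Combining the two estimates gives $\rmI_{\nu\otimes\mu}(\calS_N;\T_N\,|\,\R_N) = O(1) + \rm{o}(\sqrt{N}) = \rm{o}(\sqrt{N})$.

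The only real subtlety is that the naive bound from Lemma~\ref{lem:cond-mut-inf-sublin} applies to $\rmI_{\mu_{|A}}$, not to $\rmI_{\mu_{|A\cap X^{\rm{SM}}}}$, and the number of cells $A$ grows exponentially in $\sqrt{N}$. A direct per-cell loss of $\log 2$ would therefore be catastrophic, but weighting by $\mu(A)$ instead of $1$ in the sum before trimming to $X^{\rm{SM}}$ (which is exactly what Lemma~\ref{lem:cond-cond-mut-inf} allows) absorbs the trimming cost into a single $\log 2$, and this is the only place where one needs to be careful about the order of the estimates.
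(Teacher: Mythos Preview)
Your proof is correct and uses the same ingredients as the paper (Lemma~\ref{lem:mut-inf-add}, Lemma~\ref{lem:cond-cond-mut-inf}, Corollary~\ref{cor:Gibbs-mut-inf}, and Lemma~\ref{lem:cond-mut-inf-sublin}), but you apply them in a different order, and this is worth noting.

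You expand $\rmI_{\nu\otimes\mu}(\calS_N;\T_N\,|\,\R_N)$ all the way down to the individual cells $R = W^i_N\times(A\cap X^{\rm{SM}}_{\sqrt{N}(I_i\cap J_i),\eps})$, split each term via Lemma~\ref{lem:mut-inf-add}, and then aggregate back up, using Lemma~\ref{lem:cond-cond-mut-inf} with the $\mu(A)$-weighting to absorb the trimming to $X^{\rm{SM}}$ into a single $\log 2$. This is correct, and your closing paragraph correctly identifies the one place where care is needed: a na\"ive per-cell $\log 2$ loss would be fatal because the number of cells $A$ grows like $\exp(c\sqrt{N})$, but the weighting by $\mu(A)$ before summing collapses the total to $\log 2$.

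The paper sidesteps this subtlety altogether by working one level up: it conditions only on the coarse partition $\Z_N = \{Z^1_N,\ldots,Z^k_N,Z^{\rm{c}}_N\}$, applies Lemma~\ref{lem:cond-cond-mut-inf} once for each of the $k$ blocks $Z^i_N$ to pass from $(\nu\otimes\mu)_{|Z^i_N}$ back to $\nu\otimes\mu$, and only then splits $\rmI_{\nu\otimes\mu}(\calS^i_N;\T^i_N\,|\,\R^i_N)$ via Lemma~\ref{lem:mut-inf-add} on the full product space. Since $k$ is fixed, the $\log 2$ losses are trivially $O(1)$, and no care is needed. The trade-off is minor: your route is a little more hands-on but shows exactly where the estimates live cell-by-cell; the paper's route is shorter and never comes near the exponential-in-$\sqrt{N}$ danger you had to navigate.
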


\begin{proof}
The definition of conditional mutual information gives
\begin{eqnarray*}
\rmI_{\nu\otimes \mu}(\calS_N;\T_N\,|\,\R_N) &=& (\nu\otimes \mu)(Z^\rm{c}_N)\rmI_{(\nu\otimes \mu)_{|Z^\rm{c}_N}}(\calS_N;\T_N\,|\,\R_N)\\ && + \sum_{i=1}^k (\nu\otimes \mu)(Z^i_N)\rmI_{(\nu\otimes \mu)_{|Z^i_N}}(\calS_N;\T_N\,|\,\R_N).
\end{eqnarray*}
Now the definition of $\R_N$, $\calS_N$ and $\T_N$ gives that the first term here is zero, and the remaining sum is equal to
\[\sum_{i=1}^k (\nu\otimes \mu)(Z^i_N)\rmI_{(\nu\otimes \mu)_{|Z^i_N}}(\calS^i_N;\T^i_N\,|\,\R^i_N).\]

Applying Lemma~\ref{lem:cond-cond-mut-inf}, this is bounded by
\[\sum_{i=1}^k\big(\log 2 + \rmI_{\nu\otimes \mu}(\calS^i_N;\T^i_N\,|\,\R^i_N)\big),\]
and within each of these summands, an application of Lemma~\ref{lem:mut-inf-add}, Corollary~\ref{cor:Gibbs-mut-inf} and Lemma~\ref{lem:cond-mut-inf-sublin} gives
\begin{eqnarray*}
&&\rmI_{\nu\otimes \mu}(\calS^i_N;\T^i_N\,|\,\R^i_N)\\
&&= \rmI_\nu(\P^{[-N-p;p)};\P^{[-p;N+p)}) + \rmI_\mu(\Q^{\sqrt{N}I_i\cap \bbZ};\Q^{\sqrt{N}J_i\cap \bbZ}\,|\,\Q^{\sqrt{N}(I_i\cap J_i)\cap \bbZ})\\
&&= \rm{O}(1) + \rm{o}(|(\sqrt{N}J_i\cap \bbZ)\setminus (\sqrt{N}I_i\cap \bbZ)|) = \rm{o}(\sqrt{N})
\end{eqnarray*}
as $N\to\infty$.
\end{proof}

\begin{proof}[Proof of upper bound in Theorem~\ref{thm:rate}]
Letting $\l_N:= (\nu\otimes \mu)_{|Z_N}$, the lower bound on $(\nu\otimes \mu)(Z_N)$ implies that $\|\d\l_N/\d(\mu\otimes \nu)\|_\infty < \a$ for all sufficiently large $N$.  This will be Min-er's choice of new measures on $Y\times X$: that is, we will prove that
\begin{multline}\label{eq:bpackupbd}
\max_{\hbox{\scriptsize{$\begin{array}{c} U\subseteq Y\times X\\ \l_N(U) \geq \k\end{array}$}}}\bicov_{\k'}\big((U,d^{\bfY \ltimes_\s \bfX}_{[-N;0)},d^{\bfY \ltimes_\s \bfX}_{[0;N)},\l_N),\delta N\big)\\ \leq \exp((\psi + \eps)(\rmh(\bfX) + \eps)\sqrt{N} + \rm{o}(\sqrt{N}))
\end{multline}
as $N\to\infty$.  In estimating this maximum we may assume that $U \subseteq Z_N$, for any remainder $U\setminus Z_N$ carries none of the measure $\l_N$ and so does not need to be covered.  After assuming that $U\subseteq Z_N$, we have the containments in~(\ref{eq:UST}) for all $(y,x) \in U$.

Now Proposition~\ref{prop:rel-efficient-cov} gives a subset $S \subseteq U$ such that
\begin{eqnarray}\label{eq:log-frac-bound}
\log\frac{|S|}{|\R_N|} \lesssim_{\a,\k,\k''} \rmI_{\nu\otimes \mu}(\calS_N;\T_N\,|\,\R_N) + 1
\end{eqnarray}
and
\begin{multline*}
(\nu\otimes \mu)\big(U\cap \T_N(U\cap \calS_N(S))\big) > (\nu\otimes \mu)(U) - \k''/\a\\
\Longrightarrow \quad \l_N\big(U\setminus \T_N(U\cap \calS_N(S))\big) < \k''\\
\Longrightarrow \quad \l_N\big(U\cap \T_N(U\cap \calS_N(S))\big) > \k - \k'' = \k'.
\end{multline*}
Applying Lemmas~\ref{lem:base-ent-bound} and~\ref{lem:rel-mut-inf-bound} to the estimate~(\ref{eq:log-frac-bound}), it follows that
\[|S| \leq \exp\big((\rmh(\bfX) + \eps)(\psi + \eps)\sqrt{N} + \rm{o}(\sqrt{N})\big)\]
as $N\to\infty$.  This completes the proof.
\end{proof}

The proof of the matching lower bound in Theorem~\ref{thm:rate} will require more delicate analysis than the above, and will occupy the remainder of this paper.

\begin{rmk}
The previous lemma is the point at which we make crucial use of the very fast mixing of $\P$ via Corollary~\ref{cor:Gibbs-mut-inf}.  In fact, that conclusion is slightly stronger than we need: it would suffice in the above argument to know that
\[\rmI_\nu(\P^{[-N;0)};\P^{[0;N)}) = \rm{o}(\sqrt{N}).\]
However, if one had instead
\[\rmI_\nu(\P^{[-N;0)};\P^{[0;N)}) = \Omega(\sqrt{N}),\]
then this would disrupt all of the subsequent estimates, since it would turn out that the mutual information of $\P^{[-N;0)}$ and $\P^{[0;N)}$ is of the same order as the entropy contained in that part of the scenery that has been visited over both $[-N;0)$ and $[0;N)$.

It would be interesting to know whether the upper bound proved above holds if one knows only that $\bfY$ is a Bernoulli system, that $\P$ is a generating partition, and that $\s$ satisfies the Invariance Principle.  Of course, in this case one may choose an independent generating partition $\P'$ of $\bfY$, but now $\P'$ may not give precise enough control over the cocycle $\s$, in the sense of Lemma~\ref{lem:P-controls-sigma}, to give a usable analog of Lemma~\ref{lem:ball-prods-in-balls}. This is why the argument above needs the precise relation between $\P$ and $\s$ that holds in a well-distributed pair. \fin
\end{rmk}

\section{Meandering of cocycles and discrete Cantor sets}\label{sec:dCs}

For the lower bound in Theorem~\ref{thm:rate}, we will imagine playing as Max-er in the competition described in Subsection~\ref{subs:competition}.  They key to Max-er's strategy will be an `inverse theorem', asserting that for most pairs of sceneries and walk trajectories, if the pair of resulting strings produced by the RWRS process are close, then it must be because of some `structural' similarly involving the sceneries alone.

This section introduces the structures that appear in this notion of similarity.  The first of these is a class of special subsets of certain intervals $[0;N)$ on which our cocycle-trajectories are often (approximately) injective.  These will be introduced after a discussion of another pre-requisite property of cocycle-trajectories.

Throughout this section $(\bfY,\s)$ will be a well-distributed pair. Recall that this means $Y \subseteq A^\bbZ$ is a mixing SFT with a H\"older-potential Gibbs measure $\nu$, and $\s:Y\to \bbR$ is a one-sided H\"older non-coboundary with $\int \s\,\d\nu = 0$ and effective variance $1$.  We assume also that $(\bfY,\s)$ satisfies the Enhanced Invariance Principle (Definition~\ref{dfn:EIP}) with some mollifier $\phi$.  Finally, fix $\ell \geq \|\s\|_\infty$ large enough that $\rm{spt}\,\phi \subseteq [-\ell,\ell]$.

Many of the arguments of this section are adapted from similar steps in~\cite{Kal82}, or their re-telling in~\cite{denHSte97}.

\subsection{Two useful estimates}

The starting point for this section is a pair of basic estimates on the distribution of our cocycle-trajectories.

\begin{lem}\label{lem:one-time-not-sm}
In the above setting, one has
\[\nu\{\s^y_N \in I\} \lesssim_{\bfY,\s} \frac{\max\{\calL^1(I),1\}}{\sqrt{N}} \quad \forall I \in \rm{Int}(\bbR),\ N\in\bbN.\]
In particular,
\[\nu\big\{|\s^y_N| \leq a\sqrt{N}\big\} \lesssim_{\bfY,\s} \max\Big\{a,\frac{1}{\sqrt{N}}\Big\} \quad \forall a\in (0,\infty),N\in\bbN.\]
\end{lem}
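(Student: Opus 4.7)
The plan is to deduce both bounds directly from the Berry-Esseen estimate stated as Theorem~\ref{thm:BE}, which asserts that the cumulative distribution function of $\s^y_N/\sqrt{N}$ agrees with the standard Gaussian CDF $\rm{N}(-\infty,\,\cdot\,)$ up to an error of order $1/\sqrt{N}$, with an implicit constant depending only on $(\bfY,\s)$.

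First I would handle the main inequality. Write $I = [a,b]$ with $\calL^1(I) = b-a$, and set $t_1 := a/\sqrt{N}$, $t_2 := b/\sqrt{N}$. Then
\[
\nu\{\s^y_N \in I\} = \nu\{\s^y_N \le b\} - \nu\{\s^y_N < a\},
\]
and two applications of Theorem~\ref{thm:BE} give
\[
\nu\{\s^y_N \in I\} = \rm{N}(t_1, t_2) + \rm{O}_{\bfY,\s}(1/\sqrt{N}).
\]
Since the standard Gaussian density is bounded by $1/\sqrt{2\pi}$, the leading term is at most $(t_2 - t_1)/\sqrt{2\pi} = \calL^1(I)/\sqrt{2\pi N}$. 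Hence
\[
\nu\{\s^y_N \in I\} \lesssim_{\bfY,\s} \frac{\calL^1(I) + 1}{\sqrt{N}} \lesssim \frac{\max\{\calL^1(I),1\}}{\sqrt{N}},
\]
as required. A trivial limiting or density argument covers the case of general intervals (open, half-open, unbounded with finite length); if $I$ is unbounded with $\calL^1(I) = \infty$ the statement is vacuous since the left-hand side is at most $1$.

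The second inequality then follows by specializing to $I = [-a\sqrt{N}, a\sqrt{N}]$, for which $\calL^1(I) = 2a\sqrt{N}$. The first inequality gives
\[
\nu\{|\s^y_N| \le a\sqrt{N}\} \lesssim_{\bfY,\s} \frac{\max\{2a\sqrt{N}, 1\}}{\sqrt{N}} = \max\!\Big\{2a,\, \tfrac{1}{\sqrt{N}}\Big\} \lesssim \max\!\Big\{a,\tfrac{1}{\sqrt{N}}\Big\}.
\]

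There is no real obstacle here: the whole argument is a one-line consequence of Berry-Esseen, and the $\max$ in the conclusion is present precisely to absorb the unavoidable $1/\sqrt{N}$ Berry-Esseen error when the interval is short. The only points to be slightly careful about are that the implicit constant in the Berry-Esseen bound is uniform in the location $t$ of the interval endpoints (which is stated in Theorem~\ref{thm:BE}), and that one applies the bound twice rather than once, so the constant in $\lesssim_{\bfY,\s}$ picks up a harmless factor of $2$.
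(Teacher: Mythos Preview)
Your proof is correct and follows essentially the same route as the paper's: apply the Berry--Esseen bound (Theorem~\ref{thm:BE}) to the two endpoints of $I=[a,b]$, bound the resulting Gaussian probability using the boundedness of the density, and then specialize to $I=[-a\sqrt{N},a\sqrt{N}]$ for the second part. The paper's write-up is terser (and in fact writes ``$I=[-a,a]$'' for the specialization, which appears to be a slip), but the substance is identical.
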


\begin{proof}
Let $I = [a,b]$. Theorem~\ref{thm:BE} gives
\begin{eqnarray*}
\nu\{a \leq \s^y_N \leq b\} &=& \nu\{\s^y_N \leq b\} - \nu\{\s^y_N \leq a\}\\
&\lesssim_{\bfY,\s}& \rm{N}(-\infty,b/\sqrt{N}) - \rm{N}(-\infty,a/\sqrt{N}) + \frac{1}{\sqrt{N}},
\end{eqnarray*}
so the result now follows from the smoothness of the Gaussian density.  The second conclusions follows by taking $I = [-a,a]$.
\end{proof}

\begin{lem}\label{lem:max-not-lg}
In the above setting,
\[\nu\big\{\max_{n \in [0;N)}|\s^y_n| \geq b\sqrt{N}\big\} \lesssim_{\bfY,\s} \frac{1}{b^2} \quad \forall b \in (0,\infty),N\in\bbN.\]
\end{lem}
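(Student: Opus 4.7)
The plan is to apply Gordin's martingale-approximation method to $\s$ and then invoke Doob's $L^2$ maximal inequality. Since $(\bfY,\s)$ is a well-distributed pair, the normalized Perron-Frobenius operator $L_\psi$ recalled in Subsection~\ref{subs:Gibbs} has a spectral gap on H\"older functions, which (after replacing $\s$ by a cohomologous one-sided H\"older function and setting $g := -\sum_{k\geq 0}L_\psi^k\s$, a sum that converges in H\"older norm by the spectral gap) yields a decomposition
\[\s = M + g\circ S - g,\]
where $g:Y\to\bbR$ is bounded and $M := \s - g\circ S + g$ is a square-integrable martingale difference with respect to the filtration obtained by pushing the past $\s$-algebra forward under powers of $S$. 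Summing along orbits,
\[\s^y_n = M^y_n + g(S^ny) - g(y), \qquad \text{so} \qquad \max_{n\in[0;N)}|\s^y_n - M^y_n|\leq 2\|g\|_\infty.\]

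Since martingale increments are orthogonal, $\int(M^y_N)^2\,\d\nu(y) = N\int M^2\,\d\nu \lesssim_{\bfY,\s} N$. Doob's $L^2$ maximal inequality therefore gives
\[\int \max_{n\in[0;N)}(M^y_n)^2\,\d\nu(y) \leq 4\int (M^y_N)^2\,\d\nu(y) \lesssim_{\bfY,\s} N,\]
and Chebyshev's inequality then yields
\[\nu\big\{\max_{n\in[0;N)}|M^y_n|\geq t\big\} \lesssim_{\bfY,\s} \frac{N}{t^2} \qquad \forall\, t>0.\]

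To conclude, set $t := b\sqrt{N}/2$. If $b\sqrt{N}\geq 4\|g\|_\infty$ then the uniform bound $|\s^y_n - M^y_n|\leq 2\|g\|_\infty$ forces
\[\big\{\max_{n\in[0;N)}|\s^y_n|\geq b\sqrt{N}\big\} \subseteq \big\{\max_{n\in[0;N)}|M^y_n|\geq b\sqrt{N}/2\big\},\]
and the Chebyshev bound above gives the desired estimate $\lesssim_{\bfY,\s} 1/b^2$. In the remaining range $b\sqrt{N}<4\|g\|_\infty$ one has $1/b^2 > 1/(16\|g\|_\infty^2)$, so the trivial bound $\nu(\cdot)\leq 1$ already suffices provided the implicit constant is taken at least $16\|g\|_\infty^2$.

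The only substantive input is the Gordin decomposition itself, but this is a standard consequence of the spectral gap recalled in Subsection~\ref{subs:Gibbs} (it is the same mechanism underlying Theorems~\ref{thm:IP} and~\ref{thm:BE}), so I do not expect it to be a genuine obstacle. An alternative would be to cite the strong invariance principle of Philipp-Stout~\cite{PhiSto75} to couple $\s^y$ with Brownian motion up to a polynomial error and then use Brownian reflection, but the martingale route is more elementary and self-contained.
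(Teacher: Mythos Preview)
Your proof is correct and takes a genuinely different route from the paper's. The paper proceeds via a fourth-moment bound: it first establishes four-fold exponential mixing (Lemma~\ref{lem:four-fol}), deduces $\|\s^y_N\|_{L^4(\nu)} \lesssim \sqrt{N}$ (Corollary~\ref{cor:fourth-moment}), and then feeds this into the Kolmogorov--Billingsley chaining argument (\cite[Theorem~12.2]{Bil68}), which actually yields the stronger bound $\lesssim 1/b^4$. Your martingale-approximation route is more direct for the stated $1/b^2$ bound: once the Gordin decomposition is in hand, Doob's $L^2$ inequality finishes immediately, with no need for higher moments or chaining. Both arguments ultimately rest on the spectral gap of the normalized Ruelle operator, but they exploit it differently --- the paper via decay of four-point correlations, you via summability of $L_\psi^k\s$. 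One small point worth making explicit: the one-sided reduction you invoke (replacing $\s$ by a cohomologous function depending only on the past, via the standard Sinai--Bowen lemma) itself contributes a bounded coboundary, which can simply be absorbed into your $g$; you do handle this correctly, but it is two coboundary corrections rather than one. The paper's approach, while heavier, has the side benefit that its fourth-moment estimate is exactly the ingredient needed for tightness in the Invariance Principle (as the paper remarks after the proof), so in context it is not wasted effort.
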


The proof of Lemma~\ref{lem:max-not-lg} is a little more involved.  It begins with the following property of Gibbs measures.

\begin{lem}[Four-fold exponential mixing]\label{lem:four-fol}
With $(\bfY,\s)$ are above, there are some $c < \infty$ and $\g \in (0,1)$ such that
\[\Big|\int \s\cdot (\s \circ S^p)\cdot (\s \circ S^{p+q})\cdot (\s \circ S^{p+q+r})\,\d\nu\Big| \leq c \g^{\max\{p,r\}} \quad \forall p,q,r \in \bbN\cup \{0\}.\]
\end{lem}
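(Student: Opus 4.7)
The plan is to reduce the four-fold correlation integral to the classical two-point decay of correlations for mixing Gibbs systems, applied with the H\"older anchor $\s$ (whose mean-zero property is essential) at whichever end carries the larger of the gaps $p$ and $r$.

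The essential tool is the following sharp form of decay of correlations, which follows from the spectral-gap bound $\|L_\psi^n F - \int F\,\d\nu\|_\infty \leq C\g^n\|F\|_{\rm Hol}$ for the normalized Perron--Frobenius operator (see \cite{Bow08} or \cite{ParPol90}) combined with the duality $\int F\cdot(G\circ S^n)\,\d\nu = \int (L_\psi^n F)\cdot G\,\d\nu$: there exist $C>0$ and $\g\in(0,1)$, depending only on $(\bfY,\nu)$ and the H\"older exponent of $\s$, such that for every H\"older $F:Y\to\bbR$, every $G\in L^\infty(\nu)$, and every $n\in\bbZ$,
$$\Big|\int F\cdot(G\circ S^n)\,\d\nu - \int F\,\d\nu\int G\,\d\nu\Big| \leq C\g^{|n|}\|F\|_{\rm Hol}\|G\|_{L^\infty(\nu)}.$$
The backward-time case ($n<0$) follows from the forward case applied to the coordinate-reversed system, which is again a mixing Gibbs system for a H\"older potential, since coordinate reversal is an isometry of $(Y,d^Y)$ conjugating $S$ to $S^{-1}$.

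With this in hand, I would split according to whether $p\geq r$ or $r>p$. In the first case, set $F:=\s$ and $G:=\s\cdot(\s\circ S^q)\cdot(\s\circ S^{q+r})$; then $G\circ S^p = (\s\circ S^p)\cdot(\s\circ S^{p+q})\cdot(\s\circ S^{p+q+r})$, so the integral of interest equals $\int F\cdot(G\circ S^p)\,\d\nu$. Since $\int F\,\d\nu = 0$ and $\|G\|_\infty\leq\|\s\|_\infty^3$, the decay estimate yields a bound of $C\g^p\|\s\|_{\rm Hol}\|\s\|_\infty^3$. In the second case, use the $S^{p+q+r}$-invariance of $\nu$ (substitute $z = S^{p+q+r}y$) to rewrite the integral as $\int \s(z)\cdot H(S^{-r}z)\,\d\nu(z)$, where
$$H(w) := \s(w)\cdot\s(S^{-q}w)\cdot\s(S^{-(p+q)}w), \qquad \|H\|_\infty\leq\|\s\|_\infty^3.$$
Applying the same decay estimate in the backward direction with $F=\s$ then gives a bound of $C\g^r\|\s\|_{\rm Hol}\|\s\|_\infty^3$. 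Combining the two cases yields the lemma with $c:=C\|\s\|_{\rm Hol}\|\s\|_\infty^3$ and the same $\g$.

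The main subtlety to watch is that the ``H\"older $\times$ $L^\infty$'' form of decay of correlations is essential here: the more commonly quoted ``H\"older $\times$ H\"older'' version would be useless, because $\|\s\circ S^k\|_{\rm Hol}$ grows exponentially in $|k|$ (the shift $S$ expands $d^Y$ by a factor of $2$), while the lemma imposes no restriction at all on $q$. Bounding the three-fold product $G$ (respectively $H$) only in sup-norm is what eliminates the dependence on $q$ and on the large shifts hidden inside it, yielding a constant $c$ that depends only on $\bfY$, $\nu$ and $\s$ themselves.
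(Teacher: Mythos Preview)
Your overall strategy---anchoring the mean-zero H\"older function $\sigma$ at whichever end carries the larger gap and bounding the remaining triple product only in $L^\infty$---is exactly the right idea, and it is also what the paper does (the paper invokes time-reversal symmetry to reduce to the $r$-gap, then applies the transfer operator). You also correctly diagnose why the H\"older\,$\times$\,H\"older bound would be useless. However, the black-box tool you state is false on the \emph{two-sided} shift. The duality $\int F\cdot(G\circ S^n)\,d\nu = \int (L_\psi^n F)\cdot G\,d\nu$ is valid only on the one-sided shift, where $L_\psi$ is genuinely the adjoint of the (non-invertible) Koopman operator; on the invertible two-sided shift the H\"older\,$\times$\,$L^\infty$ bound fails for arbitrary $G\in L^\infty$. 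Indeed, with $F$ H\"older of mean zero, take $G:=F\circ S^{-n}$: then $\int F\cdot(G\circ S^n)\,d\nu = \int F^2\,d\nu$ does not decay, while $\|G\|_\infty = \|F\|_\infty$ stays fixed. An arbitrary bounded $G$ can ``look backwards'' and undo the shift.

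What saves the argument is the extra structure of your specific $G=\sigma\cdot(\sigma\circ S^q)\cdot(\sigma\circ S^{q+r})$: every factor is itself H\"older, hence exponentially well approximated in sup-norm by a depth-$k$ cylinder. Taking $k\approx r/3$ and replacing each copy of $\sigma$ by such a cylinder $\sigma'$ costs $O(\beta^k)$ in the integral; the approximated integrand then splits into pieces supported on coordinate blocks separated by a gap of order $r$, and now the one-sided transfer operator (equivalently, exponential $\psi$-mixing of the Gibbs measure) applies legitimately. This cylinder-approximation step is precisely what the paper carries out explicitly, so once your gap is filled the two proofs coincide.
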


\begin{proof}
The definition of a Gibbs measure via equation~(\ref{eq:dfn-Gibbs}) is invariant under time-reversal, as is the conclusion of the present lemma.  It therefore suffices to find $c$ and $\g$ such that
\[\Big|\int \s \cdot (\s \circ S^p)\cdot (\s \circ S^{p+q})\cdot (\s \circ S^{p+q+r})\,\d\nu\Big| \leq c \g^r \quad \forall p,q,r \in \bbN.\]

In this proof, let us re-scale $\s$ so that $\|\s\|_\infty \leq 1$, and let $L_\psi$ be a normalized Ruelle-Perron-Frobenius operator whose adjoint has invariant measure $\nu^-$, as in Subsection~\ref{subs:Gibbs}.

Now let $k := \lfloor r/3\rfloor$.  Since $\s$ is H\"older, there $b_1 < \infty$ and $\b_1 \in (0,1)$ such that
\[|\s(y) - \s(y')| \leq b_1\b_1^{d(y,y')},\]
and so
\[\|\s - \sfE_\nu(\s\,|\,\P^{[-k;k]})\|_\infty \leq b_1\b_1^k \leq (b_1/\b_1)(\sqrt[3]{\b_1})^r.\]
Letting $\s' := \sfE_\nu(\s\,|\,\P^{[-k;k]})$, it follows that
\begin{multline*}
\Big|\int \s\cdot (\s \circ S^p)\cdot (\s \circ S^{p+q})\cdot (\s \circ S^{p+q+r})\,\d\nu\Big|\\ \leq 
\Big|\int \s'\cdot (\s'\circ S^p)\cdot (\s'\circ S^{p+q})\cdot (\s' \circ S^{p+q+r})\,\d\nu\Big| + (b_1/\b_1)(\sqrt[3]{\b_1})^r.
\end{multline*}

It therefore suffices to give an exponentially-decaying bound on the first term here.  Since the whole integral is $S$-invariant, we may reduce instead to the case of $\s'$ being $\P^{[-2k,0]}$-measurable (hence, in particular, one-sided).  Recalling that
\[\sfE_\nu(\s' \circ S^r\,|\,\P^{(-\infty;0]}) = L^r_\psi \s'\]
for one-sided functions $\s'$, this leads to
\[\int \s' \cdot (\s' \circ S^p)\cdot (\s' \circ S^{p+q})\cdot (\s' \circ S^{p+q+r})\,\d\nu = \int \s' \cdot (\s' \circ S^p)\cdot (\s' \circ S^{p+q})\cdot (L^r_\psi \s' \circ S^{p+q})\,\d\nu.\]
Finally, because $L_\psi$ has spectral radius less than $1$ on any space of mean-zero, one-sided H\"older functions (see again~\cite[Theorem 2.2]{ParPol90}), there are $b_2 < \infty$ and $\b_2 \in (0,1)$ such that
\[\Big\|L_\psi^r\s' - \int \s'\,\d\nu\Big\|_\infty \leq b_2 \b_2^k,\]
so this completes the proof.
\end{proof}

\begin{cor}[Fourth-moment bound]\label{cor:fourth-moment}
With $(\bfY,\s)$ as above, one has
\[\|\s^y_N\|_{L^4(\nu)} \lesssim_{\bfY,\s} \sqrt{N} \quad \forall N \in \bbN.\]
\end{cor}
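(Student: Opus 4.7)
My plan is to expand $(\s^y_N)^4$ directly and estimate the $N^4$ four-fold correlation integrals that appear, using Lemma~\ref{lem:four-fol} as the only real input. Concretely, write
\[
\|\s^y_N\|_{L^4(\nu)}^4 \ =\ \sum_{n_1,n_2,n_3,n_4 \in [0;N)} \int (\s\circ S^{n_1})(\s\circ S^{n_2})(\s\circ S^{n_3})(\s\circ S^{n_4})\,\d\nu,
\]
and then use the symmetry of the integrand in $(n_1,n_2,n_3,n_4)$ together with $S$-invariance of $\nu$ to reduce, up to the combinatorial factor $4!$, to the ordered sum over $0\leq m_1 \leq m_2 \leq m_3 \leq m_4 < N$. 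Shifting each such integral by $-m_1$ and setting $p := m_2 - m_1$, $q := m_3 - m_2$, $r := m_4 - m_3$, it becomes precisely
\[
\int \s\cdot(\s\circ S^p)\cdot(\s\circ S^{p+q})\cdot(\s\circ S^{p+q+r})\,\d\nu,
\]
so Lemma~\ref{lem:four-fol} bounds its absolute value by $c\g^{\max\{p,r\}}$.

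The remaining step is a purely combinatorial bookkeeping of the range of $(m_1,p,q,r)$. Summing over $m_1$ contributes at most a factor of $N$, and summing over $q$ (which is unconstrained by the exponential bound, since the four-fold cumulant decays only in the outer gaps $p$ and $r$) contributes at most another factor of $N$. The remaining double sum
\[
\sum_{p,r\geq 0} \g^{\max\{p,r\}} \ =\ \sum_{k\geq 0}(2k+1)\g^k \ <\ \infty
\]
converges because $\g < 1$. Altogether this yields $\|\s^y_N\|_{L^4(\nu)}^4 \lesssim_{\bfY,\s} N^2$, and taking fourth roots gives the claimed $O(\sqrt{N})$ estimate.

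There is no serious obstacle: the only point that requires a moment's thought is the observation that the four-fold cumulant estimate decays exponentially in the outer gaps $p$ and $r$ but \emph{not} in the middle gap $q$, and that this is exactly what is needed --- each of the two un-decayed coordinates $(m_1,q)$ contributes one factor of $N$, which matches the diffusive scaling $\|\s^y_N\|_{L^2}^2 \sim N$ predicted by the Invariance Principle (Theorem~\ref{thm:IP}) and promoted here to the fourth moment.
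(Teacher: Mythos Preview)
Your proof is correct and is essentially the same as the paper's: both expand the fourth power, reduce by symmetry and $S$-invariance to ordered quadruples parameterized by the gaps $(p,q,r)$, apply Lemma~\ref{lem:four-fol} to get the bound $c\g^{\max\{p,r\}}$, and then observe that the sums over the base point and over $q$ contribute one factor of $N$ each while the sum over $(p,r)$ converges. Your write-up is simply more explicit about the combinatorial bookkeeping (the $4!$, the separation of the $m_1$-sum, and the evaluation $\sum_{p,r\geq 0}\g^{\max\{p,r\}}=\sum_{k\geq 0}(2k+1)\g^k$) than the paper's terser version.
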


\begin{proof}
Let $c$ and $\g$ be as given by the preceding lemma. Expanding the power inside the $L^4$-norm gives
\begin{eqnarray*}
\|\s^y_N\|^4_{L^4(\nu)} &=& \sum_{n_1,n_2,n_3,n_4 \in [0;N)}\int \s(S^{n_1}y)\s(S^{n_2}y)\s(S^{n_3}y)\s(S^{n_4}y)\,\nu(\d y)\\
&\lesssim& \sum_{p,q,r \geq 0,\ p+q+r < N}\int \s(y)\s(S^py)\s(S^{p+q}y)\s(S^{p+q+r}y)\,\nu(\d y)\\
&\leq& c\sum_{p,q,r \geq 0,\ p+q+r < N}\g^{\max\{p,r\}}\\
&\lesssim_\g& cN^2.
\end{eqnarray*}
\end{proof}

\begin{proof}[Proof of Lemma~\ref{lem:max-not-lg}]
Whenever $m,n \in [0;N)$ with $m \leq n$, the previous corollary gives
\[\int|\s^y_n - \s^y_m|^4\,\nu(\d y) = \int |\s^{S^my}_{n-m}|^4 \,\nu(\d y) \lesssim_{\bfY,\s} |n-m|^2.\]

This moment bound is strong enough to enable a standard chaining argument for controlling $\max_{n \in [0;N)}|\s^y_n|$.  A suitable quantitative version is given by Billingsley as~\cite[Theorem 12.2]{Bil68}.  The bound above is the hypothesis of that theorem with (in his notation) parameters $\g = 4$, $\a = 2$ and $u_i = 1$ for all $i$, and its conclusion becomes
\[\nu\big\{\max_{n \in [0;N)}|\s^y_n| \geq b\sqrt{N}\big\} \lesssim_{\bfY,\s} \frac{N^2}{(b\sqrt{N})^4} = \frac{1}{b^4},\]
which is actually stronger than we require.
\end{proof}

\begin{rmk}
The proof of~\cite[Theorem 12.2]{Bil68} is really a quantitative implementation of Kolmogorov's classical proof that Brownian motion has a continuous version (see, for instance,~\cite[Theorem 3.23]{Kal02}).  The full sequence of arguments above --- from a mixing result (Lemma~\ref{lem:four-fol}), to a fourth-moment bound (Corollary~\ref{cor:fourth-moment}), to an application of Kolmogorov's method --- are essentially the steps taken by Bunimovich and Sinai in~\cite[Section 4]{BunSin81} for their proof that the laws of the random variables $\traj_N(\s^y)$ on $(Y,\nu)$ form a tight sequence in $\Pr C[0,1]$.

An easy extension of Lemma~\ref{lem:four-fol} and Corollary~\ref{cor:fourth-moment} gives
\[\|\s^y_N\|_{L^{2p}(\nu)} \lesssim_{\bfY,\s,p} \sqrt{N} \quad \forall p \in \bbN.\]
The fourth moment is the simply smallest with which the method of Kolmogorov can be applied. \fin
%
\end{rmk}

\subsection{Meandering of cocycle-trajectories}\label{subs:intro-meander}

\begin{dfn}\label{dfn:meander}
Let $\ell,\a > 0$, and let $M,L \in \bbN$.  Let $a \in \bbZ$, let $I := [a;a+LM)$, and let
\[\cal{C} = \big\{[a + iM;a + (i+1)M)\,\big|\ i \in \{0,\ldots,L-1\}\big\}\]
be the partition of this interval into length-$M$ subintervals.

For $y \in Y$, the cocycle-trajectory $\s^y$ is \textbf{$(\a,\ell)$-meandering over $(I,\cal{C})$} if the following holds:
\[\forall \cal{J} \subseteq \cal{C}\ \hbox{with}\ |\cal{J}| \geq \a L \quad \exists J,J' \in \cal{J}\ \hbox{such that}\ \rm{dist}(\s^y_J,\s^y_{J'}) > 2\ell.\]
\end{dfn}

This property gives a sense in which the cocycle-trajectory $\s^y$ has many well-separated images of intervals from $\cal{C}$.  Importantly, once $M$ is large, well-distributed pairs have a strong lower bound on the probability of this occurring.

\begin{prop}\label{prop:spade}
If $(\bfY,\s)$ is well-distributed, then there is a $C < \infty$ such that for all $\ell \in (0,\infty)$ and all sufficiently large $M \in \bbN$, the following holds for all $L \in \bbN$ and all $\a > 0$:
\[\nu\big\{\s^y\ \hbox{is}\ (\a,\ell)\hbox{-meandering over}\ ([0;LM),\cal{C})\big\} \geq 1 - \frac{C}{L^{1/3} \a^2},\]
where $\cal{C}$ is the partition of $[0;LM)$ into subintervals of length $M$.
\end{prop}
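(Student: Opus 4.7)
The plan is to reduce failure of the meandering property to a pairwise count, and then bound the expected number of close pairs using the oscillation estimate of Lemma~\ref{lem:max-not-lg} together with the tail bound of Lemma~\ref{lem:one-time-not-sm}. The starting point is a purely combinatorial observation: if $\s^y$ is not $(\a,\ell)$-meandering, then there exists $\cal{J} \subseteq \cal{C}$ with $|\cal{J}| \geq \a L$ such that $\rm{dist}(\s^y_J,\s^y_{J'}) \leq 2\ell$ for every pair $J,J' \in \cal{J}$, and such a collection contains at least $\binom{\a L}{2} \gtrsim \a^2 L^2$ pairs (the case $\a L < 2$ being trivial, since the asserted failure bound then exceeds $1$). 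Markov's inequality therefore gives
\[
\nu\{\s^y \text{ is not } (\a,\ell)\text{-meandering}\} \lesssim \frac{1}{\a^2 L^2}\sum_{0 \leq i < j < L}\nu\{\rm{dist}(\s^y_{J_i},\s^y_{J_j}) \leq 2\ell\},
\]
where I write $J_i := [iM;(i+1)M)$.

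Next I bound each pairwise probability. Introduce a parameter $b \geq 1$ and the oscillation events $G_i := \{\max_{n \in J_i}|\s^y_n - \s^y_{iM}| \leq b\sqrt{M}\}$. Since the increments of $\s^y$ over $J_i$ are those of $\s^{S^{iM}y}$ over $[0;M)$, the $S$-invariance of $\nu$ together with Lemma~\ref{lem:max-not-lg} give $\nu(G_i^{\rm{c}}) \lesssim 1/b^2$ uniformly in $i$. On $G_i \cap G_j$ each of $\s^y_{J_i}$ and $\s^y_{J_j}$ is contained in an interval of length $2b\sqrt{M}$ centred at $u_i := \s^y_{iM}$ and $u_j := \s^y_{jM}$ respectively, so once $M$ is large enough that $b\sqrt{M} \geq 2\ell$ the event $\rm{dist}(\s^y_{J_i},\s^y_{J_j}) \leq 2\ell$ forces $|u_i - u_j| \leq 3b\sqrt{M}$. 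Since $u_j - u_i = \s^{S^{iM}y}_{(j-i)M}$ has the same law under $\nu$ as $\s^y_{(j-i)M}$, Lemma~\ref{lem:one-time-not-sm} yields $\nu\{|u_j - u_i| \leq 3b\sqrt{M}\} \lesssim b/\sqrt{j-i}$. A union bound over $G_i^{\rm{c}}$, $G_j^{\rm{c}}$ and the good case thus produces
\[
\nu\{\rm{dist}(\s^y_{J_i},\s^y_{J_j}) \leq 2\ell\} \lesssim \frac{b}{\sqrt{j-i}} + \frac{1}{b^2}.
\]

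Finally, summing over $0 \leq i < j < L$, the first term contributes $\lesssim b\sum_{k=1}^{L-1}(L-k)/\sqrt{k} \lesssim bL^{3/2}$ while the second contributes $\lesssim L^2/b^2$. The optimal choice $b := L^{1/6}$ balances the two at order $L^{5/3}$, and substituting into the Markov bound from the first step yields
\[
\nu\{\s^y \text{ is not } (\a,\ell)\text{-meandering}\} \lesssim \frac{L^{5/3}}{\a^2 L^2} = \frac{C}{\a^2 L^{1/3}},
\]
as required. The main conceptual step is the combinatorial reduction to a pairwise first-moment count, which converts the outer quantifier over subcollections $\cal{J}$ (the hardest aspect of the definition) into an additive quantity amenable to Markov. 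The technical core is the pairwise estimate above, and the exponent $1/3$ emerges precisely from balancing large-oscillation events (controlled via Lemma~\ref{lem:max-not-lg}) against clustering of starting values (controlled via Lemma~\ref{lem:one-time-not-sm}).
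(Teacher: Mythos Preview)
Your proof is correct and follows essentially the same strategy as the paper: reduce failure of meandering to a first-moment count of close pairs via Markov, then bound each pairwise probability using Lemmas~\ref{lem:one-time-not-sm} and~\ref{lem:max-not-lg}. The only tactical difference is that the paper optimizes the oscillation threshold separately for each pair (taking, in your notation, $b=(j-i)^{1/6}$, which yields a clean per-pair bound $\lesssim (j-i)^{-1/3}$ before summing), whereas you sum first and then choose the single global value $b=L^{1/6}$; both routes land on the same $L^{-1/3}$ exponent.
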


\begin{rmks}
\emph{(1.)}\quad This is essentially the lower bound denoted by $\spadesuit$ in~\cite{denHSte97}, except that they require only $\s^y_J \cap \s^y_{J'} = \emptyset$, rather than separation by $2\ell$.  Both the proof and the later applications of this proposition roughly follow their paper, except that we work throughout with well-distributed pairs.

In fact,~\cite{denHSte97} allows a more general lower bound of the form $1 - \frac{C}{L^\g \a^2}$ for some $\g > 0$.  The specific value $\g = 1/3$ arises for random walks in the domain of attraction of a Gaussian distribution, but a smaller value may be needed for a heavier-tailed walk, such as one in the domain of attraction of a $p$-stable law for some $p \in [1,2)$.  Their main results also apply to such random walks, hence their need for this generality, but our focus on well-distributed pairs precludes it.  We will return to this point in Subsection~\ref{subs:other-walks}.

\vspace{7pt}

\emph{(2.)}\quad It will be very important that the value of $M$ at which this inequality starts to hold does not depend on $L$ or $\a$.  \fin
\end{rmks}

\begin{proof}[Proof of Proposition~\ref{prop:spade}]
This proof is essentially as in paragraphs (2.5) and (2.6) of~\cite{Kal82}.

First, $L^{1/3}\a^2 \leq (\a L)^2$ for all $\a > 0$ and $L \in \bbN$.  Therefore, provided $C$ is sufficiently large, the desired bound is vacuous for small values of $\a L$, and we may henceforth assume $\a L \geq 2$, and hence $\binom{\lceil \a L\rceil}{2} \geq (\a L)^2/4$.

Fix $i,j \in [L]$ with $j > i$, and suppose $\b \in (0,1/2)$ (it will be optimized later).  Then Lemmas~\ref{lem:one-time-not-sm} and~\ref{lem:max-not-lg} give
\begin{eqnarray*}
&&\nu\big\{\rm{dist}(\s^y_{[iM;(i+1)M)},\s^y_{[jM;(j+1)M)}) \leq (j-i)^\b\sqrt{M}\big\}\\
&&\leq \nu\big\{|\s^y_{(j-i)M}| \leq 2(j-i)^\b\sqrt{M}\big\} + 2\nu\big\{\max_{n \in [0;M)}|\s^y_n| \geq (j-i)^\b\sqrt{M}\big\}\\
&&\lesssim_{\bfY,\s} \max\Big\{\frac{(j-i)^\b}{\sqrt{(j-i)}},\frac{1}{\sqrt{(j-i)M}}\Big\} + \frac{1}{(j-i)^{2\b}} = \frac{(j-i)^\b}{\sqrt{(j-i)}} + \frac{1}{(j-i)^{2\b}}.
\end{eqnarray*}
Choosing $\b := 1/6$, this last bound becomes $1/(j-i)^{1/3}$.  Therefore, provided $\sqrt{M} > 2\ell$,
\begin{eqnarray*}
&&\nu\big\{\s^y\ \hbox{is not}\ (\a,\ell)\hbox{-meandering over}\ ([0;LM),\cal{C})\big\}\\
&&\leq \nu\big\{\exists J \subseteq [L]\ \hbox{with}\ |J| \geq \a L\ \hbox{such that}\ \forall i,j \in J\ \hbox{distinct one has}\\
&& \quad \quad \quad \quad \quad \quad \quad \quad \rm{dist}(\s^y_{[iM,(i+1)M)},\s^y_{[jM,(j+1)M)}) \leq |j-i|^{1/6}\sqrt{M} \big\}\\
&&\leq \nu\Big\{\exists\ \hbox{at least}\ \binom{\lceil\a L\rceil}{2}\ \hbox{pairs}\ i,j \in [L]\ \hbox{such that}\ j > i\ \hbox{and}\\
&& \quad \quad \quad \quad \quad \quad \quad \quad \rm{dist}(\s^y_{[iM,(i+1)M)},\s^y_{[jM,(j+1)M)}) \leq |j-i|^{1/6}\sqrt{M} \Big\}\\
&& \leq \frac{1}{\binom{\lceil \a L\rceil}{2}}\sum_{1 \leq i < j \leq L}\nu\big\{\rm{dist}(\s^y_{[iM,(i+1)M)},\s^y_{[jM,(j+1)M)}) \leq (j-i)^{1/6}\sqrt{M}\big\}\\
&& \lesssim_{\bfY,\s} \frac{1}{\a^2L^2}\sum_{1 \leq i < j \leq L}\frac{1}{(j-i)^{1/3}} \lesssim \frac{L\cdot L^{2/3}}{\a^2L^2} = \frac{1}{\a^2L^{1/3}},
\end{eqnarray*}
as required.
\end{proof}

\subsection{Regularity for occupation measures}


We next introduce the consequence that we will need of the Enhanced Invariance Principle (Definition~\ref{dfn:EIP}).  Our application of this principle will be essentially the same as made by Aaronson in~\cite{Aar12}, for the special case of random walks: see the `Local Time Lemma' and Lemma 4 of that paper.

\begin{lem}[Smoothness of typical occupation measures]\label{lem:occ-smooth}
Suppose that $(\bfY,\s)$ satisfies the Enhanced Invariance Principle.  Let $\phi$ and $\ell$ be as at the beginning of this section.  For every $\eps > 0$ there is an $M < \infty$ such that
\[\nu\big\{\phi\star \g^y_{[0;N)} \sim_{M,\eps} \rm{U}_{B_\ell(\s^y_{[0;N)})}\big\} \geq 1 - \eps\]
for all sufficiently large $N$.
\end{lem}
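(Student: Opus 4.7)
The plan is to deduce the lemma from the Enhanced Invariance Principle by horizontally rescaling by $\sqrt{N}$ and comparing the density of the mollified occupation measure with that of uniform measure against the Brownian local time $L^B$. Since approximate two-sided absolute continuity is preserved under pushforward (Lemma~\ref{lem:abs-ct-basics}), I work in the rescaled coordinate $u := t/\sqrt{N}$. On their common support $B_\ell(\s^y_{[0;N)})/\sqrt{N}$, the Radon--Nikodym derivative of (the pushforward of) $\phi\star\g^y_{[0;N)}$ with respect to (the pushforward of) $\rm{U}_{B_\ell(\s^y_{[0;N)})}$ simplifies, after cancellation of the rescaling Jacobians, to
\[
R^y_N(u) \ := \ \bigl[\sqrt{N}\,(\phi\star\g^y_{[0;N)})(\sqrt{N}u)\bigr]\cdot\bigl[|B_\ell(\s^y_{[0;N)})|/\sqrt{N}\bigr].
\]
Producing an $M<\infty$ and a measurable set $E^y_N \subseteq B_\ell(\s^y_{[0;N)})$ on which $R^y_N(\cdot/\sqrt{N}) \in [1/M,M]$, carrying at least $1-\eps/2$ of the mass of both measures with $\nu$-probability $\geq 1-\eps$, will give the conclusion: for any Borel $A$,
\[
(\phi\star\g^y_{[0;N)})(A) \ \leq \ (\phi\star\g^y_{[0;N)})(A\cap E^y_N) + \eps/2 \ \leq \ M\cdot \rm{U}_{B_\ell(\s^y_{[0;N)})}(A) + \eps/2,
\]
and symmetrically.

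Next I invoke the EIP jointly with $\traj_N(\s^y)\to B$. The first bracketed factor of $R^y_N$ converges in law, locally uniformly in $u$, to $L^B(u)$; the second factor, viewed as the continuous functional $f\mapsto|f([0,1])|$ of $\traj_N(\s^y)$ on $C_0(0,1]$, converges to $|B_{[0,1]}|$. For $\sfW_{[0,1]}$-a.e.\ $B$, the local time $L^B$ is continuous and bounded on $\bbR$, supported on the compact interval $[\inf B_{[0,1]},\sup B_{[0,1]}]$, and strictly positive on its interior, while $|B_{[0,1]}|>0$. Thus, for any $\delta>0$ and a.e.\ $B$, there is a (random) $M_B<\infty$ and a closed $E_B\subseteq B_{[0,1]}$ (avoiding small neighbourhoods of the two endpoints) such that $L^B(u)\cdot|B_{[0,1]}|\in[1/M_B,M_B]$ on $E_B$, and both $\rm{U}_{B_{[0,1]}}(B_{[0,1]}\setminus E_B)$ and $\int_{B_{[0,1]}\setminus E_B}L^B(u)\,\d u$ are smaller than $\delta$. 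Given the target $\eps$, I set $\delta:=\eps/4$ and then choose a deterministic $M$ with $\sfW_{[0,1]}\{M_B\leq M\}\geq 1-\eps/2$.

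To transfer this Brownian conclusion to finite $N$, I apply Skorohod's representation theorem on the Polish space $C_0(0,1]\times C_{\rm{c}}(\bbR)$, realizing the EIP convergence as almost-sure convergence on a single probability space where $y^{(N)}\sim\nu$ and $R^{y^{(N)}}_N$ converges locally uniformly to $L^B\cdot|B_{[0,1]}|$. On the almost-sure event, for $N$ large, the set $E^{y^{(N)}}_N\subseteq B_\ell(\s^{y^{(N)}}_{[0;N)})$ defined as the preimage of a small thickening of $E_B$ under the rescaling satisfies $R^{y^{(N)}}_N\in[1/(2M),2M]$ and carries $\geq 1-\eps/2$ of the mass of both measures simultaneously, by local uniform convergence of the density and Hausdorff-continuity of the range $\traj_N(\s^y)([0,1])$.

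The principal obstacle is this last transfer step: the boundary of $B_{[0,1]}$, where $L^B$ vanishes and the density ratio degenerates, must be excised without losing too much mass under either measure. This is where the $\eps$-slack in the definition of $\sim_{M,\eps}$ is essential, and it is enabled by the continuity of $L^B$ at its zero set together with the Skorohod coupling, which simultaneously controls the trajectory range and its occupation density.
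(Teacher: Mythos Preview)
Your proof is correct and rests on the same idea as the paper's: use that $L^B$ is almost surely continuous, supported exactly on $B_{[0,1]}$, and strictly positive on its interior to obtain a deterministic $M$ that works with high $\sfW_{[0,1]}$-probability, and then transfer back through the Enhanced Invariance Principle. The difference is in the transfer mechanism. You invoke Skorohod's representation theorem and argue pathwise, which forces you to manage the random threshold $N_0(\omega)$ and the boundary excision explicitly. The paper instead encodes the desired conclusion as membership in a single \emph{open} set $U\subseteq \rm{Int}(\bbR)\times C_{\rm c}(\bbR)$, built as a finite union indexed by candidate ranges $K_s$ with the conditions $2/M<f|_{K_s}<M/2$ and $\int_{K_s}f>1-\eps$; it then checks that $(B_{[0,1]},L^B)\in U$ with probability $>1-\eps$ and applies the Portmanteau theorem directly to the law of the rescaled pair $(B_\ell(\s^y_{[0;N)})/\sqrt{N},\,\phi\star\g^y_{[0;N)}(\sqrt{N}\cdot))$. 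The Portmanteau route is shorter because the $\liminf$ of probabilities and the handling of the endpoints of $B_{[0,1]}$ are absorbed into the openness of $U$, with no coupling needed; your route is equally valid and makes the density comparison more explicit, at the cost of the extra bookkeeping you describe in the last paragraph.
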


\begin{proof}
Because $L^B$ is a.s. a continuous function with support \emph{equal} to the positive-length compact interval $B_{[0,1]}$~(\cite[Corollary 22.18]{Kal02}), for any $\eps > 0$ there are $M < \infty$ and finitely many $K_1,\ldots, K_r \in \rm{Int}(\bbR)$ such that $\calL^1(K_s) > 1/(M\eps)$ for each $s$, and such that the set
\begin{eqnarray*}
U &:=& \bigcup_{s=1}^r \Big\{(K,f) \in \rm{Int}(\bbR)\times C_{\rm{c}}(\bbR)\,\Big|\\
&&\quad \quad \quad \quad \quad \quad K_s \subseteq \rm{interior}(K) \subseteq K \subseteq K_s + (-1/2M,1/2M),\\
&&\quad \quad \quad \quad \quad \quad \quad \quad \quad \quad \quad \quad 2/M < f|_{K_s} < M/2,\ \hbox{and}\ \int_{K_s} f > 1 - \eps\Big\}
\end{eqnarray*}
satisfies
\[\sfW_{[0,1]}\{B\,|\ (B_{[0,1]},L^B) \in U\} > 1-\eps.\]

This last subset of $C_0(0,1]$ is easily checked to be open.  On the other hand, the Enhanced Invariance Principle implies that
\[\big(B_\ell(\s^y_{[0;N)})/\sqrt{N},\ \phi\star \g^y_{[0;N)}(\sqrt{N}(\cdot))\big) \stackrel{\rm{law}}{\to} (B_{[0,1]},L^B)\]
as $(\rm{Int}(\bbR)\times C_{\rm{c}}(\bbR))$-valued random variables.  Therefore the Portmanteau Theorem implies that
\[\nu\big\{\big(B_\ell(\s^y_{[0;N)})/\sqrt{N},\ \phi\star \g^y_{[0;N)}(\sqrt{N}(\cdot))\big) \in U\big\}> 1 - \eps\]
for all sufficiently large $N$.  Since $\phi\star \g^y_{[0;N)}(\sqrt{N}(\cdot))$ is always non-negative and has integral equal to $1$, this membership of $U$ implies that
\[\phi \star \g^y_{[0;N)} \sim_{M,\eps} \rm{U}_{B_\ell(\s^y_{[0;N)})},\]
as required.
\end{proof}

Based on the preceding results, we can introduce certain sets that occur with high probability as the relevant time scale tends to $\infty$.  With $\phi$ and $\ell$ as before, and for $[a;b) \subseteq \bbZ$, set
\[Y^{\rm{spread}}_{[a;b)} := \big\{y\,\big|\ [\s^y_a -(b-a)^{1/3}, \s^y_a + (b-a)^{1/3}] \subseteq B_\ell(\s^y_{[a;b)})\big\}.\]
Given also $\eps > 0$ and $M < \infty$, set
\[Y^{\rm{smooth}}_{[a;b),M,\eps} := \big\{y\,\big|\ \phi \star \g^y_{[a;b)} \sim_{M,\eps} \rm{U}_{B_\ell(\s^y_{[a;b)})}\big\}.\]

The results above show that
\begin{eqnarray}\label{eq:spread-whp}
\nu(Y^{\rm{spread}}_{[a;b)}) \to 1 \quad \hbox{as}\ b-a \to \infty
\end{eqnarray}
(indeed, this would follow if the exponent $1/3$ were replaced with any value less than $1/2$), and that
\begin{eqnarray}\label{eq:smooth-whp}
\nu(Y^{\rm{smooth}}_{[a;b),M,\eps}) \to 1
\end{eqnarray}
as $b-a\to \infty$ and then $M\to\infty$.

\subsection{A discrete filtration}\label{subs:disc-filt}

We will next define recursively some sequences of auxiliary parameters and sets.

Firstly, let
\[L_d := \lceil (d+1)^{18}\rceil \quad \forall d\geq 1\]
(the reason for the exponent $18$ will emerge shortly). Let $N_0 := 1$, and then let
\[N_{d+1} := L_{d+1}N_d \quad \forall d\geq 0.\]
A crude application of Stirling's Inequality gives
\begin{eqnarray}\label{eq:N_s-est}
\log N_d \sim (d+1)\log (d+1) \quad \forall d \geq 1.
\end{eqnarray}

In addition, let
\[\a_d := \frac{1}{(d+1)^2}\]
and
\[\k_{r,d} := \prod_{i=r+1}^d(1 - \a_i) \quad \hbox{whenever}\ 1 \leq r \leq d.\]
The series $\sum_i \a_i$ converges, and therefore $\k_{r,d}$ tends to a limit $\k_{r,\infty} \in [0,1)$ as $d \to\infty$, and then $\k_{r,\infty} \uparrow 1$ as $r\to\infty$.

Also, for each $d\geq 0$, let $\cal{D}_d$ be the partition of $\bbZ$ into the discrete intervals
\[Q^d_i := [iN_d;(i+1)N_d) \quad \hbox{for}\ i \in \bbZ.\]
Given $t \in \bbZ$, let $Q^d(t)$ denote the cell of $\cal{D}_d$ that contains $t$, so $Q^d(t) = Q^d_i$ if and only if $iN_d \leq t < (i+1)N_d$.

Letting $\ell$ be as before, for each $d \geq 1$ and $t \in \bbZ$ let
\[Y_{d,t}^{\rm{mndr}}:= \big\{y \in Y\,\big|\ \s^y\ \hbox{is}\ (\a_d,\ell)\hbox{-meandering over}\ (Q^d(t),\cal{D}_{d-1}\cap Q^d(t)) \big\}\]
(this is essentially a repeat of the definition of the set `$\theta_d'$' in~\cite[Section 6.1]{denHSte97}; interestingly, our argument does not seem to need any analog of the sets `$\theta'''_d$' from that paper).  Observe that $Y^{\rm{mndr}}_{d,t}$ actually depends on $t$ only through $Q^d(t)$, hence only on $i := \lfloor t/N_d\rfloor$.  For this $i$, one has
\[Y^{\rm{mndr}}_{d,t} = S^{-iN_d}(Y^{\rm{mndr}}_{d,0}),\]
and so Proposition~\ref{prop:spade} gives
\begin{multline}\label{eq:X-good-big}
\nu(Y^{\rm{mndr}}_{d,t}) = \nu(Y^{\rm{mndr}}_{d,0}) > 1 - \frac{C}{L_d^{1/3}\a_d^2}\\ \geq 1 - \frac{C}{(d+1)^{18/3}\frac{1}{(d+1)^4}} = 1 - \frac{C}{(d+1)^2},
\end{multline}
with $C$ as in that proposition, for all $d$ and $t$.  (This explains the choice of the exponent $18$: we will soon need the error term at the end here to be summable in $d$.)

Given $1 \leq s \leq d$ and $Q \in \cal{D}_d$, let
\[H^{\rm{mndr}}_{s,Q}(y) := \{t \in Q\,|\ y \in Y^{\rm{mndr}}_{s,t}\}.\]
For a given $y$, this is the set of times $t \in Q$ such that $\s^y$ `behaves well', in the sense of Proposition~\ref{prop:spade}, over the interval $Q^s(t)$. Since $Y^{\rm{mndr}}_{s,t}$ depends on $t$ only through $Q^s(t)$, the set $H^{\rm{mndr}}_{s,Q}(y)$ is a union of cells from $\cal{D}_s\cap Q$: it is equal to
\[\bigcup\big\{[iN_s;(i+1)N_s)\,\big|\ i \in \bbZ,\ [iN_s;(i+1)N_s) \subseteq Q,\ \hbox{and}\ S^{iN_s}y \in Y^{\rm{mndr}}_{s,0}\big\}.\]

The analysis below will need cocycle-trajectories that are simultaneously `well-behaved' over most intervals $Q^s(t) \subseteq Q \in \cal{D}_d$ on all sufficiently large scales up to some $d$.  A high probability of such cocycle-trajectories is given by the following lemma.

\begin{lem}\label{lem:cal-T-big}
If $1 \leq r < d$ and $Q \in \cal{D}_d$, then
\[\int_Y \Big|\bigcap_{s=r+1}^d H^{\rm{mndr}}_{s,Q}(y)\Big|\ \nu(\d y) \geq \Big(1 - C\sum_{s=r+1}^d\frac{1}{(s+1)^2}\Big)N_d\]
with $C$ as in Proposition~\ref{prop:spade}.
\end{lem}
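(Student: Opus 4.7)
The plan is to prove this by a straightforward union-bound/Fubini argument, leveraging the single-scale probability estimate~(\ref{eq:X-good-big}) together with the shift-invariance of $\nu$.

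First I would rewrite the integrand using complements: since $H^{\mathrm{mndr}}_{s,Q}(y) \subseteq Q$ and $|Q| = N_d$, we have
\[
\Big|\bigcap_{s=r+1}^d H^{\mathrm{mndr}}_{s,Q}(y)\Big| = N_d - \Big|Q \setminus \bigcap_{s=r+1}^d H^{\mathrm{mndr}}_{s,Q}(y)\Big| \geq N_d - \sum_{s=r+1}^d |Q \setminus H^{\mathrm{mndr}}_{s,Q}(y)|.
\]
So it suffices to estimate $\int_Y |Q \setminus H^{\mathrm{mndr}}_{s,Q}(y)|\,\nu(\d y)$ for each $s$ in the range.

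Second, I would apply Fubini. By definition, $t \in Q \setminus H^{\mathrm{mndr}}_{s,Q}(y)$ iff $y \notin Y^{\mathrm{mndr}}_{s,t}$, so
\[
\int_Y |Q \setminus H^{\mathrm{mndr}}_{s,Q}(y)|\,\nu(\d y) = \sum_{t \in Q} \nu\big(Y \setminus Y^{\mathrm{mndr}}_{s,t}\big).
\]
By the shift-invariance remark preceding~(\ref{eq:X-good-big}) (namely $Y^{\mathrm{mndr}}_{s,t} = S^{-\lfloor t/N_s\rfloor N_s}(Y^{\mathrm{mndr}}_{s,0})$) and $S$-invariance of $\nu$, each term equals $\nu(Y \setminus Y^{\mathrm{mndr}}_{s,0})$, which is at most $C/(s+1)^2$ by~(\ref{eq:X-good-big}). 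Hence the above sum is bounded by $N_d \cdot C/(s+1)^2$.

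Third, I would combine the two steps: summing over $s = r+1,\ldots,d$ and subtracting from $N_d$ yields
\[
\int_Y \Big|\bigcap_{s=r+1}^d H^{\mathrm{mndr}}_{s,Q}(y)\Big|\,\nu(\d y) \geq N_d - C N_d \sum_{s=r+1}^d \frac{1}{(s+1)^2} = \Big(1 - C\sum_{s=r+1}^d \frac{1}{(s+1)^2}\Big) N_d,
\]
as required. There is no real obstacle here; the only thing to be careful about is that the shift-translate relating $Y^{\mathrm{mndr}}_{s,t}$ to $Y^{\mathrm{mndr}}_{s,0}$ uses that $Y^{\mathrm{mndr}}_{s,t}$ depends only on $Q^s(t)$, which is already recorded in the setup. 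The key design choice $L_d = \lceil(d+1)^{18}\rceil$ (hence $\a_d^{-2} L_d^{-1/3} \lesssim (d+1)^{-2}$) is what ensures the error series in $s$ is summable, which is of course the whole point of that choice.
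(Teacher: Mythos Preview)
Your proof is correct and follows essentially the same approach as the paper's: both use Fubini to write $\int_Y |Q\setminus H^{\rm{mndr}}_{s,Q}(y)|\,\nu(\d y) = \sum_{t\in Q}\nu(Y\setminus Y^{\rm{mndr}}_{s,t}) = N_d\,\nu(Y\setminus Y^{\rm{mndr}}_{s,0}) < CN_d/(s+1)^2$ via shift-invariance and~(\ref{eq:X-good-big}), then subtract the sum over $s$ from $N_d$.
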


\begin{proof}
For each fixed $s \in [1;d]$, the bound~(\ref{eq:X-good-big}) gives
\begin{multline*}
\int_Y |Q\setminus H^{\rm{mndr}}_{s,Q}(y)|\,\nu(\d y) = \sum_{t \in Q}\nu\{y\,|\ t \not\in H^{\rm{mndr}}_{s,Q}(y)\}\\ = N_d\nu(Y \setminus Y^{\rm{mndr}}_{s,0}) < \frac{CN_d}{(s+1)^2},
\end{multline*}
and therefore
\begin{multline*}
\int_Y \Big|\bigcap_{s=r+1}^d H^{\rm{mndr}}_{s,Q}(y)\Big|\ \nu(\d y) \geq N_d - \sum_{s=r+1}^d\int_Y |Q\setminus H^{\rm{mndr}}_{s,Q}(y)|\,\nu(\d y)\\ > \Big(1 - C\sum_{s=r+1}^d\frac{1}{(s+1)^2}\Big)N_d.
\end{multline*}
\end{proof}

We can introduce other sets of times in a fixed large interval $Q$ at which a cocycle-trajectory is behaving well in one sense or another.  Fix again $1 \leq r < d$ and $Q \in \cal{D}_d$.  Then we set
\[H^{\rm{spread}}_{r,Q}(y) := \{t \in Q\,|\ y \in Y^{\rm{spread}}_{Q^r(t)}\}\]
and
\[H^{\rm{smooth}}_{r,Q,M,\eps}(y) := \{t \in Q\,|\ y \in Y^{\rm{smooth}}_{Q^r(t),M,\eps}\}.\]
Now we may combine the estimates~(\ref{eq:spread-whp}) and~(\ref{eq:smooth-whp}) with Lemma~\ref{lem:cal-T-big} and Markov's Inequality to conclude the following.

\begin{cor}\label{cor:cal-T-big}
For every $\eta > 0$ and $\eps > 0$ there are $r,M \in \bbN$ such that, for every $d > r$ and every $Q \in \cal{D}_d$, one has
\[\nu\Big\{y \in Y\,\Big|\ \Big|H^{\rm{spread}}_{r,Q}(y)\cap H^{\rm{smooth}}_{r,Q,M,\eps}(y)\cap \bigcap_{s=r+1}^d H^{\rm{mndr}}_{s,Q}(y)\Big| \geq (1 - \eta)N_d\Big\} > 1 - \eta.\]
\qed
\end{cor}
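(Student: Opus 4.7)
The plan is to handle each of the three sets $H^{\rm{spread}}_{r,Q}$, $H^{\rm{smooth}}_{r,Q,M,\eps}$, and $\bigcap_{s=r+1}^d H^{\rm{mndr}}_{s,Q}$ via a separate first-moment estimate for the size of its complement in $Q$, and then assemble the three bounds with Markov's Inequality. The underlying mechanism is $S$-invariance of $\nu$: since the events $Y^{\rm{spread}}_{Q^r(t)}$ and $Y^{\rm{smooth}}_{Q^r(t),M,\eps}$ depend on $t$ only through the $\cal{D}_r$-cell containing $t$, and the measure $\nu$ is $S$-invariant, each of these events has the same $\nu$-measure as its time-$0$ translate $Y^{\rm{spread}}_{[0;N_r)}$ or $Y^{\rm{smooth}}_{[0;N_r),M,\eps}$.

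First I would fix the scale $r$. Choose $r$ large enough that simultaneously
\[
C\sum_{s=r+1}^\infty \frac{1}{(s+1)^2} < \frac{\eta^2}{3} \quad \text{and} \quad \nu\big(Y \setminus Y^{\rm{spread}}_{[0;N_r)}\big) < \frac{\eta^2}{6},
\]
the first being possible because the tail of a convergent series vanishes and the second by~(\ref{eq:spread-whp}) together with $N_r \to \infty$. Then, with this $r$ now fixed, invoke~(\ref{eq:smooth-whp}) to pick $M$ so large that $\nu(Y \setminus Y^{\rm{smooth}}_{[0;N_r),M,\eps}) < \eta^2/6$.

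The three first-moment bounds are then: from Lemma~\ref{lem:cal-T-big},
\[
\int_Y \Big|Q\setminus \bigcap_{s=r+1}^d H^{\rm{mndr}}_{s,Q}(y)\Big|\,\nu(\d y) \leq C\sum_{s=r+1}^d\frac{N_d}{(s+1)^2};
\]
and by $S$-invariance together with the preceding paragraph, summing $1_{t\notin H^{\rm{spread}}_{r,Q}(y)}$ over $t\in Q$ gives
\[
\int_Y \big|Q\setminus H^{\rm{spread}}_{r,Q}(y)\big|\,\nu(\d y) = N_d\,\nu\big(Y\setminus Y^{\rm{spread}}_{[0;N_r)}\big) < \frac{\eta^2 N_d}{6},
\]
and identically for $H^{\rm{smooth}}_{r,Q,M,\eps}$. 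Letting $H(y)$ denote the full triple intersection, a union bound on the complements yields
\[
\int_Y |Q\setminus H(y)|\,\nu(\d y) < \Big(\tfrac{\eta^2}{3} + \tfrac{\eta^2}{6} + \tfrac{\eta^2}{6}\Big)N_d < \eta^2 N_d.
\]
Markov's Inequality applied to the nonnegative integrand $|Q\setminus H(y)|$ then gives
\[
\nu\big\{y \,:\, |Q\setminus H(y)| \geq \eta N_d\big\} < \eta,
\]
which is exactly the stated claim after rewriting $|H(y)| \geq (1-\eta)N_d$. There is no genuine obstacle here; the only point that requires care is verifying that $S$-invariance lets the first-moment computation be reduced to a single $\nu$-measure bound, so that the thresholds for $r$ and $M$ supplied by~(\ref{eq:spread-whp}) and~(\ref{eq:smooth-whp}) can be chosen \emph{independently} of $d$ and of the particular $Q\in \cal{D}_d$.
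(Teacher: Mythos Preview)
Your proposal is correct and follows exactly the approach the paper indicates (combine~(\ref{eq:spread-whp}),~(\ref{eq:smooth-whp}), Lemma~\ref{lem:cal-T-big}, and Markov's Inequality via first-moment bounds on the complements). One small quantifier issue: as stated,~(\ref{eq:smooth-whp}) gives the limit ``as $b-a\to\infty$ and then $M\to\infty$'', so it does not directly let you fix $N_r$ first and then push $M\to\infty$; the clean fix is to choose $M$ first (so that $\nu(Y\setminus Y^{\rm{smooth}}_{[0;N),M,\eps})<\eta^2/6$ for all sufficiently large $N$) and then choose $r$ large enough to satisfy all three thresholds simultaneously.
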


The largeness of the intersection of times appearing here implies that the cocycle-trajectory $\s^y$ enjoys several different useful properties simultaneously on most of $Q$.  The conjunction of all of these properties will play a r\^ole in our later analysis of generalized RWRS systems.

\subsection{Discrete Cantor sets and approximate injectivity}\label{subs:intro-dCs}

This subsection will focus on the intersection $\bigcap_{s=r+1}^d H^{\rm{good}}_{s,Q}(y)$, as appears in Corollary~\ref{cor:cal-T-big}.  Provided it is large enough, one can find special, highly-structured subsets inside it on which $\s^y$ is approximately injective.

\begin{dfn}[Discrete Cantor sets]\label{dfn:dCs}
Let $d \in \bbN$.  A \textbf{discrete Cantor set} of \textbf{depth} $d$ is an indexed family $(t_\omega)_{\omega \in \{0,1\}^d}$ of points in $\bbR$, and it is \textbf{proper} if they are all distinct.

In addition, given $D_1 \geq D_2 \geq \ldots \geq D_d > 0$, these are \textbf{gap upper bounds} for the discrete Cantor set if
\[|t_\omega - t_{\omega'}| \leq D_{i+1} \quad \hbox{whenever}\ i < d\ \hbox{and}\ \omega_j = \omega'_j\ \forall j \leq i.\]

Given $K \in \rm{Int}(\bbR)$, $d \in \bbN$ and $D = (D_1\geq \ldots \geq D_d)$ as above, we will let $\DCS_{d,D}(K) \subseteq K^{\{0,1\}^d}$ denote the collection of all discrete Cantor sets of depth $d$, contained in $K$, and having gap upper bounds given by $D$.
\end{dfn}

Our first result about discrete Cantor sets is an estimate on their `number'; or, more correctly, their covering number for some natural metric.  We will endow $\DCS_{d,D}(K)$ with the metric $d_\DCS$ obtained from the norm $\|\cdot\|_\infty$ on $\bbR^{\{0,1\}^d}$: that is,
\[d_\DCS((x_\omega)_\omega,(y_\omega)_\omega) = \max_{\omega \in \{0,1\}^d}|x_\omega - y_\omega|.\]

\begin{lem}[Bounding the number of discrete Cantor sets]\label{lem:bound-dCs}
Let $K \in \rm{Int}(\bbR)$ with $L:= \calL^1(K)$, and fix gap upper bounds $D = (D_1\geq D_2 \geq \ldots \geq D_d)$.  Suppose $\delta \leq D_d/10,L/10$.  Then
\[\cov\big((\DCS_{d,D}(K),d_\DCS),\delta\big) \leq \frac{2L}{\delta}\Big(\frac{2D_1}{\delta}\Big)\Big(\frac{2D_2}{\delta}\Big)^2\cdots \Big(\frac{2D_d}{\delta}\Big)^{2^{d-1}}.\]
\end{lem}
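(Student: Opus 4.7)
The plan is to build a $\delta$-net in $\ell^\infty$ hierarchically, enumerating the $2^d$ points $t_\omega$ in an order that matches the tree structure of the indices, and placing each new point on a grid centered at an already-chosen approximation of its nearest placed predecessor. This way the approximation error stays at $\delta$ throughout, without any telescoping loss.

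Specifically, for each $\omega \in \{0,1\}^d \setminus \{0^d\}$ let $\ell(\omega) \in \{1,\ldots,d\}$ be the position of its last $1$, and call $\omega':=\omega_1\cdots\omega_{\ell(\omega)-1}0\cdots 0$ the \emph{predecessor} of $\omega$. Since $\omega$ and $\omega'$ agree on the first $\ell(\omega)-1$ coordinates, the gap hypothesis gives $|t_\omega - t_{\omega'}|\leq D_{\ell(\omega)}$. Note that at level $i$, the set of $\omega$ with $\ell(\omega)=i$ has exactly $2^{i-1}$ elements, and the predecessor of any such $\omega$ was introduced at a strictly earlier level. I will traverse the $t_\omega$'s in order of increasing $\ell(\omega)$: first $t_{0^d}$, then at level $i$ the $2^{i-1}$ new points $t_\omega$ with $\ell(\omega)=i$.

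The net is then constructed as follows. Fix a $(2\delta)$-spaced grid in $K$ for $t_{0^d}$; since $\delta \leq L/10$ this has at most $\lceil L/(2\delta)\rceil \leq 2L/\delta$ points, and the nearest grid point $\bar t_{0^d}$ satisfies $|\bar t_{0^d}-t_{0^d}|\leq \delta$. Inductively, having built approximations $\bar t_{\omega'}$ at earlier levels with $|\bar t_{\omega'}-t_{\omega'}|\leq \delta$, the bound $|t_\omega-t_{\omega'}|\leq D_i$ places $t_\omega$ in the interval $[\bar t_{\omega'}-D_i-\delta, \bar t_{\omega'}+D_i+\delta]$, which has length $\leq 2D_i(1+1/10)$; a $(2\delta)$-spaced grid in this interval has at most $\lceil (2D_i+2\delta)/(2\delta)\rceil\leq 2D_i/\delta$ points (using $\delta\leq D_d/10\leq D_i/10$), so the nearest such grid point $\bar t_\omega$ is within $\delta$ of $t_\omega$. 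Running over all choices at every level yields a collection of size $\frac{2L}{\delta}\prod_{i=1}^d (2D_i/\delta)^{2^{i-1}}$ that $\delta$-covers $\DCS_{d,D}(K)$ in the $\|\cdot\|_\infty$ metric $d_{\DCS}$.

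The only real temptation to avoid is parameterising a Cantor set additively by the increments $s_\sigma:=t_{\sigma 1 0\cdots 0}-t_{\sigma 0 0\cdots 0}$ and approximating each $s_\sigma$ on a common grid: this causes the errors to telescope over the length of the binary string used to express $t_\omega$, forcing one to choose accuracies $\sim \delta/d$ at each scale and producing spurious $(d+1)$ factors. The hierarchical construction above sidesteps this because each new $\bar t_\omega$ is read off directly from a grid near $\bar t_{\omega'}$, so errors never compose along a branch.
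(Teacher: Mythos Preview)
Your argument is correct and arrives at the stated bound by a route genuinely different from the paper's. The paper first rescales to $\delta=2$, applies the coordinatewise floor map $\Phi$, and then bounds the cardinality of the image $\Phi^{\times\{0,1\}^d}(\DCS_{d,D}(K))$ by \emph{induction on the depth} $d$: a depth-$d$ integer Cantor set is split as a pair of depth-$(d-1)$ Cantor sets, each contained in an interval of length about $D_2$, and one counts the possible locations of those two intervals and then recurses into each. Your approach avoids the recursion altogether by organising the $2^d$ indices via the position $\ell(\omega)$ of the last $1$ and placing each new point on a grid anchored at the already-approximated predecessor $\omega'$; the exponents $2^{i-1}$ then emerge transparently as the number of indices at level $i$. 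Both arguments encode the same tree combinatorics, but yours does it in a single sweep and makes explicit why the errors do not telescope down a branch --- a point the paper's inductive framing leaves implicit.

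One very minor technical point: the net $\{(\bar t_\omega)_\omega\}$ you build is \emph{external} (its members need not lie in $\DCS_{d,D}(K)$), whereas the paper's definition of $\cov$ requires centres inside the space. This is harmless: picking one genuine Cantor set from each nonempty fibre of your approximation map yields an internal $2\delta$-net of the same cardinality, and the extra factor of $2$ in the radius is immaterial for every subsequent use of the lemma (Corollary~\ref{cor:bound-dCf}, inequality~(\ref{eq:DCM-cov}), and Lemma~\ref{lem:no-of-matchings}), where only the shape of the bound matters.
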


\begin{proof}
The desired inequality is invariant under re-scaling $\bbR$, so we may simply assume that $\delta = 2$ and that $D_d,L \geq 20$.

Let $\Phi:\bbR\to \bbZ$ be the discretization map
\[\Phi(x) := \lfloor x\rfloor.\]
Clearly if $(x_\omega)_\omega,(y_\omega)_\omega \in \DCS_{d,D}(K)$ and $(\Phi(x_\omega))_\omega = (\Phi(y_\omega))_\omega$ then
\[d_\DCS((x_\omega)_\omega,(y_\omega)_\omega) < 2.\]
It therefore suffices bound the cardinality of the set $\Phi^{\times \{0,1\}^d}(\DCS_{d,D}(K))$.

Let $\ol{K}$ be a closed interval with end-points in $\bbZ$ that contains $K$ and has length at most $4L/3$.  Using that $D_d,L\geq 20$, one sees that
\[\Phi^{\times \{0,1\}^d}(\DCS_{d,D}(K)) \subseteq \DCS_{d,4D/3}(\ol{K}) \cap \bbZ^{\{0,1\}^d},\]
so it suffices to bound the cardinality of this right-hand set by the desired product.

In the base case, $d=1$, the set $\DCS_{d,4D/3}(\ol{K}) \cap \bbZ^{\{0,1\}^d}$ just consists of pairs $(x_0,x_1)$ in $\ol{K}\cap \bbZ$ separated by distance at most $4D_1/3$, and there are at most $(2L)(2D_1)$ of these.

Now, for the recursion clause, suppose the result is known for all depths less than some $d \geq 2$, and consider a discrete Cantor set $(x_\omega)_\omega \in \DCS_{d,4D/3}(\ol{K})\cap \bbZ^{\{0,1\}^d}$.  It may be identified with the pair of depth-$(d-1)$ discrete Cantor sets
\[(x_{0\omega})_{\omega \in \{0,1\}^{d-1}} \quad \hbox{and} \quad (x_{1\omega})_{\omega \in \{0,1\}^{d-1}}.\]
Let
\[K_i := \min_{\omega \in \{0,1\}^{d-1}}x_{i\omega} + [0,4D_2/3] \quad \hbox{for}\ i=0,1,\]
and let $D' := (D_2,\ldots,D_d)$. In view of the gap upper bounds, the above two depth-$(d-1)$ discrete Cantor sets are members of
\[\DCS_{d-1,4D'/3}(K_0)\cap \bbZ \quad \hbox{and} \quad \DCS_{d-1,4D'/3}(K_1)\cap \bbZ,\]
respectively.

Therefore the cardinality of $\DCS_{d,4D/3}(\ol{K})\cap \bbZ$ is bounded by the number of possible choices of $K_1$ and $K_2$, multiplied by the square of $|\DCS_{d-1,4D'/3}([0,4D_2/3])|$.  By the base-case argument and the inductive hypothesis, this is bounded by
\begin{eqnarray*}
(2L)(2D_1)|\DCS_{d-1,2D'}([0,4D_2/3])|^2 &\leq& (2L)(2D_1)\big((2D_2)(2D_3)^2\cdots (2D_d)^{2^{d-2}}\big)^2\\
&\leq& (2L)(2D_1)(2D_2)^2(2D_3)^4\cdots (2D_d)^{2^{d-1}},
\end{eqnarray*}
as required.
\end{proof}

\begin{dfn}[Discrete Cantor families]
A \textbf{discrete Cantor family} of \textbf{depth} $d$ is an indexed family $(K_\w)_{\w \in \{0,1\}^d}$ of pairwise-disjoint members of $\rm{Int}(\bbR)$.  It has \textbf{gap upper bounds} $D_1 \geq \ldots \geq D_d$ if
\[\rm{diam}(K_\w\cup K_{\w'}) \leq D_{i+1} \quad \hbox{whenever}\ i < d\ \hbox{and}\ \w_j = \w'_j\ \forall j \leq i.\]
If $\mathfrak{K} = (K_\w)_{\w\in \{0,1\}^d}$ is such a discrete Cantor family, then its \textbf{domain} is
\[\dom(\mathfrak{K}) := \bigcup_{\w \in \{0,1\}^d} K_\w\]
(the reason for this terminology will become clear later).

We will let $\DCF_{d,D}(K) \subseteq \rm{Int}(K)^{\{0,1\}^d}$ denote the collection of all discrete Cantor families of depth $d$, contained in $K$, and having gap upper bounds given by $D$.
\end{dfn}

It is clear that if $(K_\omega)_\omega \in \DCF_{d,D}(K)$, then
\[(\min K_\omega)_\omega \quad \hbox{and} \quad (\max K_\omega)_\omega \in \DCS_{d,D}(K).\]
Similarly to $d_\DCS$, we will endow $\DCF_{d,D}(K)$ with the metric
\begin{multline*}
d_{\DCF}\big((K_\omega)_\omega,(K'_\omega)_\omega\big) := \max_{\omega \in \{0,1\}^d}d_{\rm{Hdf}}(K_\w,K'_\w)\\
= \max\big\{d_\DCS\big((\min K_\omega)_\omega,(\min K'_\omega)_\omega\big), d_\DCS\big((\max K_\omega)_\omega,(\max K'_\omega)_\omega\big)\big\},
\end{multline*}
where $d_{\rm{Hdf}}$ is the classical Hausdorff metric on the space of nonempty compact subsets. Lemma~\ref{lem:bound-dCs} immediately gives the following.

\begin{cor}[Bounding the number of discrete Cantor families]\label{cor:bound-dCf}
Let $K \in \rm{Int}(\bbR)$ with $L:= \calL^1(K)$, and fix gap upper bounds $D = (D_1\geq D_2 \geq \ldots \geq D_d)$.  Suppose $\delta \leq D_d/10,L/10$.  Then
\[\cov\big((\DCF_{d,D}(K),d_\DCF),\delta\big) \leq \Big(\frac{2L}{\delta}\Big(\frac{2D_1}{\delta}\Big)\Big(\frac{2D_2}{\delta}\Big)^2\cdots \Big(\frac{2D_d}{\delta}\Big)^{2^{d-1}}\Big)^2.\]
\qed
\end{cor}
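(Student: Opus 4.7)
The plan is to reduce Corollary~\ref{cor:bound-dCf} directly to Lemma~\ref{lem:bound-dCs} by identifying each discrete Cantor family with the pair of its left- and right-endpoint discrete Cantor sets. The key observation is that a family $\mathfrak{K} = (K_\w)_{\w\in\{0,1\}^d} \in \DCF_{d,D}(K)$ is fully determined by the two indexed families
\[
\mathfrak{K}^- := (\min K_\w)_\w \quad \text{and} \quad \mathfrak{K}^+ := (\max K_\w)_\w,
\]
and moreover both lie in $\DCS_{d,D}(K)$: indeed, the gap upper bound $\rm{diam}(K_\w\cup K_{\w'}) \leq D_{i+1}$ for the family immediately implies $|\min K_\w - \min K_{\w'}| \leq D_{i+1}$ and likewise for the max endpoints.

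Next I would observe that, for compact subintervals $K_\w,K'_\w \subseteq \bbR$, the Hausdorff distance reduces to the endpoint comparison
\[
d_{\rm{Hdf}}(K_\w,K'_\w) = \max\big\{|\min K_\w - \min K'_\w|,\ |\max K_\w - \max K'_\w|\big\},
\]
as is already remarked (implicitly) in the definition of $d_\DCF$ given in the excerpt. Maxing over $\w \in \{0,1\}^d$ yields
\[
d_\DCF(\mathfrak{K},\mathfrak{K}') = \max\big\{d_\DCS(\mathfrak{K}^-,{\mathfrak{K}'}^-),\ d_\DCS(\mathfrak{K}^+,{\mathfrak{K}'}^+)\big\}.
\]

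From here the covering bound is immediate. Let $F^-,F^+ \subseteq \DCS_{d,D}(K)$ be $\delta$-nets of size at most the bound from Lemma~\ref{lem:bound-dCs}. For each pair $(S^-,S^+) \in F^-\times F^+$, if there exists some $\mathfrak{K} \in \DCF_{d,D}(K)$ with $\mathfrak{K}^-$ within $\delta$ of $S^-$ and $\mathfrak{K}^+$ within $\delta$ of $S^+$, select one such $\mathfrak{K}$ and include it in our net $F$; otherwise skip the pair. The displayed identity above shows that every element of $\DCF_{d,D}(K)$ lies within $d_\DCF$-distance $\delta$ of some element of $F$, and $|F| \leq |F^-|\cdot|F^+|$. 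Substituting the bound of Lemma~\ref{lem:bound-dCs} for each factor yields exactly the squared product claimed. There is no real obstacle here beyond being careful that the selection step above produces a set that genuinely covers $\DCF_{d,D}(K)$ (which it does by construction, since every $\mathfrak{K}$ in the domain has endpoints within $\delta$ of some pair in $F^-\times F^+$, and that pair is then used to pick a representative).
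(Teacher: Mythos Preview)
Your approach is the paper's intended one: reduce to Lemma~\ref{lem:bound-dCs} via the isometric embedding $\mathfrak{K}\mapsto(\mathfrak{K}^-,\mathfrak{K}^+)$ into $\DCS_{d,D}(K)^2$ equipped with the max metric. However, there is a slip in the selection step. If $\mathfrak{K}'\in\DCF_{d,D}(K)$ has $(\mathfrak{K}')^\pm$ within $\delta$ of $S^\pm$, and $\mathfrak{K}$ is your chosen representative for the pair $(S^-,S^+)$, then the triangle inequality only gives $d_\DCS(\mathfrak{K}^\pm,(\mathfrak{K}')^\pm)<2\delta$, hence $d_\DCF(\mathfrak{K},\mathfrak{K}')<2\delta$. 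So your $F$ is merely a $2\delta$-net, not a $\delta$-net, and the stated bound at radius $\delta$ does not follow from your argument as written.

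The easy fix is to use the specific cover constructed in the proof of Lemma~\ref{lem:bound-dCs}, not an arbitrary $\delta$-net. After rescaling to $\delta=2$, that proof partitions $\DCS_{d,D}(K)$ into the fibers of the coordinatewise floor map $\Phi^{\times\{0,1\}^d}$, each fiber having diameter strictly less than $\delta$. The product of this partition with itself still has cells of diameter $<\delta$ for the max metric on $\DCS_{d,D}(K)^2$, and the number of cells is at most the square of the Lemma's bound. Pulling back along $\mathfrak{K}\mapsto(\mathfrak{K}^-,\mathfrak{K}^+)$ and choosing one representative from each nonempty cell then gives a genuine $\delta$-net in $\DCF_{d,D}(K)$ of the required size.
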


We will also need to know how certain discrete Cantor families relate to the filtration $\cal{D}_\bullet$ introduced previously.  If $1 \leq r \leq d$ and $(K_\omega)_{\omega \in \{0,1\}^{d-r}}$ is a discrete Cantor family, then it is \textbf{adapted} to $(\cal{D}_d,\cal{D}_{d-1},\ldots,\cal{D}_r)$ if
\begin{itemize}
\item each $K_\omega \in \cal{D}_r$,
\item if $s \in [0;d-r]$ and $\omega,\omega' \in \{0,1\}^{d-r}$ satisfy $\omega_i = \omega'_i$ for all $i \in [1;s]$, then
\[\cal{D}_{d-s}(K_\omega) = \cal{D}_{d-s}(K_{\omega'})\]
(including the case $s = 0$, when the assumption is vacuous),
\item but if $\omega_s \neq \omega_{s'}$ for some $s \in [1;d-r]$, then
\[\cal{D}_{d-s}(K_\omega) \neq \cal{D}_{d-s}(K_{\omega'}).\]
\end{itemize}

We now turn to the main result of this subsection, which provides discrete Cantor families on which a given cocycle-trajectory is (approximately) injective.

\begin{prop}[Finding a good discrete Cantor set for a good trajectory]\label{prop:gooddCs}
Let $1 \leq r < d$, let $Q \in \cal{D}_d$, let $y \in Y$, and suppose that $\cal{J} \subseteq \cal{D}_r\cap Q$ is a family of intervals such that
\[|\cal{J}| \geq (1 - \kappa_{r,d})N_d/N_r\]
and
\[\mcup \cal{J} \subseteq \bigcap_{s=r+1}^d H^{\rm{mndr}}_{s,Q}(y).\]
Then there is a discrete Cantor family $(Q_\omega)_{\omega \in \{0,1\}^{d-r}}$ contained in $\cal{J}$ and adapted to $(\cal{D}_d,\ldots,\cal{D}_r)$ such that the images $B_\ell(\s^y_{Q_\omega})$, $\omega \in \{0,1\}^{d-r}$, are also pairwise-disjoint.
\end{prop}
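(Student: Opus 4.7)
The plan is to construct the family $(Q_\omega)_{\omega \in \{0,1\}^{d-r}}$ by a top-down recursion through the levels $\cal{D}_d,\cal{D}_{d-1},\ldots,\cal{D}_r$ of the filtration. Writing $k := d-r$, at each stage $t \in \{0,1,\ldots,k\}$ I select cells $Q^{(t)}_\eta \in \cal{D}_{d-t}$ for every $\eta \in \{0,1\}^t$, beginning with $Q^{(0)}_\emptyset := Q$, in such a way that whenever $t < k$ the two children $Q^{(t+1)}_{\eta 0},Q^{(t+1)}_{\eta 1}$ are distinct $\cal{D}_{d-t-1}$-cells inside $Q^{(t)}_\eta$ whose $\s^y$-images have mutual distance strictly greater than $2\ell$. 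Setting $Q_\omega := Q^{(k)}_\omega$ then yields a family in $\cal{D}_r$, and adaptedness to $(\cal{D}_d,\ldots,\cal{D}_r)$ is immediate from the nested choices. Moreover, if $\omega,\omega' \in \{0,1\}^k$ first differ in coordinate $s$, then $Q_\omega$ and $Q_{\omega'}$ sit inside the sibling cells $Q^{(s)}_{\omega|_{[1;s]}}$ and $Q^{(s)}_{\omega'|_{[1;s]}}$, whose $\s^y$-images have mutual distance $>2\ell$; hence $B_\ell(\s^y_{Q_\omega})\cap B_\ell(\s^y_{Q_{\omega'}}) = \emptyset$ follows at once.

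To drive the recursion I introduce tolerances $\delta_0 := \k_{r,d}$ and $\delta_{t+1} := \delta_t/(1-\a_{d-t})$. Using $\k_{r,d-t-1} = \k_{r,d-t}/(1-\a_{d-t})$ and telescoping gives $\delta_{k-1} = \k_{r,d}/\k_{r+1,d} = 1-\a_{r+1}$. The invariant that I carry through the induction is that each selected $Q^{(t)}_\eta$ contains at most $\delta_t (N_{d-t}/N_r)$ cells of $\cal{D}_r$ that are not in $\cal{J}$; the hypothesis $|\cal{J}| \geq (1-\k_{r,d})(N_d/N_r)$ is exactly what is needed for this invariant to hold at $t=0$. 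For the inductive step with $t < k-1$, a Markov-type count over the $L_{d-t}$ children of $Q^{(t)}_\eta$ at level $d-t-1$ shows that at most $\delta_t L_{d-t}/\delta_{t+1} = (1-\a_{d-t})L_{d-t}$ of them can violate the invariant at stage $t+1$, so at least $\a_{d-t}L_{d-t}$ ``good'' children remain. Because $Q^{(t)}_\eta$ meets $\cal{J}$ and $\mcup \cal{J}\subseteq H^{\rm{mndr}}_{d-t,Q}(y)$, I have $Q^{(t)}_\eta \subseteq H^{\rm{mndr}}_{d-t,Q}(y)$, so $\s^y$ is $(\a_{d-t},\ell)$-meandering over $(Q^{(t)}_\eta,\cal{D}_{d-t-1}\cap Q^{(t)}_\eta)$, and the definition of meandering applied to the collection of good children yields two of them with $\s^y$-images separated by more than $2\ell$; these become $Q^{(t+1)}_{\eta 0}$ and $Q^{(t+1)}_{\eta 1}$.

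The final step, $t = k-1$, is the same count, except that the threshold $\delta_{k-1} = 1-\a_{r+1}$ forces the good children to be $\cal{D}_r$-cells already in $\cal{J}$: at least $(1-\delta_{k-1})L_{r+1} = \a_{r+1}L_{r+1}$ of the $\cal{D}_r$-cells inside $Q^{(k-1)}_\eta$ lie in $\cal{J}$, and meandering at level $r+1$ picks out two of them with $2\ell$-separated $\s^y$-images to serve as $Q^{(k)}_{\eta 0},Q^{(k)}_{\eta 1} \in \cal{J}$. The delicate point of the whole argument is the calibration of the tolerances $\delta_t$ so that the Markov losses telescope to precisely the meandering threshold $\a_{r+1}$ at the bottom level --- this is exactly why the hypothesis takes the seemingly unusual form $(1-\k_{r,d})(N_d/N_r)$. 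Once that calibration is in place, the adaptedness and the pairwise disjointness of the $\ell$-neighborhoods of the images both follow immediately from the tree structure of the construction.
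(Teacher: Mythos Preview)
Your proof is correct and is essentially the same argument as the paper's: the paper proves it by induction on $d$, which when unrolled is exactly your top-down recursion, and your tolerances $\delta_t = \k_{r,d-t}$ are precisely the quantities appearing in the paper's inductive hypothesis at level $d-t$. The Markov count to find enough good children and the appeal to meandering to select two with $2\ell$-separated images are identical in both presentations.
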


\begin{proof}
This is proved by induction on $d$.

\vspace{7pt}

\emph{Base clause.}\quad When $d=r+1$, our assumptions are
\[|\cal{J}| \geq (1 - \a_{r+1})L_{r+1}\]
and
\[\mcup\cal{J} \subseteq H^{\rm{mndr}}_{r+1,Q}(y).\]
The first of these implies that $\cal{J} \neq \emptyset$, and hence the second implies that also $H^{\rm{mndr}}_{r+1,Q}(y) \neq \emptyset$.  However, this is possible only if $H^{\rm{mndr}}_{r+1,Q}(y) = Q$, and hence $S^{iN_{r+1}}y \in Y^{\rm{mndr}}_{r+1,0}$ where $Q = [iN_{r+1};(i+1)N_{r+1})$.  Since $1 - \a_{r+1} \geq \a_{r+1}$, the definition of $Y^{\rm{mndr}}_{r+1,0}$ gives two (necessarily disjoint) intervals $Q_0,Q_1 \in \cal{J}$ for which $B_\ell(\s^y_{Q_0})$ and $B_\ell(\s^y_{Q_1})$ are also disjoint.

\vspace{7pt}

\emph{Recursion clause.}\quad Now suppose $d > r+1$, and that the result is already known at scales up to $d-1$.  Let $\cal{J}$ satisfy the two assumptions, and let
\[\cal{J}_R := \{K \in \cal{J}\,|\ K\subseteq R\}\]
for each $R \in \cal{D}_{d-1}\cap Q$.

Our first assumption about $\cal{J}$ gives
\[|(\cal{D}_r\cap Q)\setminus \cal{J}| = \sum_{R \in \cal{D}_{d-1}\cap Q}|(\cal{D}_r\cap R)\setminus \cal{J}_R| \leq \k_{r,d}N_d/N_r,\]
and so Markov's Inequality implies that the set
\[\cal{J}' := \big\{R \in \cal{D}_{d-1}\cap Q\,\big|\ |\cal{J}_R| \geq (1 - \k_{r,d-1})N_{d-1}/N_r\big\}\]
has cardinality at least $\a_d L_d$.

Our second assumption about $\cal{J}$ requires that $H^{\rm{mndr}}_{d,Q}(y) \neq \emptyset$, and hence instead $H^{\rm{mndr}}_{d,Q}(y) = Q$.   We may therefore apply the base-clause argument to the family $\cal{J}' \subseteq \cal{D}_{d-1}\cap Q$ to obtain a pair of intervals $Q_0,Q_1 \in \cal{J}'$ such that
\[B_\ell(\s^y_{Q_0}) \cap B_\ell(\s^y_{Q_1}) = \emptyset.\]

On the other hand, by the definition of $\cal{J}'$, for each $i\in \{1,2\}$ we have
\[|\cal{J}_{Q_i}|\geq (1 - \k_{r,d-1})N_{d-1}/N_r,\]
and also
\begin{multline*}
\mcup \cal{J}_{Q_i} = Q_i\cap \big(\mcup \cal{J}\big) \subseteq Q_i\cap \Big(\bigcap_{s=r+1}^d H^{\rm{mndr}}_{s,Q}(y)\Big)\\ \subseteq \bigcap_{s=r+1}^{d-1}(Q_i\cap H^{\rm{mndr}}_{s,Q}(y)) = \bigcap_{s=r+1}^{d-1}H^{\rm{mndr}}_{s,Q_i}(y).
\end{multline*}

We may therefore apply the inductive hypothesis to each of $\cal{J}_{Q_0}$ and $\cal{J}_{Q_1}$ to obtain Cantor families $(Q_{0\omega})_{\omega \in \{0,1\}^{d-r-1}}$ and $(Q_{1\omega})_{\omega \in \{0,1\}^{d-r-1}}$ inside them with the asserted properties.  Assembling these into a single family shows that the induction continues, and hence completes the proof.
\end{proof}


\begin{rmk}
Proposition~\ref{prop:gooddCs} is a finitary cousin of the classical result that if $C\subseteq [0,1]$ has Hausdorff dimension at most $\frac{1}{4}$, then a.e. Brownian sample path is injective on $C$: see Section 16.6 in Kahane~\cite{Kah85}, up to Theorem 6 of that section.  An unusual feature of the present setting is that our partitions of $\bbZ$ are increasingly coarse, and each cell of $\cal{D}_d$ contains roughly $d^2$ cells of $\cal{D}_{d-1}$, so the `index' of $\cal{D}_{d-1}$ in $\cal{D}_d$ tends to $\infty$ with $d$.  This is different from the classical analysis of self-crossing for Brownian motion on $[0,1]$, which is easiest using simply dyadic partitions. \fin
\end{rmk}

\subsection{Approximate covering with discrete Cantor families}

Let $(\bfY,\s)$, $\phi$ and $\ell$ be as before.

\begin{prop}\label{prop:pre-key}
For every $\beta,\eta > 0$ there are $M < \infty$, $r_0 \in \bbN$ and a family of subsets $Y^{\rm{good}}_{r,d} \subseteq Y$, $d > r \geq r_0$, satisfying $\nu(Y^{\rm{good}}_{r,d}) > 1 - \beta$ and such that the following holds. For every $r \geq r_0$ there is a $\delta > 0$ such that if $d > r$, $y \in Y^{\rm{good}}_{r,d}$ and $P \subseteq [0;N_d)$ with $|P| > (1 - \delta)N_d$ then there is a collection $\cal{G}$ of discrete Cantor families of depth $d-r$, all contained in $P$ and subordinate to $(\cal{D}_d,\cal{D}_{d-1},\ldots,\cal{D}_{r+1})$, for which the following hold:
\begin{enumerate}
\item[1)] (images are not too short) for every $(Q_\w)_\w \in \cal{G}$ and all $\w \in \{0,1\}^{d-r}$, one has
\[[\s^y_{\min Q_\w} - N_r^{1/3},\s^y_{\min Q_\w} + N_r^{1/3}] \subseteq B_\ell(\s^y_{Q_\w});\]
\item[2)] (well-separated images) for every $(Q_\w)_\w \in \cal{G}$, the images $B_\ell(\s^y_{Q_\w})$ for $\w \in \{0,1\}^{d-r}$ are pairwise disjoint (beware that this is not asserting any disjointness among images from distinct members of $\cal{G}$);
\item[3)] (cocycle-range is mostly covered) one has
\[\cal{L}^1\Big( B_\ell(\s^y_{[0;N_d)}) \Big\backslash\bigcup_{(Q_\w)_\w \in \cal{G}}\bigcup_{\w \in \{0,1\}^{d-r}}B_\ell(\s^y_{Q_\w})\Big) \leq \eta \calL^1\big(B_\ell(\s^y_{[0;N_d)})\big);\]
\item[4)] (cocycle-range is covered fairly efficiently) one has
\[\sum_{(Q_\w)_\w \in \cal{G}}\sum_{\w \in \{0,1\}^{d-r}}\calL^1(B_\ell(\s^y_{Q_\w})) \leq M\cal{L}^1\big(B_\ell(\s^y_{[0;N_d)})\big).\]
\end{enumerate}
\end{prop}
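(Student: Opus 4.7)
I plan to build $\cal G$ by iterating Proposition~\ref{prop:gooddCs}, each application carving out a new discrete Cantor family whose cells have images lying mostly inside the still-uncovered portion of $B_\ell(\s^y_{[0;N_d)})$. The setup parameters $M$ and $r_0$ come from Corollary~\ref{cor:cal-T-big}, applied with an auxiliary $\eps_0>0$ chosen sufficiently small in terms of $\beta$ and $\eta$: this furnishes the good set
\[Y^{\rm{good}}_{r,d} := \Big\{y : \Big|H^{\rm{spread}}_{r,[0;N_d)}(y)\cap H^{\rm{smooth}}_{r,[0;N_d),M,\eps_0}(y)\cap \bigcap_{s=r+1}^d H^{\rm{mndr}}_{s,[0;N_d)}(y)\Big| \ge (1-\eps_0)N_d\Big\}\]
with $\nu(Y^{\rm{good}}_{r,d})>1-\beta$. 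I would then enlarge $r_0$, if necessary, to ensure that $1-\k_{r,d}$ is comfortably smaller than the fraction of ``useful'' cells produced in each step of the iteration below (which works out to roughly $\eta/2$), and choose $\delta$ small enough that when $|P|>(1-\delta)N_d$ the set $P$ still contains a $(1-2\eps_0)$-dense portion of the above intersection.

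The iteration maintains an uncovered set $U_k \subseteq B_\ell(\s^y_{[0;N_d)})$, starting with $U_0 := B_\ell(\s^y_{[0;N_d)})$, and a running list of $\cal{D}_r$-cells already used. At step $k$, let $\cal{J}_k$ consist of those cells $Q \in \cal{D}_r\cap[0;N_d)$ that (i) lie inside $P$ and inside the meandering/spread/smoothness intersection defining $Y^{\rm{good}}_{r,d}$, (ii) have not yet been used, and (iii) satisfy $\calL^1(B_\ell(\s^y_Q)\cap U_k) \ge (\eta/2)\calL^1(B_\ell(\s^y_Q))$. So long as $\calL^1(U_k)\ge\eta L$, where $L:=\calL^1(B_\ell(\s^y_{[0;N_d)}))$, I would use the smoothness hypothesis $\phi\star\g^y_{[0;N_d)}\sim_{M,\eps_0}\rm{U}_{B_\ell(\s^y_{[0;N_d)})}$ together with a Markov-type averaging argument applied to the covering-count $\tilde g^y(u):=\#\{Q\in\cal{D}_r\cap[0;N_d):u\in B_\ell(\s^y_Q)\}$ to verify $|\cal{J}_k|\ge(1-\k_{r,d})N_d/N_r$. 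Proposition~\ref{prop:gooddCs} then yields a Cantor family $\mathfrak{K}_{k+1}\subseteq\cal{J}_k$; I add it to $\cal{G}$, update $U_{k+1}:=U_k\setminus\bigcup_\w B_\ell(\s^y_{Q_\w})$, and mark its cells as used. The loop terminates once $\calL^1(U_k)<\eta L$.

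Verifying the four properties is then routine: (1) follows from each $Q_\w$ lying in $H^{\rm{spread}}_{r,[0;N_d)}(y)$; (2) is built into Proposition~\ref{prop:gooddCs}; (3) is precisely the termination condition. The point of requirement (iii) in the iteration is property (4): each chosen cell $Q_\w \in \mathfrak{K}_{k+1}$ contributes marginal coverage $\calL^1(B_\ell(\s^y_{Q_\w})\cap U_k)\ge (\eta/2)\calL^1(B_\ell(\s^y_{Q_\w}))$, and these marginal contributions are mutually disjoint across iterations by the update rule for $U_k$, so
\[\sum_{(Q_\w)_\w\in\cal{G}}\sum_\w \calL^1(B_\ell(\s^y_{Q_\w})) \le \frac{2}{\eta}\bigl(L-\calL^1(U_{\rm{final}})\bigr)\le \frac{2}{\eta}L,\]
giving (4) with final constant $M':=2/\eta$ (replacing $M$ by the larger of the two if necessary). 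The main obstacle is the density check in the iteration: maintaining $|\cal{J}_k|\ge(1-\k_{r,d})N_d/N_r$ throughout, which is where the strength of the smoothness conclusion from the Enhanced Invariance Principle is genuinely needed --- without the two-sided approximate absolute continuity $\phi\star\g^y_{[0;N_d)}\sim_{M,\eps_0}\rm{U}_{B_\ell(\s^y_{[0;N_d)})}$, restricting to cells whose images land in the shrinking uncovered region could deplete $\cal{J}_k$ below the Proposition~\ref{prop:gooddCs} threshold long before the target coverage is achieved.
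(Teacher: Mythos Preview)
Your overall architecture---a greedy iteration of Proposition~\ref{prop:gooddCs}, with the termination condition yielding (3) and the `new coverage' bookkeeping yielding (4)---is exactly the paper's approach. Two places need tightening before it goes through.

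First, your $Y^{\rm{good}}_{r,d}$ as written carries only \emph{cell-scale} smoothness, via $H^{\rm{smooth}}_{r,[0;N_d),M,\eps_0}$. The condition you later invoke, $\phi\star\g^y_{[0;N_d)}\sim_{M,\eps_0}\rm{U}_{B_\ell(\s^y_{[0;N_d)})}$, is the \emph{top-scale} smoothness $y\in Y^{\rm{smooth}}_{[0;N_d),M,\eps_0}$. That is a separate event and does not follow from the cell-scale condition; you must intersect it into the good set explicitly. The paper does exactly this, taking $Y^{\rm{good}}_{r,d}=Y^{\rm{smooth}}_{[0;N_d),M_1,\eta/2}\cap Y^{\rm{good},1}_{r,d}$ with its own constant $M_1$.

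Second, the density check cannot be run directly on the covering-count $\tilde g^y$: nothing in the hypotheses gives pointwise control over $\tilde g^y$. The workable route (and the paper's) passes through occupation measure. Top-scale smoothness converts $\calL^1(U_k)\ge\eta L$ into $(\phi\star\g^y_{[0;N_d)})(U_k)\gtrsim\eta/M_1$; since $\phi\star\g^y_{[0;N_d)}$ is the average of the $\phi\star\g^y_Q$ over $Q\in\cal D_r\cap[0;N_d)$, Markov's inequality gives a fraction $\gtrsim\eta/M_1$ of cells $Q\in\cal J$ with $(\phi\star\g^y_Q)(U_k)\gtrsim\eta/M_1$; finally, cell-scale smoothness converts this back to $\calL^1(B_\ell(\s^y_Q)\cap U_k)/\calL^1(B_\ell(\s^y_Q))\gtrsim\eta/(M_1M_2)$. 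So the achievable threshold in your condition (iii), and hence the fraction of useful cells, is of order $\eta/(M_1M_2)$ rather than $\eta/2$, and the efficiency constant in (4) becomes $\sim M_1M_2/\eta$ rather than $2/\eta$. (Incidentally, marking cells ``used'' is harmless but redundant: once $B_\ell(\s^y_Q)$ is removed from $U_k$, condition (iii) fails for that $Q$ automatically in all later rounds.)
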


It seems worth emphasizing here that the final choice of $\delta$ must be allowed to depend on $r \geq r_0$.

\begin{proof}
\quad\emph{Step 1: Choice of parameters and sets.}\quad We start with the selection of $M$ and $r_0$, along with some auxiliary parameters that will be used during the proof.  We will then construct the sets $Y^{\rm{good}}_{r,d}$.

The parameters are given by the following choices (which will be motivated during the course of the proof):
\begin{itemize}
\item[C1)] Let $\eps_1 := \eta/2$, and choose $r_{0,1} \in \bbN$ and $M_1 < \infty$ according to the convergence~(\ref{eq:smooth-whp}) so that $\nu(Y^{\rm{smooth}}_{[0;N_d),M_1,\eps_1}) > 1 - \beta/2$ for all $d > r_{0,1}$.
\item[C2)] Let $\delta_1 := \eta/4M_1$.
\item[C3)] Choose $r_{0,2}$ so large that
\[\k_{r_{0,2},\infty} > \max\Big\{1 - \frac{\eta(1 - \delta_1)}{8M_1},\delta_1\Big\}.\]
This implies that also
\[1 - \k_{r,d} < \frac{\eta(1 - \delta_1)}{8M_1} \quad \hbox{and} \quad \delta_1 < \k_{r,d} \quad \hbox{whenever}\ d >r \geq r_{0,2}.\]
\item[C4)] Let $\eps_2 := \eta/16M_1$, and let $r_{0,3} \in \bbN$ and $M_2 < \infty$ be given by Corollary~\ref{cor:cal-T-big} so that for every $d >r \geq r_0$ the set
\begin{multline*}
Y^{\rm{good},1}_{r,d}\\
:= \Big\{y\,\Big|\ \Big|H^{\rm{spread}}_{r,[0;N_d)}(y)\cap H^{\rm{smooth}}_{r,[0;N_d),M_2,\eps_2}(y)\cap \bigcap_{s=r+1}^d H^{\rm{mndr}}_{s,[0;N_d)}(y)\Big| \geq (1 - \delta_1/2)N_d\Big\}
\end{multline*}
has
\[\nu(Y^{\rm{good},1}_{r,d}) > 1 - \beta/2.\]
\item[C5)] Finally, let $M := 16M_1M_2/\eta$ and $r_0 := \max\{r_{0,1},r_{0,2},r_{0,3}\}$.
\end{itemize}

Now, for each $d > r$, let
\[Y^{\rm{good}}_{r,d} := Y^{\rm{smooth}}_{[0;N_d),M_1,\eps_1}\cap Y^{\rm{good},1}_{r,d},\]
where the second of these right-hand sets was introduced in choice (C4) above.  Choices (C1) and (C4) together imply that
\[\nu(Y^{\rm{good}}_{r,d}) > 1 - \beta.\]

It remains to prove that these choices give the desired consequences.

\vspace{7pt}

\emph{Step 2: Finding a large family of good intervals.}\quad Now fix $d > r \geq r_0$, and let
\[\delta := \frac{\delta_1}{2 N_r}.\]
We will complete the proof with this choice of $\delta$.  Thus, suppose that $y \in Y^{\rm{good}}_{r,d}$ and that $P \subseteq [0;N_d)$ with $|P| > (1 - \delta)N_d$.

Consider the family of intervals
\[\cal{J}_0 := \{Q \in \cal{D}_r\,|\ Q\subseteq P\}.\]
For this family, one has
\begin{multline*}
|(\cal{D}_r\cap [0;N_d))\setminus \cal{J}_0| \leq \sum_{Q \in (\cal{D}_r\cap [0;N_d))\setminus \cal{J}_0}|Q\setminus P|\\ \leq \sum_{Q \in \cal{D}_r\cap [0;N_d)}|Q\setminus P| = |[0;N_d)\setminus P| < \delta N_d,
\end{multline*}
and hence
\[|\cal{J}_0| > (1 - \delta N_r)N_d/N_r = (1 - \delta_1/2)N_d/N_r.\]

Combining this bound with the fact that $y \in Y^{\rm{good}}_{r,d} \subseteq Y^{\rm{good},1}_{r,d}$, it follows that the family
\[\cal{J} := \Big\{Q \in \cal{D}_r\,\Big|\ Q \subseteq P\cap H^{\rm{spread}}_{r,[0;N_d)}(y)\cap H^{\rm{smooth}}_{r,[0;N_d),M_2,\eps_2}(y)\cap \bigcap_{s=r+1}^d H^{\rm{mndr}}_{s,[0;N_d)}(y)\Big\}\]
has
\[|\cal{J}| \geq (1 - \delta_1)N_d/N_r,\]
and this is greater than $(1 - \k_{r,d})N_d/N_r$ by choice (C3).

\vspace{7pt}

\emph{Step 3: Constructing the Cantor families}\quad Having introduced $\cal{J}$, Proposition~\ref{prop:gooddCs} gives the ability to find a discrete Cantor family inside $\cal{J}$ on which the cocycle-trajectories $\s^y$ is approximately injective.  It only remains to show how a careful repeated appeal to Proposition~\ref{prop:gooddCs} can produce a whole collection of discrete Cantor families with the desired properties.  This is achieved by the following recursion.

\vspace{7pt}

\quad\emph{Base step.}\quad First, with $\cal{J}$ as above, and in view of choice (C3), Proposition~\ref{prop:gooddCs} gives a discrete Cantor family $\mathfrak{Q}_1 := (Q_\w)_{\w \in \{0,1\}^{d-r}}$ contained in $\cal{J}$ such that
\begin{itemize}
\item[i)] the intervals $B_\ell(\s^y_{Q_\w})$ for $\w \in \{0,1\}^{d-r}$ are pairwise disjoint.
\end{itemize}
Moreover, since $\bigcup\cal{J}\subseteq H^{\rm{spread}}_{r,[0;N_d)}(y)$, we also have
\begin{itemize}
\item[ii)] $[\s^y_{\min Q_\w} - N_r^{1/3},\s^y_{\min Q_\w} + N_r^{1/3}] \subseteq B_\ell(\s^y_{Q_\w})$ for every $\w$.
\end{itemize}

\vspace{7pt}

\quad\emph{Recursion step.}\quad Now suppose that discrete Cantor families $\mathfrak{Q}_1$, \ldots, $\mathfrak{Q}_m$ in $\cal{J}$ have already been constructed for some $m \geq 1$.  If
\begin{eqnarray}\label{eq:domsbigyet?}
\cal{L}^1\Big(B_\ell(\s^y_{[0;N_d)}) \Big\backslash \bigcup_{s\leq m}B_\ell(\s^y_{\dom(\mathfrak{Q}_s)})\Big) \leq \eta\cal{L}^1\big(B_\ell(\s^y_{[0;N_d)})\big),
\end{eqnarray}
then stop the recursion, and let $\cal{G} := \{\mathfrak{Q}_1,\ldots,\mathfrak{Q}_m\}$.  In that case the construction is finished.  Otherwise, suppose the opposite inequality of~(\ref{eq:domsbigyet?}).  Since $y \in Y^{\rm{good}}_{r,d} \subseteq Y^{\rm{smooth}}_{[0;N_d),M_1,\eps_1}$, the opposite of~(\ref{eq:domsbigyet?}) implies that
\begin{eqnarray}\label{eq:meas-comp-big}
(\phi \star \g^y_{[0;N_d)})\Big(B_\ell(\s^y_{[0;N_d)}) \Big\backslash \bigcup_{s\leq m}B_\ell(\s^y_{\dom(\mathfrak{Q}_s)})\Big) > \frac{\eta - \eps_1}{M_1} \geq \frac{\eta}{2M_1}.
\end{eqnarray}

Since
\[\phi \star \g^y_{[0;N_d)} = \frac{N_r}{N_d}\sum_{Q \in \cal{D}_r\cap [0;N_d)} \phi \star \g^y_Q\\ \ll_{1,\delta_1} \frac{1}{|\cal{J}|}\sum_{Q \in \cal{J}} \phi \star \g^y_Q,\]
inequality~(\ref{eq:meas-comp-big}) gives that
\[\frac{1}{|\cal{J}|}\sum_{Q \in \cal{J}}( \phi \star \g^y_Q)\Big( B_\ell(\s^y_{[0;N_d)}) \Big\backslash \bigcup_{s\leq m}B_\ell(\s^y_{\dom(\mathfrak{Q}_s)})\Big) > \frac{\eta}{2M_1} - \delta_1 > \frac{\eta}{4M_1},\]
by the choice of $\delta_1$ in (C2).  Therefore, letting
\[\cal{J}' := \Big\{Q \in \cal{J}\,\Big|\ (\phi \star \g^y_Q) \Big( B_\ell(\s^y_{[0;N_d)}) \Big\backslash\bigcup_{s\leq m}B_\ell(\s^y_{\dom(\mathfrak{Q}_s)})\Big) > \frac{\eta}{8M_1}\Big\},\]
another appeal to Markov's Inequality implies that
\[|\cal{J}'| \geq \frac{\eta}{8M_1}|\cal{J}| \geq \frac{\eta(1 - \delta_1)}{8M_1}N_d/N_r,\]
and this is greater than $(1-\k_{r,d})N_d/N_r$ by our choice in (C3), because $r \geq r_{0,2}$. We may therefore apply Proposition~\ref{prop:gooddCs} again to obtain a discrete Cantor family $\mathfrak{Q}_{m+1} := (Q'_\w)_{\w \in \{0,1\}^{d-r}}$ in $\J'$ satisfying the same properties (i) and (ii) as in the base step.

Observe that
\[(\phi \star \g^y_{Q'_\w})(B_\ell(\s^y_{Q'_\w})) = 1 \quad \forall \w \in \{0,1\}^{d-r}.\]
Combined with the defining property of $\J'$, this gives
\[(\phi \star \g^y_{Q'_\w})\Big(B_\ell(\s^y_{Q'_\w})\Big\backslash \bigcup_{s \leq m}B_\ell(\s^y_{\dom(\mathfrak{Q}_s)})\Big) > \frac{\eta}{8M_1}.\]
Since
\[\mcup\J' \subseteq \mcup \J \subseteq H^{\rm{smooth}}_{r,[0;N_d),M_2,\eps_2}(y),\]
this now implies that
\begin{multline*}
\rm{U}_{B_\ell(\s^y_{Q'_\w})}\Big(B_\ell(\s^y_{Q'_\w}) \Big\backslash \bigcup_{s \leq m}B_\ell(\s^y_{\dom(\mathfrak{Q}_s)})\Big) > \frac{\frac{\eta}{8M_1} - \eps_2}{M_2} > \frac{\eta}{16M_1M_2} = \frac{1}{M}.\\
\Longrightarrow \quad \calL^1\Big(B_\ell(\s^y_{Q'_\w}) \Big\backslash \bigcup_{s \leq m}B_\ell(\s^y_{\dom(\mathfrak{Q}_s)})\Big) \geq \frac{1}{M}\calL^1(B_\ell(\s^y_{Q'_\w})).
\end{multline*}

Therefore
\begin{eqnarray}\label{eq:growth-in-meas}
&&\calL^1\Big(\bigcup_{s \leq m+1}B_\ell(\s^y_{\dom(\mathfrak{Q}_s)})\Big\backslash \bigcup_{s \leq m}B_\ell(\s^y_{\dom(\mathfrak{Q}_s)})\Big) \nonumber\\
&&= \sum_{\w \in \{0,1\}^{d-r}}\calL^1\Big(B_\ell(\s^y_{Q'_\w}) \Big\backslash \bigcup_{s \leq m}B_\ell(\s^y_{\dom(\mathfrak{Q}_s)})\Big)\nonumber\\
&&\geq \frac{1}{M}\sum_{\w \in \{0,1\}^{d-r}}\calL^1(B_\ell(\s^y_{Q'_\w})) 
= \frac{1}{M}\calL^1(B_\ell(\s^y_{\dom(\mathfrak{Q}_{m+1})})).
\end{eqnarray}

Now continue the recursion.

\vspace{7pt}

\emph{Step 4: Completion of the proof.}\quad Since all quantities are finite, the algorithm described above must end after finitely many steps in some choice of $\cal{G} = \{\mathfrak{Q}_1,\ldots,\mathfrak{Q}_m\}$.  When it does so:
\begin{itemize}
\item conclusion (1) follows because all the chosen intervals were contained in $H^{\rm{spread}}_{r,[0;N_d)}(y)$;
\item conclusion (2) follows from the use of Proposition~\ref{prop:gooddCs} at each step of the construction;
\item conclusion (3) holds because it was the condition for the algorithm to terminate;
\item conclusion (4) holds by an iterated appeal to inequality~(\ref{eq:growth-in-meas}):
\begin{eqnarray*}
&&\sum_{s=1}^m\calL^1(B_\ell(\s^y_{\dom(\mathfrak{Q}_s)}))\\
&&\leq M\sum_{s=1}^m\calL^1\Big(B_\ell(\s^y_{\dom(\mathfrak{Q}_s)})\Big\backslash \bigcup_{s' =1}^{s-1}B_\ell(\s^y_{\dom(\mathfrak{Q}_{s'})})\Big)\\
&&= M\calL^1\Big(\bigcup_{s \leq m}B_\ell(\s^y_{\dom(\mathfrak{Q}_s)})\Big) \leq M\calL^1\big(B_\ell(\s^y_{[0;N_d)})\big).
\end{eqnarray*}
\end{itemize}
$\phantom{i}$\end{proof}



\section{The lower bound, completion of the proof}\label{sec:lower-bd}

This section proves the lower bound in Theorem~\ref{thm:rate}, and hence completes the proof of that theorem.  It rests on a result showing that if $y$ and $y'$ satisfy some constraints in terms of $\s^y_{[0;N)}$ and $\s^{y'}_{[0;N)}$, and if $(y,x) \approx (y',x')$ according to $d^{\bfY \ltimes_\s \bfX}_{[0;N)}$, then $x$ and $x'$ must themselves be `similar', in a sense involving discrete Cantor sets.

This notion of similarity is quite cumbersome.  I suspect this is a defect of the proof method, in that a much tighter (and more easily-stated) notion of similarity for sceneries also holds in our setting, as discussed in Subsection~\ref{subs:approx-recov}, but I have not been able to prove this.

It should be stressed that the structural relation between sceneries that we deduce is already implicitly at the heart of Kalikow's argument in~\cite{Kal82}, as well as its various sequels~\cite{Rud88,denHSte97}.  However, our formulation is superficially quite different from Kalikow's.  He introduces certain hierarchically-defined events in the space of walk-scenery pairs, involving longer and longer time-scales, and then uses a recursion to show that the probabilities of all these events remain close to $1$.  We re-interpret each of these as an event involving a `tree-like' embedding of a discrete Cantor set into the domain of the scenery: this use of discrete Cantor sets converts Kalikow's hierarchy of properties into a single geometric structure.  In addition to making the relevant properties easier to visualize, this affords new ways of using Kalikow's estimates: we will ultimately need to work with whole families of these discrete Cantor sets, whereas Kalikow needs to apply his hierarchical probability-estimate only once.

In the following, $\bfY$, $\bfX$, $\s$, $\phi$ and $\ell$ will continue to be as in the previous section.  We also continue to assume that $\rm{diam}(X,d^X) \leq 1$.

\subsection{Reduction to estimates for single bi-neighbourhoods}

The key to the lower bound in Theorem~\ref{thm:rate} will be that, once $d$ is large enough and for a suitable choice of the subset $U \subseteq Y\times X$ appearing in Definition~\ref{dfn:big-bicov}, all bi-neighbourhoods in $(U,d^{\bfY \ltimes_\s \bfX}_{[-N_d,0)},d^{\bfY\ltimes_\s \bfX}_{[0;N_d)},\nu\otimes \mu)$ are `small' in a certain sense. This will then force one to use many of them to cover a positive proportion of $U$.  This `smallness' of the bi-neighbourhoods is in terms of the metric on the sceneries alone.  Let $\pi_X:Y\times X \to X$ denote the coordinate projection, and let $\pi_X^\ast d^X$ denote the pullback of $d^X$ to a pseudometric on $Y\times X$, and similarly for other metrics on $X$.

The following piece of notation will now be quite useful: given $(B,B') \in C_0(0,1]\times C_0(0,1]$, let
\[\rm{aspect}(B,B') := \min\Big\{\frac{\calL^1(B_{[0,1]}\cap B'_{[0,1]})}{\calL^1(B_{[0,1]})},\frac{\calL^1(B_{[0,1]}\cap B'_{[0,1]})}{\calL^1(B'_{[0,1]})}\Big\},\]
interpreting this as $0$ if either $B$ or $B'$ is constant.  For $(B,B') \sim \sfW_{[0,1]}^{\otimes 2}$, the random variable $\rm{aspect}(B,B')$ is a.s. positive with a continuous distribution on $(0,1]$.

\begin{prop}[Bounding the covering number of a bi-neighbourhood]\label{prop:cov-of-bi-neigh}
Let $\bfX$ and $\bfY$ be as before. For any $\g,\psi,\eps,\beta,\delta' > 0$ there exist $\eta,\delta > 0$ and a sequence of subsets $Y_d \subseteq Y$, $d \geq 1$, such that $\nu(Y_d) > 1 - \beta$ for all sufficiently large $d$, and such that the following holds.  If
\begin{quote}
$B,B' \in C_0(0,1]$ with $\rm{aspect}(B,B') > \g$ and $\calL^1(B_{[0,1]}\cap B'_{[0,1]}) \geq \psi/2$,
\end{quote}
and if we set $I := \sqrt{N_d}B_{[0,1]}$, $J := \sqrt{N_d}B'_{[0,1]}$, and
\[U := \big\{y\in Y_d\,\big|\ \|\traj_{-N_d}(\s^y) - B\|_\infty < \eta\ \hbox{and}\ \|\traj_{N_d}(\s^y) - B'\|_\infty < \eta\big\} \times X,\] 
then
\[\cov\Big(\Big(U\cap B^{d^{\bfY \ltimes_\s \bfX}_{[0;N_d)}}_{\delta N_d}\big(U\cap B^{d^{\bfY \ltimes_\s \bfX}_{[-N_d;0)}}_{\delta N_d}(y,x)\big),\pi_X^\ast d^{\bfX,\infty}_{I\cap J}\Big),\delta'\Big) \leq \exp(\eps \calL^1(I\cap J))\]
for any $(y,x) \in U$.
\end{prop}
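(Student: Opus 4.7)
Proof plan for Proposition~\ref{prop:cov-of-bi-neigh}:

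The overall strategy is to show that membership of $(y',x')$ in the bi-neighbourhood of $(y,x)$ inside $U$ forces $T^t x$ and $T^t x'$ to remain close, in the averaged metric $d^\bfX_{I\cap J}$, for a set of $t$ of nearly full $\calL^1$-measure in $I\cap J$. Once that is established, Lemma~\ref{lem:d1-cov-and-dinf-cov} promotes this to a covering bound in the Bowen--Dinaburg pseudometric $d^{\bfX,\infty}_{I\cap J}$ at scale $\delta'$. The good set $Y_d$ will be the intersection of the set of $y$ for which Proposition~\ref{prop:pre-key} supplies covering collections $\cal{G}^-$ and $\cal{G}^+$ of depth-$(d-r)$ discrete Cantor families on $[-N_d;0)$ and $[0;N_d)$, respectively, together with the $Y^{\rm{spread}}_{[\pm N_d;0)}$ and $Y^{\rm{smooth}}_{[\pm N_d;0),M,\eps_2}$ from Subsection~\ref{subs:disc-filt}; the Enhanced Invariance Principle ensures $\nu(Y_d)>1-\beta$ once $d$ is large enough, and the assumption $\rm{aspect}(B,B')>\g$ together with $\calL^1(I\cap J)\geq \psi\sqrt{N_d}/2$ keeps the overlap region a definite fraction of both $\calL^1(I)$ and $\calL^1(J)$.

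The bi-neighbourhood condition unpacks into the existence of an intermediate $(y'',x'')\in U$ with $d^{\bfY\ltimes_\s\bfX}_{[-N_d;0)}((y,x),(y'',x''))<\delta N_d$ and $d^{\bfY\ltimes_\s\bfX}_{[0;N_d)}((y'',x''),(y',x'))<\delta N_d$. The base-component inequalities $d^\bfY_{[-N_d;0)}(y,y'')<\delta N_d$ and $d^\bfY_{[0;N_d)}(y'',y')<\delta N_d$ imply, via Markov and Corollary~\ref{cor:P-controls-sigma}, that for all but a $\delta_1$-fraction of $n\in[-N_d;0)$ the quantities $|\s^y_n-\s^{y''}_n|$ and $d^X(T^{\s^y_n}x,T^{\s^{y''}_n}x'')$ are both small (and similarly for $n\in[0;N_d)$ comparing $y''$ with $y'$, and $x''$ with $x'$). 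Choosing $\delta$ sufficiently small in terms of $\eta$, $\g$, $\psi$, and the modulus of continuity of $T$ guarantees that after absorbing a small-measure bad set we have pointwise $d^X(T^{\s^y_n}x,T^{\s^{y''}_n}x'')<\delta'/4$ on a $(1-\delta_1)$-fraction of past times, and the analogous statement for $(y'',y')$ on the future.

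The Cantor families then convert these time-indexed inequalities into a bound in the \emph{spatial} variable $t$. Apply Proposition~\ref{prop:pre-key} (with its parameter $\delta$) to the past data $y$ on $[-N_d;0)$: conclusions (1)--(4) provide a collection $\cal{G}^-$ of Cantor families whose members $Q_\w$ satisfy $B_\ell(\s^y_{Q_\w})$ covers all but an $\eta$-fraction of $B_\ell(\s^y_{[-N_d;0)})$ with bounded overlap multiplicity $M$, and each interval $Q_\w$ is so long that the image $B_\ell(\s^y_{Q_\w})$ contains a neighbourhood of scale $N_r^{1/3}$ around $\s^y_{\min Q_\w}$. Do the analogous thing for $y'$ on $[0;N_d)$, producing $\cal{G}^+$. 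For any $t$ in $I\cap J$ outside an $\eta'$-fraction, we can choose $Q_\w\in\cal{G}^-$ and $Q'_{\w'}\in\cal{G}^+$ with $t\in B_\ell(\s^y_{Q_\w})\cap B_\ell(\s^{y'}_{Q'_{\w'}})$; by the spread and smoothness properties, there are times $n\in Q_\w$ and $m\in Q'_{\w'}$ (lying in the respective good sub-fractions above) with $\s^y_n$ and $\s^{y'}_m$ within a constant of $t$. The triangle chain
\[
T^t x\;\approx\;T^{\s^y_n}x\;\approx\;T^{\s^{y''}_n}x''\;\approx\;T^t x''\;\approx\;T^{\s^{y''}_m}x''\;\approx\;T^{\s^{y'}_m}x'\;\approx\;T^t x'
\]
then gives $d^X(T^tx,T^tx')<\delta'/2$ for all such $t$. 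Integrating over $t\in I\cap J$ yields $d^\bfX_{I\cap J}(x,x')\leq \delta_2\calL^1(I\cap J)$ for a $\delta_2$ that can be made arbitrarily small by choosing $\delta,\eta$ small.

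Finally, the desired covering number estimate follows from Lemma~\ref{lem:d1-cov-and-dinf-cov} applied to $(X,d^X,\mu,T)$ with radius $\delta'$: for every $\eps>0$ there is a $\delta_2>0$ such that any $d^\bfX_{I\cap J}$-ball of radius $\delta_2\calL^1(I\cap J)$ admits a $d^{\bfX,\infty}_{I\cap J}$-cover of cardinality at most $\exp(\eps\calL^1(I\cap J))$ (this uses ergodicity of $\bfX$ and the joint continuity of the $\bbR$-action). Pulling back along $\pi_X$ gives the required bound on the bi-neighbourhood. The main obstacle in this outline is the last step of the chain argument: one must ensure that the ``bad'' $t\in I\cap J$ coming from each of the inclusions $B_\ell(\s^y_{Q_\w})$, $B_\ell(\s^{y'}_{Q'_{\w'}})$, the $\delta_1$-fraction of misaligned times in each of $[-N_d;0)$ and $[0;N_d)$, and the Cantor-family overlap multiplicity controlled by $M$, combine to an arbitrarily small proportion of $\calL^1(I\cap J)$; this is what forces the parameter $\eta$ in Proposition~\ref{prop:pre-key} to be chosen considerably smaller than $\g,\psi$ and drives all subsequent choices of $\delta$.
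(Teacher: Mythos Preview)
Your argument has a genuine gap at the third and fourth links of the triangle chain. You claim that ``for all but a $\delta_1$-fraction of $n\in[-N_d;0)$ the quantities $|\s^y_n-\s^{y''}_n|$ \ldots\ are both small''. This is false, and is precisely the obstruction discussed around Example~\ref{ex:hard-to-recover}. Markov's inequality applied to $d^\bfY_{[-N_d;0)}(y,y'')<\delta N_d$ gives only that $d^Y(S^ny,S^ny'')$ is small for most $n$; Corollary~\ref{cor:P-controls-sigma} then controls the \emph{increments} $(\s^y_n-\s^y_{n_0})-(\s^{y''}_n-\s^{y''}_{n_0})$ over any sub-interval on which this closeness holds uniformly, but says nothing about the offset $\s^{y''}_{n_0}-\s^y_{n_0}$ itself. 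The only global control on that offset comes from membership of $y,y''$ in $U$, namely $\|\traj_{-N_d}(\s^y)-\traj_{-N_d}(\s^{y''})\|_\infty<2\eta$, which yields merely $|\s^y_n-\s^{y''}_n|<2\eta\sqrt{N_d}$. This grows with $d$, so the step $T^{\s^{y''}_n}x''\approx T^tx''$ cannot be justified by continuity of the flow at bounded scale, and the bi-neighbourhood is \emph{not} contained in a single $\pi_X^\ast d^\bfX_{I\cap J}$-ball around $x$.

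The paper does not try to eliminate these offsets; it records them. Proposition~\ref{prop:key} attaches to each close pair a tuple $\F$ of discrete Cantor \emph{matchings}, whose displacement data $u_\w=\s^{y''}_{\min Q_\w}-\s^y_{\min Q_\w}$ may indeed be of order $\eta\sqrt{N_d}$ (property (P1)), and whose content is that $d^X(T^zx,T^{z+u_\w}x'')$ is small on each $K_\w$ (property (P3)). Lemma~\ref{lem:close-match-close-scenery} then shows that two points whose matching tuples are $d_{\DCM,m}$-close \emph{and} whose intermediates are $d^{\bfX,\infty}_K$-close must have $d^\bfX_K$-close sceneries. The covering bound therefore factors as (number of $\delta'''$-cells in matching-tuple space) $\times$ (the Lemma~\ref{lem:d1-cov-and-dinf-cov} factor); the first is bounded by Lemma~\ref{lem:no-of-matchings}, Lemma~\ref{lem:pre-ofpropkey} packages one pass through a $\delta N_d$-neighbourhood, and Proposition~\ref{prop:cov-of-bi-neigh} chains the past and future passes. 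Your outline uses the Cantor families only for their covering property and has, in effect, set the first factor equal to $1$; repairing it requires exactly this matching-tuple bookkeeping.
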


Heuristically, this proposition asserts that, provided one looks only within the set $U$ defined by suitable approximate trajectories, both $d^{\bfY\ltimes_\s \bfX}_{[-N_d;0)}$ and $d^{\bfY \ltimes_\s \bfX}_{[0;N_d)}$ remember something about the scenery $x$ according to the metric $d^{\bfX,\infty}_{I \cap J}$.  On the other hand, we will be able to arrange that $U$ has measure bounded below by some $\k > 0$, and so it will follow that one needs roughly $\exp(\rmh(\bfX)|I\cap J \cap \bbZ|)$ of these bi-neighbourhoods to cover a sizable portion of $U$.

Proposition~\ref{prop:cov-of-bi-neigh} could be formulated using the Hamming-like metric $d^\bfX_{I\cap J}$ in place of the Bowen-Dinaburg metric $d^{\bfX,\infty}_{I\cap J}$, but the latter choice turns out to give a slightly shorter proof.

Most of this section will be given to the proof of Proposition~\ref{prop:cov-of-bi-neigh}.  However, let us first show how it implies Theorem~\ref{thm:rate}.  This will also use the following auxiliary lemma.

\begin{lem}\label{lem:transfer-psi}
For every $\a \in (1,\infty)$ and $\eps > 0$ there exists $\g > 0$ for which the following holds.  For any $\eta > 0$ there is a $\k > 0$ such that, for all sufficiently large $N \in \bbN$,
\begin{eqnarray*}
&&\forall \nu' \in \Pr Y\ \hbox{such that}\ \|\d\nu'/\d\nu\|_\infty \leq \a,\ \exists B,B' \in C_0(0,1]\ \hbox{such that}\\
&& \quad \quad \cal{L}^1(B_{[0,1]}\cap B_{[0,1]}') > \psi_{\rm{BM}}(\a) - \eps,\\
&& \quad \quad \rm{aspect}(B,B') > \g,\\
&& \hbox{and}\\
&& \quad \quad \nu'\big\{y\,\big|\ \|\traj_{-N}(\s^y) - B\|_\infty < \eta\ \hbox{and}\ \|\traj_N(\s^y) - B'\|_\infty < \eta\big\} \geq \k.
\end{eqnarray*}
\end{lem}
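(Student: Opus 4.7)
The plan is to (a) select a ``good'' open set $V \subseteq C_0(0,1]^2$ in the Brownian path space whose $\sfW^{\otimes 2}_{[0,1]}$-mass exceeds $1 - 1/\a$ by a definite slack, (b) cover a compact piece of $V$ by finitely many $\eta$-balls, (c) transfer this cover back to finite $N$ via a joint invariance principle, and (d) use the density bound on $\nu'$ to guarantee a uniform lower bound on the $\nu'$-mass of some ball by pigeonhole.

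First I would fix $\g$. By Definition~\ref{dfn:psi} and the fact that the random variable $(B,B') \mapsto \cal{L}^1(B_{[0,1]} \cap B'_{[0,1]})$ has an atomless continuous distribution under $\sfW^{\otimes 2}_{[0,1]}$, I may choose a slack $\zeta = \zeta(\a,\eps) > 0$ with
\[\sfW^{\otimes 2}_{[0,1]}\{\cal{L}^1(B_{[0,1]} \cap B'_{[0,1]}) > \psi_{\rm{BM}}(\a) - \eps\} > 1 - 1/\a + 2\zeta.\]
Since $\rm{aspect}(B,B') > 0$ $\sfW^{\otimes 2}_{[0,1]}$-a.s., I then pick $\g = \g(\a,\eps) > 0$ small enough that the open set
\[V := \{\cal{L}^1(B_{[0,1]} \cap B'_{[0,1]}) > \psi_{\rm{BM}}(\a) - \eps\} \cap \{\rm{aspect}(B,B') > \g\}\]
still satisfies $\sfW^{\otimes 2}_{[0,1]}(V) > 1 - 1/\a + \zeta$. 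Given any $\eta > 0$, tightness of $\sfW^{\otimes 2}_{[0,1]}$ furnishes a compact $K \subseteq C_0(0,1]^2$ with $\sfW^{\otimes 2}_{[0,1]}(V \cap K) > 1 - 1/\a + \zeta/2$; cover the compact set $V \cap K$ by finitely many open $\eta$-balls $B_\eta(\phi_i)$, $i=1,\ldots,R$, centered at points $\phi_i = (B^{(i)},(B')^{(i)}) \in V \cap K$.

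The central ingredient is a \emph{joint} invariance principle: under $\nu$, the pair $(\traj_{-N}(\s^y), \traj_N(\s^y))$ converges in law to $\sfW^{\otimes 2}_{[0,1]}$. Assuming this, set
\[A_N := \{y\ :\ (\traj_{-N}(\s^y), \traj_N(\s^y)) \in \textstyle\bigcup_i B_\eta(\phi_i)\};\]
by the Portmanteau theorem applied to this open set, $\liminf_N \nu(A_N) \geq \sfW^{\otimes 2}_{[0,1]}(V \cap K) > 1 - 1/\a + \zeta/2$. The hypothesis $\|\d\nu'/\d\nu\|_\infty \leq \a$ gives $\nu'(A_N^c) \leq \a\,\nu(A_N^c)$, hence
\[\nu'(A_N) \geq 1 - \a(1 - \nu(A_N)) \geq \a\zeta/4\]
for every $N$ exceeding a threshold that depends only on $\a,\eps,\eta$ (crucially \emph{not} on $\nu'$). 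Pigeonholing over the $R$ balls produces some $i$ with $\nu'$-mass at least $\a\zeta/(4R) =: \k$ on the preimage of $B_\eta(\phi_i)$; the pair $(B,B') := \phi_i \in V$ then witnesses all three conclusions of the lemma simultaneously.

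The main obstacle is establishing the joint invariance principle, since Theorem~\ref{thm:IP} as stated is only one-sided. I expect tightness of the joint laws to be immediate from one-sided tightness of each coordinate, and the identification of every subsequential limit as the product measure $\sfW^{\otimes 2}_{[0,1]}$ to follow from the exponential mixing of the Gibbs measure $\nu$: the backward and forward cocycle-trajectories involve $\s \circ S^n$ for $n$ in disjoint half-lines, and the correlations across the origin decay at a rate dwarfing the diffusive scale $\sqrt N$. Adapting the spectral analysis of the Ruelle operator used in~\cite{GuiHar88} (or in the proof of Theorem~\ref{thm:IP}) to a two-sided perturbed transfer operator should make this rigorous; every other step in the plan (the Portmanteau step, the density transfer, and the pigeonhole) is routine.
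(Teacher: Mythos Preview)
Your proposal is correct and follows essentially the same route as the paper's proof: define a good open set in $C_0(0,1]^2$ of $\sfW^{\otimes 2}_{[0,1]}$-mass exceeding $1-1/\a$ by a definite slack, cover (a compact piece of) it by finitely many $\eta$-small open sets, apply a joint invariance principle and Portmanteau, then use the density bound $\|\d\nu'/\d\nu\|_\infty \leq \a$ together with pigeonhole to find a good ball. The one difference is that the paper simply asserts the joint convergence $(\traj_{-N}(\s^y),\traj_N(\s^y)) \Rightarrow \sfW^{\otimes 2}_{[0,1]}$ as an immediate consequence of Theorem~\ref{thm:IP}, whereas you (more carefully) flag it as the main thing to justify; your sketch via tightness plus exponential mixing across the origin is the right idea, and indeed the paper elsewhere relies on exactly this kind of decorrelation for Gibbs measures.
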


\begin{proof}
Let $\psi:= \psi_{\rm{BM}}(\a)$.  Definition~\ref{dfn:psi} gives
\[\sfW_{[0,1]}^{\otimes 2}\big\{(B,B')\,\big|\ \calL^1(B_{[0,1]}\cap B'_{[0,1]})  \leq \psi_{\rm{BM}}(\a) - \eps \big\} < 1/\a.\]
Since $\rm{aspect}(B,B') > 0$ for $\sfW^{\otimes 2}_{[0,1]}$-a.e. $(B,B')$, we may now choose $\g > 0$ such that the closed set
\[K := \big\{(B,B')\,\big|\ \calL^1(B_{[0,1]}\cap B'_{[0,1]}) \leq \psi_{\rm{BM}}(\a) - \eps\ \ \hbox{or}\ \ \rm{aspect}(B,B') \leq \g\big\}\]
still has $\sfW^{\otimes 2}_{[0,1]}(K) < 1/\a$.

Now suppose that $\eta > 0$. By the inner-regularity of $\sfW_{[0,1]}^{\otimes 2}$ with respect to compact sets, there are finitely many open subsets $W_1,\ldots,W_m \subseteq C_0(0,1]^2\setminus K$ such that
\begin{itemize}
\item $\sfW_{[0,1]}^{\otimes 2}(W_1\cup \cdots \cup W_m) > 1 - 1/\a + \k_0$ for some $\k_0 > 0$, and
\item each $W_i$ has diameter less than $\eta$ for the maximum of the metrics $\|\cdot\|_\infty$ on each coordinate in $C_0(0,1]^2$.
\end{itemize}
Set $\k := \a\k_0/m$, and choose representative pairs $(B_i,B_i') \in W_i$ for each $i\leq m$.

The Invariance Principle (Theorem~\ref{thm:IP}) implies that
\[(\traj_{-N}(\s^y),\traj_N(\s^y)) \stackrel{\rm{law}}{\to} (B,B') \sim \sfW_{[0,1]}^{\otimes 2} \quad \hbox{as}\ N \to \infty.\]
Therefore, since each $W_i$ is open, the Portmanteau Theorem~(\cite[Theorem 4.25]{Kal02}) gives
\[\nu\big\{(\traj_{-N}(\s^y),\traj_N(\s^y)) \in W_1 \cup \cdots \cup W_m\big\} > 1 - 1/\a + \k_0\]
for all sufficiently large $N$.

Suppose $N$ is large enough that this last inequality holds, and now consider some $\nu' \in \Pr Y$ with $\|\d\nu'/\d\nu\|_\infty \leq \a$.  Then that last inequality gives
\begin{multline*}
\nu'\big\{(\traj_{-N}(\s^y),\traj_N(\s^y)) \not\in W_1 \cup \cdots \cup W_m\big\}\\ \leq \a\nu\big\{(\traj_{-N}(\s^y),\traj_N(\s^y)) \not\in W_1 \cup \cdots \cup W_m\big\} < \a(1/\a - \k_0) = 1 - \a\k_0,
\end{multline*}
so there is some $i \leq m$ for which
\[\nu'\{(\traj_{-N}(\s^y),\traj_N(\s^y)) \in W_i\} > \k.\]
Letting $(B,B') := (B_i,B'_i)$ for this choice of $i$ completes the proof.
\end{proof}

\begin{proof}[Proof of Theorem~\ref{thm:rate} from Proposition~\ref{prop:cov-of-bi-neigh}]
The upper bound has already been obtained, so it remains to prove the lower bound.  This is vacuous if $\rmh(\bfY) = 0$, so assume $0 < \rmh(\bfY) < \infty$.  We now imagine playing as Max-er in the competition of Subsection~\ref{subs:competition}.

\vspace{7pt}

\emph{Step 1: The choice of measure and subset.}\quad First, Min-er chooses some $\l' \in \Pr(Y \times X)$ such that $\|\d\l'/\d(\nu\otimes \mu)\|_\infty \leq \a$.  Let $\nu' \in \Pr Y$ and $\mu' \in \Pr X$ be its marginals, so we know that also $\|\d\nu'/\d\nu\|_\infty \leq \a$.

Let $\psi := \psi_{\rm{BM}}(\a)$.  Fix $\eps > 0$, and assume without loss of generality that $\eps < \psi/4$.  For this $\eps$, we will show that there are choices of $\delta > 0$ and $\k > \k' > 0$, depending on $\eps$ but not on the particular measure $\l'$, such that for each sufficiently large $d$ there is some $U \subseteq Y\times X$ with $\l'(U) \geq \k$ and
\[\bicov_{\k'}\big((U,d^{\bfY \ltimes_\s \bfX}_{[-N_d;0)},d^{\bfY \ltimes_\s \bfX}_{[0;N_d)},\l'),\delta N_d\big) \geq \exp\big((\rmh(\bfX) - 3\eps)(\psi - 2\eps)\sqrt{N_d}\big).\]
Since $\eps$ is arbitrary this will complete the proof.

The parameters and subset are chosen in the following steps:
\begin{itemize}
\item Let $\delta' > 0$ be so small that $\rmh(\mu,T,d^X,\delta') > \rmh(\bfX) - \eps$, as is possible by Proposition~\ref{prop:spatial-ent-and-KS-ent}.

\item By the continuity of $\psi_{\rm{BM}}$ (Lemma~\ref{lem:props-of-psiBM}), choose $\a' > \a$ such that $\psi_{\rm{BM}}(\a') > \psi_{\rm{BM}}(\a) - \eps$.

\item Let $\g > 0$ be given by Lemma~\ref{lem:transfer-psi} for $\a'$ and $\eps$.

\item Choose $\b$ so small that $\frac{\a}{1 - \b\a} < \a'$.  Given $\g$, $\psi$, $\eps$, $\delta'$ and this $\b$, now apply Proposition~\ref{prop:cov-of-bi-neigh} to obtain $\eta,\delta > 0$ and the subsets $Y_d \subseteq Y$ having the properties listed there.  Observe that
\[\nu'(Y\setminus Y_d) \leq \a\nu(Y\setminus Y_d) < \a\b \quad \Longrightarrow \quad \l'(Y_d\times X) = \nu'(Y_d) \geq 1 - \a\b,\]
and hence
\[\Big\|\frac{\d\l'_{|Y_d\times X}}{\d(\nu\otimes \mu)}\Big\|_\infty \leq \frac{\a}{1 - \a\b} < \a'\]
for all sufficiently large $d$.

\item For our given $\eps$ and for the values of $\g$ and $\eta$ chosen above, and for $d$ sufficiently large, now return to Lemma~\ref{lem:transfer-psi}, applied to the measure $\nu'_{|Y_d}$, to obtain some $\k > 0$ such that, for all sufficiently large $d$, there are $B,B' \in C_0(0,1]$ satisfying
\[\calL^1(B_{[0,1]}\cap B'_{[0,1]}) > \psi_{\rm{BM}}(\a') - \eps > \psi - 2\eps > \psi/2,\]
\[\rm{aspect}(B,B') > \g,\]
and such that the set
\[U_1 := \big\{y\in Y_d\,\big|\ \|\traj_{-N_d}(\s^y) - B\|_\infty < \eta\ \hbox{and}\ \|\traj_{N_d}(\s^y) - B'\|_\infty < \eta\big\}\]
has
\[\nu'(U_1) = \nu'(Y_d)\nu'_{|Y_d}(U_1) \geq (1 - \a\b)\nu'_{|Y_d}(U_1) \geq \k.\]
Let $I:= \sqrt{N_d}B_{[0,1]}$ and $J := \sqrt{N_d}B'_{[0,1]}$.

\item Finally, let $\k' := \k/2$, and let $U := U_1 \times X$, so $\l'(U) = \nu'(U_1) \geq \k$ for all sufficiently large $d$.
\end{itemize}

\vspace{7pt}

\emph{Step 2: Bounding the bi-covering number.}\quad 
The conclusion of Proposition~\ref{prop:cov-of-bi-neigh} now gives that
\[\cov\Big(\Big(U\cap B^{d^{\bfY \ltimes_\s \bfX}_{[0;N_d)}}_{\delta N_d}\big(U\cap B^{d^{\bfY \ltimes_\s \bfX}_{[-N_d;0)}}_{\delta N_d}(y,x)\big),\ \pi_X^\ast d^{\bfX,\infty}_{I\cap J}\Big),\delta'\Big) \leq \exp(\eps \calL^1(I\cap J))\]
for all $(y,x) \in U$, for all sufficiently large $d$.  On the other hand, if $V \subseteq U$ with $\l'(V) \geq \k'$, then also $(\nu \otimes \mu)(V) \geq \k'/\a$ and hence $\mu(\pi_X(V)) \geq \k'/\a$, a fixed positive constant. Therefore, provided $d$ and hence $\calL^1(I\cap J) \geq (\psi/2)\sqrt{N_d}$ are sufficiently large, Proposition~\ref{prop:from-OW} and the choice of $\delta'$ give
\begin{eqnarray*}
\cov((V,\pi_X^\ast d^{\bfX,\infty}_{I\cap J}),\delta') &=& \cov((\pi_X(V),d^{\bfX,\infty}_{I\cap J}),\delta')\\
&\geq& \cov\big((\pi_X(V),d^\bfX_{I\cap J}),\delta'\calL^1(I\cap J)\big)\\
&>& \exp\big((\rmh(\bfX) - 2\eps)\calL^1(I\cap J)\big).
\end{eqnarray*}

These two bounds together imply that, if $d$ is sufficiently large, then the number of $(\delta N_d)$-bi-neighbourhoods needed to cover such a subset $V \subseteq U$ is at least
\[\frac{\exp\big((\rmh(\bfX) - 2\eps)\calL^1(I\cap J)\big)}{\exp(\eps \calL^1(I\cap J))} \geq \exp\big((\rmh(\bfX) - 3\eps)(\psi - 2\eps)\sqrt{N_d}\big),\]
as required.
\end{proof}

%
%

The rest of this section is occupied by the proof of Proposition~\ref{prop:cov-of-bi-neigh}.

\subsection{Discrete Cantor matchings}

First we need the following relative of Definition~\ref{dfn:dCs}.  Its importance will appear in the formulation of Proposition~\ref{prop:key}.

\begin{dfn}[Discrete Cantor matchings]\label{dfn:dCm}
Let $d \in \bbN$.  A \textbf{discrete Cantor matching} of \textbf{depth} $d$ is a pair $(K_\w,u_\w)_{\w \in \{0,1\}^d}$ in which $(K_\w)_\w$ is a discrete Cantor family and $(u_\w)_\w$ is a discrete Cantor set, both of depth $d$.  It has \textbf{gap upper bounds} $D_1 \geq \ldots \geq D_d$ if these are gap upper bounds for both this discrete Cantor family and this discrete Cantor set.

If $\frM = (K_\w,u_\w)_\w$ is a discrete Cantor matching, then its \textbf{domain} is
\[\dom(\frM) := \bigcup_\w K_\w.\]

For fixed $d$, $D = (D_1 \geq \cdots \geq D_d)$ and $J \in \rm{Int}(\bbR)$, the collection of depth-$d$ discrete Cantor matchings contained in $J$ and with gap upper bounds $D$ will be denoted by $\DCM_{d,D}(J)$.
\end{dfn}

Clearly $\DCM_{d,D}(J)$ may be identified with $\DCF_{d,D}(J)\times \DCS_{d,D}(J)$.  We endow $\DCM_{d,D}(J)$ with the metric $d_\DCM$ given by the maximum of the metrics $d_\DCF$ and $d_\DCS$ on these coordinate factors, and so Lemma~\ref{lem:bound-dCs} and Corollary~\ref{cor:bound-dCf} immediately imply
\begin{multline}\label{eq:DCM-cov}
\cov\big((\DCM_{d,D}(J),d_\DCM),\delta\big)\\ \leq \Big(\frac{2\calL^1(J)}{\delta}\Big(\frac{2D_1}{\delta}\Big)\Big(\frac{2D_2}{\delta}\Big)^2\cdots \Big(\frac{2D_d}{\delta}\Big)^{2^{d-1}}\Big)^3
\end{multline}
provided $\delta < \calL^1(J)/10,D_d/10$.

\subsection{Similarity of sceneries from similarity of pairs}\label{subs:sim-to-sim}

Let the parameters $L_d$, $N_d$, $\a_d$ and $\k_{r,d}$ be as introduced in Subsection~\ref{subs:disc-filt}.

\begin{prop}\label{prop:key}
Suppose that $\beta,\eta > 0$, and let $M < \infty$, $r_0 \in \bbN$, and the sets $Y^{\rm{good}}_{r,d}$ for $d > r \geq r_0$ be as provided by Proposition~\ref{prop:pre-key} for this $\beta$ and $\eta$.

Then for any $r \geq r_0$ there is a $\delta > 0$ such that if
\begin{itemize}
\item $B \in C_0(0,1]$ and $J := \sqrt{N_d} B_{[0,1]} + [-\eta\sqrt{N_d},\eta\sqrt{N_d}]$,
\item $(y,x),(y',x')\in Y^{\rm{good}}_{r,d}\times X$ with
\[\|\traj_{N_d}(\s^y) - B\|_\infty,\ \|\traj_{N_d}(\s^{y'}) - B\|_\infty < \eta,\]
\item and
\begin{eqnarray}\label{eq:d-big-small}
d_{[0;N_d)}^{\bfY \ltimes_\s \bfX}\big((y,x),(y',x')\big) \leq \delta N_d,
\end{eqnarray}
\end{itemize}
then there is a tuple
\[\F \in \DCM_{d-r,4\ell(N_d \geq \ldots \geq N_{r+1})}(J)^m \quad \hbox{for some} \quad m \leq \frac{M\calL^1(J)}{2^{d-r}N_r^{1/3}}\]
such that the following hold:
\begin{enumerate}
\item[P1)] (discrete Cantor sets are small) for every $(K_\omega,u_\w)_\w \in \F$ and every $\w \in \{0,1\}^{d-r}$, one has $|u_\omega| < 2\eta\sqrt{N_d}$;
\item[P2)] (range interval is mostly covered) one has
\[\cal{L}^1\Big(J \Big\backslash\bigcup_{\frM \in \F}\dom(\frM)\Big) < \eta \calL^1(J) + 4\eta\sqrt{N_d};\]
\item[P3)] (sceneries approximately agree across matchings) for every $(K_\w,u_\w)_\w \in \F$ and $\w \in \{0,1\}^{d-r}$, one has
\[d^X(T^zx,T^{z+u_\w}x') < \eta \quad \forall z \in K_\w.\]
\end{enumerate}
\end{prop}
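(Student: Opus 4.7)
The plan is to run Proposition~\ref{prop:pre-key} on $y$ against a carefully chosen subset $P \subseteq [0;N_d)$ of ``good times'', then upgrade each resulting discrete Cantor family $(Q_\w)_\w \in \cal{G}$ to a discrete Cantor matching by setting $K_\w := B_\ell(\s^y_{Q_\w})$ and $u_\w := \s^{y'}_{\min Q_\w} - \s^y_{\min Q_\w}$. Given $\beta, \eta > 0$, fix $M$, $r_0$ and the sets $Y^{\rm{good}}_{r,d}$ via Proposition~\ref{prop:pre-key}, and let $\delta_0(r)$ be the threshold supplied for each $r \geq r_0$. For our $r \geq r_0$, take $\delta > 0$ so small that $2\sqrt{\delta} < \delta_0(r)$, that $N_r C \delta^{\alpha/2} < \eta/4$ for the H\"older data $(C,\alpha)$ of $\s$, and that the flow's modulus of continuity over time-shifts of size $\sqrt{\delta}$ is below $\eta/4$. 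Markov's inequality applied to $d^{\bfY \ltimes_\s \bfX}_{[0;N_d)}((y,x),(y',x')) \leq \delta N_d$ produces
\begin{equation*}
P := \{n \in [0;N_d) : d^Y(S^n y, S^n y') < \sqrt{\delta} \text{ and } d^X(T^{\s^y_n}x, T^{\s^{y'}_n}x') < \sqrt{\delta}\}
\end{equation*}
with $|P| > (1 - 2\sqrt{\delta})N_d > (1 - \delta_0(r))N_d$, so Proposition~\ref{prop:pre-key} applies to $y$ with this $P$, producing the collection $\cal{G}$ of Cantor families contained in $P$.

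For the Cantor-matching structure, when $\w, \w'$ share their first $i$ coordinates the cells $Q_\w, Q_{\w'}$ lie in a common cell of $\cal{D}_{d-i}$ of length $N_{d-i}$, so $\rm{diam}(\s^y(Q_\w \cup Q_{\w'})) \leq 2\ell N_{d-i}$ by $\|\s\|_\infty \leq \ell$; hence $\rm{diam}(K_\w \cup K_{\w'}) \leq 2\ell + 2\ell N_{d-i} \leq 4\ell N_{d-i} = D_{i+1}$, and pairwise disjointness of the $K_\w$ comes from property~(2) of Proposition~\ref{prop:pre-key}. The same step bound gives $|\Delta_n - \Delta_m| \leq 2\ell|n-m|$ for $\Delta_n := \s^y_n - \s^{y'}_n$, hence $|u_\w - u_{\w'}| \leq 2\ell N_{d-i}$, so $(u_\w)_\w$ is a discrete Cantor set with matching gap bounds. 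The cardinality bound $m = |\cal{G}| \leq M\calL^1(J)/(2^{d-r}N_r^{1/3})$ combines property~(1) of Proposition~\ref{prop:pre-key} (each $\calL^1(K_\w) \geq 2N_r^{1/3}$) with property~(4) (total length summed over $\cal{G}$ at most $M \calL^1(B_\ell(\s^y_{[0;N_d)})) \leq M \calL^1(J)$). Property P1 is immediate since $|u_\w| = |\Delta_{\min Q_\w}| \leq 2\eta\sqrt{N_d}$ from the hypothesis $\|\traj_{N_d}(\s^y) - B\|_\infty, \|\traj_{N_d}(\s^{y'}) - B\|_\infty < \eta$, and P2 combines property~(3) of Proposition~\ref{prop:pre-key} with the inclusion $J \subseteq B_\ell(\s^y_{[0;N_d)}) + [-2\eta\sqrt{N_d}, 2\eta\sqrt{N_d}]$ obtained from the same $\traj$-closeness.

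Property P3 is where the main obstacle lies. For $z \in K_\w$, pick $n \in Q_\w$ with $|z - \s^y_n| \leq \ell$, which is always possible since consecutive cocycle values differ by at most $\|\s\|_\infty \leq \ell$. Since $n \in Q_\w \subseteq P$ we have $d^X(T^{\s^y_n}x, T^{\s^{y'}_n}x') < \sqrt{\delta} < \eta/3$, and the triangle inequality reduces P3 to bounding $d^X(T^z x, T^{\s^y_n}x)$ and $d^X(T^{z + u_\w}x', T^{\s^{y'}_n}x')$. The displacement in the second is
\begin{equation*}
(z + u_\w) - \s^{y'}_n = (z - \s^y_n) + (\Delta_{\min Q_\w} - \Delta_n),
\end{equation*}
where H\"older regularity of $\s$ summed over at most $N_r$ good times $m \in Q_\w \subseteq P$ (each satisfying $|\s(S^m y) - \s(S^m y')| \leq C\delta^{\alpha/2}$) gives $|\Delta_n - \Delta_{\min Q_\w}| \leq N_r C \delta^{\alpha/2} < \eta/4$. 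The two remaining terms are then flow-continuity errors for time-shifts of size at most $\ell$ and $\ell + \eta/4$ respectively; the main obstacle is that $\ell \geq \|\s\|_\infty$ is a lower bound outside our control, so achieving each such error below $\eta/3$ requires arranging the mollifier $\phi$ (and therefore $\ell$) to be small relative to the flow's continuity modulus at the outset of the construction --- something that is always possible for aperiodic cocycles, and with a more careful choice of $\phi$ also in the lattice-valued case flagged after Definition~\ref{dfn:EIP}.
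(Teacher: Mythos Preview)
Your construction of $P$, the application of Proposition~\ref{prop:pre-key}, the definition of the matchings $(K_\w,u_\w)$, the verification of the gap bounds, the cardinality estimate, and properties (P1)--(P2) are all essentially the paper's argument. The genuine gap is in (P3), and your proposed fix does not work.

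Your triangle inequality
\[
d^X(T^zx,T^{z+u_\w}x') \leq d^X(T^zx,T^{\s^y_n}x) + d^X(T^{\s^y_n}x,T^{\s^{y'}_n}x') + d^X(T^{\s^{y'}_n}x',T^{z+u_\w}x')
\]
asks you to control $d^X(T^zx,T^{\s^y_n}x)$ with $|z-\s^y_n|\leq\ell$: this is the displacement of a \emph{single} point along its orbit over a time interval of length $\ell$, and for a non-trivial flow it cannot be made smaller than an arbitrary $\eta$. Your suggested remedy --- choosing the mollifier so that $\ell$ is small --- is blocked by the constraint $\ell\geq\|\s\|_\infty$, which you yourself note; $\|\s\|_\infty$ is a fixed positive number determined by the cocycle, so for $\eta$ small this route is closed.

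The correct decomposition exploits uniform continuity of the \emph{maps} $T^w$ for $|w|\leq\ell$, not smallness of orbit displacement. First, at the cocycle values themselves, combine $d^X(T^{\s^y_n}x,T^{\s^{y'}_n}x')<\t\eta$ with your drift bound $|\s^{y'}_n-\s^y_n-u_\w|<\eps$ (and a choice of $\eps$ so that $|t|\leq\eps$ forces $d^X(p,T^tp)<\t\eta$) to get
\[
d^X\big(T^{\s^y_n}x,\,T^{\s^y_n+u_\w}x'\big)<2\t\eta \quad\forall n\in Q_\w.
\]
Now choose $\t\eta$ in advance so that $d^X(p,q)<2\t\eta$ implies $d^X(T^wp,T^wq)<\eta$ for all $|w|\leq\ell$ (joint continuity of the action on the compact space $X$). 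Writing $z=\s^y_n+w$ with $|w|\leq\ell$ gives
\[
d^X(T^zx,T^{z+u_\w}x')=d^X\big(T^w(T^{\s^y_n}x),\,T^w(T^{\s^y_n+u_\w}x')\big)<\eta,
\]
which is exactly (P3). The point is that $\ell$ is fixed and $\t\eta$ is chosen small relative to it, not the other way around.
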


\begin{rmk}
Beware that the assumptions of this proposition are symmetric between $(y,x)$ and $(y',x')$, but the conclusions are not. \fin
\end{rmk}

\begin{proof}\quad \emph{Step 1: Choice of parameters.}\quad  Suppose that $r \geq r_0$, and let $\delta_1 > 0$ be the error tolerance given by Proposition~\ref{prop:pre-key} for this $r$.

Since the action $T:\bbR\actson X$ is jointly continuous, given our $\eta > 0$ and $\ell$, there is some $\t{\eta} > 0$ such that for any $x,x' \in X$ one has
\[d^X(x,x') < 2\t{\eta} \quad \Longrightarrow \quad d^X(T^zx,T^zx') < \eta \quad \forall z \in [-\ell,\ell].\]
Again by the continuity of this action, we may now choose $\eps > 0$ such that
\[|z| \leq \eps \quad \Longrightarrow \quad d^X(x,T^zx) < \t{\eta} \quad \forall x\in X.\]

Next, given this $\eps$, Corollary~\ref{cor:P-controls-sigma} gives some $\t{\delta} > 0$ such that for any discrete interval $[0;L) \subseteq \bbZ$ and any $x,x' \in Y$ one has
\[\max_{n \in [0;L)}d^Y(S^ny,S^ny') < \t{\delta} \quad \Longrightarrow \quad \max_{n \in [0;L)}|\s^y_n - \s^{y'}_n| < \eps.\]

Finally, let $\delta := \delta_1\cdot\min\{\t{\delta},\t{\eta}\}$, and assume~(\ref{eq:d-big-small}) with this value of $\delta$.

\vspace{7pt}

\emph{Step 2: Using Proposition~\ref{prop:pre-key}.}\quad Consider the set
\[P := \big\{n \in [0;N_d)\,\big|\ d^Y(S^ny,S^ny') \leq \t{\delta}\ 
\hbox{and}\ d^X(T^{\s^y_n}x,T^{\s^{y'}_n}x')\leq \t{\eta}\big\}.\]
By~(\ref{eq:d-big-small}), our choice of $\delta$ and Markov's Inequality, one has
\[\delta N_d \geq |[0;N_d)\setminus P|\cdot \min\{\t{\delta},\t{\eta}\} \quad \Longrightarrow \quad |P| \geq (1 - \delta_1)N_d.\]
We may therefore subject $P$ to an application of Proposition~\ref{prop:pre-key}.  Let $\cal{G}$ be the collection of discrete Cantor families produced by that proposition, ordered so that $\cal{G} = ((Q_{s,\w})_{\w})_{s=1}^m$ for some $m$, and let
\[\cal{F} := \big((B_\ell(\s^y_{Q_{s,\w}}),\s^{y'}_{\min Q_{s,\w}} - \s^y_{\min Q_{s,\w}})_{\w \in \{0,1\}^{d-r}}\big)_{s=1}^m.\]
To see that each entry of $\F$ is a member of $\DCM_{d-r,4\ell(N_d\geq \ldots \geq N_{r+1})}(J)$, observe the following:
\begin{itemize}
\item property (2) of Proposition~\ref{prop:pre-key} gave that the image-neighbourhoods $B_\ell(\s^y_{Q_{s,\w}})$ are pairwise disjoint for distinct $\w \in \{0,1\}^{d-r}$, for each fixed $s$;
\item these neighbourhoods are all contained in $B_\ell(\s^y_{[0;N_d)}) \subseteq J$;
\item and, since $\ell \geq \|\s\|_\infty$, for any bounded discrete interval $R \subseteq \bbZ$ and any $y \in Y$, the image $\s^y_R$ must have diameter at most $\ell \cal{L}^1(R)$. It follows that $B_\ell(\s^y_R)$ is an interval of length at most $\ell\calL^1(R) + 2\ell$.  Therefore, for each $s\leq m$, the gap upper bounds $\ell (N_d + 2)\geq \ldots \geq \ell (N_{r+1} + 2)$ hold for the discrete Cantor family $(B_\ell(\s^y_{Q_{s,\w}}))_\w$ because $(Q_\w)_\w$ was adapted to $(\cal{D}_d,\ldots,\cal{D}_{r+1})$, and the gap upper bounds $2\ell (N_d + 2)\geq \ldots \geq 2\ell (N_{r+1} + 2)$ hold for the discrete Cantor set $(\s^{y'}_{\min Q_{s,\w}} - \s^y_{\min Q_{s,\w}})_\w$ as it is a set of differences of elements of such discrete Cantor families. This conclusion is stronger than the gap upper bounds $4\ell N_d \geq \ldots \geq 4\ell N_{r+1}$ of property (P3), because $N_{r+1} \geq N_{r_0 + 1} \geq 2$.
\end{itemize}

The desired upper bound on $m$ holds because property (1) of Proposition~\ref{prop:pre-key} gives
\begin{multline*}
\calL^1(K_\w) \geq N_r^{1/3} \quad \forall (K_\w,u_\w)_\w \in \F\ \hbox{and}\ \w \in \{0,1\}^{d-r}\\
\Longrightarrow \quad \calL^1(\dom(\frM)) \geq 2^{d-r}N_r^{1/3} \quad \forall \frM \in \F,
\end{multline*}
while property (4) of that proposition gives
\[\sum_{\dom(\frM) \in \F}\calL^1(\dom(\frM)) \leq M\calL^1\big(B_\ell(\s^y_{[0;N_d)})\big) \leq M\calL^1(J).\]

\vspace{7pt}

\emph{Step 3: Verifying the remaining properties.}\quad It remains to prove (P1)--(P3).

Property (P1) holds because $\traj_N(\s^y)$ and $\traj_N(\s^{y'})$ are both close to $B$: for $t := \min Q_\omega/N_d$, those approximations give
\[|\s^y_{\min Q_\omega} - \s^{y'}_{\min Q_\omega}| = \sqrt{N_d}|\traj_{N_d}(\s^y)(t) - \traj_{N_d}(\s^{y'})(t)| < 2\eta\sqrt{N_d}.\]

Property (P2) results from property (3) of Proposition~\ref{prop:pre-key}, combined with the facts that
\[B_\ell(\s^y_{[0;N_d)}) \subseteq J \quad \hbox{and} \quad \calL^1\big(J\setminus B_\ell(\s^y_{[0;N_d)})\big) < 4\eta \sqrt{N_d}.\]

Finally, property (P3) holds because for each $s\leq m$ and each $\w \in \{0,1\}^{d-r}$, we have that $Q_{s,\w} \subseteq P$ by construction, and so the definition of $P$ gives
\begin{eqnarray}\label{eq:two-ineqs}
d^Y(S^ny,S^ny') < \t{\delta} \quad \hbox{and} \quad d^X(T^{\s^y_n}x,T^{\s^{y'}_n}x') < \t{\eta} \quad \forall n \in Q_{s,\w}.
\end{eqnarray}
For this $s$ and each $\w$, now abbreviate $n_\w := \min Q_{s,\w}$ and $u_\w := \s^{y'}_{n_\w} - \s^y_{n_\w}$. By the choice of $\t{\delta}$, the first inequality in~(\ref{eq:two-ineqs}) implies that
\[|\s_n^{y'} - \s_n^y - u_\w| = |(\s_n^{y'} - \s^{y'}_{n_\w}) - (\s_n^y - \s^y_{n_\w})| = |\s^{S^{n_\w}y'}_{n - n_\w} - \s^{S^{n_\w}y}_{n - n_\w}|< \eps \quad \forall n \in Q_{s,\w}.\]
Given this, and letting $y_1 := S^{n_\w}y$, $y_1' := S^{n_\w}y'$, $x_1 := T^{\s^y_{n_\w}}x$ and $x_1' := T^{\s^{y'}_{n_\w}}x'$, the second inequality in~(\ref{eq:two-ineqs}) may be re-written as
\[d^X\big(T^{\s^{y_1}_n}x_1,\,T^{\s^{y_1'}_n}x_1'\big) < \t{\eta} \quad \forall n \in [0;N_r).\]
Combining the above inequalities, and recalling the choice of $\eps$, we now obtain
\[d^X\big(T^{\s^{y_1}_n}x_1,\,T^{\s^{y_1}_n}x_1'\big) < 2\t{\eta} \quad \forall n \in [0;N_r),\]
and now by the choice of $\t{\eta}$ this implies that
\[d^X(T^zx,T^{z+ u_\w}x') < \eta \quad \forall z \in B_\ell(\s^{y_1}_{[0;N_r)}) = B_\ell(\s^y_{Q_{s,\w}}).\]
\end{proof}

We will retain the names (P1)--(P3) for the above properties throughout the rest of the paper.  Note that, by duplicating some members of the resulting family $\F$, we may always assume that $m = \lfloor M\calL^1(J)/2^{d-r}N_r^{1/3}\rfloor$ without disrupting these other properties.


\subsection{Bounding the covering number of discrete Cantor matchings}

The next lemma is an elementary estimate which will lie at the heart of the competition between two different sources of entropy in the sequel.

\begin{lem}\label{lem:baby}
There is some absolute constant $C_0 < \infty$ such that
\[\sum_{s=0}^{d-r} 2^s (d-s)\log (d-s) \leq C_0 2^d \quad \hbox{whenever}\ d > r\geq 1.\]
\end{lem}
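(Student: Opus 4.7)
The plan is to reverse the order of summation via the substitution $k = d-s$. Under this change of variable, $s = d-k$, and as $s$ runs from $0$ to $d-r$, the index $k$ runs from $d$ down to $r$. The sum transforms as
\[
\sum_{s=0}^{d-r} 2^s (d-s)\log(d-s) \;=\; \sum_{k=r}^{d} 2^{d-k}\, k \log k \;=\; 2^d \sum_{k=r}^{d} \frac{k \log k}{2^k}.
\]

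Next I would observe that the series $\sum_{k=1}^\infty \frac{k \log k}{2^k}$ is convergent, since its general term decays exponentially in $k$ (one easily verifies this by, e.g., the ratio test, noting $\tfrac{(k+1)\log(k+1)}{2^{k+1}} / \tfrac{k\log k}{2^k} \to 1/2$). Denote the value of this convergent series by $C_0 := \sum_{k=1}^\infty \frac{k\log k}{2^k} < \infty$.

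Then for any $d > r \geq 1$ we obtain
\[
\sum_{k=r}^{d} \frac{k\log k}{2^k} \;\leq\; \sum_{k=1}^{\infty} \frac{k\log k}{2^k} \;=\; C_0,
\]
and multiplying by $2^d$ yields the claim $\sum_{s=0}^{d-r} 2^s(d-s)\log(d-s) \leq C_0 \, 2^d$. There is no real obstacle here; the lemma is essentially a bookkeeping identity together with convergence of a standard exponentially-decaying series, so the only step that requires any care is recognizing that rewriting the sum from the ``heavy'' end $s = d-r$ (where the factor $2^s$ is largest but $(d-s)\log(d-s)$ is smallest) makes the exponential decay in $k = d-s$ manifest.
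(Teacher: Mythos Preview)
Your proof is correct and takes essentially the same approach as the paper's: both substitute $k = d-s$ (the paper writes $\ell$), factor out $2^d$, and observe that $\sum_{k\geq 1} 2^{-k} k\log k$ converges. The only cosmetic difference is that the paper bounds $k\log k \lesssim 2^{k/2}$ to verify convergence, whereas you invoke the ratio test.
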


\begin{proof}
Dividing the left-hand side by $2^d$ produces the sum
\[\sum_{s=0}^{d-r} 2^{-(d-s)} (d-s)\log (d-s) = \sum_{\ell=r}^d 2^{-\ell}\ell\log \ell \lesssim \sum_{\ell=r}^\infty 2^{-\ell}\cdot 2^{\ell/2} < \infty.\]
\end{proof}

For any fixed $d$, $D = (D_1 \geq \ldots \geq D_d)$ and $J \in \rm{Int}(\bbR)$, and for each $m \in \bbN$, let $d_{\DCM,m}$ be the metric on $(\DCM_{d,D}(J))^m$ given as the maximum of the metric $d_\DCM$ on each of the $m$ coordinates.

\begin{lem}[Bounding the number of discrete-Cantor-matching tuples]\label{lem:no-of-matchings}
For every $\delta,\eps > 0$ there exists $r_1 \in \bbN$ such that if $d > r \geq r_1$, if $J \in \rm{Int}(\bbR)$ has length at most $4\ell N_d$, and if
\[m := \Big\lfloor \frac{M\calL^1(J)}{2^{d-r}N_r^{1/3}}\Big\rfloor,\]
then
\[\cov\big(\big((\DCM_{d-r,4\ell(N_d\geq \ldots \geq N_{r+1})}(J))^m,d_{\DCM,m}\big),\delta \big) \leq \exp (\eps \calL^1(J)) \quad \forall m \in \bbN.\]
\end{lem}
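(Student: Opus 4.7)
The plan is to combine the single-copy estimate from Corollary~\ref{cor:bound-dCf} (and its extension to $\DCM$ in equation~(\ref{eq:DCM-cov})) with the growth rates of $L_d$ and $N_d$, using Lemma~\ref{lem:baby} to control a dyadic sum of $(s+1)\log(s+1)$ terms. The final key input is that $2^r / N_r^{1/3} \to 0$ extremely rapidly as $r \to \infty$, which will absorb the factor $m$ when counting product covers.

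First, I will apply~(\ref{eq:DCM-cov}) with depth $d-r$ and gap upper bounds $D_i = 4\ell N_{d-i+1}$ for $i = 1,\ldots,d-r$. Taking logarithms and reindexing via $s = d - i + 1$ gives
\[
\log \cov\big((\DCM_{d-r,D}(J),d_\DCM),\delta\big) \leq 3 \log\frac{2\calL^1(J)}{\delta} + 3\sum_{s=r+1}^{d} 2^{d-s} \log\frac{8\ell N_s}{\delta}.
\]
The first term is $\rm{O}(\log N_d)$, and negligible relative to what follows. For the sum, I use the estimate $\log N_s \lesssim (s+1)\log(s+1)$ from~(\ref{eq:N_s-est}); substituting $\ell = d-s$ recasts the sum as $\sum_{\ell=0}^{d-r-1} 2^\ell (d-\ell+1)\log(d-\ell+1)$, which Lemma~\ref{lem:baby} bounds (after a harmless index shift) by a constant multiple of $2^d$. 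Thus
\[
\log \cov\big((\DCM_{d-r,D}(J),d_\DCM),\delta\big) \leq C_1(\delta,\ell)\, 2^d,
\]
for some constant $C_1(\delta,\ell)$ depending only on $\delta$ and $\ell$, once $r$ is large enough that $\log N_d$ is absorbed into the dominant term.

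Next, because $d_{\DCM,m}$ is the max of $d_{\DCM}$ on each of $m$ coordinates, an optimal cover in each factor can be multiplied to cover the product, giving
\[
\log\cov\big(((\DCM_{d-r,D}(J))^m,d_{\DCM,m}),\delta\big) \leq m \cdot C_1(\delta,\ell)\, 2^d.
\]
Substituting the hypothesis $m \leq M\calL^1(J)/(2^{d-r} N_r^{1/3})$ yields
\[
\log\cov\big(((\DCM_{d-r,D}(J))^m,d_{\DCM,m}),\delta\big) \leq C_1(\delta,\ell) M\, \calL^1(J) \cdot \frac{2^r}{N_r^{1/3}}.
\]
So the desired bound $\exp(\eps \calL^1(J))$ will follow as soon as $C_1(\delta,\ell) M \cdot 2^r / N_r^{1/3} \leq \eps$.

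The last step, then, is to observe that since $L_i = \lceil (i+1)^{18}\rceil$ we have $N_r = \prod_{i=1}^{r} L_i \geq ((r+1)!)^{18}$, so $N_r^{1/3} \geq ((r+1)!)^{6}$, which by Stirling grows super-exponentially in $r$; equivalently, from~(\ref{eq:N_s-est}), $\log N_r \sim (r+1)\log(r+1)$. In particular $2^r / N_r^{1/3} \to 0$ as $r\to\infty$, and I simply choose $r_1 \in \bbN$ large enough that $C_1(\delta,\ell) M \cdot 2^r / N_r^{1/3} \leq \eps$ for all $r \geq r_1$. No step here is hard; the main thing to get right is that the constant $C_1(\delta,\ell)$ depends only on $\delta$ and $\ell$ (not on $d$, $r$, $J$, or $m$), so that the choice of $r_1$ is uniform in the remaining parameters.
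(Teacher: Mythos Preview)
Your proof is correct and follows essentially the same route as the paper's: apply~(\ref{eq:DCM-cov}), control the resulting dyadic sum via Lemma~\ref{lem:baby} and the growth estimate~(\ref{eq:N_s-est}), multiply by $m$, and then use $2^r/N_r^{1/3}\to 0$ to absorb everything for large $r$. The only cosmetic difference is that the paper keeps the $\log(8\ell/\delta)$ contribution separate (yielding a term $\calL^1(J)/N_r^{1/3}$ rather than your $\calL^1(J)\cdot 2^r/N_r^{1/3}$), but both vanish as $r\to\infty$, so this does not affect the argument.
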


\begin{proof}
By~(\ref{eq:DCM-cov}) and~(\ref{eq:N_s-est}), there is an absolute constant $C < \infty$ such that this covering number is bounded by
\begin{eqnarray*}
&&\Big(\frac{8\ell N_d}{\delta}\Big(\frac{8\ell N_d}{\delta}\Big)\Big(\frac{8\ell N_{d-1}}{\delta}\Big)^2\cdots \Big(\frac{8\ell N_{r+1}}{\delta}\Big)^{2^{d-r-1}}\Big)^{3m}\\
&&= \Big(\frac{8\ell}{\delta}\Big)^{3m2^{d-r}} \big(N_d\cdot N_d \cdot N_{d-1}^2 \cdots N_{r+1}^{2^{d-r-1}}\big)^{3m}\\
&&\leq \exp\Big(Cm\big((d+1)\log (d+1) + 2d\log d + \cdots + 2^{d-r-1}(r+2)\log (r+2)\big)\\
&& \quad \quad \quad \quad \quad \quad \quad \quad \quad \quad \quad \quad \quad \quad \quad \quad \quad \quad \quad \quad \quad \quad \quad \quad + 3m2^{d-r}\log \frac{8\ell}{\delta}\Big)\\
&&= \exp\Big(Cm\sum_{s=0}^{d-r-1} 2^s(d+1-s)\log (d+1-s) + 3m2^{d-r}\log \frac{8\ell}{\delta}\Big).
\end{eqnarray*}

Substituting for $m$, this is bounded by
\[\exp\Big(\frac{C'\calL^1(J)}{2^{d-r}N_r^{1/3}}\sum_{s=0}^{d-r-1} 2^s(d+1-s)\log (d+1-s) + C''\frac{\calL^1(J)}{N_r^{1/3}}\Big),\]
where
\[C' := C M \quad \hbox{and} \quad C'' := 3 M\log\frac{8\ell}{\delta},\]
neither of which depends on $d$ or $r$.  Letting $C_0$ be the constant from Lemma~\ref{lem:baby}, the above expression is in turn bounded by
\[\exp\Big(\frac{2C'C_0\calL^1(J)}{N_r^{1/3}2^{-r}} + C''\frac{\calL^1(J)}{N_r^{1/3}}\Big).\]
Another appeal to~(\ref{eq:N_s-est}) implies that $N_r^{1/3}2^{-r} \to \infty$ as $r \to\infty$, so the above is bounded by $\exp(\eps \calL^1(J))$ provided $r$ was large enough, irrespective of the value of $d$.
\end{proof}

\subsection{Bounding covering numbers of bi-neighbourhoods}

\begin{lem}\label{lem:close-match-close-scenery}
For any $\g',\zeta > 0$ there are $\eta,\delta''' \in (0,1)$ for which the following holds.  Suppose that $K \subseteq J \subseteq \bbR$ are compact intervals, both containing $0$, such that $\calL^1(K) \geq \g'\calL^1(J)$.  Suppose also that $d > r \geq 1$, that $x_1,x_1',x_2,x_2' \in X$, and that
\[m \in \bbN \quad \hbox{and} \quad \F,\cal{G} \in \big(\DCM_{d-r,4\ell(N_d\geq \ldots \geq N_{r+1})}(J)\big)^m\]
are such that
\[J,x_1,x_1'\ \hbox{and}\ \F\ \hbox{satisfy (P1)--(P3) for these values of}\ r,d,\eta,\]
\[J,x_2,x_2'\ \hbox{and}\ \cal{G}\ \hbox{satisfy (P1)--(P3) for these values of}\ r,d,\eta,\]
\begin{eqnarray}\label{eq:M-N-close}
d_{\DCM,m}(\F,\cal{G}) < \delta''',
\end{eqnarray}
and
\begin{eqnarray}\label{eq:K'inf-bound}
d^{\bfX,\infty}_K(x'_1,x'_2) < \delta'''.
\end{eqnarray}
Then
\[d^\bfX_K(x_1,x_2) < \zeta(\calL^1(K) + \sqrt{N_d}).\]
(The choice of the notation `$\g'$' and `$\delta'''$' is for ease of reference later.)
\end{lem}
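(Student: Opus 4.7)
The plan is to show that the triangle inequality, applied piecewise across matched pairs of Cantor families, propagates the closeness of $x_1',x_2'$ (measured in $d^{\bfX,\infty}_K$) to closeness of $x_1,x_2$ (measured in $d^\bfX_K$), with an $O(\eta)$ error absorbed by $\zeta$ and an $O(\sqrt{N_d})$ boundary loss absorbed by the $\sqrt{N_d}$ term in the conclusion. The main obstacle is orchestrating a five-term triangle inequality that correctly transfers the information from $x_1'$ to $x_2'$ to $x_2$, while staying inside the domain $K$ where the hypothesis $d^{\bfX,\infty}_K(x_1',x_2')<\delta'''$ is useful; this requires trimming $K$ by a boundary of width $2\eta\sqrt{N_d}$ to accommodate the shifts $u_\w$, which by (P1) have magnitude at most $2\eta\sqrt{N_d}$.

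First, I would choose parameters as follows: given $\g',\zeta$, pick $\eta$ small enough that $(3+1/\g')\eta<\zeta$ and $8\eta<\zeta$, and then, using the joint continuity of $T:\bbR\actson X$, pick $\delta'''$ so small that $\delta'''+2\sup_{|t|\le 2\delta'''}\sup_x d^X(x,T^tx)<\eta$. Write the matchings coordinatewise as $\F=((K_{s,\w},u_{s,\w})_\w)_{s=1}^m$ and $\cal{G}=((K'_{s,\w},v_{s,\w})_\w)_{s=1}^m$. The assumption $d_{\DCM,m}(\F,\cal{G})<\delta'''$ gives $d_{\rm{Hdf}}(K_{s,\w},K'_{s,\w})<\delta'''$ and $|u_{s,\w}-v_{s,\w}|<\delta'''$ for every $s,\w$ --- crucially, the canonical coordinate-by-coordinate pairing induced by the metric $d_{\DCM,m}$ aligns $\F_s$ with $\cal{G}_s$ and the $\w$-th piece of one with the $\w$-th piece of the other.

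Next, define the good region: let $K^-:=\{z\in K : [z-2\eta\sqrt{N_d},z+2\eta\sqrt{N_d}]\subseteq K\}$, and let $G:=K^-\cap\bigcup_{s,\w}K_{s,\w}$. Property (P2) applied to $\F$ and the inclusion $K\subseteq J$ together give
\[\cal{L}^1(K\setminus G)\le 4\eta\sqrt{N_d}+\eta\cal{L}^1(J)+4\eta\sqrt{N_d}=\eta\cal{L}^1(J)+8\eta\sqrt{N_d}.\]
For $z\in G$, pick the least $s$ with $z\in\dom(\frM_s)$ and the unique $\w$ with $z\in K_{s,\w}$; by Hausdorff closeness find $z'\in K'_{s,\w}$ with $|z-z'|<\delta'''$, and note $z+u_{s,\w}\in K$ since $z\in K^-$. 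Then the five-step triangle inequality
\begin{align*}
d^X(T^zx_1,T^zx_2) &\le d^X\bigl(T^zx_1,T^{z+u_{s,\w}}x_1'\bigr)+d^X\bigl(T^{z+u_{s,\w}}x_1',T^{z+u_{s,\w}}x_2'\bigr)\\
&\quad +d^X\bigl(T^{z+u_{s,\w}}x_2',T^{z'+v_{s,\w}}x_2'\bigr)+d^X\bigl(T^{z'+v_{s,\w}}x_2',T^{z'}x_2\bigr)\\
&\quad +d^X\bigl(T^{z'}x_2,T^zx_2\bigr)
\end{align*}
uses (P3) for $\F$ (first term, $<\eta$), the hypothesis $d^{\bfX,\infty}_K(x_1',x_2')<\delta'''$ (second term), the continuity estimate for the shift difference $\le 2\delta'''$ (third term), (P3) for $\cal{G}$ at $z'\in K'_{s,\w}$ (fourth term, $<\eta$), and the continuity estimate for $|z-z'|<\delta'''$ (fifth term). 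By the parameter choice this sum is at most $3\eta$.

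Finally, integrating over $K$, with $d^X\le\rm{diam}(X,d^X)\le 1$ on the bad region, and invoking $\cal{L}^1(J)\le\cal{L}^1(K)/\g'$:
\[d^\bfX_K(x_1,x_2)\le 3\eta\,\cal{L}^1(K)+\cal{L}^1(K\setminus G)\le \Bigl(3+\frac{1}{\g'}\Bigr)\eta\,\cal{L}^1(K)+8\eta\sqrt{N_d},\]
which is strictly less than $\zeta(\cal{L}^1(K)+\sqrt{N_d})$ by the choice of $\eta$. Note that this argument makes no use of any bound on $m$ or of condition (4) from Proposition~\ref{prop:pre-key}, because the alignment of labellings in $d_{\DCM,m}$ lets us process each good $z$ using a single paired piece, so the only global input from $\F$ and $\cal{G}$ is (P2) applied to $\F$.
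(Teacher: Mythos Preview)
Your proof is correct and follows essentially the same approach as the paper's: the same shrinking of $K$ by a margin of $2\eta\sqrt{N_d}$, the same five-term triangle inequality routing $x_1 \to x_1' \to x_2' \to x_2$ via the matched intervals, and the same final integration using (P2) and the diameter bound. Your bookkeeping is slightly tidier (absorbing the $\delta'''$-continuity terms into a single $\eta$ up front rather than tracking them as $\zeta/4$ pieces), and your closing observation that neither $m$ nor property (4) is used matches the paper's own remark.
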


\begin{rmks}\emph{(1.)}\quad 
It is very important here that we assume only proximity of $x_1$ and $x_2$ in $d^{\bfX,\infty}_K$, rather than $d^\bfX_J$, and then (of course) also conclude only that kind of proximity.  On the other hand, it is also important that the input is an inequality for $d^{\bfX,\infty}_K$, whereas the output is only for $d^\bfX_K$; this difference will be taken into account later by an appeal to Lemma~\ref{lem:d1-cov-and-dinf-cov}.

\vspace{7pt}

\emph{(2.)}\quad Note also that this lemma does not use the bound on the length $m$ of the tuples $\F$ and $\cal{G}$. \fin
\end{rmks}

\begin{proof}
First, using the joint continuity of $T$, choose $\delta_0 > 0$ so small that
\[|z| \leq 2\delta_0 \quad \Longrightarrow \quad \max_{x \in X}d^X(x,T^zx) < \zeta/4.\]
Next, choose $\eta > 0$ and $\delta''' \in (0,\delta_0]$ both so small that
\[\eta/\g' + 2\eta + \delta''' < \zeta/2 \quad \hbox{and} \quad 8\eta < \zeta.\]

Now assume also that $d > r \geq 1$, and that $x_1$, $x_1'$, $x_2$, $x_2'$, $\F$ and $\cal{G}$ satisfy the stated assumptions.  For each $s \leq m$, let
\[\F = \big((K_{1,s,\omega},u_{1,s,\w})_{\w \in \{0,1\}^{d-r}}\big)_{s=1}^m \quad \hbox{and} \quad \cal{G} = \big((K_{2,s,\omega},u_{2,s,\w})_{\w \in \{0,1\}^{d-r}}\big)_{s=1}^m.\]
Let
\[D := \bigcup_{s=1}^m\bigcup_{\w \in \{0,1\}^{d-r}} K_{1,s,\w},\]
and let $K' \subseteq K$ be the closed subinterval with the same centre and length $\calL^1(K) - 4\eta\sqrt{N_d}$ (understood as $\emptyset$ if this value is negative).  By property (P2),
\begin{multline*}
\calL^1(K\setminus (K'\cap D)) \leq \calL^1(K\setminus K') + \calL^1(J\setminus D)\\ \leq 4\eta\sqrt{N_d} + (\eta\calL^1(J) + 4\eta\sqrt{N_d}) \leq 8\eta\sqrt{N_d} + (\eta/\g')\calL^1(K).
\end{multline*}

Now suppose that $t \in K'\cap D$.  There are $s \leq m$ and $\omega \in \{0,1\}^{d-r}$ such that $t \in K_{1,s,\omega}$, and now the approximation~(\ref{eq:M-N-close}) gives some $w \in (-\delta''',\delta''')$ such that $t + w \in K_{2,s,\omega}$.  Let $u_i := u_{i,s,\w}$ for $i=1,2$.

By the triangle inequality,
\begin{eqnarray*}
d^X(T^tx_1,T^tx_2) &\leq& d^X(T^tx_1,T^{t+u_1}x'_1) + d^X(T^{t+u_1}x'_1,T^{t+u_1}x'_2)\\
&& + d^X(T^{t+u_1}x'_2,T^{t+w+u_2}x'_2) + d^X(T^{t+w+u_2}x'_2,T^{t+w}x_2)\\
&& + d^X(T^{t+w}x_2,T^tx_2).
\end{eqnarray*}
These five right-hand terms may now be bounded separately:
\begin{itemize}
\item property (P3) gives
\[d^X(T^tx_1,T^{t+u_1}x_1') < \eta \quad \hbox{and} \quad  d^X(T^{t+w+u_2}x'_2,T^{t+w}x_2) < \eta;\]
\item since $t \in K''$ and property (P1) gives $|u_1| < 2\eta\sqrt{N_d}$, we still have $t + u_1 \in K'$, and so the approximation~(\ref{eq:K'inf-bound}) gives
\[d^X(T^{t+u_1}x'_1,T^{t+u_1}x'_2) < \delta''';\]
\item finally, our assumptions gave $|w| \leq \delta'''$ and the approximation~(\ref{eq:M-N-close}) gives $|u_1 - u_2| < \delta$, so the choice of $\delta''' \leq \delta_0$ implies that
\[d^X(T^{t+u_1}x'_2,T^{t+w+u_2}x'_2) < \zeta/4 \quad \hbox{and} \quad d^X(T^{t+w}x_2,T^tx_2) < \zeta/4.\]
\end{itemize}

Putting these estimates together gives
\[d^X(T^tx_1,T^tx_2) < 2\eta + \delta + \zeta/2 \quad \forall t \in K'\cap D.\]
Integrating over $t \in K'$, this becomes
\begin{eqnarray*}
d^\bfX_{K'}(x_1,x_2) &\leq& \int_{K\setminus (K'\cap D)}d^X(T^tx_1,T^tx_2)\,\d t + \int_{K'\cap D}d^X(T^tx_1,T^tx_2)\,\d t\\
&\leq& \calL^1(K\setminus (K'\cap D)) + (2\eta + \delta''' + \zeta/2)\calL^1(K'\cap D)\\
&\leq& (\eta/\g')\calL^1(K) + 8\eta\sqrt{N_d} + (2\eta + \delta''' + \zeta/2)\calL^1(K)\\
&=& (\eta/\g' + 2\eta + \delta''' + \zeta/2)\calL^1(K) + 8\eta\sqrt{N_d},
\end{eqnarray*}
and this is less than $\zeta\calL^1(K) + \zeta\sqrt{N_d}$ by the choice of $\eta$ and $\delta'''$.
\end{proof}

\begin{lem}\label{lem:pre-ofpropkey}
For any $\g,\psi,\eps,\beta,\delta' > 0$ there are $\eta,\delta > 0$ and a sequence of subsets $Y_d \subseteq Y$, $d \geq 1$, such that $\nu(Y_d) > 1 - \beta$ for all sufficiently large $d$, and such that the following holds.  If
\begin{quote}
$B \in C_0(0,1]$, $J := \sqrt{N_d}B_{[0,1]}$ and $K \in \rm{Int}(J)$ satisfy both $\calL^1(K) \geq (\psi/2)\sqrt{N_d}$ and $\calL^1(K) \geq \g\calL^1(J)$,
\end{quote}
and if
\[U \subseteq \big\{y \in Y_d\,\big|\ \|\traj_{N_d}(\s^y) - B\|_\infty < \eta\big\} \times X,\]
then
\[\rm{cov}\big(\big(U\cap B^{d^{\bfY \ltimes_\s \bfX}_{[0;N_d)}}_{\delta N_d}(C),\,\pi_X^\ast d^{\bfX,\infty}_K\big),\delta'\big) \leq \exp(\eps \calL^1(K))\]
whenever $C\subseteq U$ has diameter at most $\delta$ according to the pseudometric $\pi_X^\ast d^{\bfX,\infty}_K$.

The analogous result holds when $\traj_{N_d}$ is replaced by $\traj_{-N_d}$ and $d^{\bfY \ltimes_\s \bfX}_{[0;N_d)}$ is replaced by $d^{\bfY \ltimes_\s \bfX}_{[-N_d;0)}$.
\end{lem}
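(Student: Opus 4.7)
The plan is to assign to each point in $U\cap B^{d^{\bfY \ltimes_\s \bfX}_{[0;N_d)}}_{\delta N_d}(C)$ a tuple of discrete Cantor matchings via Proposition~\ref{prop:key}, count those tuples up to coarse $d_{\DCM,m}$-proximity using Lemma~\ref{lem:no-of-matchings}, deduce from Lemma~\ref{lem:close-match-close-scenery} that points whose matchings are close have small $d^\bfX_K$-distance in the scenery coordinate, and finally pass from a $d^\bfX_K$-diameter bound to a $d^{\bfX,\infty}_K$-covering estimate via Lemma~\ref{lem:d1-cov-and-dinf-cov}.

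For the parameter selection I would first fix tolerances $\eps_1,\eps_2>0$ with $\eps_1(1/\g + 4/\psi) + \eps_2 < \eps$. Apply Lemma~\ref{lem:d1-cov-and-dinf-cov} at $\eps_2,\delta'$ to get $\delta_1>0$, then set $\zeta := \delta_1/(1 + 2/\psi)$ and $\g' := \g/2$; apply Lemma~\ref{lem:close-match-close-scenery} with these values to obtain $\eta_0,\delta'''\in (0,1)$, and set $\eta := \min\{\eta_0,\psi/4\}$ (the second constraint will guarantee $\calL^1(J^*)\leq (1+4\eta/\psi)\calL^1(J) \leq 2\calL^1(J)$ for the enlarged interval $J^* := J + [-\eta\sqrt{N_d},\eta\sqrt{N_d}]$ produced by Proposition~\ref{prop:key}, consistent with the choice $\g' = \g/2$ when combined with the hypothesis $\calL^1(K)\geq \g\calL^1(J)$). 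Feed $\eta,\beta$ into Proposition~\ref{prop:pre-key} to obtain $M,r_0$ and the sets $Y^{\rm{good}}_{r,d}$, then pick $r := \max\{r_0,r_1\}$ where $r_1$ is supplied by Lemma~\ref{lem:no-of-matchings} applied with tolerance $\eps_1$ and radius $\delta'''/2$, and set $Y_d := Y^{\rm{good}}_{r,d}$. The proposition returns the final $\delta>0$, which I shrink further if needed to ensure $\delta \leq \delta'''$.

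For the main argument, pick any reference $(y',x') \in C$ for a given $(y,x) \in U\cap B^{d^{\bfY \ltimes_\s \bfX}_{[0;N_d)}}_{\delta N_d}(C)$ with $d^{\bfY \ltimes_\s \bfX}_{[0;N_d)}((y,x),(y',x')) \leq \delta N_d$. The hypotheses of Proposition~\ref{prop:key} are met (both $y,y'\in Y^{\rm{good}}_{r,d}$ with trajectories close to $B$), so one obtains $\F(y,x) \in \DCM_{d-r,4\ell(N_d\geq\ldots\geq N_{r+1})}(J^*)^m$ with $m = \lfloor M\calL^1(J^*)/(2^{d-r}N_r^{1/3})\rfloor$ and satisfying (P1)--(P3). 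By Lemma~\ref{lem:no-of-matchings}, the ambient space of tuples has $(\delta'''/2)$-covering number in $d_{\DCM,m}$ bounded by $\exp(\eps_1\calL^1(J^*)) \leq \exp(\eps_1(1/\g + 4/\psi)\calL^1(K))$, using $\calL^1(J^*)\leq 2\calL^1(J) \leq (2/\g)\calL^1(K)$. Partition our set by which of these $d_{\DCM,m}$-balls $\F(y,x)$ lands in. Within a single part, any two points $(y_1,x_1),(y_2,x_2)$ with references in $C$ satisfy $d_{\DCM,m}(\F(y_1,x_1),\F(y_2,x_2)) < \delta'''$ and $d^{\bfX,\infty}_K(x_1',x_2')\leq \delta\leq \delta'''$, so Lemma~\ref{lem:close-match-close-scenery} (applied with ambient interval $J^*$ and $\g' = \g/2$) yields
\[ d^\bfX_K(x_1,x_2) < \zeta(\calL^1(K) + \sqrt{N_d}) \leq \zeta(1 + 2/\psi)\calL^1(K) = \delta_1\calL^1(K), \]
where $\sqrt{N_d}\leq (2/\psi)\calL^1(K)$ comes from $\calL^1(K) \geq (\psi/2)\sqrt{N_d}$. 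Hence each part lies in a single $\pi_X^\ast d^\bfX_K$-ball of radius $\delta_1\calL^1(K)$, which by Lemma~\ref{lem:d1-cov-and-dinf-cov} is covered by at most $\exp(\eps_2\calL^1(K))$ balls of $\pi_X^\ast d^{\bfX,\infty}_K$-radius $\delta'$. Multiplying the two counts gives the desired $\exp(\eps\calL^1(K))$ bound. The case of $\traj_{-N_d}$ and $d^{\bfY \ltimes_\s \bfX}_{[-N_d;0)}$ is identical by time-reversal.

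The main obstacle is the interlocking choice of parameters rather than any single estimate: $\eta$ must be fixed before $r$, yet $\delta$ (which governs both the bi-neighbourhood radius and the $d_{\DCM,m}$-covering radius via $\delta\leq\delta'''$) is produced only once $r$ is in hand by Proposition~\ref{prop:key}. The quantitative heart of the argument is that, thanks to Lemma~\ref{lem:no-of-matchings}, the matching-tuple count contributes an exponent that is a uniformly small multiple of $\calL^1(J)$, which by the aspect-ratio hypothesis $\calL^1(K)\geq \g\calL^1(J)$ translates to a uniformly small multiple of $\calL^1(K)$; otherwise the matching-tuple count would swamp the $\exp(\eps\calL^1(K))$ target.
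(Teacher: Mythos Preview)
Your proposal is correct and follows essentially the same route as the paper's proof: assign matchings via Proposition~\ref{prop:key}, count them with Lemma~\ref{lem:no-of-matchings}, translate matching-proximity plus $C$'s small $\pi_X^\ast d^{\bfX,\infty}_K$-diameter into a $d^\bfX_K$-diameter bound via Lemma~\ref{lem:close-match-close-scenery}, and finish with Lemma~\ref{lem:d1-cov-and-dinf-cov}. The only differences are cosmetic parameter bookkeeping (your $\g' = \g/2$ versus the paper's $\g/(1+4/\psi)$, your explicit shrinking of $\delta$ to enforce $\delta \leq \delta'''$, which the paper leaves tacit); your version is if anything slightly more careful on that last point.
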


\begin{proof}
It will be clear that the second assertion follows in the same way as the first, so we concentrate on that.

\vspace{7pt}

\emph{Step 1: Choosing the parameters.}\quad
\begin{itemize}
\item First recall from Lemma~\ref{lem:d1-cov-and-dinf-cov} that for our given $\delta' > 0$, there is some $\delta'' > 0$ such that for every $x \in X$ and $K \in \rm{Int}(\bbR)$ with $\calL^1(K) \geq 1$ one has
\[\cov\big((B^{d^\bfX_K}_{\delta''\calL^1(K)}(x),d^{\bfX,\infty}_K),\delta'\big) < \exp(\eps \calL^1(K)/2).\]

\item Next, choose $\g' := \g/(1 + 4/\psi)$, choose $\zeta$ so small that $\zeta(1 + 2/\psi) <  \delta''$, and now implement Lemma~\ref{lem:close-match-close-scenery} with this $\g'$ and $\zeta$ to obtain some $\eta,\delta''' \in (0,1)$ with the property described there.

\item For the given value of $\beta$ and for the $\eta$ chosen above, now let $M < \infty$, $r_0 \in \bbN$ and the subsets $Y^{\rm{good}}_{r,d} \subseteq Y$ for $d > r \geq r_0$ be as provided by Proposition~\ref{prop:pre-key}.

\item Now let
\[\eps' := \eps/(2/\g + 8\eta/\psi),\]
and apply Lemma~\ref{lem:no-of-matchings} to obtain some $r \geq r_0$ such that for any $d > r$, if
\[I \in \rm{Int}(\bbR), \quad \calL^1(I) \leq 4\ell N_d \quad \hbox{and} \quad m := \Big\lfloor \frac{M\calL^1(I)}{2^{d-r}N_r^{1/3}} \Big\rfloor,\]
then
\[\cov\big(\big((\DCM_{d-r,4\ell(N_d\geq \ldots \geq N_{r+1})}(I))^m,d_{\DCM,m}\big),\delta'''/2\big)\big) \leq \exp(\eps'\calL^1(I)).\]
Fix this $r$, let $Y_d := Y^{\rm{good}}_{r,d}$ for all $d > r$, and, for completeness, let $Y_d := \emptyset $ for $d \leq r$.  The conclusion of Proposition~\ref{prop:pre-key} gives $\nu(Y_d) > 1 - \beta$ for all sufficiently large $d$.

\item Finally, having found this $r$, in addition to the other parameters chosen above, let $\delta > 0$ be given by Proposition~\ref{prop:key}.
\end{itemize}

Now assume that $d > r$ is sufficiently large and that $B$, $K$ and $J$ are as in the statement of the lemma.  Observe that
\[J' := J + [-\eta\sqrt{N_d},\eta\sqrt{N_d}] \subseteq B_\ell(\s^y_{[0;N_d)}) + [-2\eta\sqrt{N_d},2\eta\sqrt{N_d}],\]
so one also has $\calL^1(J') \leq 4\ell N_d$ once $d$ is sufficiently large. Let
\[m := \Big\lfloor \frac{M\calL^1(J')}{2^{d-r}N_r^{1/3}} \Big\rfloor.\]

\vspace{7pt}

\emph{Step 2: The Hamming-like metric.}\quad The next step is to prove an analog of the desired bound with $\pi_X^\ast d^\bfX_K$ in place of $\pi_X^\ast d^{\bfX,\infty}_K$ and with diameter $\delta''$ in place of radius $\delta'$.

Given our assumption on $U$ and choice of $\delta$, Proposition~\ref{prop:key} asserts that
\begin{multline*}
U\cap B^{d^{\bfY \ltimes_\s \bfX}_{[0;N_d)}}_{\delta N_d}(C)  \subseteq \big\{(y,x)\,\big|\ \exists (y',x') \in C\ \hbox{and}\ \F \ \hbox{such that}\ |\F| = m\\ \hbox{and}\ J',x,x',\F\ \hbox{satisfy (P1)--(P3)}\big\}.
\end{multline*}
Next, for any $d > r$, Lemma~\ref{lem:no-of-matchings} gives a Borel partition $\Q$ of $\DCM_{d-r,4\ell(N_d\geq \ldots \geq N_{r+1})}(J')^m$ into cells of diameter at most $\delta'''$ according to $d_{\DCM,m}$, and with
\begin{multline*}
|\Q| \leq \exp(\eps' \calL^1(J')) \leq \exp(\eps'(\calL^1(J) + 4\eta\sqrt{N_d}))\\ \leq \exp
(\eps'(1/\g + 4\eta/\psi)\calL^1(K))\leq \exp(\eps\calL^1(K)/2).
\end{multline*}
The above containment may now be written
\[U\cap B^{d^{\bfX \ltimes_\s \bfX}_{[0;N_d)}}_{\delta N_d}(C)\subseteq \bigcup_{Q \in \Q}R_Q\]
with
\[R_Q := \big\{(y,x)\,\big|\ \exists (y',x') \in C\ \hbox{and}\ \F \in Q \ \hbox{such that}\ J',x,x',\F\ \hbox{satisfy (P1)--(P3)}\big\}.\]
Now observe also that since $\calL^1(J) \geq \calL^1(K) \geq (2/\psi)\sqrt{N_d}$, we have
\[\calL^1(K) \geq \g\calL^1(J) \geq \g\frac{\calL^1(J')}{1 + 4\eta/\psi} \geq \g\frac{\calL^1(J')}{1 + 4/\psi} \geq \g'\calL^1(J').\]
Therefore, the choice of $\eta$ and $\delta'''$ using Lemma~\ref{lem:close-match-close-scenery} implies that
\[\rm{diam}(R_Q,\pi_X^\ast d^\bfX_K) < \zeta \calL^1(K) + \zeta\sqrt{N_d} \leq \zeta(1 + 2/\psi)\calL^1(K) < \delta''\calL^1(K) \quad \forall Q \in \Q.\]

\vspace{7pt}

\emph{Step 3: The Bowen-Dinaburg metric}\quad It remains to improve our conclusion from $\pi_X^\ast d^\bfX_K$ to $\pi_X^\ast d^{\bfX,\infty}_K$.  This follows because, by Lemma~\ref{lem:d1-cov-and-dinf-cov} and our choice of $\delta''$, each of the sets $R_Q$ obtained above may in turn be covered by at most $\exp(\eps \calL^1(K)/2)$ balls of radius $\delta'$ for the pseudometric $\pi_X^\ast d^{\bfX,\infty}_K$.
\end{proof}

\begin{proof}[Proof of Proposition~\ref{prop:cov-of-bi-neigh}]
This follows from two back-to-back appeals to Lemma~\ref{lem:pre-ofpropkey}, with some care over the values of all the error tolerances.

\vspace{7pt}

\emph{Step 1: Choosing the parameters.}\quad We are given $\g,\psi,\eps,\b,\delta' > 0$.

By the first part of Lemma~\ref{lem:pre-ofpropkey}, we may choose some $\eta_1,\delta_1 > 0$ and subsets $Y_{d,1} \subseteq Y$ such that $\nu(Y_{d,1}) > 1 - \beta/2$ for all sufficiently large $d$, and such that the following holds.  If $B' \in C_0(0,1]$, $J := \sqrt{N_d}B'_{[0,1]}$, and $K \in \rm{Int}(J)$ with both
\[\calL^1(K) \geq (\psi/2)\sqrt{N_d} \quad \hbox{and} \quad \calL^1(K) \geq \g\calL^1(J),\]
and if
\[U \subseteq \big\{y \in Y_{d,1}\,\big|\ \|\traj_{N_d}(\s^y) - B'\|_\infty < \eta_1\big\} \times X,\]
then
\[\rm{cov}\big(\big(U\cap B^{d^{\bfY \ltimes_\s \bfX}_{[0;N_d)}}_{\delta_1 N_d}(C),\,\pi_X^\ast d^{\bfX,\infty}_K\big),\delta'\big) \leq \exp(\eps \calL^1(K)/2)\]
whenever $C\subseteq U$ has diameter at most $\delta_1$ according to the pseudometric $\pi_X^\ast d^{\bfX,\infty}_K$.

Having done so, now the second part of Lemma~\ref{lem:pre-ofpropkey} gives some $\eta \in (0,\eta_1],\delta \in (0,\delta_1]$ and subsets $Y_d \subseteq Y_{d,1}$ such that $\nu(Y_d) > 1 - \beta$ for all sufficiently large $d$, and such that the following holds. If $B \in C_0(0,1]$, $I := \sqrt{N_d}B_{[0,1]}$, and $K \in \rm{Int}(I)$ with both
\[\calL^1(K) \geq (\psi/2)\sqrt{N_d} \quad \hbox{and} \quad \calL^1(K) \geq \g\calL^1(I),\]
and if
\[U' \subseteq \big\{y \in Y_d\,\big|\ \|\traj_{-N_d}(\s^y) - B\|_\infty < \eta\big\} \times X,\]
then
\[\rm{cov}\big(\big(U'\cap B^{d^{\bfY \ltimes_\s \bfX}_{[-N_d;0)}}_{\delta N_d}(C),\,\pi_X^\ast d^{\bfX,\infty}_K\big),\delta_1/2\big) \leq \exp(\eps \calL^1(K)/2)\]
whenever $C\subseteq U$ has diameter at most $\delta$ according to the pseudometric $\pi_X^\ast d^{\bfX,\infty}_K$.

This gives our choice of $\eta$, $\delta$ and $Y_d$.

\vspace{7pt}

\emph{Step 2: Completion of the proof.}\quad Now suppose that $B,B' \in C_0(0,1]$ have $\rm{aspect}(B,B') > \g$ and $\calL^1(B_{[0,1]}\cap B'_{[0,1]}) \geq \psi/2$, and let
\[I:= \sqrt{N_d}B_{[0,1]}, \quad J := \sqrt{N_d}B'_{[0,1]}, \quad \hbox{and} \quad K := I\cap J.\]
Then $\calL^1(K) \geq (\psi/2)\sqrt{N_d}$, and the lower bound on $\rm{aspect}(B,B')$ implies that $\calL^1(I),\calL^1(J) \geq \g\calL^1(K)$.  Also set
\[U:= \big\{y \in Y_d\,\big|\ \|\traj_{-N_d}(\s^y) - B\|_\infty < \eta\ \hbox{and}\ \|\traj_{N_d}(\s^y) - B'\|_\infty < \eta\big\} \times X,\]
and suppose that $(y,x) \in U$.

Since
\[U \subseteq \big\{y \in Y_d\,\big|\ \|\traj_{-N_d}(\s^y) - B'\|_\infty < \eta\big\} \times X\]
the choice of $\delta$ (applied with $C := \{(y,x)\}$) implies that the set $U\cap B^{d^{\bfY \ltimes_\s \bfX}_{[-N_d;0)}}_{\delta N_d}(y,x)$ has a Borel partition $\R$ into cells of diameter at most $\delta_1$ according to the pseudometric $\pi_X^\ast d^{\bfX,\infty}_K$ and with
\[|\R| \leq \exp(\eps \calL^1(I\cap J)/2).\]

Next, for each $C \in \R$, since
\[C \subseteq U \subseteq \big\{y \in Y_d\,\big|\ \|\traj_{N_d}(\s^y) - B\|_\infty < \eta\big\} \times X,\]
and since $\delta \leq \delta_1$ and $\eta \leq \eta_1$, the choice of $\delta_1$ and $\eta $ gives that
\[\cov\big(\big(U\cap B^{d^{\bfY \ltimes_\s \bfX}_{[0;N_d)}}_{\delta N_d}(C),\pi_X^\ast d^{\bfX,\infty}_K\big),\delta'\big) < \exp(\eps \calL^1(I\cap J)/2).\]
Since
\[U\cap B^{d^{\bfY \ltimes_\s \bfX}_{[0;N_d)}}_{\delta N_d}\big(U\cap B^{d^{\bfY \ltimes_\s \bfX}_{[-N_d;0)}}_{\delta N_d}(y,x)\big) = \bigcup_{C \in \R}\big(U\cap B^{d^{\bfY \ltimes_\s \bfX}_{[0;N_d)}}_{\delta N_d}(C)\big),\]
these bounds combine to give an overall $\delta'$-covering number of the whole bi-neighbourhood, according to $\pi_X^\ast d^{\bfX,\infty}_K$, of at most $\exp(\eps \calL^1(I\cap J))$.
\end{proof}

\section{Further questions and directions}\label{sec:further-ques}

\subsection{Further understanding of the marginal m.p. spaces}\label{subs:lim-geom}

Another natural approach to Theorem A would seek an enhancement of Kalikow's proof of Theorem~\ref{thm:Kal2} which somehow quantifies the failure of either the Very Weak Bernoulli condition or extremality.

An interesting proposal towards this end has been widely discussed by Thouvenot, often in connection with his Weak Pinsker Conjecture.  For a general shift-invariant process $(A^\bbZ,\mu,S)$ with marginal m.p. spaces $(A^N,d_{\rm{Ham}},\mu_N)$, he suggests considering the smallest number of pairwise-disjoint subsets of $A^N$ that one needs in order that their union carry most of $\mu_N$, and so that the conditional measure of $\mu_N$ on each of them exhibits exponential concentration.  We will not define this more carefully here, but refer to it as the `concentrating-decomposition rate'.

This is an attractive idea in the context of $\rm{RWRS}_\mu$, because the decomposition~(\ref{eq:cond-meass2}) can be associated with the family of graphs
\[\{(y,F_c(y)\,|\ y \in Y_N\} \quad \hbox{for}\ c \in X_N,\]
where the notation is as in Section~\ref{sec:prelim-discuss}. With a little trimming, this decomposition can be turned into a pairwise-disjoint family of subsets of $\{\pm 1\}^N\times C^N$ that carry most of $\rho_N$, and number roughly $\exp (2R\rmh(\mu,S)\sqrt{N})$.  If one could show that this decomposition is, up to order $\exp(\rm{o}(\sqrt{N}))$, among the most efficient ways to break $\rho_N$ into exponentially-concentrated components, then it seems that the scenery entropy $\rmh(\mu,S)$ naturally appears inside this intrinsic geometric invariant of the spaces $(\{\pm 1\}^N\times C^N,d_{\rm{Ham}},\rho_N)$.

Unfortunately, it is not clear that the conditional measures $\rho_{N,c}$ \emph{are} exponentially concentrated.  By definition, we had
\begin{eqnarray}\label{eq:pushfwd}
\rho_{N,c} = (\rm{id},F_c)_\ast \nu_{1/2}^{\otimes N},
\end{eqnarray}
but we have no guarantee that the functions $F_c$ enjoy any `approximate continuity': indeed, it is easy to see that they do not, by slightly modifying Example~\ref{ex:hard-to-recover}.

Thus, there is no reason why the pushforward in~(\ref{eq:pushfwd}) should preserve the exponential concentration property of $\nu_{1/2}^{\otimes N}$, and I do not see any other reason why that property should hold for $\rho_{N,c}$.  It could be that, in order to decompose $\rho_N$ into exponentially-concentrated measures, one needs to decompose each $\rho_{N,c}$ further by conditioning on some additional properties of a random walk path $y$, and I know of no very good estimate on the number of further cells that one would need.  For the above idea, it would be essential that this further partition for each $\rho_{N,c}$ use at most $\exp(\rm{o}(\sqrt{N}))$ cells, so that it does not change the leading-order estimate given by the decomposition according to the graphs of $F_c$.

%

\subsection{Other random walks}\label{subs:other-walks}

Several variants of the RWRS processes do not fall into the class considered by Theorem A.

Perhaps the nearest relatives are those in which the underlying random walk is $p$-stable for some $p \in (1,2)$, so that one has an invariance principle for convergence to a $p$-stable L\'evy process.  In this case, I suspect that the proofs above can be easily adapted to give the following.

\begin{conj}\label{conj:p-stable}
If $(\bfY,\s)$ are the system and cocycle corresponding to a $p$-stable random walk on $\bbZ$ for some $p \in (1,2)$, and if $\bfX$ is a Bernoulli flow, then
\begin{multline*}
\sup_{\k> \k' > 0}\sup_{\delta > 0}\limsup_{N\to\infty}\frac{\log \BIPACK_{\a,\k,\k',\delta}(Y\times X,d_{[-N;0)}^{\bfY \ltimes_\s \bfY},d_{[0;N)}^{\bfY \ltimes_\s \bfX},\nu\otimes \mu)}{N^{1/p}}\\ = \psi_{p\hbox{\scriptsize{-}}\rm{stab}}(\a)\rmh(\bfX),
\end{multline*}
where $\psi_{p\hbox{\scriptsize{-}}\rm{stab}}$ is the obvious analog of $\psi_{\rm{BM}}$ for the $p$-stable L\'evy process.
\end{conj}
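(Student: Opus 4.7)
The plan is to mirror the architecture of the proof of Theorem~\ref{thm:rate} with Brownian motion replaced everywhere by the $p$-stable L\'evy process $L$, and with the quantitative probabilistic inputs re-derived in the heavy-tailed regime. Throughout one works with the bi-packing rather than the bi-covering, as in the conjecture's formulation; the arguments based on Proposition~\ref{prop:rel-efficient-cov} translate cleanly to the packing side by standard doubling inequalities, and the lower bound is in fact more natural for bi-packing, since it is built from many essentially-disjoint conditional distributions rather than from balls that must cover mass. The first task is to set up $\psi_{p\text{-stab}}$ in analogy with Definition~\ref{dfn:psi}, using $\calL^1(L_{[0,1]}\cap L'_{[0,1]})$ for independent $p$-stable L\'evy processes, and to verify the analog of Lemma~\ref{lem:props-of-psiBM}: for $p \in (1,2)$, the range of $L$ has positive Lebesgue measure a.s., since the L\'evy local time exists, is continuous in space, and is strictly positive on the random open set where it is defined, and the intersection measure is atomless by independence and self-similarity.

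Next, I would develop the $p$-stable replacements for the inputs in Section~\ref{sec:dCs}: a one-point bound $\nu\{\s^y_N \in I\} \lesssim \max\{\calL^1(I),1\}/N^{1/p}$ by characteristic-function analysis; a replacement for Corollary~\ref{cor:fourth-moment} using $L^q$ norms for some $q < p$ together with moment inequalities for sums with regularly varying tails; a Kolmogorov-style maximal inequality yielding $\nu\{\max_{n < N}|\s^y_n| \geq b N^{1/p}\} \lesssim b^{-q}$; and an enhanced invariance principle giving joint convergence of $(\traj_N(\s^y),\ u\mapsto (\phi\star\g^y_{[0;N)})(N^{1/p} u))$ to $(L, L^L)$. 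For the Bernoulli base with iid $p$-stable-domain steps these are essentially classical (the Kesten--Spitzer framework for the occupation-measure half). Feeding these into Proposition~\ref{prop:spade} produces an error of the form $C/(L^\gamma \a^2)$ for some exponent $\gamma = \gamma(p) > 0$ as already anticipated in the remark following that proposition, and one then re-calibrates the filtration parameters in Subsection~\ref{subs:disc-filt} by setting $L_d := \lceil (d+1)^K \rceil$ with $K = K(\gamma)$ large enough to preserve both summability of failure probabilities and the key summation estimate of Lemma~\ref{lem:baby}.

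For the upper bound one repeats the argument of Section~\ref{sec:upper-bd} essentially verbatim: select finitely many pairs of L\'evy-like asymptotic trajectories $(L_i,L'_i)$ covering the bulk of the joint law of $(\traj_{-N}(\s^y),\traj_N(\s^y))$ with $\calL^1(L_{i,[0,1]}\cap L'_{i,[0,1]}) < \psi_{p\text{-stab}}(\a) + \eps$; build the partitions $\R_N,\calS_N,\T_N$ with $\Q^{N^{1/p}(I_i\cap J_i)\cap \bbZ}$ replacing $\Q^{\sqrt{N}(I_i\cap J_i)\cap \bbZ}$, so that the typical cell-count from Shannon--McMillan contributes the target $\rmh(\bfX)\,\psi_{p\text{-stab}}(\a)\,N^{1/p}$ factor; and apply Proposition~\ref{prop:rel-efficient-cov}. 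A pleasant simplification is that $\rmI_\nu(\P^{[-N;0)};\P^{[0;N)}) = 0$ in the iid case, so Corollary~\ref{cor:Gibbs-mut-inf} is trivially available. For the lower bound, the Bernoulli hypothesis on $\bfX$ is crucial: one decomposes the relevant marginal into roughly $\exp(\rmh(\bfX) N^{1/p}(1 - o(1)))$ essentially-disjoint conditional distributions indexed by scenery patches over the visited window, and invokes Theorem~\ref{thm:Bern-shift-almost-exp-conc} to separate these in Hamming distance; the scenery-recovery analysis behind Proposition~\ref{prop:key} is then recast using discrete Cantor structures built on ranges of $p$-stable trajectories rather than Brownian ones.

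The principal obstacle is the poorer regularity of $p$-stable occupation measures. Lemma~\ref{lem:occ-smooth} relied on the fact that Brownian local time is continuous with support equal to a non-degenerate interval; for $p \in (1,2)$ the L\'evy local time is only H\"older continuous of order strictly less than $1 - 1/p$ and is supported on a Cantor-like random subset of an interval, reflecting the jumps of $L$. This degrades the ``spread'' and ``smooth'' events in Subsection~\ref{subs:disc-filt}: the inclusion $[\s^y_a - (b-a)^{1/3},\s^y_a + (b-a)^{1/3}] \subseteq B_\ell(\s^y_{[a;b)})$ must be replaced by a one-sided condition that tolerates large jumps, the mollifier $\phi$ in the enhanced invariance principle must be chosen with support broader than the typical jump scale, and the separation requirement in Proposition~\ref{prop:gooddCs} has to be relaxed from ``$2\ell$-separated'' to a threshold growing like $(LM)^{1/p}$. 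Pushing these modifications through the tightly-coupled estimates of Sections~\ref{sec:dCs} and~\ref{sec:lower-bd} is the main technical hurdle, and is also why it seems advisable to target the conjecture first with Bernoulli fibre $\bfX$, where exponential concentration masks the residual geometric inefficiencies.
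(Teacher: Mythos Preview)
This statement is a \emph{conjecture} in the paper, not a theorem: it appears in Section~\ref{sec:further-ques} under ``Further questions and directions,'' introduced by the sentence ``I suspect that the proofs above can be easily adapted to give the following.'' The paper provides no proof, and indeed the author explicitly leaves it open. There is therefore no proof in the paper to compare your proposal against; what you have written is a speculative research programme, not a verification of an existing argument.

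That said, your outline is broadly in the spirit of what the author seems to have in mind, and you correctly identify the main pressure points: the replacement of Brownian estimates by $p$-stable ones in Lemmas~\ref{lem:one-time-not-sm} and~\ref{lem:max-not-lg}, the re-calibration of the filtration exponents in Subsection~\ref{subs:disc-filt} to accommodate a smaller $\gamma$ in Proposition~\ref{prop:spade} (already flagged in the remark following that proposition), and the need for a $p$-stable Enhanced Invariance Principle. However, a few of your technical assertions are off. For symmetric $p$-stable processes with $p\in(1,2)$, the local time exists and is jointly continuous, and the range $L_{[0,1]}$ has \emph{positive} Lebesgue measure; it is not ``Cantor-like'' in the sense of being Lebesgue-null, though it is not an interval either because of jumps. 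This matters because your proposed repair of Lemma~\ref{lem:occ-smooth} hinges on how the range fails to be an interval, and the actual obstruction is the presence of gaps of random size rather than a fractal support. Also note that the conjecture is stated for $\BIPACK$ rather than $\BICOV$; the paper's own remark after Definition~\ref{dfn:big-bicov} warns that there is no simple relation between the two, so your appeal to ``standard doubling inequalities'' to pass between them is not justified and would itself require new ideas.
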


On the other hand, the generalization to random walks in $\bbZ^2$, as in~\cite{denHSte97}, is quite different.  The problem there is that a typical pair of trajectories $\s^y_{[0;N)},\s^{y'}_{[0;N)}$ spend only $\rm{o}(N)$ amount of time at locations which are visited by both of them.  I suspect this implies that \emph{no} information is robustly remembered by both the $N$-step past and the $N$-step future, in the sense of the following.

\begin{conj}
Let $\bf{e}_1,\bf{e}_2$ be the usual basis of $\bbZ^2$.  Let $\bfY = (\{\pm \bf{e}_1,\pm \bf{e}_2\}^\bbZ,\nu,S)$ be a Bernoulli shift with $\nu = \nu_{(1/4,1/4,1/4,1/4)}^{\otimes \bbZ}$, let $\s:Y\to \{\pm \bf{e}_1,\pm \bf{e}_2\}$ be the time-zero coordinate, and let $\bfX$ be a finite-entropy Bernoulli $\bbZ^2$-system.  Then
\[\rm{BIPACK}_{\a,\k,\k',\delta}(Y\times X,d^{\bfY \ltimes_\s \bfX}_{[-N;0)},d^{\bfY \ltimes_\s \bfX}_{[0;N)},\nu \otimes \mu) = 1\]
for all sufficiently large $N$, for all $\a > 1$, $\k > \k' > 0$ and $\delta > 0$.
\end{conj}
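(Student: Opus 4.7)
\medskip

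\noindent\textbf{Proof plan for the conjecture.}

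The plan is to show, with Min-er's choice $\mu' := \nu \otimes \mu$ (which has Radon--Nikodym derivative $1 \leq \a$), that for every $U \subseteq Y\times X$ with $\mu'(U) \geq \k$, a $\mu'$-large sub-subset $U' \subseteq U$ has the property that \emph{no} two points in $U'$ are $\delta$-bi-separated, once $N$ is large enough. Since a singleton is trivially a packing, this forces $\bipack_{\k'} = 1$ for any $\k' < \mu'(U')$. The mechanism relies on the fact that in dimension two, even though SRW is recurrent, typical past and future trajectories of comparable length overlap on a set so small that both pasts can be matched onto a common future without disturbing the sceneries in more than $o(N)$ of the time-steps.

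The crux is the following Erd\H{o}s--Taylor-type estimate for 2D SRW: for three independent walks drawn from $\nu$, with past ranges $A := \s^y_{[-N;0)}$, $B := \s^{y'}_{[-N;0)}$ and future range, say, $C(z) := \s^z_{[0;N)}$, the \emph{intersection local time}
\[ L(y,y',z) := \bigl|\{n\in[0;N) : \s^z_n \in A\cap B\}\bigr| \]
has $\nu^{\otimes 3}$-mean bounded by the mutual intersection time of two 2D SRWs, which is classically $O(N/\log N)=o(N)$. Markov's Inequality then gives, for any $\eta > 0$, a set $G_N \subseteq Y^2$ with $\nu^{\otimes 2}(G_N) \to 1$ such that for every $(y,y') \in G_N$, a positive-$\nu$-measure set of $\zeta \in Y$ makes $L(y,y',\zeta) \leq \eta N$. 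A standard Fubini-and-excision step (parallel to the reductions in Sections~\ref{sec:before-upper-bd}--\ref{sec:upper-bd}) then produces the subset $U' \subseteq U$, with $\mu'(U') \geq \k-o(1)$, such that \emph{every} pair $(y,x),(y',x') \in U'$ has $(y,y') \in G_N$.

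Given such a pair, a valid non-bi-separation witness is constructed directly, exploiting that $\bfY$ is a Bernoulli shift on four symbols (so arbitrary concatenations give valid elements of $Y$). Fix $\zeta$ with $L(y,y',\zeta) \leq \eta N$, and set
\[ (y_1)_n := y_n,\ (y_2)_n := y'_n \text{ for } n<0, \qquad (y_1)_n=(y_2)_n := \zeta_n \text{ for } n \geq 0, \qquad z := y_1. \]
Then $\s^{y_1},\s^{y_2},\s^z$ agree on $[0;N)$ with common image $C \subseteq \bbZ^2$, while they agree with $\s^y,\s^{y'}$ respectively on $[-N;0)$. Passing to a Bernoulli-model compact system for $\bfX$ (via Sinai/Ornstein) and identifying it with a product shift on $D^{\bbZ^2}$ with an appropriate metric and its partition-based approximant (via Lemma~\ref{lem:bd-above-with-pt}), define sceneries by $w := x$ on $C \cap A$, $w := x'$ on $(C\cap B)\setminus A$, $w$ arbitrary elsewhere; $x_1 := x$ on $A$ and $x_1 := w$ off $A$; and $x_2 := x'$ on $B$ and $x_2 := w$ off $B$. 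The triangle of distances $(y,x)\leftrightarrow (y_1,x_1)\leftrightarrow (z,w)\leftrightarrow (y_2,x_2)\leftrightarrow (y',x')$ decomposes into three pairs where the two sceneries agree on every site of the relevant trajectory (giving $d^{\bfY\ltimes_\s\bfX}$-distance equal to a bounded $d^\bfY$-contribution plus $0$) and one pair, $(y_2,x_2)\leftrightarrow (z,w)$, where $x_2 \neq w$ only on $C \cap A \cap B \cap \{x \neq x'\}$; this contributes at most $L(y,y',\zeta) \leq \eta N$ to the Hamming-based distance. Choosing $\eta$ small enough relative to $\delta$ therefore guarantees all four $d^{\bfY\ltimes_\s\bfX}$-distances are $< \delta N$, i.e., $(y,x)$ and $(y',x')$ are \emph{not} $\delta$-bi-separated.

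The main obstacle is the 2D random walk intersection estimate in Step~1: obtaining a clean $o(N)$ bound on $L(y,y',z)$ with convergence in probability requires deploying, for instance, Erd\H{o}s--Taylor asymptotics for the range of a 2D SRW together with a second-moment control on the mutual intersection time of two independent walks. The remaining combinatorial step---managing conflicts between the two pasts on the common future trajectory through the single "triple intersection" set $C \cap A \cap B$---is what fundamentally distinguishes the 2D setting from the 1D one studied in Theorem~A: in 1D the analogous triple intersection has size $\Theta(\sqrt{N})$, precisely the scale at which scenery entropy becomes visible, whereas the $O(N/\log N)$ bound in 2D is strong enough to absorb everything into the error tolerance $\delta N$.
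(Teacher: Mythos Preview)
The statement you are attempting is presented in the paper as an open \emph{conjecture}; the paper offers only the heuristic that two independent two-dimensional walks of length $N$ share only $\rm{o}(N)$ of their occupation, and gives no proof. So there is no ``paper's own proof'' against which to compare.

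Your plan does capture that heuristic correctly, and the chain of witnesses $(y_1,x_1)\to(z,w)\to(y_2,x_2)$ is neatly built. But there is a structural gap. By analogy with the definition of $\bicov_{\k'}((U,d_1,d_2,\mu'),\delta)$ and the discussion of bi-neighbourhoods in subspaces in Subsection~\ref{subs:competition}, bi-separation of two points in the subspace $(U,d_1,d_2)$ is witnessed only by intermediate points that themselves \emph{lie in $U$}. In the competition, $U$ is chosen by Max-er adversarially after Min-er fixes $\mu'$. Your witnesses have rigid form: $y_1,y_2,z$ all share the common future $\zeta_{\geq 0}$ while $y_1,y_2$ carry the prescribed pasts $y_{<0},y'_{<0}$. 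Max-er can block this. For a concrete obstruction, take $U = \{(y,x):y_0 = y_{-1}\}\times X$, which has measure $1/4$; then $(y_{<0},\zeta_{\geq 0})\in U$ forces $\zeta_0 = y_{-1}$ and $(y'_{<0},\zeta_{\geq 0})\in U$ forces $\zeta_0 = y'_{-1}$, so whenever $y_{-1}\neq y'_{-1}$ (three-quarters of pairs in $U\times U$) no choice of $\zeta$ places both $y_1$ and $y_2$ in $U$. More elaborate choices of $U$ can impose far more intricate constraints linking past and future, and it is not clear that any single explicit construction of witnesses can meet them all. Your passage to a sub-subset $U'$ does not help here: you are not free to replace Max-er's $U$ by $U'$, and showing non-bi-separation in the ambient space $Y\times X$ says nothing about bi-separation in $U$.

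Two smaller points. First, $\rm{BIPACK}_{\a,\k,\k',\delta}$ is never actually defined in the paper (only $\rm{BICOV}$ is, with $\bipack$ mentioned in a remark), so part of resolving the conjecture is pinning down the intended definition and how the parameter $\k'$ enters. Second, the probabilistic input you need is that $|A\cap B|$, the mutual range of two independent planar walks of length $N$, is $\rm{o}(N/\log N)$ with high probability; combined with the $\rm{O}(\log N)$ Green's-function bound this yields $L(y,y',\zeta)=\rm{o}(N)$. That mutual-range estimate (of order $N/(\log N)^2$) is a Le~Gall-type result, not quite the single-walk Erd\H{o}s--Taylor range asymptotics you cite; the ``mutual intersection time'' $\sum_{m,n}1_{\s^y_{-m}=\s^{y'}_{-n}}$ has expectation of order $N$, not $N/\log N$, so some care is needed in naming the right quantity.
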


Similar remarks might apply to a $p$-stable walk if $p < 1$, in which case the occupation measures are no longer absolutely continuous.  This conjecture promises the same behaviour for these systems as for Bernoulli systems (Proposition~\ref{prop:Bern-bicov-trivial}), even though they are among those shown to be non-Bernoulli by den Hollander and Steif in~\cite{denHSte97}, using an adaptation of Kalikow's argument.  It seems that a different invariant is needed to distinguish these examples one from another.

\bibliographystyle{alpha}
\bibliography{bibfile}

\end{document}